\documentclass[10pt,a4paper]{amsart} %options : draft,10pt,leqno ; classes : amsart, article

\usepackage[utf8]{inputenc}
\usepackage[T1]{fontenc}
\usepackage{amsmath, amssymb}
\usepackage{amsthm}
\usepackage{mathtools}   % provides \prescript 

\usepackage{bbm} %fonction indicatrice

\usepackage{hyperref}
\hypersetup{
    unicode=true,          % non-Latin characters in Acrobat’s bookmarks
    pdftitle={},    % title
    pdfauthor={},     % author
    pdfsubject={},   % subject of the document
    pdfkeywords={}{}, % list of keywords   
    pdfborder={0 0 0}
%    colorlinks=false,       % false: boxed links; true: colored links
%    linkcolor=red,          % color of internal links (change box color with linkbordercolor)
%    citecolor=green,        % color of links to bibliography
%    filecolor=magenta,      % color of file links
%    urlcolor=cyan           % color of external links
}

\usepackage{enumitem} % better enumerate
\setenumerate[0]{label=(\roman*)}   % default enumerate tag, need enumitem package

\numberwithin{equation}{section}

\newtheorem{theorem}{Theorem}[section]
\newtheorem{proposition}[theorem]{Proposition}
\newtheorem{lemma}[theorem]{Lemma}
\newtheorem{corollary}[theorem]{Corollary}

\theoremstyle{definition}
\newtheorem{definition}[theorem]{Definition}

\newtheorem{remark}{Remark}
\newtheorem*{notation}{Notation}

%---------------------------------------------------------------------
%The following code makes every letter l into \ell in math environment
\mathcode`l="8000
\begingroup
\makeatletter
\lccode`\~=`\l
\DeclareMathSymbol{\lsb@l}{\mathalpha}{letters}{`l}
\lowercase{\gdef~{\ifnum\the\mathgroup=\m@ne \ell \else \lsb@l \fi}}
\endgroup
%---------------------------------------------------------------------

\DeclareMathOperator{\Grass}{Grass}

\DeclareMathOperator{\GL}{GL}
\DeclareMathOperator{\SL}{SL}
\DeclareMathOperator{\End}{End}
\DeclareMathOperator{\Mat}{\mathcal{M}}
\DeclareMathOperator{\Supp}{Supp}
\DeclareMathOperator{\diag}{diag}

\DeclareMathOperator{\dang}{d_\measuredangle}

\newcommand{\dd}{\,\mathrm{d}}

\newcommand{\pp}{\boxplus}
\newcommand{\ff}{*}
\newcommand{\mm}{\boxminus}

\newcommand{\R}{\mathbb{R}}
\renewcommand{\C}{\mathbb{C}}
\newcommand{\Z}{\mathbb{Z}}
\renewcommand{\H}{\mathbb{H}}
\newcommand{\N}{\mathbb{N}}
\newcommand{\Q}{\mathbb{Q}}

\newcommand{\g}{\mathfrak{g}}
\newcommand{\h}{\mathfrak{h}}

\newcommand{\sW}{\mathsf{W}}

\newcommand{\cG}{\mathcal{G}}
\newcommand{\cT}{\mathcal{T}}

\newcommand{\eps}{\varepsilon}

\newcommand{\abs}[1]{\lvert#1\rvert}    % valeur absolue
\newcommand{\norm}[1]{\lVert#1\rVert}   % norme
\DeclareMathOperator{\Nbd}{Nbd}

\newcommand\dash{\nobreakdash-\hspace{0pt}}

 %\color{green}

\renewcommand{\phi}{\varphi}
\DeclareMathOperator{\ev}{ev}
\DeclareMathOperator{\Span}{Span}
\DeclareMathOperator{\rank}{rank}

\DeclareMathOperator{\indic}{\mathbbm{1}}
\DeclareMathOperator{\Dirac}{\mathcal{D}}

\DeclareMathOperator{\Sym}{Sym}

\newcommand{\Tbb}{\mathbb{T}}
\newcommand{\Gbb}{\mathbb{G}}
\newcommand{\Hbb}{\mathbb{H}}
\newcommand{\Fbb}{\mathbb{F}}

\newcommand{\Abb}{\mathbb{A}}

\newcommand{\afr}{\mathfrak{a}}
\newcommand{\bfr}{\mathfrak{b}}
\newcommand{\gfr}{\mathfrak{g}}

\newcommand{\Ecal}{\mathcal{E}}
\newcommand{\Ncal}{\mathcal{N}}

\newcommand{\transp}[1]{\prescript{t}{}{\!#1}} % transposé

\begin{document}

\title{Linear random walks on the torus}

\author{Weikun He}
\thanks{W.H. is supported by ERC grant ErgComNum 682150.}
\address{Einstein Institute of Mathematics, The Hebrew University of Jerusalem, Jerusalem 91904, Israel.}
\email{weikun.he@mail.huji.ac.il}

\author{Nicolas de Saxcé}
\thanks{}
\address{CNRS -- Université Paris 13, LAGA, 93430 Villetaneuse, France.}
%LAGA, UMR 7539, CNRS, Université Paris 13 -- Sorbonne Paris-Cité,
%Université Paris 8, 93430 Villetaneuse, France.}
\email{desaxce@math.univ-paris13.fr}

\subjclass[2010]{Primary 37A17, 11B75; Secondary 37A45, 11L07, 20G30.}

\keywords{Random walk, Toral automorphism, Equidistribution, Sum-product}

\date{}

\begin{abstract}
We prove a quantitative equidistribution result for linear random walks on the torus, similar to a theorem of Bourgain, Furman, Lindenstrauss and Mozes, but without any proximality assumption.
\end{abstract}

\maketitle

\section{Introduction}

The goal of the present paper is to study the equidistribution of linear random walks on the torus.
We are given a probability measure $\mu$ on the group $\SL_d(\Z)$ of integer matrices with determinant one, and consider the associated random walk $(x_n)_{n\geq 0}$ on the torus $\Tbb^d=\R^d/\Z^d$, starting from a point $x_0$ in $\Tbb^d$, and moving at step $n$ following a random element $g_n$ with law $\mu$:
\[ x_n = g_n x_{n-1} = g_n\dots g_1 x_0.\]
We say that the measure $\mu$ on $\SL_d(\Z)$ has some finite exponential moment if there exists $\eps>0$ such that
\[ \int \norm{g}^\eps \dd\mu(g) < \infty,\]
where $\norm{\ }$ denotes an arbitrary norm on $\Mat_d(\R)$, the space of $d \times d$ matrices with real coefficients.
Our goal is to prove the following theorem.

\begin{theorem}[Equidistribution on the torus]
\label{thm:easy}
Let $d \geq 2$. Let $\mu$ be a probability measure on $\SL_d(\Z)$.
Denote by $\Gamma$ the subsemigroup generated by $\mu$, and by $\Gbb < \SL_d$ the Zariski closure of $\Gamma$.
Assume that:
\begin{enumerate}[label=(\alph*)]
\item \label{it:SA1} The measure $\mu$ has a finite exponential moment; 
\item \label{it:SA2} The only subspaces of $\R^d$ preserved by $\Gamma$ are $\{0\}$ and $\R^d$;
\item \label{it:SA3} The algebraic group $\Gbb$ is Zariski connected. % and semisimple.
%\comm{On peut se passer de l'hypothèse que $\Gbb$ est semisimple, qui découle de l'irréductibilité de l'action et du fait que $\Gbb$ est engendré par des éléments de $\SL_d(\Z)$.}
\end{enumerate} 
Then, for every irrational point $x$ in $\Tbb^d$, the sequence of measures $(\mu^{*n} * \delta_x)_{n \geq 1}$ converges to the Haar measure in the weak-$*$ topology.
\end{theorem}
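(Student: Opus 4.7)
The plan is to follow the Fourier-analytic framework of Bourgain, Furman, Lindenstrauss and Mozes (hereafter BFLM), the novelty being to dispense with the proximality hypothesis on which their argument relies. By Weyl's equidistribution criterion, the conclusion amounts to
\[
\widehat{\mu^{*n}*\delta_x}(a) \;=\; \int_{\SL_d(\Z)} e^{2\pi i \langle \transp g a,\, x\rangle} \dd\mu^{*n}(g) \;\xrightarrow[n\to\infty]{}\; 0
\]
for every nonzero $a\in\Z^d$. I would in fact aim at the stronger quantitative dichotomy of BFLM type: there exist constants $c,C>0$, depending only on $\mu$, such that for every nonzero $a\in\Z^d$ and every $n\geq C\log(1+\|a\|)$, either $|\widehat{\mu^{*n}*\delta_x}(a)|\leq e^{-cn}$, or $x$ lies within distance $e^{-cn}$ of a rational point with denominator at most $e^{Cn}$. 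This implies Theorem~\ref{thm:easy} immediately for irrational $x$.

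The Fourier coefficient above is the value at $x$ of the Fourier transform of the measure $\nu_n$, the law of $\transp{g_n}\cdots\transp{g_1}\cdot a$ under $\mu^{*n}$, which is a linear random walk on $\Z^d$ driven by the transposed measure $\check\mu$. The proof of the dichotomy would then run in three steps. First, combining positivity of the top Lyapunov exponent of $\check\mu$ (guaranteed by hypotheses \ref{it:SA2} and \ref{it:SA3}) with Le Page and Benoist--Quint type large-deviation estimates for the Cartan projection, one shows that $\|\transp{g_n}\cdots\transp{g_1}\cdot a\|$ grows like $e^{\lambda_1 n}\|a\|$ with overwhelming probability, and that its direction lies in a regular region of an appropriate Grassmannian. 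Second, one establishes a non-concentration estimate: no proper affine subspace of $\R^d$ carries more than $e^{-cn}$ of the mass of $\nu_n$. Third, one feeds these two inputs into a discretized, multidimensional sum-product theorem in $\R^d$ to convert the character sum $\hat\nu_n(x)$ into either the desired exponential decay or the asserted rational approximation of $x$.

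The main obstacle is the non-concentration step. In the proximal BFLM setting, proximality provides a unique dominant expanding line and a Guivarc'h-regular stationary measure on $\Pbb(\R^d)$, from which non-concentration of $\nu_n$ on affine subspaces follows immediately. Without proximality, no such expanding line need exist, and one has to work on a higher-dimensional Grassmannian or flag variety. The natural remedy is to pass to a suitable exterior power $\wedge^k\R^d$, in which, by Goldsheid--Margulis, the representation contains a proximal irreducible summand; Zariski connectedness~\ref{it:SA3} ensures this summand is globally $\Gbb$-invariant, and strong irreducibility~\ref{it:SA2} then yields a Guivarc'h-regular stationary measure on its projectivisation. Transporting this regularity back to the original representation via a Plücker-type embedding, and replacing the one-dimensional discretized sum-product of BFLM by a multidimensional variant, is where I expect the bulk of the technical work to lie.
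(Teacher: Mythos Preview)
Your high-level plan---reduce to Fourier decay, establish non-concentration, feed into a discretized sum-product machine---is the right shape, and the quantitative dichotomy you state is indeed what the paper proves as Theorem~\ref{thm:main}. But the specific route you sketch for the non-proximal case is one the authors explicitly tried and found insufficient: passing to an exterior power $\wedge^r\R^d$ to recover proximality and then applying a higher-dimensional discretized projection theorem in $\R^d$ (the natural generalisation of Bourgain's theorem) ``only allows to deal with some special cases'' (see the introduction and the references to \cite{he_projection,he_schubert}). The difficulty is that transporting Guivarc'h regularity from the proximal summand of $\wedge^r\R^d$ back through the Pl\"ucker embedding does not yield the uniform non-concentration on \emph{all} affine subspaces at \emph{all} scales that the multidimensional sum-product input requires.

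The paper's actual route is structurally different on two counts. First, instead of working with the random walk on $\Z^d$, it works with the law $\mu^{*n}$ viewed as a measure on the simple algebra $E\subset\Mat_d(\R)$ generated by $G$, and proves a Fourier-decay theorem for multiplicative convolutions in $E$ (Theorem~\ref{thm:fourier}) via a sum-product estimate intrinsic to simple algebras (Proposition~\ref{pr:sumprod}, built on \cite{He2016}). This sidesteps the projection-theorem bottleneck entirely. Second, the required non-concentration of $\mu^{*n}$ in $E$---near affine hyperplanes \emph{and} near translates of the determinant variety $S_E(\rho)$---is not obtained from stationary-measure regularity at all, but from the spectral gap modulo primes of Salehi Golsefidy--Varj\'u (Theorem~\ref{thm:specgap}), combined with a rescaling argument (Propositions~\ref{pr:NCaffine} and~\ref{pr:NCdet}). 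Your proposal does not mention the singular-locus non-concentration, and the large-deviation/stationary-measure tools you invoke would not by themselves give the uniform-in-subvariety estimates needed here.
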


With an additional proximality assumption, this theorem was proved a decade ago by Bourgain, Furman, Lindenstrauss and Mozes \cite{BFLM}, and we follow their approach to this problem, via a study of the Fourier coefficients of the law at time $n$ of the random walk on $\Tbb^d$.
Recently, Benoist and Quint~\cite{bq1,bq2,bq3} have developed another approach to study equidistribution of random walks on homogeneous spaces.
Their results are far more general, but imply in particular that, under the above assumptions, the Ces\`aro averages
\(\frac{1}{n}\sum_{k=1}^n \mu^{*k} * \delta_x\)
converge to the Haar measure on $\Tbb^d$.
Besides removing the Cesàro average in their result, one advantage of the Fourier analytic method used here is that it yields a quantitative statement, giving a speed of convergence of the random walk, in terms of the diophantine properties of the starting point $x$, see \cite[Theorem~A]{BFLM}.

\begin{theorem}[Quantitative equidistribution on the torus]
\label{thm:main}
Under the assumptions of Theorem~\ref{thm:easy}, let $\lambda_1$ denote the top Lyapunov exponent associated to $\mu$.\\
Given $\lambda \in {(0, \lambda_1)}$, there exists a constant $C = C(\mu, \lambda) > 0$ such that for every $x \in \Tbb^d$ and every $t \in {(0,1/2)}$, 
if for some $a \in \Z^d \setminus \{0\}$, 
\[\abs{\widehat{\mu^{*n}*\delta_x}(a)} \geq t \quad \text{and} \quad n \geq C \log\frac{\norm{a}}{t},\]
then there exists $q \in \Z_{> 0}$ and $x' \in (\frac{1}{q}\Z^d) / \Z^d$ such that
\[q \leq \Bigl(\frac{\norm{a}}{t}\Bigr)^C \quad \text{and} \quad d(x,x') \leq e^{-\lambda n} .\]
\end{theorem}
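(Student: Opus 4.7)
The plan is to follow the Fourier-analytic strategy of Bourgain-Furman-Lindenstrauss-Mozes, adapted to the non-proximal setting. First I would dualize: writing
\[
\widehat{\mu^{*n}*\delta_x}(a) = \int e^{-2\pi i \bracket{\transp{g}a,\, x}} \dd\mu^{*n}(g),
\]
one sees that the hypothesis is equivalent to $|\widehat{\rho_n}(x)| \geq t$, where $\rho_n$ is the pushforward of $\mu^{*n}$ under $g \mapsto \transp{g}a$. The goal becomes showing that such a Fourier spike at $x$ forces $x$ to be close to a rational point whose denominator is polynomially bounded in $\norm{a}/t$.

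The backbone of the argument would be a \emph{flattening / non-concentration} dichotomy for $\rho_n$ at each intermediate scale $\tau \in [t, 1]$: either one extra convolution step strictly decreases the concentration at scale $\tau$, or $\rho_n$ is abnormally concentrated near some affine structure. Combined with the large deviation estimates for the norm cocycle $\log \norm{\transp{g}a}$ (which under hypotheses \ref{it:SA1}--\ref{it:SA3} grows like $\lambda_1 n$ with exponential concentration, by Le Page's theorem for strongly irreducible actions), the flattening alternative can be iterated only boundedly many times before one exits the non-concentrated regime. Iterating for $n \asymp \log(\norm{a}/t)$ steps therefore leaves only the concentrated alternative, and then a standard Bogolyubov-type argument on the set of frequencies that correlate with $x$ extracts a rational approximation $x'$ with $d(x,x')\leq e^{-\lambda n}$ and denominator $\leq (\norm{a}/t)^C$, exactly as in Sections~7--8 of \cite{BFLM}.

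The main obstacle is implementing the flattening step without proximality. In \cite{BFLM}, proximality was used to reduce each long product $\transp{g_1}\cdots \transp{g_n}a$ to an approximately rank-one operator acting on $a$, which linearizes $\rho_n$ as an affine image of a scalar random walk and invites the one-dimensional discretized sum-product theorem. In our generality, no single deterministic line attracts $\transp{g}a$: one must instead exploit the fact that, by strong irreducibility and Zariski connectedness, the action of $\Gamma$ on $\R^d$ (and on its exterior powers) still has a simple top Lyapunov exponent up to passing to a suitable representation. I would therefore replace the one-dimensional sum-product input by a higher-dimensional non-concentration/projection theorem — of the type developed by the authors in earlier work — applied at the level of the Cartan projection of the random walk, and use the regularity of the Furstenberg measure on the full flag variety (rather than on a projective space) to localise $\rho_n$. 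Once this substitute for the BFLM linearization is in place, the rest of the scheme — granulation estimates, the concentration-versus-flattening dichotomy, and the final extraction of a rational point — carries over without essential change.
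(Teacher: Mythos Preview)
Your outline has the right global architecture, but the crucial middle step --- your proposed substitute for the BFLM linearization --- is precisely the approach the paper explicitly rejects. In the introduction the authors note that the natural generalization of Bourgain's projection theorem to higher dimensions, combined with the BFLM scheme, \emph{only handles special cases} (they cite \cite{he_schubert}); relying on ``regularity of the Furstenberg measure on the full flag variety'' to localize $\rho_n$ runs into the same obstruction, since without proximality there is no single attracting subspace of the right dimension that captures enough of the mass uniformly. So the plan as written has a genuine gap at exactly the point where the non-proximal case diverges from \cite{BFLM}.

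What the paper does instead is change the ambient space: rather than studying the pushforward $g \mapsto \transp{g}a$ in $\R^d$, one studies the law of $g$ itself inside the \emph{simple algebra} $E \subset \Mat_d(\R)$ generated by $G$. The flattening input is then a Fourier-decay theorem for multiplicative convolutions in simple algebras (Theorem~\ref{thm:fourier}), which in turn rests on a sum-product estimate in $E$ rather than on a projection theorem. To feed this machine one needs non-concentration of $\mu^{*n}$ near affine subspaces of $E$ and near translates of the determinant locus; the second of these is obtained not from Furstenberg-measure regularity but from the spectral gap modulo primes of Salehi~Golsefidy--Varj\'u (Proposition~\ref{pr:gapEscape}) combined with a rescaling argument. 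The output is Theorem~\ref{thm:decay}: genuine Fourier decay for the rescaled measure $\tilde\mu_n$ on $E^*$. Only after this is established does one project back to $\R^d$ via $g \mapsto a_0 g$ and run the BFLM ``Phase~II'' argument (Sections~\ref{sc:fourier}--\ref{sc:rational}); even there, the extraction of the rational point is not a single Bogolyubov step but an iterated concentration-bootstrap (Propositions~\ref{pr:step1}, \ref{pr:step2}, \ref{pr:rewindbis}) together with an almost-diophantine property (Proposition~\ref{pr:dioph}), and several of the BFLM lemmas have to be reproved without proximality (e.g.\ Lemma~\ref{lm:nombre}).
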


One of our motivations for removing any proximality assumption from this theorem was to generalize a theorem of Bourgain and Varjú on expansion in $\SL_d(\Z/q\Z)$, where $q$ is an arbitrary integer, to more general simple $\Q$-groups.
We briefly describe this application at the end of the paper, in \S\ref{ss:exp}.

\bigskip

The general strategy for the proof of Theorem~\ref{thm:main} is similar to the one in the work of Bourgain, Furman, Lindenstrauss and Mozes, and can be briefly sketched as follows:
\begin{enumerate}[label=(\arabic*)]
\item Start with the assumption
\[
\abs{\widehat{\mu^{*n}*\delta_x}(a)} = \abs{\widehat{\mu^{*m}*\nu_{n-m}}(a)} \geq t,
\]
where $\nu_{n-m}=\mu^{*(n-m)} * \delta_x$, and $t$ can be thought of as a fixed positive number.
By applying Hölder’s inequality, one can show that $\widehat{\nu_{n-m}}(a\cdot g)$ is large for $g$ from a set $A\subset\Mat_d(\Z)$ with large $\mu_1$-measure, where $\mu_1$ is an additive convolution of $\mu^{*m}$.
\item Using a Fourier decay property of $\mu_1$ and that $\mu_1(A)$ is large, one can show that the set $a\cdot A\subset\R^d$ is evenly distributed with respect to two scales $N$ and $M$ (each of which is an exponential function of $m$), in the sense that one needs to use roughly $\big(\frac{N}{M}\big)^d$ many balls to cover the set $a\cdot A\cap B(0,N)$.
The establishment of the Fourier decay property is the main difference between our approach and that of Bourgain, Furman, Lindenstrauss and Mozes.
\item On the other hand, $\widehat{\nu_{n-m}}(b)$ is large for $b\in a\cdot A\cap B(0,N)$. Using this together with the fact that $a\cdot A\cap B(0,N)$ is evenly distributed, an ingenious argument due to Bourgain, Furman, Lindenstrauss and Mozes \cite[Proposition~7.5]{BFLM} shows there is a $\frac{1}{M}$-separated set $X\subset\Tbb^d$ such that $\nu_{n-m}$ is concentrated over a neighborhood of $X$ of size $e^{-cm}$, for some $c>0$.
\item Using the geometry of the backwards random walk, and in particular the Lyapunov exponents, one can bootstrap concentration of $\nu_{n-m}$ near the set $X$. Then, a Diophantine property implies that $X$ must be $e^{-cm}$-close to a finite set of rational points with denominators at most $e^{Cm}$.
\item Using again the backwards random walk, one deduces from the above that $\nu_0=\delta_x$ must be $e^{-\lambda n}$ close to a rational point with denominator bounded by $e^{Cm}$. In this sketch, $m\asymp C\log\frac{\norm{a}}{t}$, where $C$ is a constant depending only on the measure $\mu$.
\end{enumerate}

\bigskip

In \cite{BFLM}, the proximality assumption is used at several important places, especially in the study of the large scale structure of Fourier coefficients \cite[Phase I]{BFLM}.
Let us mention the main ingredients we had to bring into our proof in order to overcome this issue.

\smallskip

One important tool in the proof of Bourgain, Furman, Lindenstrauss and Mozes \cite{BFLM} is a discretized projection theorem, due to Bourgain \cite[Theorem~5]{Bourgain2010}, giving information on the size of the projections of a set $A\subset\R^d$ to lines.
When the random walk is not proximal, one should no longer project the set to lines, but to subspaces whose dimension equals the proximality dimension of $\Gamma$.
One approach, of course, would be to generalize Bourgain's theorem to higher dimensions, and this has been worked out by the first author \cite{he_projection}.
But it turns out that the natural generalization of Bourgain's theorem, used with the general strategy of \cite{BFLM}, only allows to deal with some special cases \cite{he_schubert}.
Here we take a different route.
Instead of working in the space $\Z^d$ of Fourier coefficients, we place ourselves in the simple algebra $E\subset\Mat_d(\R)$ generated by the random walk.
This allows us to use the results of the first author on the discretized sum-product phenomenon in simple algebras \cite{He2016}.
Thus, instead of a projection theorem, we use a result on the Fourier decay of multiplicative convolutions in simple algebras, derived in Section~\ref{sc:sumprod}, and generalizing a theorem of Bourgain for the field of real numbers \cite[Theorem~6]{Bourgain2010}.

\smallskip

Then, in order to be able to apply this Fourier estimate to the law at time $n$ of the random walk, we have to check some non-concentration conditions.
For that, we use a result of Salehi Golsefidy and Varjú \cite{SGV} on expansion modulo prime numbers in semisimple groups, combined with a rescaling argument, proved with the theory of random walks on reductive groups.
In the end, we obtain some Fourier decay theorem for the law at time $n$ of a random walk on $\SL_d(\Z)$, Theorem~\ref{thm:decay}, which, we believe, bears its own interest and, we hope, will have other applications.

\smallskip

The rest of the proof, corresponding to \cite[Phase II]{BFLM}, follows more closely the strategy of \cite{BFLM}.
But since at several points we had to find an alternative proof to avoid the use of the proximality assumption, we chose to include the whole argument, rather than refer the reader to \cite{BFLM}.
We hope that this will make the proof easier to follow.

\section{Sum-product, \texorpdfstring{$L^2$}{L2}-flattening and Fourier decay}
\label{sc:sumprod}

The main objective of this section is to prove that in a simple real algebra, multiplicative convolutions of non-concentrated measures admit a polynomial Fourier decay. The precise statement is given in Theorem~\ref{thm:fourier} below.

\bigskip

From now on, $E$ will denote a finite-dimensional real associative simple algebra, endowed with a norm $\norm{\ }$.
%We say an algebra $E$ over $\R$ is normed if it comes with a norm (denoted by $\norm{\ }$) which makes the underlining linear space a normed vector space. We will only consider finite-dimensional algebras. Hence the choice of the norm only affects constants. Thus we may assume that the underlying linear space of $E$ is $\R^d$ and the norm is Euclidean, with $d = \dim E$.
Given a finite Borel measure $\mu$ on $E$ and an integer $s\geq 1$, we write 
\[\mu^{* s} = \underbrace{\mu * \dotsm * \mu}_\text{$s$ times}\]
for the $s$-fold multiplicative convolution of $\mu$ with itself.
In order to ensure the Fourier decay of some multiplicative convolution of the measure $\mu$, we need two assumptions:
First, $\mu$ should not be concentrated around a linear subspace of $E$, and second, $\mu$ should not give mass to elements of $E$ that are too singular.

To make these requirements more precise, let us set up some notation.
For $\rho > 0$ and a point $x$ in a metric space, let $B(x,\rho)$ denote the closed ball of radius $\rho$ and centered at $x$. 
When the ambient space is not clear from the context, we indicate it by adding a subscript to the notation, as in $B_E(x,\rho )$.
For a subset $W \subset E$, let $W^{(\rho)}$ denote the $\rho$-neighborhood of $W$,
\[
W^{(\rho)} = W + B(0,\rho).
\]
For $a \in E$ define $\det_E(a)$ to be the determinant of the endomorphism $E \to E$, $x \mapsto ax$. Note that since $E$ is simple this quantity is equal to the determinant of $E \to E$, $x \mapsto xa$. 
For $\rho > 0$, define $S_E(\rho)$, the set of badly invertible elements of $E$, as
\[S_E(\rho) = \{\, x \in E \mid \abs{\det\nolimits_E(x)} \leq \rho \,\}. \]

\begin{theorem}[Fourier decay of multiplicative convolutions]
\label{thm:fourier}
Let $E$ be a normed simple algebra over $\R$ of finite dimension. Given $\kappa > 0$, there exists $s = s(E,\kappa) \in \N$ and $\eps = \eps(E,\kappa) > 0$ such that for any parameter $\tau \in {(0, \eps \kappa)}$ the following holds for any scale $\delta > 0$ sufficiently small.
Let $\mu$ be a Borel probability measure on $E$. Assume that
\begin{enumerate}
\item $\mu \bigl(E \setminus B(0,\delta^{-\eps})\bigr) \leq \delta^{\tau}$;
\item for every $x \in E$, $\mu(x + S_E(\delta^{\eps})) \leq \delta^{\tau}$;
\item for every $\rho \geq \delta$ and every proper affine subspace $W \subset E$, $\mu(W^{(\rho)}) \leq \delta^{-\eps} \rho^\kappa$.
\end{enumerate}
Then for all $\xi \in E^*$ with $\norm{\xi} = \delta^{-1}$,
%$\delta^{-1+\eps} \leq \norm{\xi} \leq \delta^{-1-\eps}$,
\[
\abs{\widehat{\mu^{* s}}(\xi)} \leq \delta^{\eps \tau}.
\]
\end{theorem}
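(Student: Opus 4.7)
I would follow Bourgain's proof of the Fourier-decay statement for multiplicative convolutions on $\R$ in \cite[Theorem~6]{Bourgain2010}, replacing at the crucial step the one-dimensional sum-product theorem by its analogue in simple algebras \cite{He2016}. The scheme is a bootstrap between the Fourier side and the physical side, orchestrated by the key identity
\[ \widehat{\mu\ff\nu}(\xi)=\int_E\widehat{\nu}(\xi\cdot a)\,\dd\mu(a),\qquad(\xi\cdot a)(x):=\xi(ax), \]
which tells us that the dual multiplicative action of $\Supp(\mu)$ controls how Fourier mass of $\mu^{\ff s}$ propagates as we add more convolution factors.

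\textbf{Step 1 (Fourier level sets inherit multiplicative structure).} Assume for contradiction $|\widehat{\mu^{\ff s}}(\xi_0)|\geq\delta^{\eps\tau}$ for some $\xi_0$ with $\|\xi_0\|=\delta^{-1}$. For an intermediate $s_1<s$ and threshold $t=\delta^{O(\eps\tau)}$, define the Fourier level set
\[ L_t=\bigl\{\xi\in E^*: \|\xi\|\leq\delta^{-1-o(1)},\ |\widehat{\mu^{\ff s_1}}(\xi)|\geq t\bigr\}. \]
The convolution identity together with Cauchy--Schwarz shows that a positive $\mu^{\ff(s-s_1)}$-mass of $a\in E$ send $\xi_0$ into $L_t$; varying $s_1$ and iterating, one sees that $L_t$ is approximately stable (at scale $\delta$) under multiplication by elements of $\Supp(\mu^{\ff k})$ for various $k$, so it carries the multiplicative structure of $\mu$.

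\textbf{Step 2 (Additive structure and sum-product contradiction).} By Plancherel, the volume of $L_t$ in the shell $\|\xi\|\sim\delta^{-1}$ lower-bounds the $L^2$-energy of the $\delta$-smoothing of $\mu^{\ff s_1}$, which by a Balog--Szemer\'edi--Gowers argument extracts a subset of $\Supp(\mu^{\ff s_1})$ with bounded $\delta$-additive doubling. Combined with the multiplicative stability from Step~1, with non-concentration hypothesis~(3) on $\mu$, and with the non-singularity hypothesis~(2) (which guarantees that multiplication by a $\mu$-typical $a$ is close to invertible and therefore does not collapse the set into a subspace on purely linear grounds), the discretized sum-product theorem in the simple algebra $E$ from \cite{He2016} forces simultaneous additive-multiplicative expansion at scale $\delta$, contradicting the approximate stability of $L_t$. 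The number of convolution factors $s=s(\kappa,E)$ is precisely the number of bootstrap iterations needed for the polynomial sum-product gain to dominate the polynomial losses in Cauchy--Schwarz and BSG.

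\textbf{Main obstacle.} The principal new difficulty over the real case is the nontrivial singular variety $S_E\subset E$: elements close to $S_E$ act on $E$ by near-projections to proper subspaces and can collapse additively structured sets without any genuine sum-product obstruction. Hypothesis~(2) is precisely the quantitative hypothesis that rules this out, but weaving it cleanly through the bootstrap---so that the iterates $\mu^{\ff s'}$ still avoid mass concentration near translates of $S_E(\delta^{\eps})$, and the non-concentration hypothesis~(3) similarly survives the iteration via a Pl\"unnecke--Ruzsa-type argument---will be the bulk of the technical work. A secondary issue is the calibration of the hierarchy of small parameters $(\tau,\eps,s)$ in terms of $\kappa$ and $\dim E$, so that the losses at each step remain strictly smaller than the sum-product gain.
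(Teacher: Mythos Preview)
Your proposal is in the right spirit and could likely be made to work, but the paper takes a different and more modular route. Rather than tracking Fourier level sets $L_t$ directly, the paper first isolates an $L^2$-\emph{flattening} statement (Proposition~\ref{pr:apla}): if $\eta$ is supported on well-invertible elements, has $\|\eta\pp P_\delta\|_2^2$ in an intermediate range, and satisfies the affine non-concentration hypothesis, then $\|(\eta\ff\eta\mm\eta\ff\eta)\pp P_\delta\|_2\leq\delta^{\eps_1}\|\eta\pp P_\delta\|_2$. The BSG step and the sum-product input from \cite{He2016} that you sketch in Step~2 live entirely inside the proof of this proposition. The paper then iterates flattening on the physical side: one sets $\mu_1=\mu_{\mid B(0,\delta^{-\eps})}$, truncates away from the singular set to get $\eta_k=\mu_{k\mid E\setminus S_E(\delta^{2^k\eps})}$, and defines $\mu_{k+1}=\eta_k\ff\eta_k\mm\eta_k\ff\eta_k$. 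Lemma~\ref{lm:iterNC} shows by direct computation that all three hypotheses (boundedness, non-singularity, affine non-concentration) survive each step; after $O(\dim E/\eps_1)$ iterations the $L^2$-norm is below $\delta^{\kappa/2}$, and an elementary argument (Lemma~\ref{lm:L2Fourier}) converts this into Fourier decay for $\eta_s\ff\eta_s$.

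Two comments on the comparison. First, the obstacle you correctly flag---propagating hypothesis~(2) through the iteration---is handled in the paper not by a Pl\"unnecke--Ruzsa argument but by brute truncation at each stage, with Lemma~\ref{lm:iterNC} checking that the mass lost and the constants incurred stay under control. Second, there is a point your sketch does not address: the measure produced by the iteration is not $\mu^{\ff s}$ but a nested additive-multiplicative convolution $\bigl((\eta_{s-1}\ff\eta_{s-1}\mm\eta_{s-1}\ff\eta_{s-1})\ff\cdots\bigr)$, and one must transfer Fourier decay from this object back to $\widehat{\mu^{\ff s}}$. The paper does this via an iterated H\"older inequality packaged as Lemma~\ref{lm:ordre}, which shows $|\widehat{\nu^{\ff m}}(\xi)|^{(2l)^m}\leq\widehat{(\nu^{\pp l}\mm\nu^{\pp l})^{\ff m}}(\xi)$. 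Your Fourier-side bootstrap would implicitly need the same device, so it is worth isolating.
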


For $E = \R$, this is due to Bourgain~\cite[Lemma 8.43]{Bourgain2010}. Li proved in~\cite{Li2018} a similar statement for the semisimple algebra $\R \oplus \dotsb \oplus \R$. 
%\marginpar{\small Ce n'est pas bien d'annoncer les choses dont on ne possède pas la démonstration !?}
While a more general statement should hold for any semisimple algebra, we do not pursue in this direction and focus in the present paper only on simple algebras.

\subsection{A sum-product theorem.}
The goal of this paragraph is a sum-product estimate in simple algebras, which is going to be used to prove an $L^2$-flattening lemma in the next paragraph.

\begin{notation}
For a subset $A$ of $E$ or of any metric space, we denote by $\Ncal(A,\delta)$ the least number of balls of radius $\delta$ that is needed to cover $A$.
\end{notation}

For $K > 1$, define the set of well invertible elements of $E$ as
\[G_{\!E}(K) = \{x \in E^\times \mid \norm{x},\norm{x^{-1}} \leq K\}.\]
Note that if $x \in G_{\!E}(K)$, the left, or right, multiplication by $x$ as a map from $E$ to itself is $O(K)$-bi-Lipschitz.
Moreover, there exists a constant $C \geq 1$
% depending only on the choice of the norm on $E$
such that for all $\rho \in {(0,1)}$, 
\[
G_{\!E}(\rho^{-1}) \subset B_E(0,\rho^{-1})\setminus S_E \bigl(\frac{\rho^{\dim E}}{C} \bigr)\]
and conversely,
\[B_E(0,\rho^{-1}) \setminus S_E(\rho) \subset G_{\!E}\bigl(C \rho^{-\dim E }\bigr).\]

\begin{proposition}[Sum-product estimate in simple algebras]
\label{pr:sumprod}
Let $E$ be a normed finite-dimensional simple algebra over $\R$ of dimension $d \geq 2$. Given $\kappa > 0$, there exists $\eps = \eps(E,\kappa) > 0$ such that the following holds for every $\delta > 0$ sufficiently small. 
Let $A$ be a subset of $E$, $\mu$ a probability measure on $E$, and $B$ a subset of $E \times E$. Assume
\begin{enumerate}
\item $A \subset B(0,\delta^{-\eps})$;
\item $\forall \rho \geq \delta$, $\Ncal(A,\rho) \geq \delta^{\eps}\rho^{-\kappa}$;
\item $\Ncal(A,\delta) \leq \delta^{-(d-\kappa)}$;
\item \label{it:spmu1} $\mu$ is supported on $G_{\!E}(\delta^{-\eps})$;
%\item \label{it:spmu2} $\forall \rho \geq \delta$, $\forall x \in E$, $\mu(B(x,\rho)) \leq \delta^{-\eps}\rho^\kappa$;
\item \label{it:spmu3} for every  proper linear subspace $W<E$, $\forall \rho \geq \delta$, $\mu(W^{(\rho)}) \leq \delta^{-\eps}\rho^\kappa$;
\item \label{it:spB} $\mu \otimes \mu (B) \geq \delta^\eps$.
\end{enumerate}
Then for every $a_\star, b_\star \in G_{\!E}(\delta^{-\eps})$, there exists $(a,b) \in B \cup \{(a_\star,b_\star)\}$ such that
\[ \Ncal(a_\star^{-1}aA+Ab_\star^{-1}b,\delta) \geq \delta^{-\eps} \Ncal(A,\delta).\]
\end{proposition}

The idea of the proof is to consider the action of $E \times E$ on $E$ by left and right multiplication and to apply a sum-product theorem \cite[Theorem 3]{He2016} for irreducible linear actions due to the first author. For the reader's convenience, let us recall the statement of the latter. 

\begin{theorem}[Sum-product theorem for irreducible linear actions]
\label{thm:sumprod_he}
Given a positive integer $d$ and a real number $\kappa > 0$ there exists $\eps = \eps(d,\kappa) > 0$ such that the following holds for $\delta > 0$ sufficiently small. 
Let $X$ be a subset of the Euclidean space $\R^d$ and $\Phi \subset \End(\R^d)$ a subset of linear endomorphisms. Assume
\begin{enumerate}
\item \label{it:spX1} $X \subset B_{\R^d}(0,\delta^{-\eps})$;
\item for all $\rho \geq \delta$, $\Ncal(X,\rho) \geq \delta^{\eps}\rho^{-\kappa}$;
\item $\Ncal(X,\delta) \leq \delta^{-(d-\kappa)}$;
\item \label{it:spPhi1} $\Phi \subset B_{\End(\R^d)}(0,\delta^{-\eps})$;
\item \label{it:spPhi2} for all $\rho \geq \delta$, $\Ncal(\Phi,\rho) \geq \delta^{\eps}\rho^{-\kappa}$;
\item \label{it:spPhi3} for every proper linear subspace $W \subset \R^d$, there is $\phi \in \Phi$ and $w \in B_W(0,1)$ such that $d(\phi w,W) \geq \delta^\eps$.
\end{enumerate}
Then 
\[\Ncal(X+X,\delta)+\max_{\phi \in \Phi}\Ncal(X + \phi X,\delta) \geq \delta^{-\eps} \Ncal(X,\delta).\]
\end{theorem}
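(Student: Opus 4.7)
The theorem is a higher-dimensional analogue of Bourgain's discretized sum--product theorem on the real line \cite{Bourgain2010}, in which the multiplicative action of $\R^\times$ is replaced by the action of an irreducible family $\Phi \subset \End(\R^d)$. My plan is to argue by contradiction: assume both $\Ncal(X+X,\delta) \leq \delta^{-\eps}\Ncal(X,\delta)$ and $\Ncal(X+\phi X,\delta) \leq \delta^{-\eps}\Ncal(X,\delta)$ for every $\phi \in \Phi$, and derive a contradiction for $\eps$ sufficiently small in terms of $d$ and $\kappa$. Using the discretized Pl\"unnecke--Ruzsa inequalities together with a Balog--Szemer\'edi--Gowers refinement, I can pass to a subset $X' \subset X$ with $\Ncal(X',\delta) \gtrsim \Ncal(X,\delta)$ for which
\[
\Ncal\bigl(\phi_1 X' + \cdots + \phi_k X',\delta\bigr) \leq \delta^{-O_k(\eps)}\Ncal(X,\delta)
\]
for every bounded $k$ and every $\phi_1,\dotsc,\phi_k \in \Phi$.

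The key step is to use the irreducibility hypothesis (vi) to construct inductively elements $\phi_1,\dotsc,\phi_d \in \Phi$ and vectors $v_1,\dotsc,v_d$ in a $\delta^{O(\eps)}$-neighborhood of $X'$ such that $\phi_1 v_1,\dotsc,\phi_d v_d$ form a basis of $\R^d$ whose conditioning (the volume of the fundamental parallelepiped) is at least $\delta^{O(\eps)}$. Indeed, once $\phi_1 v_1,\dotsc,\phi_{k-1} v_{k-1}$ span a proper subspace $W \subsetneq \R^d$, hypothesis (vi) furnishes $\phi_k \in \Phi$ and a unit vector $w \in W$ with $d(\phi_k w, W) \geq \delta^\eps$; the non-concentration (ii) of $X$ then lets me pick $v_k \in X'$ close to $w$. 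With such a basis in hand, a standard thickening argument---applying the scale-by-scale non-concentration (ii) once along each coordinate---shows that the iterated sumset $\phi_1 X' + \cdots + \phi_d X'$ covers a box of sidelength $\delta^{-O(\eps)}$ at scale $\delta^{1-O(\eps)}$, so
\[
\Ncal\bigl(\phi_1 X' + \cdots + \phi_d X',\delta\bigr) \geq \delta^{-d + O(\eps)}.
\]
Combining this lower bound with the upper bound of the first paragraph and the hypothesis $\Ncal(X,\delta) \leq \delta^{-(d-\kappa)}$ from (iii) yields $\delta^{-d+O(\eps)} \leq \delta^{-d+\kappa - O(\eps)}$, which is absurd once $\eps$ is chosen small enough in terms of $d$ and $\kappa$.

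The main obstacle is precisely this basis-building step: the qualitative irreducibility (vi) must be upgraded into a \emph{quantitative} construction with only polynomial loss in $\delta$. Each inductive escape from the current subspace introduces a factor $\delta^\eps$ in the conditioning of the growing basis, and one must ensure that after $d$ steps the accumulated loss is still polynomial. This is where the non-concentration of $\Phi$ itself (hypothesis (v)) enters, providing enough freedom in selecting $\phi_k$ to avoid the bad subspaces produced by the previous steps. A second subtlety is that converting the spanning property into an honest lower bound on the covering number uses the non-concentration of $X$ at \emph{every} scale $\rho \in [\delta,1]$, so hypothesis (ii) is used in its full strength. The remaining Pl\"unnecke--Ruzsa and BSG bookkeeping is by now standard in discretized additive combinatorics \cite{TaoVu}.
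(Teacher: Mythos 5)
You should first note that the paper does not prove this statement at all: Theorem~\ref{thm:sumprod_he} is quoted verbatim from the first author's earlier work (\cite[Theorem 3]{He2016}), and in the present paper it is only \emph{applied} (in the proof of Proposition~\ref{pr:sumprod}). So there is no in-paper proof to compare with; your proposal has to stand on its own, and it does not.

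The fatal gap is the passage from a well-conditioned basis to a full-dimensional sumset. Knowing that $\phi_1v_1,\dotsc,\phi_dv_d$ span $\R^d$ with determinant $\geq\delta^{O(\eps)}$, for single points $v_i$ near $X'$, says nothing about the metric entropy of $\phi_1X'+\dotsb+\phi_dX'$: trivially $\Ncal(\phi_1X'+\dotsb+\phi_dX',\delta)\leq\delta^{-O(\eps)}\Ncal(X,\delta)^d$, and the hypotheses allow $\Ncal(X,\delta)\asymp\delta^{-\kappa}$ (condition (ii) only forces dimension $\geq\kappa$, and (iii) is an upper bound), so the $d$-fold sum can have covering number at most about $\delta^{-d\kappa}$, which for small $\kappa$ is far below the $\delta^{-d+O(\eps)}$ you claim. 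For instance, $X$ could be a $\kappa$-regular Cantor set inside a single line: it satisfies (i)--(iii), every $\phi_iX$ lies in a line, and no bounded number of sums ever covers a box. Your ``thickening along each coordinate using (ii)'' conflates non-concentration of $X$ (a covering-number lower bound) with $X$ being $\delta$-dense along a prescribed direction, which it need not be; relatedly, the inductive step where you ``pick $v_k\in X'$ close to $w$'' is unjustified, since (ii) does not place points of $X'$ near any prescribed vector $w$ furnished by (vi). Consequently the contradiction $\delta^{-d+O(\eps)}\leq\delta^{-(d-\kappa)-O(\eps)}$ is never reached in exactly the regime (small $\kappa$) where the theorem is interesting. (The Pl\"unnecke--Ruzsa/BSG step also needs care -- a single subset $X'$ working simultaneously for all tuples $\phi_1,\dotsc,\phi_k$ is not what BSG gives -- but that part could probably be repaired by Ruzsa calculus on $X$ itself.) The genuine proof, in \cite{He2016}, does not try to reach full dimension after boundedly many sums; instead it argues that the absence of a $\delta^{-\eps}$ gain forces, via energy/multiscale (Bourgain--Gamburd type) arguments, an approximate multiplicative-invariance structure -- essentially a subspace almost invariant under $\Phi$ or a concentration of $X$ violating (ii)--(iii) -- and this is where the quantitative irreducibility (vi) and the non-concentration of $\Phi$ (v) are actually used. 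That structural step is the heart of the theorem and is missing from your sketch.
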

Here, of course, $\phi X$ denote the set $\{\phi x \mid x \in X\}$

\begin{proof}[Proof of Proposition~\ref{pr:sumprod}]
%Changing the norm on $E$ only affects the constants hence we may assume that the norm on $E$ is Euclidean and identify $E$ with $\R^d$ where $d = \dim(E)$. Note that for all $x, y \in E$, 
%\[\norm{xy} \ll \norm{x} \norm{y}.\]
In this proof the implied constants in the Vinogradov or Landau notation may depend on $E$.
We may assume without loss of generality that $B \subset \Supp(\mu) \times \Supp(\mu)$.
This implies that for all $(a,b) \in B$, 
\[\norm{a},\norm{a^{-1}},\norm{b},\norm{b^{-1}} \leq \delta^{-\eps},\]
and consequently the multiplication on the left or right by $a$ or $b$ is a $\delta^{-O(\eps)}$-bi-Lipschitz endomorphism of $E$.

For $(a,b)$ in $B$, define $\phi(a,b) \in \End(E)$ by
\[\forall x \in E,\; \phi(a,b)x = - a^{-1} a_\star x b_\star^{-1} b.\]
We would like to apply the previous theorem to $X = A$ and $\Phi = \{\phi(a,b) \in \End(E) \mid a,b \in B\}$.
We claim that the assumptions of Theorem~\ref{thm:sumprod_he} hold with $O(\eps)$ in the place of $\eps$. 
Hence there is $\eps_1 > 0$ such that when $\eps > 0$ is small enough, we have either 
\[\Ncal(A + A,\delta) \geq \delta^{-\eps_1} \Ncal(A,\delta)\]
in which case we are done, or there exists $(a,b) \in B$ such that
\[\Ncal(A + \phi(a,b) A ,\delta) \geq \delta^{-\eps_1} \Ncal(A,\delta).\]
In the latter case we conclude by multiplying the set above by $a_\star^{-1}a$ on the left,
\[\Ncal(a_\star^{-1} a A - A b_\star^{-1} b,\delta) \geq \delta^{O(\eps)} \Ncal(A + \phi(a,b)A,\delta) \geq \delta^{-\eps_1 + O(\eps)} \Ncal(A,\delta).\]
It remains to check the assumptions in Theorem~\ref{thm:sumprod_he}.
Items \ref{it:spX1}--\ref{it:spPhi1} are immediate.
To check the remaining assumptions, write, for $b \in E$,
\[B_1(b) = \{a \in E \mid (a,b) \in B\}.\]
By assumption~\ref{it:spB} of the proposition we are trying to prove, we can pick $b_0 \in E$ such that 
\[\mu(B_1(b_0)) \geq \delta^\eps.\]
From the inequalities
\[ \norm{a - a'} \ll \norm{a} \norm{a'} \norm{a'^{-1} - a^{-1}}\]
and 
\[\norm{a'^{-1} - a^{-1}} = \norm{\bigl(\phi(a,b_0) - \phi(a',b_0)\bigr)(a_\star^{-1}b_0^{-1}b_\star)} \leq \norm{\phi(a,b_0) - \phi(a',b_0)} \norm{a_\star^{-1}b_0^{-1}b_\star},\]
we see that the map $a \mapsto \phi(a, b_0)$ is $\delta^{-O(\eps)}$-bi-Lipschitz on $B_1(b_0)$.
Thus item~\ref{it:spPhi2} follows from assumption~\ref{it:spmu3} of Proposition~\ref{pr:sumprod}.

Finally, assume for contradiction that item~\ref{it:spPhi3} fails with $\delta^{C\eps}$ in the place of $\delta^\eps$. Namely, there is a linear subspace $W_0 \subset E$ of intermediate dimension $0 < k <d$ such that $\forall (a,b) \in B$, $d(W_0, \phi(a,b)W_0) \leq \delta^{C\eps}$, where $d$ denotes the distance on the the Grassmannian $\Grass(k,E)$ of $k$-dimensional subspaces in $E$ defined by
\[d(W,W') = \min_{w \in B_W(0,1)} d(w,W') = \min_{w' \in B_{W'}(0,1)} d(w',W).\]
In particular, for $a, a' \in B_1(b_0)$, we have
\[
d(W_0, \phi(a,b_0) W_0) \leq \delta^{C\eps} \text{ and } d(W_0, \phi(a',b_0)W_0) \leq \delta^{C\eps}.
\]
Multiplying the second inequality on the left by $a^{-1}a'$, we obtain
\[d(a^{-1}a'W_0, \phi(a,b_0) W_0) \leq \delta^{(C-O(1))\eps}.\]
By the triangle inequality,
\[d(W_0, a^{-1} a' W_0) \leq \delta^{(C-O(1))\eps},\]
which means 
\begin{equation}
\label{eq:dgwW0}
\forall g \in a^{-1}B_1(b_0),\, \forall w \in W_0,\; d(gw,W_0) \leq \norm{gw}d(gW_0,W_0) \leq \delta^{(C-O(1))\eps}\norm{w}.
\end{equation}
Observe that the assumption~\ref{it:spmu3} of Proposition~\ref{pr:sumprod} implies that the subset $B_1(b_0)$ is $\delta^{O(\eps)}$-away from linear subspaces, 
that is, for any proper linear subspace $V \subset E$, there is $a \in B_1(b_0)$ such that $d(a,V) \geq \delta^{O(\eps)}$.
Hence so is the subset $a^{-1}B_1(b_0)$. 
By \cite[Lemma 8]{He2016}, every $x \in B(0,1)$ can be expressed as a linear combination of $d$ elements from $a^{-1}B_1(b_0)$ with coefficients bounded by $\delta^{-O(\eps)}$.
Thus \eqref{eq:dgwW0} implies that
\[\forall x \in B_E(0,1),\, \forall w \in B_{W_0}(0,1),\; d(xw,W_0) \leq \delta^{(C-O(1))\eps}.\]
We can apply the same argument to right multiplication and obtain similarly,
\[\forall x \in B_E(0,1),\, \forall w \in B_{W_0}(0,1),\; d(wx,W_0) \leq \delta^{(C-O(1))\eps}.\]
Consider the map $f \colon \Grass(k,E) \to \R$ defined by
\[f(W) = \iint_{B_E(0,1) \times B_{W}(0,1)} \big(d(xw,W)  + d(wx,W)\big) \dd x \dd w.\]
On the one hand, from the above, $f(W_0) \leq \delta^{(C-O(1))\eps}$. On the other hand, $f$ is continuous and defined on a compact set. It never vanishes for the reason that a zero of $f$ must be a two-sided ideal of $E$ contradicting the simplicity of $E$. Hence $f$ has a positive minimum on $\Grass(k,E)$. We obtain a contradiction if $C$ is chosen large enough, proving our claim regarding item~\ref{it:spPhi3}.
\end{proof}

\subsection{\texorpdfstring{$L^2$}{L2}-flattening.}
The aim of this subsection is to prove a sum-product $L^2$-flattening lemma for simple algebras.

We shall consider both additive and multiplicative convolutions between measures or functions on $E$.
To avoid confusion, we shall use the usual symbol $\ff$ to denote multiplicative convolution and the symbol $\pp$ to denote additive convolution.
In the same fashion, for finite Borel measures $\mu$ and $\nu$ on E, we define $\mu \mm \nu$ to be the push forward measure of $\mu \otimes \nu$ by the map $(x,y) \mapsto x-y$.

For a Borel set $A \subset E$, denote by $\abs{A}$ the Lebesgue measure of $A$. 
For $\delta > 0$, define $P_\delta = \abs{B(0,\delta)}^{-1}\indic_{B(0,\delta)}$. 
For absolutely continuous measures such as $\mu \pp P_\delta$, by abuse of notation, we write $\mu \pp P_\delta$ to denote both the measure and the density function.
In particular, $\norm{\mu \pp P_\delta}_2$ denotes the $L^2$-norm of the density function with respect to the Lebesgue measure.
For $x\in E$, we write $\Dirac_x$ to denote the Dirac measure at the point $x$.

\begin{proposition}[\texorpdfstring{$L^2$}{L2}-flattening]
\label{pr:apla}
Let $E$ be a normed finite-dimensional simple algebra over $\R$ of dimension $d \geq 2$. Given $\kappa > 0$, there exists $\eps = \eps(E,\kappa) > 0$ such that the following holds for $\delta > 0$ sufficiently small. 
Let $\mu$ be a Borel probability measure on $E$. Assume that
\begin{enumerate}
\item $\mu$ is supported on $G_{\!E}(\delta^{-\eps})$;
\item $\delta^{-\kappa} \leq \norm{\mu \pp P_\delta}_2^2 \leq \delta^{-d + \kappa}$;
%\item $\forall\rho\geq\delta$, $\mu(N_\rho) \leq \delta^{-\eps}\rho^{\kappa}$;
\item \label{it:flatnc2} for every proper linear subspace $W<E$, $\forall \rho \geq \delta$, $\mu(W^{(\rho)}) \leq \delta^{-\eps}\rho^\kappa$.
\end{enumerate}
Then,
\[ \norm{(\mu \ff \mu \mm \mu \ff \mu) \pp P_\delta}_2 \leq \delta^{\eps}\norm{\mu \pp P_\delta}_2.\]
\end{proposition}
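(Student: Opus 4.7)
I would argue by contradiction along the lines of the Bourgain--Gamburd $L^2$-flattening paradigm, with the discretized sum-product theorem in simple algebras of \cite{He2016} playing the role that Bourgain's sum-product theorem on $\R$ plays in the classical setting. Suppose then that $\norm{(\mu \ff \mu \mm \mu \ff \mu) \pp P_\delta}_2 > \delta^\eps \norm{\mu \pp P_\delta}_2$. A dyadic pigeonhole on the level sets of the density $\mu \pp P_\delta$, absorbing logarithmic factors into $\delta^{O(\eps)}$, allows us to replace $\mu$ by a measure essentially uniform on a $\delta$-separated set $A \subset G_{\!E}(\delta^{-O(\eps)})$ whose $\delta$-covering number $N$ is pinned down by assumption~(2) to the range $[\delta^{-\kappa + O(\eps)}, \delta^{\kappa - d + O(\eps)}]$. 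The non-concentration hypothesis~(3) transfers to the counting measure on $A$ with a mild loss in parameters, and the failure of flattening survives the reduction.

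The Fourier identity $\widehat{\mu \ff \mu \mm \mu \ff \mu} = \abs{\widehat{\mu \ff \mu}}^2$ and Parseval translate this failure into a lower bound on the additive $4$-energy of the multiplicative convolution $\mu \ff \mu$ at scale $\delta$, i.e.\ into a combinatorial lower bound for the number of $\delta$-approximate solutions to $a_1 b_1 + a_2 b_2 = a_3 b_3 + a_4 b_4$ with $a_i, b_i \in A$. A noncommutative Balog--Szemer\'edi--Gowers argument, which goes through thanks to the fact that left and right multiplication by elements of $G_{\!E}(\delta^{-O(\eps)})$ are $\delta^{-O(\eps)}$-bi-Lipschitz on $E$, then extracts a subset $A' \subset A$ of size at least $\delta^{O(\eps)} N$ with small multiplicative doubling at scale $\delta$. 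Non-concentration persists on $A'$ up to a further mild loss in parameters.

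The contradiction now comes from the sum-product theorem of \cite{He2016}: for some $\eps_0 = \eps_0(E,\kappa) > 0$, a $\delta$-discretized subset of $G_{\!E}(\delta^{-O(\eps)})$ satisfying~(3) cannot have both multiplicative and additive doublings at scale $\delta$ bounded by $\delta^{-\eps_0}$. The multiplicative side is controlled by construction, so what remains is to bound the additive doubling of $A'$; for that I would combine the upper bound $\norm{\mu \pp P_\delta}_2^2 \leq \delta^{-d+\kappa}$ from~(2), which prevents $\mu \pp P_\delta$ from being too concentrated, with a Pl\"unnecke--Ruzsa inequality in $(E,+)$ to deduce that the $\delta$-covering number of $A' + A'$ is at most $\delta^{-O(\eps)} \abs{A'}$. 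Taking $\eps$ small enough compared to $\eps_0$ yields the contradiction.

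The most delicate step, in my view, is precisely this control of the additive doubling of $A'$: the initial energy inequality is of mixed additive-multiplicative type, and the BSG extraction on the multiplicative side naturally discards additive information. Retaining enough additive structure to feed into the sum-product dichotomy --- either by arguing asymmetrically (applying BSG on one side while using a structural property on the other), or by working with a richer mixed combinatorial invariant --- is the technical crux of the argument.
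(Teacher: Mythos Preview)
Your outline follows the right paradigm, but the BSG step is misapplied, and the gap you flag in your last paragraph is real and not closed by your suggestion.

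From the counter-assumption you get large additive energy of $\mu \ff \mu$ at scale $\delta$. Additive BSG applied to this produces a large subset of the \emph{product set} $A\cdot A$ with small \emph{additive} doubling, not a subset $A' \subset A$ with small \emph{multiplicative} doubling. (One can alternatively deduce $\norm{\mu\ff\mu \pp P_\delta}_2 \gtrsim \norm{\mu\pp P_\delta}_2$ and run a multiplicative BSG to get $A'\subset A$ with $A'\cdot A'$ small---but then the additive side is lost.) Either way, your proposed recovery of additive doubling from the bound $\norm{\mu\pp P_\delta}_2^2 \leq \delta^{-d+\kappa}$ fails: that bound only says the set is not too small (it forces $N \geq \delta^{-\kappa + O(\eps)}$); it gives no control whatsoever on $\Ncal(A'+A',\delta)$, and Pl\"unnecke--Ruzsa has nothing to amplify without an initial sumset bound.

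The paper does not try to manufacture a single set with both doublings small. After the dyadic pigeonhole it obtains, for a $(\mu\otimes\mu)$-positive set of pairs $(a,b)$, near-maximal additive energy between $aA_i$ and $A_jb$. Additive BSG is applied pairwise, and then a Cauchy--Schwarz pivoting argument (Bourgain's ``additive-multiplicative BSG'') produces a single set $A_\star \subset A_i$ and a positive-measure family $B_1$ of pairs $(a,b)$ with
\[
\Ncal(a_\star^{-1}aA_\star - A_\star b_\star^{-1}b,\delta) \lesssim \Ncal(A_\star,\delta).
\]
The sum-product input is correspondingly reformulated (Proposition~\ref{pr:sumprod}, derived from the irreducible-action sum-product theorem of \cite{He2016}): for a set $A_\star$ with the size and non-concentration inherited from (ii)--(iii), and any positive-measure family of well-invertible multiplier pairs, \emph{some} pair must expand $a_\star^{-1}aA_\star + A_\star b_\star^{-1}b$. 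The multipliers carry the multiplicative information; $A_\star$ only needs the additive side. Your instinct that an asymmetric argument is required is correct---this is its shape.
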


\begin{remark}
If $E = \R$ the same holds if condition~\ref{it:flatnc2} is replaced by 
\begin{equation}
\label{eq:flatnc1} 
\forall \rho \geq \delta,\, \forall x \in E,\; \mu(B(x,\rho)) \leq \delta^{-\eps}\rho^\kappa.
\end{equation}
Note also that when $\dim(E) \geq 2$, property \eqref{eq:flatnc1} is implied by condition~\ref{it:flatnc2}.
\end{remark}

\begin{remark}
\label{rk:mu1half}
We shall apply this proposition to measures that are not probability measures. It is clear that by making $\eps$ slightly smaller, the same statement holds for measures $\mu$ with total mass $\mu(E) \in {[\delta^{\eps}, \delta^{-\eps}]}$.
\end{remark}

For the proof of the proposition, we need basic tools from additive combinatorics, in the context of discretized sets.
We briefly recall the definitions and results we shall use, and refer the reader to Tao \cite[Section 6]{Tao2008} for more detail on the subject.

Let $\delta > 0$ be a small positive real number.
We will refer to it as the scale.
By a $\delta$-discretized set we mean a union of balls of radius $\delta$.
Note that if $A$ is a $\delta$-discretized set then $\Ncal(A,\delta) \asymp_E \delta^{-d} \abs{A}$. 
The following discretized version of Ruzsa's triangle inequality can be deduced easily from \cite[Lemma 2.6]{TaoVu}.

\begin{lemma}[Ruzsa's triangle inequality]
Let $A$, $B$, $C$ be bounded subsets of $E$.
\[\Ncal(B,\delta) \Ncal(A - C,\delta) \ll_E  \Ncal(A - B,\delta) \Ncal(B - C, \delta) .\]
\end{lemma}

\begin{definition}[Additive energy]
The \emph{additive energy} between two subsets $A$, $B \subset E$ at scale $\delta$ is
\[
\Ecal_\delta(A,B) = \Ncal\bigl(\{\, (a,a',b,b') \in A \times A \times B \times B \mid \norm{a + b - a'-b'} \leq \delta \,\} ,\delta\bigr).
\]
If $A$ and $B$ are $\delta$-discretized sets, then % it is not difficult to see (cf. \cite[Page 579]{Tao2008}) that
\[
 \Ecal_\delta(A,B) \asymp_E \delta^{-3d} \norm{\indic_A \pp \indic_B}_2^2.
\]
\end{definition}

The additive energy $\Ecal_\delta(A,B)$ quantifies the amount of additive relations between the sets $A$ and $B$.
One very important instance of this is the following theorem, taken from \cite[Theorem~6.10]{Tao2008}, which is a non-commutative version of the Balog-Szemerédi-Gowers lemma, 

\begin{theorem}[Balog-Szemerédi-Gowers lemma]
Let $K\geq 1$ be a parameter. 
Let $A,B$ be two subsets of $E$. If 
\[\Ecal_\delta(A,B) \geq \frac{1}{K}\Ncal(A,\delta)^{3/2} \Ncal(B,\delta)^{3/2}\]
then there are subsets $A'\subset A$ and $B' \subset B$ satisfying $\Ncal(A',\delta) \geq K^{-O(1)} \Ncal(A,\delta)$ and $\Ncal(B',\delta) \geq K^{-O(1)} \Ncal(B,\delta)$
and 
\[\Ncal(A' + B',\delta) \ll_E K^{O(1)} \Ncal(A',\delta) \Ncal(B',\delta).\]
\end{theorem}

\smallskip

\begin{proof}[Proof of Proposition~\ref{pr:apla}]
In this proof, the implied constants in the Landau or Vinogradov notation depend on the algebra structure of $E$ as well as the choice of norm on it. We use the following rough comparison notation : for positive quantities $f$ and $g$, we write $f \lesssim g$ if $f \leq \delta^{-O(\eps)} g$ and $f \sim g$ for $f \lesssim g$ and $g \lesssim f$.
For instance, if $a \in G_{\!E}(\delta^{-\eps})$ then $\abs{\det_E(a)} \sim 1$.

To simplify notation, we shall also use the shorthand $\mu_\delta = \mu \pp P_\delta$.
Now assume for a contradiction that the conclusion of the proposition does not hold, namely
\begin{equation}
\label{eq:counterFlat0}
\norm{(\mu \ff \mu \mm \mu \ff \mu) \pp P_\delta}_2 \geq \delta^{\eps}\norm{\mu_\delta}_2.
\end{equation}

\noindent\underline{Step 0:} Compare the $L^2$-norms of $(\mu \ff \mu \mm \mu \ff \mu) \pp P_\delta$ and $\mu \ff \mu_\delta \mm \mu_\delta \ff \mu$.\\
For $x \in E$, write
\begin{align*}
&(\mu \ff \mu \mm \mu \ff \mu \pp P_\delta)(x)\\
=& \abs{B(0,\delta)}^{-1} \mu^{\otimes 4}\{(a,b,c,d) \mid ab - cd \in B(x, \delta) \}\\
\leq& \abs{B(0,\delta)}^{-1} (\mu^{\otimes 4} \otimes P_\delta^{\otimes 2})\{(a,b,c,d,y,z) \mid a(b+y) - (c+z)d \in B(x, \delta^{1-2\eps})\}\\
\lesssim& \abs{B(0,\delta^{1-2\eps})}^{-1} (\mu^{\otimes 4} \otimes P_\delta^{\otimes 2}) \{(a,b,c,d,y,z) \mid a(b+y) - (c+z)d \in B(x, \delta^{1-2\eps})\}\\
=& (\mu \ff \mu_\delta \mm \mu_\delta \ff \mu \pp P_{\delta^{1-2\eps}})(x).
\end{align*}
Above at the sign $\leq$, we used the assumption that $\Supp(\mu) \subset B(0,\delta^{-\eps})$.
Therefore, by Young's inequality,
\[
\norm{\mu \ff \mu \mm \mu \ff \mu \pp P_\delta}_2 \lesssim \norm{\mu \ff \mu_\delta \mm \mu_\delta \ff \mu \pp P_{\delta^{1-2\eps}}}_2 \leq \norm{\mu \ff \mu_\delta \mm \mu_\delta \ff \mu}_2.
\]
To conclude step 0, we deduce from the above and \eqref{eq:counterFlat0} that
\begin{equation}
\label{eq:counterFlat}
\norm{\mu \ff \mu_\delta \mm \mu_\delta \ff \mu}_2 \gtrsim \norm{\mu_\delta}_2.
\end{equation}

\smallskip

\noindent\underline{Step 1:} Discretize the measure $\mu$ using dyadic level sets.\\
%By \cite[Lemma A.4]{BISG} (or \cite[Lemma 4.4]{LindenstraussSaxce}),
It is easy to check that there exist $\delta$-discretized sets $A_i \subset B(0,\delta^{-\eps})$, $i \geq 0$ such that $A_i$ is empty for $i \gg \log\frac{1}{\delta}$, and
\begin{equation}
\label{eq:mullAi}
\mu_\delta
\ll \sum_{i \geq 0} 2^i \indic_{A_i}
\ll \mu_{3\delta} + 1.
\end{equation}

\smallskip

\noindent\underline{Step 2:} Pick a popular level in order to transform \eqref{eq:counterFlat} into a lower bound on the additive energy between two $\delta$-discretized sets.\\
We have
\[\mu \ff \mu_\delta \mm \mu_\delta \ff \mu = \iint_{E \times E} (\Dirac_{\!a} \ff \mu_\delta) \mm (\mu_\delta \ff \Dirac_b) \dd \mu(a) \dd \mu(b).\]
From the left inequality in \eqref{eq:mullAi},
\[\mu \ff \mu_\delta \mm \mu_\delta \ff \mu \ll \sum_{i,j \geq 0} 2^{i + j} \iint (\Dirac_{\!a} \ff \indic_{A_i}) \mm (\indic_{A_j} \ff \Dirac_b) \dd \mu(a) \dd \mu(b).\]
Observe that $\Dirac_{\!a} \ff \indic_{A_i} = \abs{\det_E(a)}^{-1}\indic_{aA_i}$ and $\indic_{A_j} \ff \Dirac_b = \abs{\det_E(b)}^{-1}\indic_{A_jb}$. Hence
\[\mu \ff \mu_\delta \mm \mu_\delta \ff \mu \ll \sum_{i,j \geq 0} 2^{i + j} \iint  \frac{\indic_{aA_i} \mm \indic_{A_jb}}{\abs{\det_E(a) \det_E(b)}} \dd \mu(a) \dd \mu(b).\]
By \eqref{eq:counterFlat}, the triangle inequality and the assumption that $\mu$ is supported on $G_{\!E}(\delta^{-\eps})$,
\[\sum_{i,j \geq 0} 2^{i + j} \iint  \norm{\indic_{aA_i} \mm \indic_{A_jb}}_2 \dd \mu(a) \dd \mu(b) \gtrsim \norm{\mu_\delta}_2,\]
There are at most $O\bigl( (\log \frac{1}{\delta})^2 \bigr) \lesssim 1$ terms in this sum.
Hence by the pigeonhole principle, there exist $i \geq 0$ and $j \geq 0$ such that
\begin{equation}
\label{eq:defiandj}
2^{i + j} \iint \norm{\indic_{aA_i} \mm \indic_{A_jb}}_2 \dd \mu(a) \dd \mu(b) \gtrsim \norm{\mu_\delta}_2.
\end{equation}

From now on we fix such $i$ and $j$.
By the right-hand inequality in \eqref{eq:mullAi}, we find % and \cite[Lemma A.5]{BISG}
\[2^i \abs{A_i}^{1/2} = \norm{2^i \indic_{A_i}}_2 \ll \norm{\mu_{3\delta}}_2 +1
 \ll \norm{\mu_\delta}_2,\]
so that for all $a,b \in G_{\!E}(\delta^{-O(\eps)})$,
\begin{equation}
\label{eq:aAi2leqmu}
%2^i \abs{\det\nolimits_E(a)}^{1/2} \abs{A_i}^{1/2} = 
\norm{2^i \indic_{aA_i}}_2 \lesssim \norm{\mu_\delta}_2 \text{ and } \norm{2^j \indic_{A_j b}}_1 \lesssim 1.
\end{equation}
Hence by Young's inequality,
\[\forall a,b \in G_{\!E}(\delta^{-O(\eps)}),\; 2^{i+j}\norm{\indic_{aA_i} \mm \indic_{A_jb}}_2  \lesssim \norm{\mu_\delta}_2.\]
This combined with \eqref{eq:defiandj} implies
%(use an inverse Chebyshev’s inequality)
that the set 
\[B_0 = \bigl\{\, (a,b) \in G_{\!E}(\delta^{-O(\eps)}) \times  G_{\!E}(\delta^{-O(\eps)}) \mid  2^{i+j}\norm{\indic_{aA_i} \mm \indic_{A_jb}}_2  \geq \delta^{O(\eps)} \norm{\mu_\delta}_2 \,\bigr\}\]
has measure $\mu \otimes \mu (B_0) \gtrsim 1$. 
For $c = (a,b) \in B_0$, using \eqref{eq:aAi2leqmu}, we find
\[\norm{\indic_{a A_i} \mm \indic_{A_j b}}_2 \gtrsim \abs{a A_i}^{1/2} \abs{A_j b},\]
and switching the role of $a A_i$ and $A_j b$,
\[\norm{\indic_{a A_i} \mm \indic_{A_j b}}_2 \gtrsim \abs{a A_i} \abs{A_j b}^{1/2}.\]
Hence,
\[\norm{\indic_{a A_i} \mm \indic_{A_j b}}^2_2 \gtrsim \abs{a A_i}^{3/2} \abs{A_j b}^{3/2}.\]
Note that $a A_i$ and $A_j b$ are $\delta^{1 - O(\eps)}$-discretized sets. 
Hence the last inequality translates to
\begin{equation}
\label{eq:energyAiAj}
\Ecal_\delta(a A_i, -A_j b) \gtrsim \Ncal(a A_i, \delta)^{3/2} \Ncal(A_j b, \delta)^{3/2}.
\end{equation}

\smallskip

\noindent\underline{Step 3:} Apply an argument of Bourgain~\cite[Proof of Theorem C]{Bourgain2009} sometimes known as the additive-multiplicative Balog-Szemerédi-Gowers theorem.\\
For subsets $A, A' \subset E$ we write $A \approx A'$ if 
\[\Ncal(A - A', \delta) \lesssim \Ncal(A,\delta)^{1/2} \Ncal(A',\delta)^{1/2}.\]
Ruzsa's triangle inequality recalled above can be summarized as : the relation $\approx$ is transitive\footnote{Strictly speaking, $\approx$ is not relation, because it involves an implicit constant in the $\lesssim$ notation.}, i.e. $A \approx A'$ and $A' \approx A''$ implies $A' \approx A''$.

By the Balog-Szemerédi-Gowers lemma applied to \eqref{eq:energyAiAj}, for every $c =(a,b) \in B_0$, there exists $A_c \subset A_i$ and $A'_c \subset A_j$ such that $\Ncal(A_c,\delta) \gtrsim \Ncal(A_i,\delta)$ and $\Ncal(A'_c,\delta) \gtrsim \Ncal(A_j,\delta)$ and
\begin{equation}
\label{eq:aAcApcb}
a A_c \approx A'_c b.
\end{equation}
%Note that this implies in particular that $\Ncal(A_i,\delta) \sim \Ncal(A_j,\delta)$.

By taking $\delta$-neighborhoods if necessary, we may assume that $A_c$ and $A'_c$ are $\delta$-discretized sets. Write $X = A_i \times A_j \subset \R^{2d}$ and $X_c = A_c \times A'_c \subset X$. From the Cauchy-Schwarz inequality applied to the function $x \mapsto \int_{B_0} \indic_{X_c}(x) \dd \mu^{\otimes 2}(c)$, we infer that
\[\iint_{B_0 \times B_0} \abs{X_c \cap X_d} \dd \mu^{\otimes 2}(c) \dd \mu^{\otimes 2}(d) \gtrsim \abs{X}.\]
By the pigeonhole principle, there exists $c_\star \in B_0$  and $B_1 \subset B_0$ such that
\[\mu^{\otimes 2}(B_1) \gtrsim  \mu^{\otimes 2}(B_0) \gtrsim 1\]
and for all $c \in B_1$, $\abs{X_{c_\star} \cap X_c} \gtrsim \abs{X}$. 
Abbreviate $A_{c_\star}$ as $A_\star$ and $A'_{c_\star}$ as $A'_\star$. 
We then have, for every $c \in B_1$, 
\begin{equation}
\label{eq:AstarCap}
\Ncal(A_\star \cap A_c,\delta) \gtrsim \Ncal(A_i,\delta)
\text{ and }
\Ncal(A'_\star \cap A'_c,\delta) \gtrsim \Ncal(A_j,\delta).
\end{equation}
For $c = (a,b) \in B_1$, by the Ruzsa calculus and \eqref{eq:aAcApcb},  
\[a A_c \approx A'_c b \approx a A_c.\]
Since $a \in G_{\!E}(\delta^{-\eps})$, this implies
\[A_c \approx A_c.\]
Using \eqref{eq:AstarCap} and the definition of the symbol $\approx$, we get $A_\star \cap A_c \approx A_c$ and for the same reason $A_\star \cap A_c \approx A_\star$.
Hence
\[aA_\star \approx a(A_\star \cap A_c) \approx a A_c \approx A'_c b \approx (A'_\star \cap A'_c) b \approx A'_\star b.\]
On the other hand, writing $c_\star = (a_\star, b_\star)$, we have $a_\star A_\star \approx A'_\star b_\star$. Hence 
$a_\star A_\star b_\star^{-1}b \approx A'_\star b$ and then $a_\star A_\star b_\star^{-1}b \approx a A_\star$ and finally
\begin{equation}
\label{eq:countersumprod}
\Ncal(a_\star^{-1}a A_\star - A_\star b_\star^{-1} b ,\delta) \lesssim \Ncal(A_\star,\delta), \quad \forall (a,b) \in B_1 \cup \{(a_\star,b_\star)\}.
\end{equation}

\smallskip

\noindent\underline{Step 4:} Apply the sum-product estimate Proposition~\ref{pr:sumprod}.\\ % to get a contradiction.\\
We claim that the assumptions of Proposition~\ref{pr:sumprod} are satisfied by the set $A_\star$, the set $B_1$ and the measure $\mu$ for the parameters $\kappa/2$ in the place of $\kappa$ and $O(\eps)$ in the place of $\eps$. 
Indeed, using Young's inequality and remembering \eqref{eq:aAi2leqmu}, we obtain
\[
\norm{\mu_\delta}_2
\lesssim 2^{i+j} \norm{\indic_{a_\star A_i} \mm \indic_{A_j b_\star}}
\leq 2^i\abs{a_\star A_i}^{1/2} 2^j\abs{A_j b_\star}
\lesssim \norm{\mu_\delta}_2.
\]
Hence $2^i \abs{A_i}^{1/2} \sim \norm{\mu_\delta}_2$ and $2^j \abs{A_j} \sim 1$.
Inversing the roles of $A_i$ and $A_j$, we get also $2^i \abs{A_i} \sim 1$.
Thus, 
\begin{equation}
\label{eq:2iAimu2}
\abs{A_i} \sim \norm{\mu_\delta}_2^{-2}\text{ and }2^i \sim \norm{\mu_\delta}_2^2.
\end{equation}
Hence
\[\abs{A_\star} \lesssim \abs{A_i} \lesssim \norm{\mu_\delta}_2^{-2} \leq \delta^{\kappa},\]
which implies, as $A_\star$ is $\delta$-discretized,
\[\Ncal(A_\star,\delta) \lesssim \delta^{-d + \kappa}.\]

Moreover, let $\rho \geq \delta$ and $x \in E$.
Since $\mu_{3\delta} = \mu \pp P_{3\delta}$, inequality \eqref{eq:flatnc1} implies
\[\mu_{3\delta}\bigl(B(x,\rho)\bigr) \lesssim \rho^\kappa.\]
By \eqref{eq:2iAimu2} and \eqref{eq:mullAi},
\[\frac{\abs{A_i \cap B(x,\rho)}}{\abs{A_i}} \lesssim 2^i \abs{A_i \cap B(x,\rho)} \ll \mu_{3\delta}\bigl(B(x,\rho)\bigr).\]
But $A_\star \subset A_i$ and $\abs{A_\star} \gtrsim \abs{A_i}$, hence
\[\frac{\abs{A_\star \cap B(x,\rho)}}{\abs{A_\star}} \lesssim \frac{\abs{A_i \cap B(x,\rho)}}{\abs{A_i}} \lesssim \rho^{\kappa}.\]
It follows that for all $\rho \geq \delta$,
\[\Ncal(A_\star,\rho) \gtrsim \rho^{-\kappa}.\]
The verification of the other assumptions in Proposition~\ref{pr:sumprod} are straightforward, so we can apply Proposition~\ref{pr:sumprod}, which leads to a contradiction to \eqref{eq:countersumprod} when $\eps > 0$ is chosen small enough.
\end{proof}

\subsection{Fourier decay for multiplicative convolutions}
The goal here is to prove Theorem~\ref{thm:fourier} using iteratively the $L^2$-flattening lemma proved above.

\smallskip

%On a finite-dimensional semisimple algebra $E$, there is a map $\tr_E \colon E \to \R$ for which the bilinear form define by $(x,y) \mapsto \tr_E(xy)$ is symmetric and non-degenerate.
Let $E$ be any finite-dimensional real algebra.
The Fourier transform of a finite Borel measure $\mu$ on $E$ is the function on  the dual $E^*$ given by
\[
\forall \xi \in E^*,\; \hat\mu(\xi) = \int_E e(\xi x)\dd \mu (x).
\] 
where $e(t) = e^{2\pi i t}$ for $t \in \R$, and we simply write $E^*\times E\to \R;\ (\xi,x)\mapsto \xi x$ for the duality pairing.
The product on $E$ yields a natural right action of $E$ on $E^*$ given by
\[ \forall \xi\in E^*,\ x\in E,\ y\in E,\quad (\xi y)(x) = \xi(yx),\]
and for finite Borel measures $\mu$ and $\nu$ on $E$, the Fourier transform of their multiplicative convolution is given by
\begin{equation}
\label{eq:hatmuffnu}
\widehat{\mu \ff \nu} (\xi) = \int \hat\nu(\xi y) \dd\mu(y).
\end{equation}
The idea of the proof of Theorem~\ref{thm:fourier} is to iterate Proposition~\ref{pr:apla} to get a measure with small $L^2$-norm, and then to get the desired Fourier decay by convolving one more time.
Two technical issues arise. First, after each iteration, the measure we obtain does not necessarily satisfy the non-concentration property required by Proposition~\ref{pr:apla}. To settle this, at each step, we truncate the measure to restrict the support on well-invertible elements.
Second, the measure we obtain in the end of the iteration is not an additive convolution of a multiplicative convolution of $\mu$ but some measure obtained from $\mu$ through successive multiplicative and additive convolutions. To conclude we need to clarify relation between the Fourier transforms of these measures. This is settled in Lemma~\ref{lm:ordre}.

\begin{lemma}
\label{lm:iterNC}
Let $E$ be a finite-dimensional normed algebra over $\R$ % Given $\kappa > 0$, there exists $s = s(E,\kappa) \in \N$ and $\eps = \eps(E,\kappa) > 0$ such that for any parameter $\tau \in {(0, \eps \kappa)}$ the following holds for any scale $\delta > 0$ sufficiently small.
and $\mu$ a Borel probability measure on $E$ such that for some $\tau,\eps>0$,
\begin{enumerate}
\item $\mu \bigl(E \setminus B(0,\delta^{-\eps})\bigr) \leq \delta^{\tau}$;
\item for every $x \in E$, $\mu(x + S_E(\delta^{\eps})) \leq \delta^{\tau}$;
\item for every $\rho \geq \delta$ and every proper affine subspace $W \subset E$, $\mu(W^{(\rho)}) \leq \delta^{-\eps} \rho^\kappa$.
\end{enumerate}

Set $\mu_1 = \mu_{\mid B(0,\delta^{-\eps})}$ and define recursively for integer $k \geq 1$,
\[
\eta_k = \mu_{k\mid E \setminus S_E(\delta^{2^k\eps})}
\]
and
\[
\mu_{k+1} = \eta_{k} \ff \eta_{k} \mm \eta_{k} \ff \eta_{k}.
\] 
Then we have for $k \geq 1$,
\begin{equation}
\label{eq:iterNC0}
\mu_k(E) \geq 1 - O_k(\delta^\tau) 
\end{equation}
\begin{equation}
\label{eq:iterNC1}
\Supp(\mu_k) \subset B(0, \delta^{-O_k(\eps)})
\end{equation}
\begin{equation}
\label{eq:iterNC2}
\forall x \in E,\; \mu_k(x + S_E(\delta^{2^k\eps})) \leq \delta^\tau
\end{equation}
\begin{equation}
\label{eq:iterNC3}
\forall \rho \geq \delta,\, \forall W \subset E \text{ proper affine subspace},\; \mu_k(W^{(\rho)}) \leq \delta^{-O_k(\eps)} \rho^\kappa.
\end{equation}
As a consequence, the same holds for $\eta_k$ in the place of $\mu_k$.
\end{lemma}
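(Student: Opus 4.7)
The plan is a straightforward induction on $k$, carrying all four bounds \eqref{eq:iterNC0}--\eqref{eq:iterNC3} simultaneously. For the base case $k=1$, everything follows directly from the hypotheses on $\mu$: the mass bound and support statement come from hypothesis (i) and the definition $\mu_1 = \mu_{\mid B_E(0,\delta^{-\eps})}$; the singular-set estimate uses the inclusion $S_E(\delta^{2\eps}) \subset S_E(\delta^\eps)$ together with hypothesis (ii); and the affine-subspace bound follows from $\mu_1 \leq \mu$. At each stage the passage from $\mu_k$ to $\eta_k$ is essentially free: the mass lost is at most $\delta^\tau$ by \eqref{eq:iterNC2} applied with $x = 0$, while the remaining three upper bounds are inherited because $\eta_k \leq \mu_k$.

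The inductive step writes $\mu_{k+1}$ as the push-forward of $\eta_k^{\otimes 4}$ under $(a,b,c,d) \mapsto ab - cd$. Total mass and support are immediate from the induction. The heart of the matter is to propagate the two non-concentration estimates, and the key algebraic input is that every $b \in \Supp(\eta_k)$ is invertible with $|\det_E(b)| > \delta^{2^k\eps}$. Consequently, for any $y \in S_E(\delta^{2^{k+1}\eps})$ one has $|\det_E(yb^{-1})| = |\det_E(y)|/|\det_E(b)| \leq \delta^{2^k\eps}$, so $S_E(\delta^{2^{k+1}\eps}) b^{-1} \subset S_E(\delta^{2^k\eps})$. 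Freezing $(b,c,d)$ and integrating over $a$ first, the condition $ab - cd \in x + S_E(\delta^{2^{k+1}\eps})$ rewrites as $a \in (x+cd)b^{-1} + S_E(\delta^{2^{k+1}\eps})b^{-1} \subset (x+cd)b^{-1} + S_E(\delta^{2^k\eps})$, which by the inductive bound \eqref{eq:iterNC2} contributes at most $\delta^\tau$. Fubini then gives \eqref{eq:iterNC2} for $\mu_{k+1}$.

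The affine-subspace bound is analogous: for fixed $(b,c,d)$ with $b \in \Supp(\eta_k)$, the set $\{a : ab - cd \in W^{(\rho)}\}$ lies in the $(\rho \|b^{-1}\|)$-neighborhood of the proper affine subspace $W' = (W+cd)b^{-1}$. The adjugate formula in the simple algebra $E$ gives $\|b^{-1}\| \lesssim \|b\|^{d-1}/|\det_E(b)| \leq \delta^{-O_k(\eps)}$, so applying \eqref{eq:iterNC3} at scale $\rho' = \rho \delta^{-O_k(\eps)} \geq \delta$ yields a bound of the form $\delta^{-O_{k+1}(\eps)} \rho^\kappa$ after integrating out $(b,c,d)$. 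The main bookkeeping subtlety lies in the fact that the doubling of the exponent $\delta^{2^k\eps} \to \delta^{2^{k+1}\eps}$ is dictated exactly by the change-of-variables $y \mapsto yb^{-1}$: it is precisely what is needed to land back in a singular set controlled by the previous induction step. This is what the recursive definition of $\eta_k$ is engineered to handle, and once this is granted, everything else is a routine propagation of polynomial losses in $\delta^{\eps}$.
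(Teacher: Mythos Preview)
Your proposal is correct and follows essentially the same approach as the paper's own proof: an induction on $k$, with the key observation that for $b \in \Supp(\eta_k)$ one has $S_E(\delta^{2^{k+1}\eps})\,b^{-1} \subset S_E(\delta^{2^k\eps})$ and $\norm{b^{-1}} \leq \delta^{-O_k(\eps)}$, so that both non-concentration estimates can be pulled back to the inner variable $a$ via right multiplication by $b^{-1}$. The only cosmetic differences are notational (the paper writes the convolution as an integral over $(y,z,w)$ with $w$ distributed according to $\eta_k \ff \eta_k$, whereas you keep all four variables $(a,b,c,d)$).
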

\begin{proof}
The proof goes by induction on $k$.\\
The result is clear for $k=1$, by assumption on $\mu$.
Assume \eqref{eq:iterNC0}--\eqref{eq:iterNC3} true for some $k \geq 1$, so that the same holds for $\eta_k$.
Then  \eqref{eq:iterNC0} and \eqref{eq:iterNC1} for $k+1$ follow immediately.

Let us prove \eqref{eq:iterNC2} for $k+1$. Let $x \in E$. Since $\mu_{k+1} = \eta_{k} \ff \eta_{k} \mm \eta_{k} \ff \eta_{k}$,
\[
\mu_{k+1}\bigl(x + S_E(\delta^{2^{k+1}\eps})\bigr) = \iint \eta_k\{ y \in E \mid  \abs{\det\nolimits_E( y z - w - x)} \leq  \delta^{2^{k+1}\eps} \} \dd\eta_k(z) \dd (\eta_k \ff \eta_k)(w)
\]
Note that for $z \in \Supp(\eta_k)$, by definition, $\abs{\det_E(z)} \geq \delta^{2^k\eps}$. Hence $\abs{\det\nolimits_E( y z - w - x)} \leq  \delta^{2^{k+1}\eps}$ implies $y - (w+x) z^{-1} \in S_E(\delta^{2^k\eps})$. Therefore, by induction hypothesis \eqref{eq:iterNC2}
\[
\mu_{k+1}\bigl(x + S_E(\delta^{2^{k+1}\eps})\bigr) \leq \max_{z \in \Supp(\eta_k),\, w\in E} \eta_k\bigl((w+x) z^{-1} + S_E(\delta^{2^k\eps})\bigr) \leq \delta^\tau.\]

Finally, let us prove \eqref{eq:iterNC3} for $k+1$. Let $\rho \geq \delta$ and let $W$ be a proper affine subspace of $E$. We have as above
\[
\mu_{k+1}\bigl( W^{(\rho)} \bigr) \leq  \max_{z \in \Supp(\eta_k),\, w\in E} \eta_k\bigl((w + W^{(\rho)}) z^{-1}\bigr).
\]
For all $z \in \Supp(\eta_k)$, we have $\abs{\det_E(z)} \geq \delta^{O_k(\eps)}$ and by the induction hypothesis $\norm{z} \leq \delta^{-O_k(\eps)}$. Hence $\norm{z^{-1}} \leq \delta^{-O_k(\eps)}$. Thus 
\[(w + W^{(\rho)}) z^{-1} = wz^{-1} + Wz^{-1} + B(0,\rho) z^{-1} \subset wz^{-1} + Wz^{-1} + B(0,\delta^{-O_k(\eps)}\rho),\]
which is nothing but the $(\delta^{-O_k(\eps)}\rho)$-neighborhood of another proper affine subspace.
Hence by induction hypothesis~\eqref{eq:iterNC3},
\[
\mu_{k+1}\bigl( W^{(\rho)} \bigr) \leq \delta^{-O_k(\eps)}\rho^\kappa.
\]
This finishes the proof of the induction step and that of the lemma.
\end{proof}

%Observe that for $\rho > 0$,
%\[B(0,\rho^{-1}) \setminus S_E(\rho) \subset G_E(\rho^{-O_E(1)}).\]

This is a slightly more general form of \cite[Theorem 7]{Bourgain2010}. The proof is essentially the same.

\begin{lemma}
\label{lm:L2Fourier}
Let $E$ be normed algebra over $\R$ of dimension $d$. The following holds for any parameters $\kappa > 0$ and $\eps > 0$ and any scale $\delta > 0$ small enough. Let $\mu$ and $\nu$ be Borel probability measures on $E$. Assume 
\begin{enumerate}
\item $\Supp(\nu) \subset B(0,\delta^{-\eps})$,
\item $\norm{\mu \pp P_\delta}_2^2 \leq \delta^{-\kappa}$,
\item \label{it:L2Fourier3} for every proper affine subspace $W \subset E$, $\nu(W^{(\delta)}) \leq \delta^{2\kappa}$.
\end{enumerate}
Then for $\xi \in E^*$ with $\delta^{-1+\eps} \leq \norm{\xi} \leq \delta^{-1-\eps}$,
\[
\abs{\widehat{\mu \ff \nu}(\xi)} \leq \delta^{\frac{\kappa}{d + 3} - O(\eps)}
\]
\end{lemma}

\begin{proof}
Changing $\delta$ to $\delta^{1 + O(\eps)}$ only weakens the assumptions by little. Namely, $\eps$ becomes $O(\eps)$ and $\kappa$ becomes $\kappa - O(\eps)$.
Therefore, we may assume without loss of generality that $\norm{\xi} = \delta^{-1 + 2\eps}$.

First, by continuity, there is a constant $c > 0$ depending only on $E$ such that 
\[\forall \zeta \in B_{E^*}(0,c),\quad \abs{\widehat{P_1}(\zeta)} \geq \frac{1}{2}.\]
Hence 
\[\forall \zeta \in B_{E^*}(0,c\delta^{-1}), \quad \abs{\widehat{P_\delta}(\zeta)} = \abs{\widehat{P_1}(\delta \zeta)}\geq \frac{1}{2}.\]
Then, by the Plancherel formula
\begin{equation}
\label{eq:PlancheEmu} 
\int_{B_{E^*}(0,c\delta^{-1})}\abs{\hat{\mu}(\zeta)}^2 \dd \eta \ll \int_{E^*} \abs{\hat{\mu}(\zeta)}^2\abs{\widehat{P_\delta}(\zeta)}^2 \dd \eta = \norm{\mu \pp P_\delta}_2^2 \leq \delta^{-\kappa}.
\end{equation}

For $R \geq 1$ and $t  \in{(0,1)}$, define
\[H_{R,t} = \{\, \zeta \in B_{E^*}(0,R) \mid \abs{\hat{\mu}(\zeta)} \geq t \,\}.\]
There is a constant $C \geq 1$ depending only on $E$ such that $\hat{\mu}$ is $C$-Lipschitz. Hence
\[H_{R,t} + B_{E^*}\bigl(0,\frac{t}{2C}\bigr) \subset H_{R+1,\frac{t}{2}}.\]
Choosing $R = \delta^{-1 + \eps}$ and using \eqref{eq:PlancheEmu} we can bound the Lebesgue measure :
\[\bigl\lvert{H_{R,t} + B_{E^*}\bigl(0,\frac{t}{2C}\bigr)}\bigr\rvert \ll t^{-2} \delta^{-\kappa}.\]
It follows that
\begin{equation}
\label{eq:NcovHRt}
\Ncal(H_{R,t},t) \ll t^{-d-2} \delta^{-\kappa}.
\end{equation}

Now, let $E$ act on $E^*$ by 
\[\forall x,y \in E,\, \zeta \in E^*,\quad (y\cdot \zeta)(x) = \zeta(xy).\]
Then
\[\widehat{\mu \ff \nu}(\xi) = \int_{E} \hat\mu(y \cdot \xi)\dd \nu(y).\]
Note that for any $y \in \Supp(\nu)$, $\norm{y\cdot \xi} \leq \delta^{-\eps} \norm{\xi} = R$.
Thus, we can bound 
\begin{equation}
\label{eq:HxiRt}
\abs{\widehat{\mu \ff \nu}(\xi)} \leq t + \nu (H^\xi_{R,t})
\end{equation}
where
\[H^\xi_{R,t} = \{\, y \in E \mid y\cdot \xi \in H_{t,R}\,\}.\]
Covering $H_{R,t}$ with balls of radius $t$, we get a covering of $H^\xi_{R,t}$ by sets of the form
\[ \{\, y \in E \mid y\cdot \xi \in B_{E^*}(\zeta,t)\,\},\quad \zeta \in E^*.\]
Each of these sets is contained in
\[ \{\, y \in E \mid \abs{ \xi(y) - \zeta(1)} \leq t \,\},\]
which is a $\norm{\xi}^{-1}t$-neighborhood of an affine hyperplane. 
Assumption \ref{it:L2Fourier3} and equation \eqref{eq:NcovHRt} then lead to
\[\nu(H^\xi_{R,t}) \leq \Ncal(H_{R,t},t) t^{2\kappa} \norm{\xi}^{-2\kappa} \leq \delta^{-O(\eps) + \kappa} t^{-d-2}. \]
With the choice $t = \delta^{\frac{\kappa}{d+3}}$, \eqref{eq:HxiRt} proves the lemma.
\end{proof}

For a finite Borel measure $\nu$ on $E$ and an integer $l \geq 1$, we denote by 
\[\nu^{\pp l} = \underbrace{\nu \pp \dotsb \pp \nu}_{\text{$l$ times}}\]
the $l$-fold additive convolution of $\nu$.

\begin{lemma}
\label{lm:ordre}
Let $E$ be a finite-dimensional real algebra.
Let $l \geq 1$ be an integer, $\nu$ a Borel probability measure on $E$, and set 
\[\mu = \nu^{\pp l} \mm \nu^{\pp l}.\] 
%\[\mu = \bigl(\underbrace{\nu \pp \dotsb \pp \nu}_{\text{$l$ times}}\bigr) \mm \bigl(\underbrace{\nu \pp \dotsb \pp \nu}_{\text{$l$ times}}\bigr).\] 
Then for every integer $m \geq 1$, for every $\xi \in E^*$, the Fourier coefficient $\widehat{\mu^{* m}}(\xi)$ is real and 
\begin{equation}
\label{eq:ordre}
\abs{\widehat{\nu^{* m}}(\xi)}^{(2l)^m} \leq \widehat{\mu^{* m}}(\xi).
\end{equation}
The same inequality also holds for a finite Borel measure with total mass $\nu(E) \leq 1$.
\end{lemma}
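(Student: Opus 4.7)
The plan is to interpolate between $\widehat{\nu^{*m}}$ and $\widehat{\mu^{*m}}$ via the intermediate Fourier coefficients
\[
A_k(\xi) := \widehat{\nu^{*(m-k)} \ff \mu^{*k}}(\xi), \qquad 0 \leq k \leq m,
\]
so that $A_0 = \widehat{\nu^{*m}}(\xi)$ and $A_m = \widehat{\mu^{*m}}(\xi)$, and then to establish the recursive estimate $\abs{A_k(\xi)}^{2l} \leq A_{k+1}(\xi)$ for every $0 \leq k \leq m-1$. Chaining, $\abs{A_0}^{(2l)^m} \leq A_1^{(2l)^{m-1}} \leq \dotsb \leq A_m$, yields exactly the claimed inequality.

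The starting identity is $\widehat{\nu^{\pp l}} = \hat\nu^l$, which combined with $\mu = (\nu^{\pp l}) \mm (\nu^{\pp l})$ gives
\[
\hat\mu(\xi) = \abs{\hat\nu(\xi)}^{2l} \geq 0.
\]
This handles the case $m = 1$ and is used crucially below. To verify that $A_k(\xi)$ is real and non-negative for every $k \geq 1$ (so that in particular $\widehat{\mu^{*m}}(\xi) = A_m$ is real, as the lemma asserts), I would think of $A_k(\xi)$ as the expected value of $e(\xi Y_1 \cdots Y_{m-k} X_1 \cdots X_k)$ with independent $Y_i \sim \nu$ and $X_j \sim \mu$, write the first $\mu$-factor $X_1$ as $U - V$ with $U, V \sim \sigma := \nu^{\pp l}$ independent, and integrate $U$ and $V$ out: the resulting conditional expectation is $\abs{\hat\sigma(\zeta)}^2$ for a linear functional $\zeta \in E^*$ depending on the remaining variables.

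For the main step $\abs{A_k(\xi)}^{2l} \leq A_{k+1}(\xi)$, the key is to isolate the $\nu$-factor adjacent to the $\mu$-block, namely the last factor $Y_{m-k}$. Integrating it out first, conditionally on $P := Y_1 \cdots Y_{m-k-1}$ and $Q := X_1 \cdots X_k$, one obtains
\[
A_k(\xi) = \int \hat\nu(\zeta_{P,Q}) \dd\pi(P,Q), \qquad \text{with } \zeta_{P,Q}(y) := \xi(PyQ),
\]
where $\pi = \nu^{*(m-k-1)} \otimes \mu^{*k}$. Jensen's inequality applied to the convex function $t \mapsto t^{2l}$ on the probability space $(E \times E, \pi)$ gives
\[
\abs{A_k(\xi)}^{2l} \leq \int \abs{\hat\nu(\zeta_{P,Q})}^{2l} \dd\pi = \int \hat\mu(\zeta_{P,Q}) \dd\pi,
\]
by the identity $\abs{\hat\nu}^{2l} = \hat\mu$. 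Unfolding $\hat\mu(\zeta_{P,Q}) = \int e(\xi(PwQ)) \dd\mu(w)$, the right-hand side is precisely the Fourier coefficient at $\xi$ of the law of $PWQ$ with $P \sim \nu^{*(m-k-1)}$, $W \sim \mu$, and $Q \sim \mu^{*k}$ independent, which by associativity equals $\widehat{\nu^{*(m-k-1)} \ff \mu^{*(k+1)}}(\xi) = A_{k+1}(\xi)$.

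The point requiring care is the non-commutativity of the product in $E$: the new $\mu$-factor must be inserted strictly between the blocks $\nu^{*(m-k-1)}$ and $\mu^{*k}$, which is why I isolate the rightmost $\nu$-factor rather than an arbitrary one (the boundary cases, where one of the two blocks is empty, go through identically). Finally, the extension to a finite positive measure $\nu$ of total mass $\nu(E) \leq 1$ is immediate: the Jensen-type bound $\abs{\int f \dd\rho}^{2l} \leq \int \abs{f}^{2l} \dd\rho$ remains valid for every positive measure $\rho$ of mass at most $1$, by Hölder's inequality (the correcting factor $\rho(E)^{2l-1}$ being bounded by $1$), so the whole argument carries over without modification.
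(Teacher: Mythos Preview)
Your proof is correct and follows essentially the same approach as the paper. The paper argues by induction on $m$, passing through the single intermediate $\widehat{\nu \ff \mu^{*m}}$ at the inductive step, while you unroll this into an explicit interpolation through all the intermediates $A_k = \widehat{\nu^{*(m-k)} \ff \mu^{*k}}$; in both cases the engine is the identity $\hat\mu = |\hat\nu|^{2l}$ combined with H\"older's inequality to convert one $\nu$-factor adjacent to the $\mu$-block into a $\mu$-factor.
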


\begin{proof}
%Observe that $\mu$ is symmetric with respect to $0$. Hence so is $\mu^{* k}$. Hence $\widehat{\mu^{* k}}(\xi)$ is real and nonnegative. 

We proceed by induction on $m$.
For $m=1$,
\[\hat{\mu}(\xi) = \abs{\hat\nu(\xi)}^{2l}.\]
Assume then that \eqref{eq:ordre} is true for some $m\geq 1$. 
By \eqref{eq:hatmuffnu}, the Hölder inequality and the induction hypothesis,
\begin{align*}
\abs{\widehat{\nu^{* (m+1)}}(\xi)}^{(2l)^m} &= \Bigl\lvert\int \widehat{\nu^{* m}}(\xi y)\dd \nu(y) \Bigr\rvert^{(2l)^m}\\
&\leq \int \abs{\widehat{\nu^{* m}}(\xi y)}^{(2l)^m} \dd \nu(y)\\
&\leq \int \widehat{\mu^{* m}}(\xi y) \dd \nu(y) \\
&= \iint e(\xi yx) \dd \mu^{* m}(x) \dd \nu(y) 
\end{align*}

Taking the $2l$-th power and using again the Hölder inequality and \eqref{eq:hatmuffnu}, we obtain
\begin{align*}
&\abs{\widehat{\nu^{* (m+1)}}(\xi)}^{(2l)^{m+1}}\\
\leq& \Bigl\lvert \iint e(\xi yx) \dd \nu(y) \dd \mu^{* m}(x) \Bigr\rvert^{2l}\\
\leq&  \int \Bigl\lvert \int e(\xi yx)) \dd \nu(y) \Bigl\lvert^{2l} \dd \mu^{* m}(x) \\
=&  \iiint e\bigl(\xi (y_1 + \dotsb + y_l - y_{l+1} - \dotsb - y_{2l})x \bigr) \dd \nu(y_1) \dotsm \dd \nu(y_{2l}) \dd \mu^{* m}(x) \\
=& \iint e(\xi z x) \dd \mu(z)  \dd \mu^{* m}(x)\\
=& \widehat{\mu^{* (m+1)}}(\xi) 
\end{align*}
This proves the induction step and finishes the proof of \eqref{eq:ordre}.
If $\nu$ is a finite Borel measure with $\nu(E)\leq 1$, we may apply \eqref{eq:ordre} to the probability measure $\nu(E)^{-1} \nu$, which yields
\[\Bigl( \frac{\abs{\widehat{\nu^{* m}}(\xi)}}{\nu(E)^m} \Bigr)^{(2l)^m} \leq \frac{\widehat{\mu^{* m}}(\xi)}{\nu(E)^{2lm}}.\]
\end{proof}

\begin{proof}[Proof of Theorem~\ref{thm:fourier}]
Let $d = \dim(E)$. Let $\eps_1 > 0$ be the constant given by Proposition~\ref{pr:apla} applied to the parameter $\kappa/2$. Define $s_{\max} = \lceil \frac{d}{\eps_1}\rceil$. 
First, remark that by the non-concentration assumption, $\norm{\mu \pp P_\delta}_\infty \leq \delta^{-d + \kappa - \eps}$. Hence
\[ \norm{\mu \pp P_\delta}_2^2 \leq \norm{\mu \pp P_\delta}_1 \norm{\mu \pp P_\delta}_\infty \leq \delta^{-d + \kappa/2}\]
if we choose $\eps \leq \kappa/2$.
For $k = 1, \dotsc, s_{\max}$, let $\mu_k$ and $\eta_k$ be defined as in Lemma~\ref{lm:iterNC}.
Since $k \leq s_{\max}$ is bounded, the implied constants in the $O_k(\eps)$ notations in Lemma~\ref{lm:iterNC} can be chosen uniformly over $k$. 
Thus, when $\eps > 0$ is sufficiently small, Lemma~\ref{lm:iterNC} allows us to apply Proposition~\ref{pr:apla} and Remark~\ref{rk:mu1half} after it to the measure $\eta_k$ for each $k = 1, \dotsc, s_{\max}$. 
Thus, either $\norm{\eta_k \pp P_\delta}_2^2 \leq \delta^{-\kappa/2}$ or
\[
\norm{\eta_{k+1} \pp P_\delta}_2^2 \leq \norm{\mu_{k+1} \pp P_\delta}_2^2 \leq \delta^{\eps_1} \norm{\eta_k \pp P_\delta}_2^2.
\]
We deduce that there exists $s \in \{1, \dotsc , s_{\max}\}$ such that
\[
\norm{\eta_{s} \pp P_\delta}_2^2 \leq \delta^{-\kappa/2}.
\]
Remembering \eqref{eq:iterNC3}, we apply Lemma~\ref{lm:L2Fourier} to obtain, for all $\xi$ with $\norm{\xi} = \delta^{-1}$,
%$\delta^{-1+\eps} \leq \norm{\xi} \leq \delta^{-1-\eps}$, 
\[ 
\abs{\widehat{\eta_s * \eta_s} (\xi)} \leq \delta^{\kappa/(2d+6)-O(\eps)} \leq \delta^\tau. 
\]
Here we assumed $\eps$ sufficiently small compared to $\kappa/d$.

For $k = 1, \dotsc, s$, apply Lemma~\ref{lm:ordre} with $l = 1$ and $m = 2^k$ to $\mu_{s-k+1} = \eta_{s-k}^{* 2} \mm \eta_{s-k}^{* 2}$. We obtain
\[
\bigl\lvert \widehat{\eta_{s-k}^{* 2^{k+1}}} (\xi) \bigr\rvert^{2^{2^k}} \leq \bigl\lvert \widehat{\mu_{s-k+1}^{* 2^k}} (\xi) \bigr\rvert.
\]
Moreover, by \eqref{eq:iterNC2}, $\mu_{s-k+1}$ differs from $\eta_{s-k+1}$ by a measure of total mass at most $\delta^\tau$. Hence 
\[
\bigl\lvert \widehat{\mu_{s-k+1}^{* 2^k}} (\xi) \bigr\rvert \leq \bigl\lvert \widehat{\eta_{s-k+1}^{* 2^k}} (\xi) \bigr\rvert + O_k(\delta^\tau).
\]
From the above, we deduce using a simple recurrence that for all $k = 1, \dotsc, s$,
\[
\bigl\lvert \widehat{\mu_{s-k+1}^{* 2^k}} (\xi) \bigr\rvert \ll_k \delta^{{\tau}/{O_k(1)}}.
\]
In particular,
\[
\bigl\lvert \widehat{\mu_1^{* 2^s}} (\xi) \bigr\rvert \ll_s \delta^{\eps\tau},
\]
which allows to conclude since $\mu$ differs from $\mu_1$ by a measure of total mass at most $\delta^\tau$.
\end{proof}

\section{Non-concentration in subvarieties}
\label{sc:noncon}

Our goal here is to prove that the law of a large random matrix product satisfies some regularity conditions. 

\smallskip

Throughout this section, unless otherwise stated, $\mu$ denotes a probability measure on $\SL_d(\Z)$. %having a finite exponential moment.
As in the introduction, $\Gamma$ denotes the subsemigroup generated by $\Supp(\mu)$, $\Gbb < \SL_d$ is the Zariski closure of $\Gamma$, and $G = \Gbb(\R)$ its group of $\R$-rational points.
We also let $E$ denote the subalgebra of $\Mat_{d}(\R)$ generated by $G$, and fix a norm on the space of all polynomial functions on $E$.
%Recall the assumptions from Theorem~\ref{thm:main}.
%\begin{enumerate}[label=(\alph*)]
%\item The measure $\mu$ has a finite exponential moment. 
%\item The action $G$ on $\R^d$ is irreducible.
%\item The algebraic group $\Gbb$ is Zariski connected.% and semisimple.
%\end{enumerate}
We shall prove two non-concentration statements for the distribution of a random matrix product.
The first one shows that the law $\mu^{*n}$ at time $n$ of the random matrix product is not concentrated near affine subspaces of the algebra $E$.

\begin{proposition}[Non-concentration on affine subspaces]
\label{pr:NCaffine}
Let $\mu$ be a probability measure on $\SL_d(\Z)$, for some $d\geq 2$.
Denote by $\Gamma$ the subsemigroup generated by $\mu$, and by $\Gbb$ the Zariski closure of $\Gamma$ in $\SL_d$, and by $E$ the subalgebra of $\Mat_d(\R)$ generated by $\Gamma$.
Assume that: 
\begin{enumerate}[label=(\alph*)]
\item  The measure $\mu$ has a finite exponential moment; 
\item  The action of $\Gamma$ on $\R^d$ is irreducible;
\item  The algebraic group $\Gbb$ is Zariski connected. % and semisimple.
\end{enumerate} 
There exists $\kappa > 0$ such that for every proper affine subspace $W \subset E$ ,
\[ \forall n \geq 1,\; \forall \rho \geq e^{-n},\quad \mu^{*n}(\{\, g \in \Gamma  \mid  d(g, W ) \leq \rho \norm{g} \,\}) \ll_\mu \rho^\kappa.\]
\end{proposition}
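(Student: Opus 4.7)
The plan is to reduce the statement to a Hölder regularity property for the projective direction of $g$ in $\Pbb(E)$ under $\mu^{*n}$, and then invoke classical regularity theorems for random walks on reductive Lie groups.

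First, I would observe that it suffices to treat the case where $W$ is an affine hyperplane, $W = \{g \in E : \phi(g) = c\}$ with $\phi \in E^*$ of unit dual norm and $c \in \R$, since any proper affine subspace is contained in such a hyperplane. The condition $d(g, W) \leq \rho \|g\|$ is then, up to a bounded factor coming from the norm on $E$, the inequality $|\phi(g) - c| \leq \rho \|g\|$. By Le Page's large deviation estimate for the top Lyapunov exponent, which is available thanks to assumptions (a) and (b), one has $\mu^{*n}\{g : \|g\| \leq e^{\lambda_1 n/2}\} \leq e^{-\alpha n}$ for some $\alpha = \alpha(\mu) > 0$; since $\rho \geq e^{-n}$, this is $\ll \rho^\alpha$ and can be absorbed into the conclusion. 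On the complementary event $\|g\| \geq e^{\lambda_1 n/2}$, the quantity $|c|/\|g\|$ is negligible compared to $\rho$, so matters reduce to bounding
\[
\mu^{*n}\bigl\{g \in \Gamma : |\phi(g)| \leq \rho \|g\|\bigr\}
\]
by $O(\rho^\kappa)$, uniformly in $\phi \in E^*$ of unit norm.

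The main step is then a Hölder regularity statement for the projective direction $[g] \in \Pbb(E)$. Using the trace pairing on the simple algebra $E$, I would write $\phi(g) = \tr(\xi g)$ for some $\xi \in E$, decompose $\xi = \sum_k a_k b_k^{T}$ with $a_k, b_k \in \R^d$, and obtain $\phi(g) = \sum_k \langle b_k, g a_k\rangle$. Under assumptions (a)--(c), $\Gbb$ is Zariski-connected and reductive, and the Benoist--Quint theory of random walks on reductive groups applies: for fixed $a \in \R^d \setminus \{0\}$, the direction $g a / \|g a\|$ is $\mu^{*n}$-close, up to an exponentially small error, to a random ``leading direction'' $v$ distributed according to a Hölder regular measure on an appropriate flag variety of $\Gbb$, and $\|g a\|/\|g\|$ converges exponentially fast to a positive random constant depending continuously on the right factor $k'$ in the Cartan decomposition $g = k a k'$. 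Combining these statements, $\phi(g)/\|g\|$ is well approximated by $\langle \tilde b, v\rangle$ for a random pair $(\tilde b, v)$ on a product of flag varieties of $\Gbb$, whose joint law is Hölder regular by the same theory; this yields the desired bound with an exponent $\kappa > 0$ depending only on $\mu$ and $\Gbb$.

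The main obstacle is that the left-multiplication action of $\Gbb$ on $E$ is reducible whenever $\dim E > d$: as a left $E$-module, $E \cong V^{\oplus r}$ where $V \cong \R^d$ is the unique irreducible and $r$ is the reduced rank of the simple algebra $E$. In particular $[g] \in \Pbb(E)$ genuinely concentrates on a proper Segre-type subvariety cut out by the top Lyapunov rank constraints, so the classical Guivarc'h regularity theorem for irreducible proximal representations does not apply directly to $\Pbb(E)$ and a naive attempt to derive regularity from a stationary measure on $\Pbb(E)$ would fail. This is circumvented by working on the flag variety of $\Gbb$ itself, where Benoist--Quint regularity is available without any proximality assumption on the ambient representation, and by noting that a proper projective hyperplane of $\Pbb(E)$ pulls back, via the continuous map described above, to a proper algebraic subvariety of this flag variety, to which the regularity theorem does apply.
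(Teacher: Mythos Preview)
Your reduction from the affine to the linear case contains a genuine error. You claim that on the event $\|g\| \geq e^{\lambda_1 n/2}$ the quantity $|c|/\|g\|$ is negligible compared to $\rho$. But the proposition is stated uniformly over \emph{all} proper affine subspaces $W \subset E$, so with your normalisation $\|\phi\|=1$ the constant $c = d(0,W)$ is unrestricted; it can perfectly well be of order $e^{\lambda_1 n}$, in which case $|c|/\|g\|$ is of order $e^{\lambda_1 n/2}$, certainly not $\ll \rho$. The affine case does \emph{not} reduce to the linear case by discarding the constant term. Even in the purely linear case your sketch is incomplete: writing $\phi(g) = \sum_k \langle b_k, g a_k\rangle$ and invoking H\"older regularity of each direction $g a_k/\|g a_k\|$ does not by itself control the sum, which can exhibit cancellations; you would need joint H\"older regularity of the full Cartan data of $g$, and this is more than what you justify.

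The paper handles the affine difficulty by a different mechanism that genuinely exploits the arithmetic hypothesis $\Gamma \subset \SL_d(\Z)$. It splits the walk as $\mu^{*n} = \mu^{*m} * \mu^{*(n-m)}$ with $e^{-C_1 m} \asymp \rho^{1/2}$ and lets $G$ act on the whole affine form $f \in \R \oplus E^*$ via $(h\cdot f)(x) = f(xh)$. For the inner $m$-step walk it applies Lemma~\ref{lm:mundef<eg}, whose proof passes through the spectral gap modulo primes (Theorem~\ref{thm:specgap}) and the escape-from-subvarieties Proposition~\ref{pr:gapEscape}; this gives $\mu^{*m}\{|f(g)| < e^{-C_1 m}\|g\|\|f\|\} \ll e^{-c_1 m}$ uniformly in $f$, with no restriction on the constant term of $f$. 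The complementary event $\|h\cdot f\| \leq \rho^{1/2}\|h\|$ is then controlled, via the decomposition $E^* \cong (\R^d)^{\oplus \dim(E)/d}$ as a $G$-module, by the purely dynamical large-deviation estimate of Proposition~\ref{pr:LDrhokappa}. Thus the constant term is never thrown away; it is carried along inside $h\cdot f$, whose norm the outer $(n-m)$-step walk amplifies.
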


The second result concerns general subvarieties of the algebra $E$, with the caveat that we have to replace $\mu^{*n}$ by an additive convolution power of itself, to avoid some obstructions.
It is also worth noting that the quantification of the non-concentration is slightly weaker than in the case of affine subspaces.
%This cannot be avoided.

\begin{proposition}[Non-concentration on subvarieties]
\label{pr:NCdet}
%\marginpar{\small On a besoin de cette proposition seulement pour $k = \dim(E)$.}
Let $\mu$ be a probability measure on $\SL_d(\Z)$, for some $d\geq 2$.
Let $\Gamma$ denote the subsemigroup generated by $\mu$, $\Gbb$ the Zariski closure of $\Gamma$ in $\SL_d$, $E$ the subalgebra of $\Mat_d(\R)$ generated by $\Gamma$, and $\lambda_1$ the top Lyapunov exponent of $\mu$.
Assume that: 
\begin{enumerate}[label=(\alph*)]
\item  The measure $\mu$ has a finite exponential moment; 
\item  The action of $\Gamma$ on $\R^d$ is irreducible;
\item  The algebraic group $\Gbb$ is Zariski connected. % and semisimple.
\end{enumerate} 
Given an integer $D \geq 1$ and given $\omega > 0$, there exists $c > 0$ and $n_0 \in\N$
%depending also on $\mu$
such that the following holds. 
Let $f \colon E \to \R$ be a polynomial function of degree $D$. Writing $f_D$ for its degree $D$ homogeneous part, we have
\[
\forall k \geq \dim(E),\; \forall n \geq n_0,\quad (\mu^{\ff n})^{\pp k} \bigl(\bigl\{ x \in E \mid \abs{f(x)} \leq e^{(D \lambda_1 - \omega) n} \norm{f_D} \bigr\}\bigr) \leq e^{-cn}.
\]
\end{proposition}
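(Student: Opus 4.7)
My approach is to prove Proposition~\ref{pr:NCdet} by induction on the degree $D$, reducing the degree-$D$ case to Proposition~\ref{pr:NCaffine} via a Fubini/gradient argument. After normalizing $\|f_D\|=1$ (so $T=e^{(D\lambda_1-\omega)n}$), standard large-deviation bounds for random matrix products under the finite exponential moment hypothesis (Le~Page, Benoist--Quint) restrict attention, outside an event of probability $\le ke^{-c_1 n}$, to the typical event $\mathcal{G}=\{\|g_i\|\le e^{(\lambda_1+\omega/(4D))n}\text{ for }i=1,\ldots,k\}$, on which $\|x\|\le R:=ke^{(\lambda_1+\omega/(4D))n}$.

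The base case $D=1$ follows from Proposition~\ref{pr:NCaffine} and Fubini: the set $\{|f|\le T\}$ is an affine-hyperplane tube of relative thickness $\rho\asymp e^{-\omega n/2}$, and conditioning on $g_2,\ldots,g_k$ reduces to a translate, whose $\mu^{\ff n}$-mass is $\ll e^{-\kappa\omega n/2}$. For the inductive step with $D\ge 2$, condition on $y:=g_2+\cdots+g_k$ and consider $F_y(g_1):=f(g_1+y)$, a polynomial of degree $D$ in $g_1$ whose leading homogeneous part is $f_D$, of norm $1$. I split on the size of $\nabla f(y)$: in the regular regime $\|\nabla f(y)\|\ge e^{((D-1)\lambda_1-\omega/(4D))n}$, the linear-in-$g_1$ term of $F_y$ is large enough that, after a stratification of $\{F_y=0\}$ into $O_D(1)$ cells, the event $|F_y(g_1)|\le T$ is covered by a bounded number of affine-hyperplane tubes of relative thickness $e^{-\Omega(\omega n)}$, to which Proposition~\ref{pr:NCaffine} applies. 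In the degenerate regime $\|\nabla f(y)\|<e^{((D-1)\lambda_1-\omega/(4D))n}$, the constraint on $y$ takes the form $|P(y)|\le e^{((D-1)\lambda_1-\omega/(4D))n}$ for some component $P$ of $\nabla f$, a polynomial of degree $D-1$ whose leading part $\partial_i f_D$ has norm bounded below by a combinatorial constant depending only on $D$ and $\dim(E)$ (since $\|f_D\|=1$, at least one partial derivative of $f_D$ has controlled norm). The induction hypothesis at degree $D-1$, applied to $(\mu^{\ff n})^{\pp(k-1)}$, bounds this event by $e^{-c_3 n}$.

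The main obstacles are twofold. First, in the regular regime the polynomial $F_y(g_1)$ has higher-order terms in $g_1$ (up to degree $D$) of size comparable to the dominant linear term, so $\{|F_y|\le T\}$ is not a single affine tube; executing the stratification uniformly over $y$ requires carefully tracking how the successive derivatives of $F_y$ inherit their leading parts from $f_D$. Second, the induction as sketched consumes one summand per degree-reduction step, so it closes only under $k\ge\dim(E)+D-1$; matching the stated hypothesis $k\ge\dim(E)$ likely requires a more refined Fubini argument that reuses summands across the induction (e.g., by randomizing which of the $k$ factors plays the role of the active variable $g_1$), or else a direct global argument for small $k$ exploiting that $(\mu^{\ff n})^{\pp\dim(E)}$ is sufficiently spread in $E$ to support a Fourier-decay or $L^2$-flattening estimate analogous to Proposition~\ref{pr:apla}.
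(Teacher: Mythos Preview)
Your approach is quite different from the paper's, and while the overall strategy (induction on degree, Fubini over the summands, reduction to Proposition~\ref{pr:NCaffine}) is natural, there is a genuine gap in the ``regular regime'' that I do not see how to close along the lines you suggest.

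The core problem is your claim that when $\|\nabla f(y)\|\ge e^{((D-1)\lambda_1-\omega/(4D))n}$, the set $\{g_1:|F_y(g_1)|\le T\}$ is covered by $O_D(1)$ affine-hyperplane tubes. In a space of dimension $\dim(E)\ge 2$ this is simply not how sublevel sets of polynomials look: already for $F(g)=\|g\|^2-R^2$ the set $\{|F|\le T\}$ is a thin spherical annulus, not a union of boundedly many affine tubes. You note yourself that on the typical event all homogeneous parts of $F_y$ evaluated at $g_1$ are of comparable size $e^{D\lambda_1 n}$, so there is no small parameter making the linear term dominate; the zero set $\{F_y=0\}$ is genuinely curved at scale $\|g_1\|$, and a stratification into $O_D(1)$ cells does not convert it into affine pieces to which Proposition~\ref{pr:NCaffine} could be applied. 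Without some completely new input (e.g.\ a non-concentration statement for tubes around \emph{algebraic hypersurfaces} in $E$, not just affine ones), I do not see how this step can be made to work. The second obstacle you identify, that the induction as written yields only $k\ge\dim(E)+D-1$, is also real, though secondary.

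For comparison, the paper's argument is representation-theoretic and avoids this geometric difficulty entirely. One defines $F\in\R[\Gbb^k]$ by $F(g_1,\dotsc,g_k)=f(g_1+\dotsb+g_k)$ and views it as a vector in the $G^k$-module $\R[\Gbb^k]_{\le D}$. The key point (Lemma~\ref{lm:criterionL}) is that $F$ has nonzero component in the sum of those simple submodules whose top Lyapunov exponent (with respect to $\mu^{\otimes k}$) equals $D\lambda_1$; this is proved via highest-weight theory and the structure of the limit set $\Pi_G$ in $E$, and it is precisely here that the hypothesis $k\ge\dim(E)$ enters (one needs $G\pi_0G+\dotsb+G\pi_0G$ to contain an open set in $E$). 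One then invokes a general large-deviation estimate for polynomials (Proposition~\ref{pr:LDPpolynome}), which combines the spectral-gap escape of Lemma~\ref{lm:mundef<e} with Theorem~\ref{thm:LargeD} applied to the simple $G^k$-modules. This machinery handles all degrees $D$ uniformly and never needs to describe sublevel sets of $f$ geometrically.
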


These two propositions will allow us to apply the results of the previous section to the law $\mu^{*n}$ at time $n$ of an irreducible random walk on $\SL_d(\Z)$.
This will yield Theorem~\ref{thm:decay} below.

\smallskip

Non-concentration estimates for subvarieties can sometimes be obtained using the theory of random matrix products, combined with linearization techniques, as is done in \cite{aoun}.
But this approach relies on the proximality assumption which we want to avoid here.
Moreover, the statements proven in \cite{aoun} are not uniform for subvarieties of bounded degree, which is crucial for our application.

The argument developed in this section relies on the spectral gap property modulo primes for finitely generated subgroups of $\H(\Z)$, where $\H$ is a semisimple $\Q$-subgroup of $\SL_n$, a still rather recent result obtained by Salehi Golsefidy and Varjú \cite{SGV} after several important works in this direction, starting with Helfgott \cite{helfgott_sl2}, followed by Bourgain and Gamburd \cite{bourgaingamburd_sl2p}, Breuillard, Green and Tao \cite{bgt_expansionlinear}, and Pyber-Szabó \cite{pyberszabo}.

It is not difficult to check, see e.g. \cite[Lemma~8.5]{bq1}, that if $\Gamma < \SL_d(\Z)$ acts strongly irreducibly on $\R^d$, then its Zariski closure is semisimple.
In particular, the assumptions of Propositions~\ref{pr:NCaffine} and \ref{pr:NCdet} imply that $\Gbb$ is semisimple.
This justifies why we focus on semisimple groups in the discussion of the spectral gap property that we give below.

\subsection{Prelude : Expansion in semisimple groups}
%We now explain the spectral gap result of Salehi Golsefidy and Varjú, and its implications for our problem.
Since elements in $\Gamma$ have integer coefficients, we may consider, for a given prime number $p$, the image $\Gamma_p$ of $\Gamma$ under the projection modulo $p$.

On the space $l^2(\Gamma_p)$ of square-integrable functions on $\Gamma_p$, we shall consider the convolution operator
\[ \begin{array}{lrcl}
T_{\mu} \colon & l^2(\Gamma_p) & \to & l^2(\Gamma_p))\\
& f & \mapsto & \mu * f
\end{array}
\]
Let $l_0^2(\Gamma_p) \subset l^2(\Gamma_p)$ denote the subspace of functions on $\Gamma_p$ having zero mean.

The theorem we shall need is the following; up to some minor modifications, it appears in Salehi Golsefidy and Varjú \cite[Theorem~1]{SGV}.

\begin{theorem}[Spectral gap theorem]
\label{thm:specgap}
Let $d\geq 2$ and let $\mu$ be a probability measure on $\SL_d(\Z)$ such that the Zariski closed subgroup $\Gbb$ generated by $\mu$ is semisimple.
Then there exists a constant $c>0$ and an integer $k$ such that for every prime number $p$,
\[
\norm{T^k_{\mu}}_{l_0^2(\Gamma_p)} \leq 1 - c.
\]
\end{theorem}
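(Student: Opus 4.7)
The plan is to reduce the statement to the Salehi Golsefidy--Varj\'u expansion theorem \cite[Theorem~1]{SGV}, which produces a uniform spectral gap on Cayley graphs modulo $q$ for a finite symmetric subset of $\SL_d(\Z)$ whose group Zariski closure has perfect identity component. Since $\Gbb$ here is Zariski connected and semisimple, hence perfect, the task is to extract such a finite symmetric set from an iterate of $\mu$ and then transfer the gap back to $T_\mu$ via the identity $T_{\mu^{*n}*\check\mu^{*n}} = T_\mu^n (T_\mu^n)^*$, where $\check\mu$ denotes the pushforward of $\mu$ under $g\mapsto g^{-1}$.

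The first step is to show that for some $n\geq 1$, the support of $\sigma_n := \mu^{*n}*\check\mu^{*n}$ contains a finite symmetric $\Omega$ whose Zariski closure is $\Gbb$. The closures $V_n := \overline{\Supp(\mu)^n}^{\mathrm{Zar}}$ form a chain of closed subvarieties of the irreducible $\Gbb$, and a Noetherian argument, combined with right-translation by a fixed $\gamma_0\in\Supp(\mu)$ (which propagates $V_N=\Gbb$ to every $n\geq N$), shows $V_n=\Gbb$ for all $n$ sufficiently large. For such $n$, the subset $\Supp(\mu)^n\cdot\gamma_0^{-n}\subset\Supp(\sigma_n)$ is Zariski-dense in $\Gbb$, so a further Noetherian argument extracts a finite Zariski-generating subset, which we symmetrize (possible since $\Supp(\sigma_n)$ is itself symmetric) to obtain $\Omega$.

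The second step is to invoke \cite[Theorem~1]{SGV} for $\Omega$: there exist an integer $m$ and a constant $c_1>0$ such that, for the uniform probability $\mu_\Omega$ on $\Omega$, $\norm{T_{\mu_\Omega}^m}_{l_0^2(\Gbb_p(\Fbb_p))}\leq 1-c_1$ for every prime $p$ outside a finite ``bad'' set (handled below). Self-adjointness of the symmetric $T_{\mu_\Omega}$ upgrades this to $\norm{T_{\mu_\Omega}}_{l_0^2}\leq 1-c_2$ for some $c_2>0$. Setting $c_0:=|\Omega|\min_{s\in\Omega}\sigma_n(\{s\})>0$, we have $\sigma_n\geq c_0\mu_\Omega$ pointwise, and writing $\sigma_n = c_0\mu_\Omega + (1-c_0)\tau$ with $\tau$ a probability measure, the triangle inequality together with $\norm{T_\tau}\leq 1$ gives $\norm{T_{\sigma_n}}_{l_0^2}\leq 1-c_0 c_2$. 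Since $T_{\sigma_n}=T_\mu^n(T_\mu^n)^*$, this translates to $\norm{T_\mu^n}_{l_0^2}^2\leq 1-c_0 c_2$, hence $\norm{T_\mu^n}_{l_0^2}\leq 1-c$ for a suitable $c>0$, establishing the theorem with $k=n$.

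The main obstacle is the Noetherian argument in step one, which relies crucially on the connectedness and semisimplicity of $\Gbb$ to ensure that a single symmetric convolution product $\Supp(\mu)^n\Supp(\mu)^{-n}$ already Zariski-generates $\Gbb$. The finite set of bad primes where SGV does not directly apply (or where the image of $\langle\Omega\rangle$ in $\Gbb_p(\Fbb_p)$ is proper) is handled individually: on each such prime, $l_0^2(\Gbb_p(\Fbb_p))$ is a fixed finite-dimensional space, the spectral radius of $T_\mu^k$ is strictly less than $1$ as soon as the walk has no non-trivial invariant subspace, and the minimum of these finitely many strict gaps with the SGV constant yields the uniform $c$ claimed in the theorem.
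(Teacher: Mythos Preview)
Your overall strategy matches the paper's exactly: reduce to \cite[Theorem~1]{SGV} by locating a finite symmetric Zariski-generating set inside the support of some $\check\mu^{*k}*\mu^{*k}$ (you use $\mu^{*n}*\check\mu^{*n}$ instead, which is immaterial since $\norm{T_\mu^n(T_\mu^n)^*}=\norm{(T_\mu^n)^*T_\mu^n}=\norm{T_\mu^n}^2$), then write that convolution as a convex combination dominating the uniform measure on the finite set, and transfer the gap back.

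The only substantive difference is in step one. The paper does not argue that $\overline{\Supp(\mu)^n}^{\mathrm{Zar}}=\Gbb$ for large $n$; instead it proves two preparatory lemmas: Lemma~\ref{lm:Sfinite} (via the Jordan--Schur theorem) extracts a finite $S\subset\Gamma$ generating a Zariski-dense subsemigroup, and Lemma~\ref{lm:S-kSk} (via Nori's strong approximation) shows that $S^{-k}S^k$ already generates Zariski-densely. Your route is more direct and avoids both Jordan--Schur and Nori, but your justification is incomplete: the $V_n$ do \emph{not} form a chain (there is no inclusion $V_n\subset V_{n+1}$), and ``a Noetherian argument'' does not by itself yield $V_N=\Gbb$. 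What does work is to observe that the translates $W_n:=\gamma_0^{-n}V_n$ satisfy $W_n\subset W_{n+1}$, hence stabilise at some closed $W\ni e$; one then checks $W$ is closed under multiplication and inversion, so $W$ is a closed connected subgroup, and Zariski-density of $\Gamma$ in the connected $\Gbb$ forces $W=\Gbb$. This is standard but not a one-liner. Note also that you invoke connectedness of $\Gbb$, which the theorem as stated does not assume; the paper's proof has the same implicit restriction through Lemma~\ref{lm:S-kSk}, and only the connected case is used downstream.

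Your treatment of the finitely many bad primes is more explicit than the paper's (which simply restricts to large $p$), but be careful: the claim ``the spectral radius of $T_\mu^k$ on $l_0^2$ is strictly less than $1$'' fails whenever the image of $\langle\Supp\mu\rangle$ in $\Gbb_p(\Fbb_p)$ is a proper subgroup, since the centred indicator of that subgroup is then a fixed vector. For the paper's applications only large $p$ are needed, so this does not matter, but the statement ``for every prime $p$'' should be read with that caveat.
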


\begin{remark}
Another way to state the above theorem is to say that the spectral radius of the operator $T_\mu$ restricted to $l^2_0(\Gamma_p)$ is bounded above by $1-c$, for every prime number $p$.
\end{remark}
%
%
%The following statement says that the same thing holds for measures $\mu$ that are not necessarily uniform measures on a finite set, if we are willing to take a small power of the Markov operator.
%\begin{corollary}
%\label{cr:specgap}
%Under the assumptions \ref{it:SA1} and \ref{it:SA3}.
%There is an integer $k \geq 1$ and $c > 0$ such that for every prime number $p$ sufficiently large,
%\end{corollary}

For completeness, we now explain how to derive the above theorem from \cite[Theorem~1]{SGV}.
The argument uses the following two lemmata.

\begin{lemma}
\label{lm:Sfinite}
Let $\Gamma$ be a subsemigroup of $\SL_d(\R)$ whose Zariski closure $\Gbb$ is semisimple. 
There exists a finite subset $S \subset \Gamma$ such that the semigroup generated by $S$ is Zariski dense in $\Gbb$.
\end{lemma}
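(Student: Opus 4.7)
The plan is a short Noetherian induction: I would construct $S$ by successively adjoining elements of $\Gamma$ until the Zariski closure of the subsemigroup they generate exhausts $\Gbb$. More precisely, starting with an arbitrary $g_1 \in \Gamma$, let $H_1 \subseteq \Gbb$ denote the Zariski closure of the subsemigroup generated by $g_1$. Inductively, suppose $g_1, \dotsc, g_n \in \Gamma$ have been chosen and let $H_n$ be the Zariski closure of the subsemigroup they generate, so $H_n \subseteq \Gbb$.

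If $H_n \subsetneq \Gbb$, then since $\Gamma$ is by hypothesis Zariski dense in $\Gbb$, the set $\Gamma \setminus H_n$ is nonempty, and I may pick $g_{n+1}$ in it. Letting $H_{n+1}$ be the Zariski closure of the subsemigroup generated by $\{g_1, \dotsc, g_{n+1}\}$, one has $H_n \subseteq H_{n+1}$, and the inclusion is strict because $g_{n+1} \in H_{n+1} \setminus H_n$.

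Since the algebraic variety $\Gbb$ is a Noetherian topological space, no infinite strictly ascending chain of Zariski-closed subsets of it exists; therefore the procedure terminates after some finite number $N$ of steps, necessarily with $H_N = \Gbb$. The finite set $S = \{g_1, \dotsc, g_N\}$ then satisfies the conclusion of the lemma.

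I do not foresee any genuine obstacle in executing this argument. In fact the semisimplicity hypothesis on $\Gbb$ plays no role in the proof; only the Zariski density of $\Gamma$ in $\Gbb$ and the Noetherian property of the Zariski topology on $\Gbb$ are used. The hypothesis is presumably stated because the lemma is meant to feed the spectral gap theorem of Salehi Golsefidy and Varj\'u, which does require semisimplicity.
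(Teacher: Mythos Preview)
Your Noetherian argument contains a genuine error. The Noetherian property of the Zariski topology is the \emph{descending} chain condition on closed subsets (equivalently, the ascending chain condition on open subsets); it does \emph{not} forbid infinite strictly ascending chains of closed subsets. For instance, in $\Abb^1$ the finite sets $\{0\} \subsetneq \{0,1\} \subsetneq \{0,1,2\} \subsetneq \dotsb$ form such a chain. More to the point, algebraic groups can contain infinite ascending chains of closed algebraic subgroups: the groups $\mu_{2^n}$ of $2^n$-th roots of unity inside a one-dimensional torus are a standard example.

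Your closing remark that semisimplicity plays no role is also incorrect, and in fact the lemma is false without it. Take $\Gbb = \SO_2 \subset \SL_2(\R)$ and let $\Gamma$ be the group of rotations by rational multiples of $2\pi$. Then $\Gamma$ is Zariski dense in the one-dimensional group $\Gbb$, yet every finitely generated subsemigroup of $\Gamma$ is finite, hence not Zariski dense. So the inductive procedure you describe would run forever in this example.

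The paper's proof uses semisimplicity in an essential way. One first picks a finite $S_0 \subset \Gamma$ for which the identity component $\Hbb$ of the Zariski closure of the generated semigroup has maximal dimension; this step is legitimate because dimension is bounded by $\dim \Gbb$. One then checks $\Hbb$ is normal in $\Gbb$, so $\Gbb/\Hbb$ is again semisimple. The image of $\Gamma$ in $\Gbb/\Hbb$ is a torsion group (each element has a power in $\Hbb$), hence virtually abelian by the Jordan--Schur theorem; since a semisimple group with abelian identity component is finite, $\Gbb/\Hbb$ is finite, and adjoining finitely many coset representatives to $S_0$ finishes the proof. The finiteness of the quotient is exactly what blocks the torus counterexample and is where semisimplicity enters.
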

%This lemma and its proof remain valid when $\R$ is replaced by any field of characteristic zero.
\begin{proof}
For any finite subset $S \subset \Gamma$, denote by $Z_e(S)$ the identity component of the Zariski closure of the semigroup generated by $S$.
Let $S_0 \subset \Gamma$ be a finite subset such that $\Hbb := Z_e(S_0)$ has maximal dimension among these subgroups. Then $\Hbb$ is also maximal for the order of inclusion, because $Z_e(S_0) \subset Z_e(S_0 \cup S)$ are both irreducible subvarieties and have the same dimension.

In particular, for any $\gamma \in \Gamma$, $\gamma\Hbb\gamma^{-1} = Z_e(\gamma S_0 \gamma^{-1}) \subset \Hbb$. Hence $\Hbb$ is a normal subgroup in $\Gbb$, since $\Gamma$ is Zariski dense.
By \cite[Theorem 6.8 and Corollary 14.11]{Borel}, the quotient $\Gbb/\Hbb$ is
%(can be given a structure of)
a semisimple linear algebraic group such that the projection $\pi \colon \Gbb \to \Gbb/\Hbb$ is a morphism of algebraic groups. %defined over $\R$.

Moreover, for any $\gamma \in \Gamma,$ there is $k \geq 1$ such that $\gamma^k \in Z_e(\{\gamma\}) \subset \Hbb$. Hence the image $\pi(\Gamma)$ of $\Gamma$ in $\Gbb/\Hbb$ is a torsion group.
By the Jordan-Schur theorem~\cite[Theorem~8.31]{raghunathan}, $\pi(\Gamma)$ is virtually abelian.
%\marginpar{\small Find the reference.}
But $\pi(\Gamma)$ is Zariski dense in $\Gbb/\Hbb$. Hence the Zariski closure of any of its subgroups of finite index contains the identity component of $\Gbb/\Hbb$.
Therefore, the identity component of $\Gbb/\Hbb$ is both semisimple and abelian, hence trivial.
It follows that $\Gbb/\Hbb$ is finite. Thus, by adding a finite number of elements to $S_0$, we can make sure that $S_0$ generates a Zariski dense subsemigroup in $\Gbb$.
\end{proof}

\begin{lemma}
\label{lm:S-kSk}
Let $\Gbb$ be a connected semisimple algebraic group defined over $\Q$. Let $S \subset \Gbb(\Q)$ be a finite subset which generates a Zariski dense subgroup.
%Let $S$ be a finite subset of $\SL_d(\Z)$. Assume that the Zariski closure $\Gbb$ of the subsemigroup generated by $S$ is semisimple. 
Then there exists $k \geq 1$ such that the symmetric set $S^{-k}S^k$ also generates a Zariski dense subgroup in $\Gbb$.  
\end{lemma}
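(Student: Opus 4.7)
The plan is to show that $\langle S^{-1}S\rangle$ is already Zariski dense in $\Gbb$, which yields the statement for every $k \geq 1$ and in particular $k = 1$: given any $s, t \in S$, one has $s^{-1}t = s^{-k}\cdot(s^{k-1}t) \in S^{-k}S^k$, so $\langle S^{-k}S^k \rangle \supset \langle S^{-1}S \rangle$.

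Set $\Gamma = \langle S\rangle$ and $\Gamma_1 = \langle S^{-1}S\rangle$. Working in the free group $F_S$ on $S$, the augmentation homomorphism $\phi \colon F_S \to \Z$ sending each generator to $1$ has kernel generated by $\{s^{-1}t : s,t \in S\}$. I would first use this to establish two group-theoretic facts: (a) $\Gamma_1$ is normal in $\Gamma$, because any conjugate $s(a^{-1}b)s^{-1}$ has augmentation zero and hence is, in $F_S$, a product of elements of $S^{-1}S$; (b) the quotient $\Gamma/\Gamma_1$ is cyclic, being a quotient of $F_S/\ker\phi \cong \Z$.

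I would then pass to Zariski closures. Let $\Hbb$ denote the Zariski closure of $\Gamma_1$ in $\Gbb$. Since $\Gamma$ normalizes $\Gamma_1$ and $\Gamma$ is Zariski dense, $\Gbb$ normalizes $\Hbb$; thus $\Hbb$ is normal in $\Gbb$. Because $\Gbb$ is connected semisimple, $\Hbb^\circ$ is an almost direct factor and $\Gbb/\Hbb^\circ$ is connected semisimple; the finite group $\Hbb/\Hbb^\circ$ is then normal in this connected group, hence central, so $\Gbb/\Hbb$ remains (connected) semisimple. Meanwhile, the image of $\Gamma$ in $\Gbb/\Hbb$ is Zariski dense and is a quotient of the cyclic group $\Gamma/\Gamma_1$, hence itself cyclic.

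The conclusion follows from incompatibility: if an algebraic group $\Gbb/\Hbb$ admits a Zariski dense cyclic subgroup, then a generator $\bar g$ lies in its own centralizer, which is Zariski closed and contains $\overline{\langle \bar g\rangle}$, hence is the whole group; so $\bar g$ is central, and a second application of the same reasoning to the center shows that $\Gbb/\Hbb$ is commutative. A commutative semisimple algebraic group is trivial, so $\Hbb = \Gbb$, giving the desired Zariski density of $\Gamma_1$. The main technical point I anticipate is verifying that $\Gbb/\Hbb$ retains semisimplicity after the two-step passage through $\Hbb^\circ$ and the possibly disconnected $\Hbb$; the free-group identification of $\ker\phi$ and the centralizer argument are then routine.
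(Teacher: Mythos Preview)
Your key claim---that in the free group $F_S$ the kernel of the augmentation map $\phi$ is generated \emph{as a subgroup} by $\{s^{-1}t : s,t\in S\}$---is false. Take $S=\{a,b\}$: then $\langle S^{-1}S\rangle=\langle a^{-1}b\rangle$ is infinite cyclic, while $ab^{-1}\in\ker\phi$ does not lie in it (every nontrivial reduced product of elements of $S^{-1}S$ begins with a letter in $S^{-1}$). Consequently your step (a), asserting that $\Gamma_1=\langle S^{-1}S\rangle$ is normal in $\Gamma$, is unjustified: the conjugate $s(a^{-1}b)s^{-1}$ has augmentation zero but need not be a product of elements of $S^{-1}S$. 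Without normality of $\Gamma_1$ you cannot conclude that its Zariski closure $\Hbb$ is normal in $\Gbb$, and the quotient argument collapses.

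The strategy is easily repaired, at the cost of abandoning $k=1$. For $s\in S$ and $a,b\in S^k$ one has $s^{-1}(a^{-1}b)s=(as)^{-1}(bs)\in S^{-(k+1)}S^{k+1}$, so $s^{-1}\Hbb_k s\subset\Hbb_{k+1}$ where $\Hbb_k$ is the Zariski closure of $\langle S^{-k}S^k\rangle$. The chain $\Hbb_1\subset\Hbb_2\subset\cdots$ stabilises by Noetherianity; its stable value $\Hbb$ is then normalised by $S^{-1}$, hence by $\Gamma$, hence normal in $\Gbb$. Since all elements of $S$ have the same image in $\Gbb/\Hbb$, the image of $\Gamma$ there is cyclic, and your concluding argument (Zariski closure of an abelian group is abelian; a connected semisimple abelian group is trivial) now applies to give $\Hbb=\Gbb$, so $\Hbb_k=\Gbb$ for some $k$. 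This fixed version is still quite different from the paper's proof, which invokes Nori's theorem to get $p$-adic density of $\Gamma$ in an open subgroup of $\Gbb(\Q_p)$ and then a metric argument showing proper algebraic subgroups cannot be arbitrarily dense; your route, once corrected, is more elementary and purely algebraic.
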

\begin{proof}
By a result of Nori \cite[Theorem~5.2]{nori}, the group $\Gamma$ generated by $S$ is dense in some open subgroup $\Omega$ of $\Gbb(\Q_p)$, for some prime number $p$.
In other words, the union $\bigcup_{k\geq 1}S^k$ is dense in $\Omega$.
Since the Lie algebra $\g$ of $\Omega$ satisfies $[\g,\g]=\g$, the derived subgroup $[\Omega,\Omega]$ contains an open subgroup $\Omega'$; then, the increasing sequence of subsets $(S^kS^{-k})_{k\geq 1}$ gets arbitrarily dense in $\Omega'$.
%Il y a une petite tricherie ici. On pourrait avoir besoin de plusieurs commutateurs pour engendrer $\Omega'$, et c'est donc plutôt $(S^kS^{-k})^l$, pour certains $k$ et $l$, qui se densifie.
%Il faudrait donc modifier l'énoncé en changent $S^kS^{-k}$ en $(S^kS^{-k})^l$.
%À moins que l'application $(x,y)\mapsto [x,y]$ soit ouverte en $0$ dans $\g$, ce qui semble vrai, car $\g$ est semi-simple. Voir D'andrea-Maffei pour cet énoncé dans un group de Lie réel semisimple compact.
%De toute façon, cette modification est sans conséquence sur la suite.

On the other hand, there exists a neighborhood $U$ of the identity in $\Gbb(\Q_p)$ and $\delta>0$, such that if $\Hbb$ is an algebraic subgroup of $\Gbb$, then $\Hbb(\Q_p)$ is not $\delta$-dense in $U$.
Indeed, the Lie algebra $\h$ of $\Hbb(\Q_p)$ satisfies $\dim_{\Q_p}\h < \dim_{\Q_p}\g$, and we can distinguish two cases.
If the normalizer $N(\h)$ of $\h$ in $\Gbb(\Q_p)$ does not contain an open subgroup, then we conclude using \cite[Lemma~2.2]{saxce_producttheorem}, which is still valid over $\Q_p$.
Otherwise, $\h$ is an ideal in $\g$, so that $\Hbb$ is a sum of simple factors of $\Gbb$; there are only finitely many such groups.
\end{proof}

\begin{proof}[Proof of Theorem~\ref{thm:specgap}]
Let $\check\mu$ denote the image measure of $\mu$ by the map $g \mapsto g^{-1}$.
By lemmata \ref{lm:Sfinite} and \ref{lm:S-kSk} above, there is $k \geq 1$ such that the support of $\check\mu^{*k} * \mu^{*k}$ contains a finite symmetric subset $S$ which generates a Zariski dense subgroup in $\Gbb$. 
By \cite[Theorem~1]{SGV} applied to $S$, there is $c > 0$ such that for any prime number $p$,
\[\norm{T_{\mu_S}}_{l_0^2(\Gamma_p)} \leq 1 - c,\]
where $\mu_S$ denotes the normalized counting measure on $S$.
Then, we can write 
\[\check\mu^{*k} * \mu^{*k} = \alpha \mu_S + (1- \alpha) \mu'\]
where $\alpha > 0$ and $\mu'$ is some probability measure on $\SL_d(\Z)$.
Thus, for any prime number $p$ sufficiently large, using the fact that $T_\mu^* = T_{\check\mu}$,
\begin{align*}
\norm{T_\mu^k}_{l_0^2(\Gamma_p)}^2 & \leq  \norm{(T_\mu^*)^k T_\mu^k}_{l_0^2(\Gamma_p)} \\
&= \norm{T_{\check\mu^{*k} * \mu^{*k}}}_{l_0^2(\Gamma_p)}\\
& \leq \alpha \norm{T_{\mu_S}}_{l_0^2(\Gamma_p)} + (1- \alpha) \norm{T_{\mu'}}_{l_0^2(\Gamma_p)} \\
&\leq 1 - \alpha c.
\end{align*}
\end{proof}

One can also interpret Theorem~\ref{thm:specgap} as a statement on the speed of equidistribution of the random walks associated to $\mu$ on the Cayley graphs $\Gamma_p$.
This is explained for instance in \cite[\S 3.1]{HLW} for the case of simple random walks on a family of expander graphs.  
%It is well known that a spectral gap implies a rapid mixing property for random walks.
Corollary~\ref{cr:gapEquid} below is a statement of this kind.

Since elements in $\Gamma$ have integer coefficients, the group $\Gbb$ is defined over $\Q$, so we may choose a set of defining polynomials with coefficients in $\Z$.
Given a prime number $p$, this allows to consider the variety $\Gbb_p$ defined over $\Fbb_p$ by the reduction modulo $p$ of the polynomials defining $\Gbb$.
Naturally, the group $\Gamma_p$ can be viewed as a subgroup of $\Gbb_p(\Fbb_p)$; by Nori's approximation theorem \cite[Theorem~5.1 and Remark 3.6]{nori}, we know that the index $[\Gbb_p(\Fbb_p):\Gamma_p]$ is uniformly bounded.
For a prime number $p$, we denote by $\pi_p \colon \Mat_d(\Z) \to \Mat_d(\Fbb_p)$ the reduction modulo $p$. 
The detailed proof of the result below is left to the reader.

\begin{corollary}
\label{cr:gapEquid}
Let $d\geq 2$ and let $\mu$ be a probability measure on $\SL_d(\Z)$ such that the Zariski closed subgroup $\Gbb$ generated by $\mu$ is semisimple.
There exists $C \geq 0$ such that for every prime number $p$ sufficiently large, and $n \geq C \log p$ , for all $a \in \Mat_d(\Fbb_p)$,
\[
\mu^{*n}(\{g \in \Mat_d(\Z) \mid \pi_p(g) = a\}) \ll_d \frac{1}{\abs{\Gbb_p(\Fbb_p)}}.
\]
\end{corollary}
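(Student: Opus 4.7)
The plan is to convert the spectral-gap estimate of Theorem~\ref{thm:specgap} into a pointwise bound on the density of the pushforward measure $\nu_n := (\pi_p)_*\mu^{*n}$, and then to translate the required smallness into the stated lower bound on $n$ using a polynomial upper bound on $|\Gbb_p(\Fbb_p)|$.

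First, I would view $\nu_n$ as a function on $G_p := \Gbb_p(\Fbb_p)$ (its density with respect to counting measure) and observe that $\nu_n = T_\mu^n \delta_e$, where $\delta_e$ is the Dirac mass at the identity. Since $\mu$ is a probability measure, $T_\mu$ is a contraction on $l^2(G_p)$ and fixes the uniform density $u = |G_p|^{-1}\indic_{G_p}$, hence preserves the mean-zero subspace $l_0^2(G_p)$. The decomposition $\delta_e = u + (\delta_e - u)$ gives
\[\nu_n - u = T_\mu^n(\delta_e - u),\]
with $\norm{\delta_e - u}_2^2 = 1 - |G_p|^{-1} \leq 1$.

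Then, Theorem~\ref{thm:specgap} supplies constants $c > 0$ and $k \geq 1$ with $\norm{T_\mu^k}_{l_0^2(G_p)} \leq 1 - c$ for every prime $p$. Iterating this bound and using the contractivity of $T_\mu$ on $l^2(G_p)$ for the remaining at most $k-1$ steps produces
\[\norm{\nu_n - u}_{l^2(G_p)} \leq (1-c)^{\lfloor n/k \rfloor}.\]
The trivial inequality $\norm{\cdot}_{l^\infty} \leq \norm{\cdot}_{l^2}$ then gives, for every $a \in G_p$,
\[\nu_n(a) \leq |G_p|^{-1} + (1-c)^{\lfloor n/k \rfloor},\]
while $\nu_n(a) = 0$ for $a \in \Mat_d(\Fbb_p) \setminus G_p$, since $\pi_p(\Gbb(\Z)) \subset G_p$.

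To reach the target bound $\nu_n(a) \leq 2|G_p|^{-1}$, it suffices to enforce $(1-c)^{\lfloor n/k \rfloor} \leq |G_p|^{-1}$, i.e.\ $n \geq C' \log |G_p|$ for an appropriate $C' = C'(c,k)$. A Lang--Weil type estimate, valid for all $p$ large enough since $\Gbb$ is semisimple of fixed dimension, gives $|\Gbb_p(\Fbb_p)| \leq C_{\Gbb}\, p^{\dim \Gbb}$, which translates the previous condition into $n \geq C \log p$ for a suitable $C$. I do not foresee any genuine obstacle; the only delicate point is that $T_\mu$, a priori defined via a measure on $\SL_d(\Z)$, descends canonically to $l^2(G_p)$ through the pushforward $(\pi_p)_*\mu$, a fact already tacitly used in the formulation of Theorem~\ref{thm:specgap}.
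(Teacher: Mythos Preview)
Your argument is correct and is precisely the standard spectral-gap-to-equidistribution computation the paper has in mind: the authors leave the proof to the reader, referring to \cite[\S 3.1]{HLW} for exactly this decomposition $\delta_e = u + (\delta_e - u)$, iteration of the spectral bound on $l_0^2$, and passage from $l^2$ to $l^\infty$. The final step, bounding $\log|\Gbb_p(\Fbb_p)|$ by $O(\log p)$, is indeed a routine point-counting estimate (Schwarz--Zippel suffices; Lang--Weil is not needed for the upper bound alone).
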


\subsection{Escaping from subvarieties: a consequence of the spectral gap}

The aim of this subsection is to establish the following proposition, using the results of the previous paragraph.
\begin{proposition}
\label{pr:gapEscape}
Let $\mu$ be a probability measure on $\SL_d(\Z)$ such that the Zariski closed subgroup $\Gbb$ generated by $\Supp\mu$ is semisimple and connected.

There exists a constant $c > 0$ depending on $\mu$ such that 
for all polynomials $f \in \Q[\Mat_d]$, of degree at most $D$ and not vanishing on $\Gbb$, we have 
\[\forall n \geq 1,\quad \mu^{*n}(\{g \in \Gamma \mid f(g)=0\}) \ll_{\mu,D} e^{-cn}.\]
\end{proposition}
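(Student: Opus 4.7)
The plan is to apply the mod\dash$p$ equidistribution of Corollary~\ref{cr:gapEquid} at a prime $p=p(n)$ growing exponentially in $n$, combined with Lang--Weil type bounds on the number of $\Fbb_p$\dash points of a proper subvariety of $\Gbb_p$. The probability that $f(g)=0$ is then dominated by the relative size of this subvariety inside $\Gbb_p$, which turns out to be $O(1/p)$ and hence exponentially small in $n$.

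Concretely, I would first normalize $f$. The zero set $\{g\in\Gbb\mid f(g)=0\}$ depends only on the class of $f$ in the finite-dimensional quotient $W_D := V_D/(V_D\cap I(\Gbb))$, where $V_D\subset\Q[\Mat_d]$ is the space of polynomials of degree at most $D$. Scaling $f$ by a nonzero rational, one may then take $f$ to represent a primitive element of the free part of the lattice $W_D^\Z := (V_D\cap\Z[\Mat_d])/(V_D\cap I_\Z(\Gbb))$. Writing $W_D^\Z\cong \Z^r\oplus T$ with $T$ finite, this primitivity ensures that for every prime $p$ outside a finite set $B=B(\Gbb,D)$, consisting of the divisors of $\abs{T}$ together with the primes of bad reduction of $\Gbb$, the reduction $\pi_p(f)$ does not vanish identically on $\Gbb_p$.

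For such $p$, the locus $V_p := \{g\in\Gbb_p\mid \pi_p(f)(g)=0\}$ is a proper subvariety of $\Gbb_p$ of degree $O_{D,\Gbb}(1)$, so Lang--Weil gives
\[
\abs{V_p(\Fbb_p)} \ll_{D,\Gbb} p^{\dim\Gbb - 1}
\quad\text{and}\quad
\abs{\Gbb_p(\Fbb_p)} \gg_\Gbb p^{\dim\Gbb}.
\]
By Corollary~\ref{cr:gapEquid}, for $p$ large and $n\geq C(\mu)\log p$,
\[
\mu^{*n}\bigl(\pi_p^{-1}(V_p(\Fbb_p))\bigr)
\leq \frac{2\abs{V_p(\Fbb_p)}}{\abs{\Gbb_p(\Fbb_p)}}
\ll_{D,\Gbb} p^{-1}.
\]
Since $\Gamma\subset\Gbb(\Z)$ and $f(g)=0$ forces $\pi_p(g)\in V_p(\Fbb_p)$, the set $\{g\in\Gamma\mid f(g)=0\}$ is contained in $\pi_p^{-1}(V_p(\Fbb_p))$ and hence inherits the same bound. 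Choosing $p$ prime in the interval $[e^{n/(2C)},\,2e^{n/(2C)}]$ (which exists by Bertrand's postulate once $n$ is above some threshold depending on $\mu,D,\Gbb$, the small cases being absorbed into the implied constant), we obtain $\mu^{*n}(\{f=0\})\ll_{D,\Gbb} e^{-n/(2C)}$, yielding the proposition with $c=1/(2C)$.

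The main obstacle is precisely the uniformity in $f$ of the bad prime set $B$. Without the normalization of $f$ above, $B$ would depend on the height of $f$ (indeed $\pi_p(f)$ vanishes on $\Gbb_p$ whenever all coordinates of $f$ in $W_D^\Z$ are divisible by $p$, which can happen for arbitrarily large $p$ if $f$ has large coefficients relative to $\Gbb$). Passing to a primitive representative in $W_D^\Z$ via the structure theorem for finitely generated abelian groups is what makes $B$ depend only on $D$ and $\Gbb$, and thereby produces a constant of the desired form $\ll_{\mu,D}$.
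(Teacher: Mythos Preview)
Your overall strategy matches the paper's exactly: combine the mod-$p$ equidistribution of Corollary~\ref{cr:gapEquid} with a Lang--Weil bound for the proper subvariety $\{\pi_p(f)=0\}\cap\Gbb_p$, then take $p\asymp e^{n/C}$. You also correctly isolate the one nontrivial point, namely that the set of bad primes must depend only on $\Gbb$ and $D$, not on the coefficients of $f$.

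Your resolution of that point has a gap. Primitivity of $[\tilde f]$ in $W_D^\Z = V_D^\Z/(V_D^\Z\cap I_\Z)$ only guarantees $\tilde f\notin pV_D^\Z+(V_D^\Z\cap I_\Z)$ for $p\nmid|T|$. What you actually need is $\tilde f\notin p\Z[\Mat_d]+I_\Z$, i.e.\ that the image of $\tilde f$ is nonzero in $\Fbb_p[\Gbb_p]=\Z[\Gbb]\otimes\Fbb_p$. The obstruction is the possible failure of injectivity of
\[
W_D^\Z\otimes\Fbb_p \longrightarrow \Z[\Gbb]\otimes\Fbb_p,
\]
equivalently the failure of $M_D$ (the image of $V_D^\Z$ in $\Z[\Gbb]$) to be $p$\dash saturated in $\Z[\Gbb]$: there may exist $h\in I_\Z$ of degree $>D$ whose degree-$\leq D$ truncation is congruent to $\tilde f$ modulo~$p$, even though no degree-$\leq D$ element of $I_\Z$ has this property. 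The torsion $T$ of $W_D^\Z$ does not see this phenomenon, so ``divisors of $|T|$ plus primes of bad reduction'' is not a priori sufficient for your set $B$. The paper's Lemma~\ref{lm:LWcplxity} handles precisely this issue via a Gr\"obner basis argument: fixing a basis of $I$ over~$\Q$ with integer coefficients and inverting the finitely many primes arising in leading coefficients and Buchberger reductions, one produces for each $f$ an auxiliary $h\in(\Q f\oplus I)\cap\Z[\Mat_d]$ of degree $O_{\Gbb,D}(1)$ (possibly exceeding $D$) with $\pi_p(h)\notin\pi_p(I_\Z)$. That extra degree flexibility is what makes the argument go through.
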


Indeed, this is a general version of \cite[Corollary 1.1]{BourgainGamburd_modpnII} which is stated for the group $\SL_d$ and for the simple random walk on the Cayley graph.
The proof is essentially the same.
But since it will be important for us to know that the upper bound depends only on the degree of $f$, we provide a detailed argument.
%Here we include the details nevertheless since we are dealing with random walks which are not simple random walk on Cayley graphs. \marginpar{\small ou bien "for the reader's convenience"? } 

%In this article, we adhere to the convention that a variety is identified with their point over a fixed algebraically closed field (e.g. $\C$ if the variety is defined over $\Q$.). Thus, by irreducible we always mean geometrically irreducible. 
% all relative notions (dimension, irreducibility, etc.) are with respect to the structure over this algebraically closed field.
%However, in the next proof we will briefly use the Zariski topology of a spectrum of an algebra over an algebraically non-closed filed, $\Q$.

We need the following lemma, which is a consequence of the Lang-Weil inequality.
It will be important for us to have an estimate which is uniform for subvarieties of bounded complexity.
\begin{lemma}
\label{lm:LWcplxity}
Given a geometrically irreducible subvariety $V \subset \Abb^d$ defined over $\Q$ and an integer $D \geq 1$, there exists $p_0 = p_0(V,D)$ such that the following holds. Let $f \in \Q[X_1,\dotsc,X_d]$ be a polynomial of degree at most $D$. Assume that $f$ does not vanish on $V$. Then for every prime number $p \geq p_0$,
\[\abs{\{\pi_p(x) \in \Fbb_p^d \mid x \in V \cap \Z^d,\, f(x) = 0\}} \ll_{V,D} p^{-1} \abs{V_p(\Fbb_p)}\]
where $V_p$ denotes the reduction modulo $p$ of $V$.
\end{lemma}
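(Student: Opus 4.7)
The plan is to apply a uniform Lang-Weil/Schwartz-Zippel type estimate to a flat $\Z$-model of the subvariety $V \cap \{f = 0\}$, and to compare it with the Lang-Weil count for $V$ itself.

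Clearing denominators we may assume $f \in \Z[X_1,\dotsc,X_d]$, which does not change the set of integer zeros. Let $e = \dim V$. Since $V$ is geometrically irreducible and $f$ does not vanish on $V$, the closed subscheme $W := V \cap \{f = 0\} \subset \Abb^d_\Q$ is a proper subvariety of $V$, so $\dim W \leq e - 1$; by B\'ezout's theorem applied to projective closures, $\deg W \leq D \cdot \deg V$. Let $\mathcal W \subset \Abb^d_\Z$ denote the scheme-theoretic (Zariski) closure of $W$, which is automatically flat over $\Spec \Z$. Constancy of Hilbert polynomials in flat projective families then yields, for every prime $p$,
\[ \dim \mathcal W_p \leq e - 1 \quad \text{and} \quad \deg \mathcal W_p \leq D \cdot \deg V. \]

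The set $S := \{\pi_p(x) \in \Fbb_p^d \mid x \in V \cap \Z^d,\, f(x) = 0\}$ is nothing but $\pi_p(\mathcal W(\Z))$, and is therefore contained in $\mathcal W_p(\Fbb_p)$. By the standard Schwartz-Zippel type inequality for finite fields (any $\Fbb_p$-subscheme of $\Abb^d$ of dimension at most $e - 1$ and degree at most $d_0$ has at most $O_{d,d_0}(p^{e-1})$ $\Fbb_p$-points), we obtain
\[ \abs{S} \leq \abs{\mathcal W_p(\Fbb_p)} \ll_{V,D} p^{e-1}. \]
On the other hand, the classical Lang-Weil inequality applied to the fixed geometrically irreducible $V$ gives $\abs{V_p(\Fbb_p)} \geq \frac{1}{2} p^e$ as soon as $p \geq p_0(V)$. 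Combining these two inequalities yields $\abs{S} \ll_{V,D} p^{-1} \abs{V_p(\Fbb_p)}$, which is the required bound.

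The delicate point — and the reason to work with the flat closure $\mathcal W$ rather than the naive $\mathcal V \cap \{f = 0\}$ for some $\Z$-model $\mathcal V$ of $V$ — is uniformity in $f$. For a fixed $f$ one could argue directly for $p$ larger than some threshold depending on $f$, but the reduction $\pi_p(f)$ could a priori vanish identically on $V_p$ in a manner that depends on $f$ and not only on $(V, D)$. Passing to the flat scheme-theoretic closure sidesteps this, because any such pathological degeneration is absorbed into embedded components of $\mathcal W_p$ without increasing its dimension, so the resulting estimate on $\abs{\mathcal W_p(\Fbb_p)}$ depends only on $V$ and $D$.
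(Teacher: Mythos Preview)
Your argument is correct and takes a genuinely different route from the paper. The paper handles the uniformity in $f$ by an explicit Gr\"obner-basis construction: it shows that for $p \geq p_0(V,D)$ and any $f$ of degree $\leq D$ not in the ideal $I$ of $V$, one can find $h \in (\Q f \oplus I) \cap \Z[X]$ of degree $O_{V,D}(1)$ whose reduction does not lie in $\pi_p(I_\Z)$; it then bounds the $\Fbb_p$-points of $V_p \cap \{\pi_p(h)=0\}$ by Schwarz--Zippel. You instead pass to the flat scheme-theoretic closure of $W = V \cap \{f=0\}$ over $\Spec\Z$ and use constancy of the Hilbert polynomial to control the special fibres directly. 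Your approach is more geometric and, as a bonus, yields a threshold $p_0$ that depends only on $V$ (it enters solely through the Lang--Weil lower bound for $V_p$, once Bertini--Noether guarantees geometric irreducibility of $V_p$); the paper's $p_0$ genuinely depends on $D$ through the Gr\"obner step. The paper's argument, on the other hand, is more self-contained and avoids appealing to flatness and Hilbert polynomials.

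Two small points worth tightening. First, Hilbert polynomials are constant in flat \emph{projective} families, so strictly speaking you should take the flat closure $\mathcal W'$ of the projective closure $\overline{W} \subset \Pbb^d_\Q$ inside $\Pbb^d_\Z$; then $\mathcal W \subset \mathcal W' \cap \Abb^d_\Z$ and you bound $\abs{\mathcal W'_p(\Fbb_p)}$ via Lang--Weil's Lemma~1 using only $\dim \mathcal W'_p \leq e-1$ and $\deg \mathcal W'_p = \deg \overline{W} \leq D \cdot \deg V$. Second, the lower bound $\abs{V_p(\Fbb_p)} \geq \tfrac12 p^e$ needs $V_p$ geometrically irreducible, which is not automatic from $V$ being so over $\Q$; you should invoke Bertini--Noether (or EGA IV 9.7.7) to absorb the finitely many bad primes into $p_0(V)$, as the paper does.
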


Note that to speak about reductions modulo $p$ of a variety over $\Q$, we need to choose a model over $\Z$. But since $V$ is a subvariety of an affine space, among such choices, there is a canonical one. See the first paragraph in the proof below.

\begin{proof}
We abbreviate $\Q[X_1,\dotsc,X_d]$ as $\Q[X]$ and $\Z[X_1,\dotsc,X_d]$ as $\Z[X]$. 
Let $I \subset \Q[X]$ be the ideal of all polynomials with coefficients in $\Q$ and vanishing on $V$. 
Let $I_\Z = I \cap \Z[X]$. % and $I_{\Fbb_p} = \pi_p(I_\Z) \subset \Fbb_p[X]$  for a prime number $p$.
For any prime number $p$, let $V_p$ be the variety over $\Fbb_p$ defined by the ideal $ \pi_p(I_\Z) \subset \Fbb_p[X]$.
By the Bertini-Noether theorem \cite[Proposition 10.4.2]{FriedMoshe},
%\cite[Chapter VIII, \S 8, Prop. 7]{Lang_DG}\marginpar{Lang traite le cas où $V$ est une hypersurface, nous avons besoin du cas général}, 
 $V_p$ is geometrically irreducible for $p \geq p_0$, where $p_0$ is a constant depending only on the embedding $V \subset \Abb^d$.

On the one hand applying the Lang-Weil inequality \cite[Theorem 1]{LangWeil} to the irreducible variety $V_p$, we obtain  
\begin{equation}
\label{eq:LWlower}
\abs{V_p(\Fbb_p)} \geq \frac{1}{2} p^{\dim(V_p)}.
\end{equation}
On the other hand, one can prove, using Gröbner bases \cite[Chapter 2]{CoxLittleOShea}, that given an integer $D \geq 1$ there is $p_0 = p_0(V,D)$ such that for all $p\geq p_0$ and all $f \in \Q[X] \setminus I$ of degree at most $D$ there is $h \in (\Q f \oplus I) \cap \Z[X]$ of degree at most $O_{V,D}(1)$ and such that $\pi_p(h) \notin \pi_p(I_\Z)$.
For such $h$, we have
\[\{x \in V \cap \Z^d \mid  f(x) = 0\} \subset \{x \in V \cap \Z^d \mid  h(x) = 0\}\]
and hence
\[\abs{\{\pi_p(x) \in \Fbb_p^d \mid x \in V \cap \Z^d,\, f(x) = 0\}} \leq \abs{ \{ x \in  V_p(\Fbb_p) \mid \pi_p(h) (x) = 0 \} }.\]
The right-hand side is the number of $\Fbb_p$-points in the subvariety $V_p \cap \{\pi_p(h) = 0\}$.
This subvariety has dimension at most $\dim(V_p) - 1$ since $V_p$ is  irreducible and $\pi_p(h) \notin \pi_p(I_\Z)$. Thus, applying a version of the Schwarz-Zippel estimate, like \cite[Lemma 1]{LangWeil}, and using the fact that the complexity controls the degree, we get
\begin{equation}
%\label{eq:LWupper}
\abs{(V_p \cap \{\pi_p(h) = 0\})(\Fbb_p)} \ll_{V,D} p^{\dim(V_p) - 1}.
\end{equation}
Together with \eqref{eq:LWlower}, this proves the desired inequality.
\end{proof}

Now we are ready to prove Proposition~\ref{pr:gapEscape}.
\begin{proof}[Proof of Proposition~\ref{pr:gapEscape}]
By Lemma~\ref{lm:LWcplxity},
\begin{equation*}
%\label{eq:dimf=0}
\abs{\{\pi_p(g) \in \Mat_d(\Fbb_p) \mid g \in \Gamma,\, f(g) = 0\}} \ll_{\Gbb,D} p^{-1} \abs{\Gbb_p(\Fbb_p)}
\end{equation*}
for every prime number $p \geq p_0(\Gbb,D)$.
Combined with Corollary \ref{cr:gapEquid}, this yields
\[\mu^{*n}(\{g \in \Gamma \mid f(g) = 0\}) \ll_{\Gbb,D} p^{-1}.\]
for all $n \geq C \log p$, where $C$ is a constant depending only on $\mu$. We conclude by choosing $p$ to be a prime number such that $p \asymp e^{n / C}$ and $p \geq p_0$.
%By \cite[Chapter I, \S 1.2, Proposition]{Borel}, the identity component of $\Gbb$ is defined over $\Q$. It follows that all the connected components of $\Gbb$ are defined over $\Q$ because $\Gbb(\Q)$ acts transitively on the set of connected components of $\Gbb$. 
%We get \eqref{eq:dimf=0} by applying the lemma below to each of the connected components and summing up the inequality thus obtained.
\end{proof}

\subsection{Large deviation estimates for random matrix products.}
In this subsection, $\mu$ is a Borel probability measure on $\SL_d(\R)$, not necessarily supported on matrices with integer coefficients. 
By $\Gamma$ we denote the closure of the subsemigroup generated by $\Supp(\mu)$ and by $G$ the group of $\R$-points of the Zariski closure of $\Gamma$. 

Let us first recall the large deviation estimates for random matrix products.
This result is originally due to Le Page~\cite{LePage}, and the version below is taken from Bougerol~\cite[Theorem V.6.2]{BougerolLacroix}.
For $g \in \GL_d(\R)$, denote by $\sigma_1(g) \geq \dots \geq \sigma_d(g) > 0$ the singular values of $g$ ordered decreasingly.
Recall that the Lyapunov exponents $(\lambda_k)_{1\leq k \leq d}$ of a measure $\mu$ on $\GL_d(\R)$ is defined as the unique real numbers satisfying
\[\lambda_1 + \dots + \lambda_k = \lim_{n\to +\infty} \frac{1}{n} \int \log \norm{\wedge^k g} \dd \mu^{*n}(g),\quad k = 1, \dotsc, d.\]

\begin{theorem}[Large deviation estimates]%see also {\cite[Theorem 3.4]{Breuillard}} and {\cite[Lemma 14.11]{BenoistQuint}}
\label{thm:LargeD}
Let $\mu$ be a Borel probability measure on $\GL_d(\R)$ having a finite exponential moment. 
Let $\lambda_{k}$ denote the $k$-th Lyapunov exponent associated to $\mu$.
Assume that the group $G$ generated by $\mu$ acts strongly irreducibly on $\R^d$.
For any $\omega > 0$, there is $c > 0$, $n_0 > 0$ such that  the following holds.
\begin{enumerate}
\item \label{it:LargeDn} For all $n \geq n_0$,
\[\mu^{*n} \bigl(\bigl\{ g \in \Gamma \mid \abs{ \frac{1}{n}\log \norm{g} - \lambda_1} \geq \omega \bigr\}\bigr) \leq e^{-cn}.\]
\item \label{it:LargeDsv} For all $k = 1, \dotsc, d$ and all $n \geq n_0$,
\[\mu^{*n} \bigl(\bigl\{ g \in \Gamma \mid \abs{ \frac{1}{n}\log \sigma_k(g) - \lambda_{k}} \geq \omega \bigr\}\bigr) \leq e^{-cn}.\]
\item \label{it:LargeDnc}  For all $n \geq n_0$, For all $v \in \R^d \setminus \{0\}$,
\[\mu^{*n} \bigl(\bigl\{ g \in \Gamma \mid \bigl\lvert \frac{1}{n}\log \frac{\norm{gv}}{\norm{v}} - \lambda_1 \bigr\rvert \geq \omega \bigr\}\bigr) \leq e^{-cn}.\]
%\item \label{it:LargeDmc} If the action of $\Gamma$ on $\R^d$ is strongly irreducible and proximal then for all nonzero vectors $x \in \R^d$ and all nonzero linear forms $f \in (\R^d)^*$,
%\[\mu^{*n} \bigl\{g \in \Gamma \mid \abs{f(gx)}\leq e^{-\omega l}\norm{gx}\,\norm{f}\bigr\} \leq e^{-cl}.\]
\end{enumerate}
\end{theorem}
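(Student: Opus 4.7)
My plan is to follow the classical transfer-operator/spectral method originating with Lepage and streamlined in \cite{BougerolLacroix}. The central statement is item~\ref{it:LargeDnc}, from which the other two will follow. For a unit vector $v \in \R^d$, the quantity $\sigma_n = \log(\norm{g_n\cdots g_1 v}/\norm{v})$ is a Birkhoff-type sum over the Markov chain $\bar v_n = g_n \cdots g_1 \cdot \bar v$ on the projective space $\Pbb(\R^d)$. For $z$ in a complex neighborhood of $0$, introduce the family of transfer operators
\[
(L_z f)(\bar v) = \int_G \Bigl(\frac{\norm{gv}}{\norm{v}}\Bigr)^z f(g \cdot \bar v)\dd\mu(g)
\]
acting on the Hölder space $C^\alpha(\Pbb(\R^d))$ for some small $\alpha > 0$; the finite exponential moment of $\mu$ ensures that $L_z$ is bounded for $\abs{\Re z}$ small enough.

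The core step is to show that $L_z$ is quasi-compact with a simple dominant eigenvalue $\lambda(z)$ depending analytically on $z$, with $\lambda(0)=1$ and $\lambda'(0)=\lambda_1$. This is established via an Ionescu Tulcea--Marinescu (Doeblin--Fortet) inequality, combining a compactness estimate on $C^0$ with a smoothing estimate on the Hölder seminorm. Uniqueness of the Furstenberg stationary measure on $\Pbb(\R^d)$ — a consequence of Guivarc'h--Raugi requiring only strong irreducibility and a finite moment — gives simplicity of $\lambda(0)$, while Furstenberg's formula for the top Lyapunov exponent gives $\lambda'(0) = \lambda_1$. Since $(L_z^n \indic)(\bar v) = \Ebb\bigl[e^{z \sigma_n}\bigr]$, a Chebyshev/Chernoff argument combined with a Legendre transform of $\log \lambda(z)$ then yields the exponential bound of item~\ref{it:LargeDnc} uniformly in $\bar v$. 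Item~\ref{it:LargeDn} follows from item~\ref{it:LargeDnc} by writing $\norm{g} = \max_{v \in F}\norm{gv}/\norm{v}$ for a sufficiently fine finite net $F \subset \Pbb(\R^d)$, whose polynomial cardinality is absorbed by the exponential bound after slightly shrinking $\omega$. Item~\ref{it:LargeDsv} reduces to item~\ref{it:LargeDn} applied to the exterior powers $\Lambda^k g$ acting on $\Lambda^k \R^d$, using the identities $\sigma_1(\Lambda^k g) = \sigma_1(g)\cdots\sigma_k(g)$ and the fact that the top Lyapunov exponent of the induced random walk on $\Lambda^k \R^d$ is $\lambda_1 + \cdots + \lambda_k$; Zariski connectedness of $\Gbb$ allows one to decompose $\Lambda^k \R^d$ into a finite direct sum of irreducible $G$-submodules and apply item~\ref{it:LargeDn} to each component.

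The main obstacle is establishing the spectral gap of $L_z$ without any proximality assumption, since Lepage's original Doeblin--Fortet estimate relies on contraction of proximal elements on $\Pbb(\R^d)$. One workaround is to replace $\Pbb(\R^d)$ by the minimal flag variety of $\Gbb$ on which $G$ acts proximally, where $\lambda_1$ is realised via the Cartan projection; Lepage's analysis then carries over verbatim and descends to give the desired cocycle estimate on $\Pbb(\R^d)$. Alternatively, one invokes the Goldsheid--Margulis simplicity theorem — which holds under strong irreducibility alone — to produce effective almost-sure contraction of the dominant singular direction after passing to a high convolution power $\mu^{*n_0}$, which is sufficient to close the Doeblin--Fortet inequality for $L_z^{n_0}$ and hence to establish its quasi-compactness.
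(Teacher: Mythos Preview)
The paper does not give its own proof of this theorem: it is quoted as a known result, attributed to Lepage~\cite{LePage} with the precise formulation taken from Bougerol~\cite[Theorem~V.6.2]{BougerolLacroix}. Your sketch is an outline of exactly that transfer-operator argument, so there is nothing to compare against beyond the cited reference.

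Two cautions on the details. First, your assertion that the Furstenberg stationary measure on $\Pbb(\R^d)$ is unique under strong irreducibility alone is not correct: uniqueness on the full projective space requires proximality as well, and without it the operator $L_0$ need not have a simple top eigenvalue on $C^\alpha(\Pbb(\R^d))$. The standard fix---precisely the ``Bougerol trick'' the paper invokes in the proof of Proposition~\ref{pr:LDrhokappa}---is to pass to $\wedge^r\R^d$ with $r$ the proximality dimension and run the argument on the strongly irreducible \emph{and proximal} summand $\Lambda_+$; this is close to your first workaround. Your second workaround via Goldsheid--Margulis does not apply here: that theorem needs hypotheses on the Zariski closure strictly stronger than strong irreducibility, and in the non-proximal case one has $\lambda_1 = \dotsb = \lambda_r$, so no contraction on $\Pbb(\R^d)$ is available. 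Second, your reduction of item~\ref{it:LargeDsv} to item~\ref{it:LargeDn} invokes Zariski connectedness of $\Gbb$, but the theorem as stated assumes only strong irreducibility; the cited reference handles this without that extra hypothesis.
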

%Item~\ref{it:LargeDn} is, of course, a special case of Item~\ref{it:LargeDsv} since $\sigma_1(g) = \norm{g}$.
%\comm{Si je me souviens bien, les deux premiers points ne requièrent pas d'hypothèse d'irréductibilité. Quant au troisième, il doit rester valable si l'on suppose seulement que l'action de $\Gamma$ est irréductible, mais cela est peut-être un peu plus subtil à montrer.}

Recall that the \emph{proximal dimension} of the group $G$ is the integer
\[ r = \min\{\, \rank g\ ;\ g\in\overline{\R G}\setminus\{0\} \,\},\]
and that $G$ is said to be \emph{proximal} if $r=1$.
For $g \in G$, we write its Cartan decomposition $g = k\diag(\sigma_1(g),\dotsc,\sigma_d(g))l$, where $k$ and $l$ are orthogonal matrices and $\sigma_1(g) \geq \dots \geq \sigma_d(g)$ are the singular values of $g$.
We also define
\[V^+_g = k \Span(e_1,\dotsc,e_r) \; \text{ and } \; W^-_g = l^{-1}\Span(e_{r+1},\dotsc, e_d)\]
where $(e_1,\dotsc,e_d)$ is the standard basis of $\R^d$.
One important consequence of the above theorem is the following regularity statement for the law at time $n$ of the random walk.
It is due to Guivarc'h \cite{guivarch}, and also appears in \cite[Lemma 4.5]{BFLM}.

\begin{lemma}\label{prox}
Let $\mu$ be a Borel probability measure on $\GL_d(\R)$ having a finite exponential moment. 
Assume that the group $G$ generated by $\mu$ is proximal and acts strongly irreducibly on $\R^d$.
Then, there exists $\kappa>0$ such that for every $n\in\N$,
\begin{equation}
\forall \rho \geq e^{-n},\quad \mu^{*n} \bigl(\bigl\{ g \in \Gamma \mid \dang(V^+_{\transp{g}}, v^\perp) \leq \rho \bigr\}\bigr) \ll_\mu \rho^\kappa.
\end{equation}
\end{lemma}

%\comm{Ce lemme est faux sans l'hypothèse de proximalité, si on remplace $v$ par un sous-espace arbitraire $P$ de dimension $r$. En effet, Weikun m'a montré un exemple où l'ensemble limite de $G$ dans $\Grass(r,d)$ est inclus dans un pinceau. Il semble que ce que permet de faire l'astuce de Bougerol et l'algèbre linéaire d'Emmanuel, c'est de trouver, pour chaque $v$, un sous-espace $P$ de dimension $r$ contenant $v$ et pour lequel l'estimée ci-dessus est valable.}

We shall use this lemma to derive the following proposition, which will be useful in the proof of Proposition~\ref{pr:NCaffine}.

\begin{proposition}
\label{pr:LDrhokappa}
Let $\mu$ be a Borel probability measure on $\GL_d(\R)$ having a finite exponential moment. 
Assume that $G$ acts strongly irreducibly on $\R^d$.
Then there exists $\kappa > 0$ such that 
for all $n \geq 1$ and all $\rho \geq e^{-n}$, for every $v\in\R^d\setminus\{0\}$,
%\marginpar{\small Ce choix de $e^{-n}$ est bien sûr arbitraire, on peut le remplacer par $e^{-10n}$ par exemple.}
\[
\mu^{*n} \bigl(\bigl\{ g \in \Gamma \mid \norm{gv}\leq \rho \norm{g} \norm{v} \bigr\}\bigr) \ll_\mu \rho^\kappa.
\]
\end{proposition}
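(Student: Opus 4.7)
Plan:

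The proposition is a form of the classical Guivarc'h Hölder regularity inequality for random matrix products. My approach is a Markov reduction followed by an appeal to the spectral theory of the random walk on projective space.

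Write $Y(g) := \norm{g}\norm{v}/\norm{gv}$, which is always $\geq 1$; the event in question is $\{Y \geq \rho^{-1}\}$. Markov's inequality applied to $t \mapsto t^\kappa$ therefore gives
\[ \mu^{*n}\bigl(\bigl\{g \in \Gamma \mid \norm{gv}\leq \rho\norm{g}\norm{v}\bigr\}\bigr) \leq \rho^\kappa \int Y^\kappa \dd\mu^{*n}(g), \]
for every $\rho > 0$. Hence it suffices to establish the uniform moment bound
\begin{equation}
\label{eq:starLD}
\exists \kappa > 0, \qquad \sup_{v \neq 0,\; n \geq 1} \int \Bigl(\frac{\norm{g}\norm{v}}{\norm{gv}}\Bigr)^\kappa \dd\mu^{*n}(g) < \infty.
\end{equation}
Note that once \eqref{eq:starLD} is proved, the conclusion holds for \emph{every} $\rho > 0$, the restriction $\rho \geq e^{-n}$ being superfluous; this is a first indication that the proposition does not follow from the large deviation estimates in Theorem~\ref{thm:LargeD} alone, since the rate function $c(\omega)$ there degenerates as $\omega \to 0$.

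To prove \eqref{eq:starLD}, I would introduce the family of transfer operators $T_\kappa f([v]) = \int f([gv]) (\norm{gv}/\norm{v})^\kappa \dd\mu(g)$ acting on a suitable Hölder space of functions on $\Pbb(\R^d)$; one has the identity $T_\kappa^n \mathbf{1}([v]) = \int (\norm{gv}/\norm{v})^\kappa \dd\mu^{*n}(g)$. Under strong irreducibility and the exponential-moment hypothesis on $\mu$, the Markov operator $T_0$ has a spectral gap on this space (Le Page's theorem). Analytic perturbation in $\kappa$ then yields, for sufficiently small $\kappa > 0$, a simple dominant eigenvalue $\lambda(\kappa)$ of $T_\kappa$ together with a positive eigenfunction bounded between two positive constants, whence $T_\kappa^n \mathbf{1}([v]) \asymp \lambda(\kappa)^n$ uniformly in $v$. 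A parallel argument, or Cramér's method applied to the cumulant generating function of $\log\norm{g_n \cdots g_1}$ using the exponential moment assumption, gives $\int \norm{g}^\kappa \dd\mu^{*n}(g) \asymp \lambda(\kappa)^n$. Dividing the two estimates yields \eqref{eq:starLD}.

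The main obstacle is that Le Page's spectral gap theorem is classically stated under the assumption that $\Gamma$ is strongly irreducible \emph{and} proximal, whereas here only strong irreducibility is assumed. The standard workaround is to pass to the action of $\Gamma$ on an exterior power $\wedge^k\R^d$, where $k$ is the proximality dimension of $\Gamma$: on a suitable irreducible constituent of this representation, $\Gamma$ acts strongly irreducibly and proximally, so Le Page's theorem applies directly there. Pulling the estimate back to the action on $\R^d$ through the comparisons $\norm{\wedge^k g} \asymp \norm{g}^k$ and an analysis of $\norm{(\wedge^k g)(v \wedge w_1 \wedge \cdots \wedge w_{k-1})}$ for generic auxiliary vectors $w_1, \ldots, w_{k-1}$ then yields \eqref{eq:starLD} in the generality stated, and the proposition follows at once from the Markov reduction above.
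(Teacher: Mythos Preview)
Your overall strategy---Markov reduction to a uniform moment bound, transfer-operator/spectral methods in the proximal case, and the exterior-power trick to reduce to the proximal case---is reasonable, and the non-proximal reduction does parallel the paper's approach. However, there is a genuine gap at the step where you write ``Dividing the two estimates yields \eqref{eq:starLD}.''

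The two asymptotics you obtain, $T_\kappa^n\mathbf{1}([v]) = \int (\norm{gv}/\norm{v})^\kappa \dd\mu^{*n}(g) \asymp \lambda(\kappa)^n$ and $\int \norm{g}^\kappa \dd\mu^{*n}(g) \asymp \lambda(\kappa)^n$, concern the marginal behaviour of $\norm{gv}/\norm{v}$ and of $\norm{g}$ separately. The quantity in \eqref{eq:starLD} is the integral of their \emph{ratio} raised to a power, and there is no general principle allowing one to bound $\int (A/B)^\kappa$ from $\int A^\kappa$ and $\int B^\kappa$ alone---that requires control of the joint distribution. A Cauchy--Schwarz attempt, for instance, yields the bound $(\lambda(2\kappa)\lambda(-2\kappa))^{n/2}$, and the strict convexity of $\kappa\mapsto\log\lambda(\kappa)$ (positive variance in the central limit theorem) forces this to grow exponentially in $n$ rather than stay bounded. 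So the ``division'' is not valid as stated.

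The paper closes this gap by a geometric argument rather than a purely spectral one: in the proximal case it invokes the elementary inequality $\norm{gv} \geq \dang(\R v, W_g^-)\,\norm{g}\,\norm{v}$, which reduces the problem to a H\"older-type non-concentration estimate for the repelling hyperplane $W_g^-$ (equivalently, for the attracting direction $V_{\transp{g}}^+$ of the transposed walk). That estimate is \cite[Lemma~4.5]{BFLM}, itself a consequence of the H\"older regularity of the Furstenberg measure. This is precisely the missing ingredient in your sketch: the ratio $\norm{g}\norm{v}/\norm{gv}$ is governed by the \emph{position} of $W_g^-$ relative to $[v]$, and it is the distribution of that position---not the sizes $\norm{g}$ and $\norm{gv}$ individually---that must be controlled. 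The reduction to the proximal case via $\wedge^r\R^d$ is then carried out essentially as you outline, with the additional input that the gap $\lambda_r>\lambda_{r+1}$ (Guivarc'h--Raugi) and the large-deviation estimate for singular values handle the error term $\sigma_{r+1}(g)/\sigma_r(g)$; it is here that the restriction $\rho\geq e^{-n}$ is actually used in the paper's argument.
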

\begin{proof}
%\comm{C'est un peu dommage de se ramener au cas proximal dans cette démonstration. D'autant que cela fait utiliser deux fois l'astuce de Bougerol: une première fois pour démontrer le théorème~\ref{thm:LargeD}, et une seconde ci-dessous. À étudier...}
By \cite[Lemma~14.2 (i)]{BenoistQuint},
\begin{equation}
\label{eq:normgv}
\forall g \in \GL_d(\R), \forall v \in \R^d\setminus\{0\},\quad \norm{gv} \geq \dang(\R v , W^-_g) \norm{g} \norm{v},
\end{equation}
where $\dang$ is defined, for any two subspaces $V,W \subset \R^d$ of $\R^d$ with respective orthonormal bases $(v_1,\dots,v_s)$ and $(w_1,\dots,w_t)$, by the formula
\[
\dang(V,W) = \norm{v_1 \wedge \dotsm \wedge v_s \wedge w_1 \wedge \dotsm \wedge w_t}.
\]
If $G$ is proximal, then Lemma~\ref{prox} applied to the transposed random walk, which is also proximal, shows that there exists $\kappa > 0$ such that 
\begin{equation}
\label{eq:V+rhokappa}
\forall \rho \geq e^{-n},\quad \mu^{*n} \bigl(\bigl\{ g \in \Gamma \mid \dang(V^+_{\transp{g}}, v^\perp) \leq \rho \bigr\}\bigr) \ll_\mu \rho^\kappa.
\end{equation}
Noting that $(W^-_g)^\perp = V^+_{\transp{g}}$ and that $\dang(V,W)=\dang(V^\perp,W^\perp)$ if $\dim(V)+\dim(W)=d$, we see that the result follows from \eqref{eq:normgv}.

We now use Bougerol's trick~\cite[Proof of Theorem V.6.2]{BougerolLacroix} to reduce the proposition to the proximal case.
The argument is based on the following lemma, taken from unpublished notes of Emmanuel Breuillard \cite[Lemma~3.2 and Lemma~3.3]{Breuillard}, generalizing \cite[Lemma 4.36]{BenoistQuint}.
Recall that $r$ denotes the proximality dimension of $G$.

\begin{lemma}
We have a decomposition of $\wedge^r \R^d$ into $G$-invariant subspaces
\[\wedge^r \R^d = \Lambda_+ \oplus \Lambda_0\]
such that the action of $G$ on $\Lambda_+$ is strongly irreducible and proximal and moreover,
\begin{equation}
\label{eq:gLambda+}
\forall g \in G,\quad \norm{(\wedge^r g)_{\mid \Lambda_+}} \gg_G \norm{g}^r \geq \norm{\wedge^r g}.
\end{equation}
Moreover, if $\pi_+ \colon \wedge^r \R^d \to \Lambda_+$ denotes the projection with respect to this decomposition, then for every $v \in \R^d$, we can find a subspace $P \subset \R^d$ of dimension $r$ and containing $v$ such that
\begin{equation}
\label{eq:pickPfromv}
\pi_+(\mathbf{v}_{\! P}) \gg_G \norm{\mathbf{v}_{\! P}},
\end{equation}
where $\mathbf{v}_{\! P} \in \wedge^r \R^d$ is the wedge product of the elements of a basis of $P$.
\end{lemma}

Now, observe that for every $g$ in $\GL_d(\R)$, 
\[\dang(\R v , W^-_g) \geq \dang(P,W^-_g).\]
Hence, recalling \eqref{eq:normgv}, it suffices to prove, for some $\kappa > 0$ and $n_0 \geq 1$,
\begin{equation}
\label{eq:W-rhokappa}
\forall n \geq n_0,\, \forall \rho \geq e^{-n},\quad \mu^{*n} \bigl(\bigl\{ g \in \Gamma \mid \dang(P, W^-_g) \leq \rho \bigr\}\bigr) \ll_\mu \rho^\kappa.
\end{equation}
By \cite[Lemma~14.2 (i)]{BenoistQuint} applied in $\wedge^r\R^d$ (see also \cite[Lemma 4.2]{Breuillard}) we find
\[
\forall g \in \GL_d(\R),\quad  \frac{\norm{(\wedge^r g) \mathbf{v}_{\! P}}}{\norm{\wedge^r g}\, \norm{\mathbf{v}_{\! P}}} \leq \dang(P, W^-_g) + \frac{\sigma_{r+1}(g)}{\sigma_r(g)}.
\]
Combined with \eqref{eq:gLambda+} and \eqref{eq:pickPfromv}, this yields
\[
\forall g \in G,\quad \frac{\norm{(\wedge^r g)_{\mid \Lambda_+} \pi_+(\mathbf{v}_{\! P})}}{\norm{(\wedge^r g)_{\mid \Lambda_+}}\, \norm{\pi_+(\mathbf{v}_{\! P})}} \ll_G \dang(P, W^-_g) + \frac{\sigma_{r+1}(g)}{\sigma_r(g)}.
\]
By a result of Guivarc'h-Raugi~\cite{GuivarchRaugi}, $\lambda_r > \lambda_{r+1}$. Applying Theorem~\ref{thm:LargeD}\ref{it:LargeDsv} to $k = r$ and $r + 1$ and $\omega_0 = (\lambda_r - \lambda_{r + 1})/3 > 0$, we get $c > 0$ and $n_0 \geq 1$ such that
\[
\forall n \geq n_0,\quad \mu^{*n} \bigl(\bigl\{ g \in \Gamma \mid \frac{\sigma_{r+1}(g)}{\sigma_r(g)} \leq e^{-\omega_0 n} \bigr\}\bigr) \geq 1 - e^{-cn}.
\]
Note that $e^{-cn} \leq \rho^c$.
The desired estimate \eqref{eq:W-rhokappa} then follows by the proximal case applied to the induced random walk on $\Lambda_+$ and to the vector $\pi_+(\mathbf{v}_{\! P}) \in \Lambda_+$.
\end{proof}

\subsection{Escaping a small neighborhood of a subvariety}
For an integer $D \geq 0$, and a regular function $f \in \R[\Gbb]$ on $\Gbb$, we say that $f$ has degree at most $D$ if it can be represented by a polynomial on $\Mat_d$ of degree at most $D$. Denote by $\R[\Gbb]_{\leq D}$ the finite-dimensional subspace consisting of regular functions of degree at most $D$. We fix a norm on $\R[\Gbb]_{\leq D}$.

\begin{lemma}
\label{lm:mundef<e}
Let $\mu$ be a Borel probability measure on $\SL_d(\Z)$ having a finite exponential moment. 
Assume that the Zariski closed subgroup $\Gbb$ generated by $\Supp\mu$ is semisimple and connected.

Given an integer $D \geq 1$, there exist constants $C> 0$, $c > 0$ and $n_0 \geq 1$ depending on $\mu$ and $D$ such that
\[\forall f \in \R[\Gbb]_{\leq D},\; \forall n \geq n_0, \quad \mu^{*n} \bigl(\bigl\{ g \in \Gamma \mid \abs{f(g)} < e^{-C n} \norm{f} \bigr\}\bigl) \leq e^{-c n}.\]
\end{lemma}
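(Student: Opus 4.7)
The strategy is to use Dirichlet's simultaneous approximation theorem to reduce the quantitative bound $|f(g)| < e^{-Cn}\|f\|$ to an exact vanishing statement $\tilde{f}(g) = 0$ for a suitable integer-coefficient polynomial $\tilde{f}$, to which Proposition~\ref{pr:gapEscape} applies. The integer structure of $\Gamma \subset \SL_d(\Z)$ is what makes this reduction possible.

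Since $\Gbb$ is defined over $\Q$, I will fix a $\Z$-basis $(f_1, \dotsc, f_N)$ of $\Z[\Gbb]_{\leq D}$; this is also an $\R$-basis of $V := \R[\Gbb]_{\leq D}$, and I equip $V$ with the Euclidean norm in this basis (all norms on $V$ being equivalent). Normalize $\|f\| = 1$, write $f = \sum_i a_i f_i$ with $\|a\|_2 = 1$, and use Theorem~\ref{thm:LargeD}\ref{it:LargeDn} to restrict to the event $\|g\| \leq e^{(\lambda_1+1)n}$, which has $\mu^{*n}$-probability at least $1 - e^{-c_1 n}$; on this event $|f_i(g)| \leq K e^{D(\lambda_1+1)n}$ for a constant $K = K(D)$. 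Now apply Dirichlet's theorem with parameter $Q := 2NKe^{D(\lambda_1+1)n}$ to obtain $q \in \{1, \dotsc, Q^N\}$ and $p \in \Z^N$ with $\|qa - p\|_\infty \leq 1/Q$. Set $\tilde{f} := \sum_i p_i f_i \in \Z[\Gbb]_{\leq D}$. Then $\tilde{f}(g) \in \Z$ for every $g \in \Gamma$, and on the truncation event
\[
|\tilde{f}(g) - q f(g)| \leq N \cdot \|qa - p\|_\infty \cdot K e^{D(\lambda_1+1)n} \leq \tfrac{1}{2}.
\]
Choosing $C := 1 + ND(\lambda_1+1)$, if $|f(g)| < e^{-Cn}$ then $|\tilde{f}(g)| \leq q e^{-Cn} + 1/2 \leq Q^N e^{-Cn} + 1/2 < 1$ for $n$ large, forcing $\tilde{f}(g) = 0$ by integrality. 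Moreover $\|p\|_\infty \geq q\|a\|_\infty - 1/Q \geq 1/\sqrt{N} - 1/Q > 0$ for $n$ large, so $\tilde{f}$ is a nonzero element of $\Z[\Gbb]_{\leq D}$. Proposition~\ref{pr:gapEscape} applied to $\tilde{f}$ then gives $\mu^{*n}(\{g : \tilde{f}(g) = 0\}) \ll_{\mu, D} e^{-c_0 n}$, and combining with the truncation bound yields the conclusion with $c := \tfrac{1}{2}\min(c_0, c_1)$ and a suitable $n_0$.

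The crucial point making the argument work is the uniformity of Proposition~\ref{pr:gapEscape} in the polynomial: although $\tilde{f}$ depends on both $f$ and $n$, its degree is bounded by $D$ and the implied constant depends only on $\mu$ and $D$. The main technical care lies in balancing the Dirichlet denominator bound $q \leq Q^N$ against the required threshold $e^{-Cn}$, which forces $C$ to be of order $ND(\lambda_1+1)$; since $N = \dim V$ and $\lambda_1$ are both fixed by $\mu$ and $D$, this is harmless.
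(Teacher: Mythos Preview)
Your proof is correct, and it takes a genuinely different route from the paper's.

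Both arguments share the same skeleton: use large deviations to restrict to $\norm{g}\le e^{O(\lambda_1 n)}$, produce a nonzero \emph{rational} polynomial $f_0\in\Q[\Gbb]_{\le D}$ that vanishes on the whole bad set, and then invoke the uniform escape estimate of Proposition~\ref{pr:gapEscape}. The difference is in how the rational witness $f_0$ is found. The paper argues structurally: it considers the subspace $W=\bigcap_{g\in A_C}\ker(\ev_g)\subset V$, notes that $W$ is defined over $\Q$ because the evaluation maps $\ev_g$ are integral, and shows $W\ne\{0\}$ via a determinant bound (picking $s=\dim V$ points $g_1,\dotsc,g_s\in A_C$ so that the evaluation matrix $\Phi$ would be invertible with $\abs{\det\Phi}\ge 1$, and deriving a contradiction from $\norm{\Phi^{-1}}\ll\norm{\Phi}^{s-1}$). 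You instead approximate $f$ directly by an integer polynomial $\tilde f$ via simultaneous Dirichlet approximation, and then use pointwise integrality of $\tilde f(g)$ to turn ``small'' into ``zero''. Your approach is arguably more elementary and transparent; the paper's is more intrinsic in that it never needs to fix a basis or invoke Dirichlet, and it yields the slightly stronger structural fact that $A_C$ lies in a single rational hypersurface determined by the set $A_C$ itself rather than by $f$.

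A couple of minor cosmetic points you might tidy up: the existence of a $\Z$-basis of $\Z[\Gbb]_{\le D}$ which is simultaneously an $\R$-basis of $V$ deserves one sentence of justification (it follows from $\Gbb$ being defined over $\Q$, so the image of $\Z[\Mat_d]_{\le D}$ in $\R[\Gbb]_{\le D}$ is a full-rank lattice); and when you invoke Proposition~\ref{pr:gapEscape}, you should note that $\tilde f$ can be lifted to an element of $\Z[\Mat_d]_{\le D}$ not vanishing on $\Gbb$, which is the precise hypothesis there. Neither affects correctness.
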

\begin{proof}
By Theorem~\ref{thm:LargeD}\ref{it:LargeDn}, there is $c > 0$ such that for $n$ large enough
\[\mu^{*n} \bigl( \bigl\{ g \in \Gamma \mid \norm{g} \geq e^{2 \lambda_1 n}  \bigr\} \bigr) \leq e^{-cn},\]
where $\lambda_1$ is the top Lyapunov exponent associated to the random walk defined by $\mu$ on $\R^d$.
Thus, we are left to bound from above the $\mu^{*n}$\dash{}measure of the set
\[A_{f,C} = \bigl\{ g \in \Gamma \mid \abs{f(g)} \leq e^{-C n} \norm{f} \text{ and } \norm{g} \leq e^{2 \lambda_1 n} \bigr\}.\]

Let us abbreviate $V = \R[\Gbb]_{\leq D}$. Let $V_\Q$ denote the set of functions $f \in V$ which are $\Q$-rational, i.e. represented by polynomials on $\Mat_d$ with coefficients in $\Q$.
Note that $V_\Q$ defines a $\Q$\dash{}structure on $V$.

We claim that if $C$ is chosen large enough then for each $f \in V$, $A_{f,C}$ must be contained in some subvariety $\{f_0 = 0\}$ where $f_0 \in V_\Q \setminus \{0\}$. Then Proposition~\ref{pr:gapEscape} applied to $f_0$ allows to conclude. 

For every $g \in \Gamma$ let $\ev_{\!g} : V \to \R$ be the evaluation map
\[\forall v\in V, \quad \ev_{\!g}(v) = v(g).\]
Since the matrices $g \in A_{f,C}$ have integer coefficients, the intersection 
\[W = \bigcap_{g \in A_{f,C}} \ker(\ev_{\!g})\]
is a subspace of $V$ defined over $\Q$, i.e. $W = \R \otimes_\Q (W \cap V_\Q)$.
We want to show that $W \cap V_\Q$ contains nonzero element. 
Assume for a contradiction that $W = \{0\}$.

Write $s = \dim(V)$. 
We can choose $g_1 ,\dotsc, g_s \in A_{f,C}$ such that 
\[\{0\} = \bigcap_{i = 1}^s \ker(\ev_{\!g_i}).\]
Fix a basis $(v_1, \dotsc, v_s)$ of $V$ in which each element is represented by a polynomial on $\Mat_d$ with coefficients in $\Z$.
Thus, the map $\Phi \colon V \to \R^s$ defined by
\[\forall f \in V,\quad \Phi(v) = \bigl(\ev_{\!g_i}(v)\bigr)_{1 \leq i \leq s}\]
is invertible and has integer coefficients when expressed in the basis $(v_1,\dotsc,v_s)$ and the standard basis of $\R^s$. 
Thus, in these bases, the determinant of $\Phi$ satisfies $\abs{\det(\Phi)} \geq 1$.
Moreover,
\[\norm{\Phi} \ll \max_{1 \leq i,j \leq s} \abs{\ev_{\!g_i}(v_j)} \ll_{\Gbb,D} \max_{1 \leq i \leq s} \norm{g_i}^D \leq e^{2D\lambda_1 n}.\]
It follows that
\[\norm{\Phi^{-1}} \ll \frac{\norm{\Phi}^{s-1}}{\abs{\det(\Phi)}} \leq e^{2Ds\lambda_1 n}.\]
By the definition of $A_{f,C}$, we have
\[\norm{\Phi(f)} = \max_{1 \leq i \leq s} \abs{f(g_i)} \leq e^{-Cn} \norm{f}.\]
Thus,
\[\norm{f} \leq \norm{\Phi^{-1}}\norm{\Phi(f)} \ll_{\Gbb,D} e^{(2Ds\lambda_1 - C)n} \norm{f}.\]
We get a contradiction if $C$ is chosen to be larger than $2Ds\lambda_1 + O_{\Gbb,D}(1)$.
\end{proof}

The following is a variant and an easy consequence of the previous lemma.
\begin{lemma}
\label{lm:mundef<eg}
Let $\mu$ be a Borel probability measure on $\SL_d(\Z)$ having a finite exponential moment. 
Assume that the Zariski closed subgroup $\Gbb$ generated by $\Supp\mu$ is semisimple and connected.

Given an integer $D$, there exist constants $C> 0$, $c > 0$ and $n_0 \geq 1$ depending on $\mu$ and $D$ such that
\[\forall f \in \R[\Gbb]_{\leq D},\; \forall n \geq n_0, \quad \mu^{*n} \bigl(\bigl\{ g \in \Gamma \mid \abs{f(g)} < e^{-C n} \norm{g} \norm{f} \bigr\}\bigl) \leq e^{-c n}.\]
\end{lemma}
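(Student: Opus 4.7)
The plan is to deduce this from the previous lemma (Lemma~\ref{lm:mundef<e}) by inserting an upper bound on $\norm{g}$ coming from the large deviation estimate of Theorem~\ref{thm:LargeD}\ref{it:LargeDn}.

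More precisely, I would first fix $\omega > 0$ (say $\omega = 1$) and apply Theorem~\ref{thm:LargeD}\ref{it:LargeDn} to obtain constants $c_1 > 0$ and $n_1 \geq 1$ such that
\[
\forall n \geq n_1,\quad \mu^{*n}\bigl(\bigl\{g \in \Gamma \mid \norm{g} > e^{(\lambda_1 + 1)n}\bigr\}\bigr) \leq e^{-c_1 n}.
\]
Let $C_0$, $c_0$ and $n_0$ be the constants produced by Lemma~\ref{lm:mundef<e} for the given $D$, and set $C = C_0 + \lambda_1 + 1$. Then on the complementary event $\{\norm{g} \leq e^{(\lambda_1 + 1)n}\}$, the inequality $\abs{f(g)} < e^{-Cn} \norm{g} \norm{f}$ implies
\[
\abs{f(g)} < e^{-Cn}\, e^{(\lambda_1 + 1)n}\, \norm{f} = e^{-C_0 n} \norm{f}.
\]
Hence, for $n \geq \max(n_0, n_1)$,
\[
\mu^{*n}\bigl(\bigl\{g \in \Gamma \mid \abs{f(g)} < e^{-Cn} \norm{g} \norm{f}\bigr\}\bigr) \leq \mu^{*n}\bigl(\bigl\{g \in \Gamma \mid \abs{f(g)} < e^{-C_0 n} \norm{f}\bigr\}\bigr) + e^{-c_1 n},
\]
and the first term on the right is bounded by $e^{-c_0 n}$ thanks to Lemma~\ref{lm:mundef<e}. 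Taking $c = \frac{1}{2}\min(c_0, c_1)$ concludes the argument.

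There is no real obstacle here: the statement is indeed a direct corollary of Lemma~\ref{lm:mundef<e}, and the only subtle point is that the constant $C$ in the conclusion must be chosen sufficiently larger than $\lambda_1$ to absorb the worst-case growth of $\norm{g}$.
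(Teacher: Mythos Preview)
Your proof is correct and follows exactly the same route as the paper: split into the event where $\norm{g}$ is large (handled by Theorem~\ref{thm:LargeD}\ref{it:LargeDn}) and the complementary event where the hypothesis reduces to that of Lemma~\ref{lm:mundef<e}. The only cosmetic difference is that the paper uses the threshold $e^{2\lambda_1 n}$ for $\norm{g}$ where you use $e^{(\lambda_1+1)n}$.
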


\begin{proof}
For any $n \geq 1$ and any $C > 0$, we have 
\begin{multline*}
\mu^{*n} \bigl(\bigl\{ g \in \Gamma \mid \abs{f(g)} < e^{-(C + 2\lambda_1) n} \norm{g} \norm{f} \bigr\}\bigl)\\
\leq \mu^{*n} \bigl(\bigl\{ g \in \Gamma \mid \abs{f(g)} < e^{-C n}  \norm{f} \bigr\}\bigl) + \mu^{*n} \bigl(\bigl\{ g \in \Gamma \mid \norm{g} \geq e^{2 \lambda_1 n}  \bigr\}\bigl).
\end{multline*}
We conclude by using Lemma~\ref{lm:mundef<e} for the first term and Theorem~\ref{thm:LargeD}\ref{it:LargeDn} for the second.
\end{proof}

\subsection{Non-concentration near affine subspaces}
We now want to prove Proposition~\ref{pr:NCaffine}.
Of course, if we are to show that the random walk does not concentrate near any proper affine subspace in the algebra $E$, we should first check that the group $G$ is not trapped in any proper affine subspace.

\begin{lemma}
\label{lm:notaffine}
Let $G$ be a subgroup of $\GL_d(\R)$ acting irreducibly on $\R^d$, and $E$ the associative subalgebra generated by $G$ in $\Mat_d(\R)$.
Then $G$ is not contained in any proper affine subspace of $E$.
\end{lemma}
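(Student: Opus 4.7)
The plan is to derive a contradiction by extracting a proper two-sided ideal in $E$ of codimension one, which is forbidden by the simplicity of $E$ that comes from the irreducibility of $G$. As a preliminary observation, since $G$ is closed under multiplication and contains the identity, the unital associative algebra generated by $G$ coincides with the $\R$-linear span of $G$, so $E = \Span(G)$. Suppose for contradiction that $G$ is contained in a proper affine subspace $W$ of $E$. Because $1 \in G \subset W$, we may write $W = 1 + F$ with $F \subsetneq E$ a proper linear subspace. Then $G - 1 \subset F$, and combining this with the decomposition $E = \R \cdot 1 + \Span(G - 1)$ coming from $g = 1 + (g-1)$, one sees that $F$ must have codimension exactly one in $E$ and equal $\Span(G - 1)$.

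The heart of the argument is then the identity
\[ g(h-1) = (gh - 1) - (g-1), \qquad g, h \in G, \]
whose right-hand side lies in $\Span(G - 1) = F$. This shows $G \cdot (G-1) \subset F$; taking $\R$-linear combinations on both arguments yields $E \cdot F \subset F$. A symmetric computation gives $F \cdot E \subset F$, so $F$ is a proper two-sided ideal of $E$ of codimension one.

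To finish, I would invoke the Burnside--Jacobson density theorem: because $\R^d$ is a faithful simple $E$-module (simplicity from $G$-irreducibility together with $E = \Span(G)$, faithfulness from $E \hookrightarrow \Mat_d(\R)$), we have $E \cong \End_D(\R^d)$ for the real division algebra $D = \End_G(\R^d)$, and in particular $E$ is a simple algebra. A proper two-sided ideal must then be $\{0\}$, forcing $F = \{0\}$ and hence $\dim E = 1$. But then $G \subset \R \cdot 1$ consists of scalar matrices, contradicting irreducibility on $\R^d$ for $d \geq 2$. The only real difficulty is recognizing that the affine hypothesis, combined with the group law, forces $F$ to be closed under both left and right multiplication by $E$; once the displayed identity is noticed, the rest is purely structural.
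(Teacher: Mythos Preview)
Your proof is correct and takes a genuinely different route from the paper's. Both arguments focus on $F = \Span(G-1)$ and aim to show $F = E$. The paper uses the identity $(g-1)(h-1) = (gh-1) - (g-1) - (h-1)$, concluding only that $F$ is closed under multiplication (a possibly non-unital subalgebra); it then appeals to a separate structural lemma---any nonzero subalgebra of $\Mat_d(\R)$ with no nontrivial invariant subspace contains the identity---which is proved from scratch via the absence of nilpotent right ideals and Wedderburn theory. Your identity $g(h-1) = (gh-1) - (g-1)$ is sharper: it shows directly that $F$ is a two-sided \emph{ideal} of $E$, so simplicity of $E$ (Burnside--Jacobson density) forces $F = 0$ in one stroke, bypassing the auxiliary lemma entirely. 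The paper's route is more self-contained, while yours is shorter and leans on a standard external theorem. Two cosmetic remarks: the codimension-one computation is not needed---once $F$ is a proper two-sided ideal in a simple algebra, $F = 0$ regardless of codimension; and your closing appeal to $d \geq 2$ matches the paper's standing hypotheses (the paper's proof likewise tacitly needs $G \neq \{1\}$ to ensure $F$ is nonzero).
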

\begin{proof}
Equivalently, we have to show that the linear span $W = \Span(G - 1)$ of $G - 1$ is $E$.
For this, it suffices to prove that $1 \in W$. 
Firstly, $W$ is closed under multiplication.
Indeed, any product between two elements of $W$ is a linear combination of elements of the form $(g-1)(h-1)$ with $g, h \in G$ and we have
\[(g-1)(h-1) = (g h - 1) - (g - 1) - (h - 1) \in W .\]
Secondly, any subspace of $\R^d$ preserved by $W$ is preserved by $G$, hence the only subspaces preserved by $W$ are $\R^d$ and $\{0\}$.
We conclude by using the following algebraic lemma.
\end{proof}

Momentarily, in the next lemma and its proof, algebras are not assumed to be unital. Thus, an subalgebra is a linear subspace that is closed under multiplication. Accordingly, a left (resp. right) ideal, is a subspace preserved under multiplication on the left (resp. right) by all elements of the algebra.  
\begin{lemma}
If a nonzero subalgebra of $\Mat_d(\R)$ does not preserve any proper nontrivial subspaces of $\R^d$, then it contains the multiplicative identity of $\Mat_d(\R)$.
\end{lemma}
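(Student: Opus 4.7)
The plan is to adjoin the identity to $A$ and invoke the Wedderburn structure theorem on the resulting unital algebra. Set $B = A + \R \cdot \mathrm{Id} \subseteq \Mat_d(\R)$, a unital subalgebra containing $A$. Because $\mathrm{Id}$ preserves every subspace of $\R^d$, a subspace is $B$-invariant if and only if it is $A$-invariant, so $B$ still acts without nontrivial invariant subspaces, i.e.\ $\R^d$ is a simple $B$-module. A direct computation shows that $A$ is a two-sided ideal of $B$: since $A$ is closed under multiplication and $A \cdot \mathrm{Id} = \mathrm{Id} \cdot A = A$, we have $AB, BA \subseteq A$.

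I will proceed by contradiction, assuming $\mathrm{Id} \notin A$, so that $A$ has codimension one in $B$. First I would check that $B$ acts faithfully on $\R^d$: any element $b = a + t\mathrm{Id}$ acting as zero would give $a = -t\mathrm{Id}$, and $t \neq 0$ would force $\mathrm{Id} \in A$, contradicting our assumption; hence $t = 0$ and $b = 0$.

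The key step is to deduce that $B$ is a simple $\R$-algebra. The Jacobson radical $\mathrm{rad}(B)$ of any finite-dimensional algebra annihilates every simple module, so faithfulness of the simple module $\R^d$ forces $\mathrm{rad}(B) = 0$; hence $B$ is semisimple. By Wedderburn's theorem, $B$ decomposes as a product $B_1 \times \cdots \times B_k$ of simple algebras, and a simple $B$-module is faithful only when exactly one factor is nonzero. So $B$ itself is simple, and its only two-sided ideals are $\{0\}$ and $B$; this contradicts the fact that $A \subset B$ is a two-sided ideal with $0 \neq A \neq B$, finishing the argument. The main obstacle is the clean deployment of Wedderburn's classification, specifically the observation that a faithful simple module over a finite-dimensional semisimple algebra forces the algebra itself to be simple.
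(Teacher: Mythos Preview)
Your proof is correct and takes a genuinely different route from the paper's. The paper works directly with the non-unital algebra $A$: it shows that $A$ has no nonzero nilpotent right ideal (because the common kernel of such an ideal would be a proper $A$-invariant subspace), hence $A$ has zero radical, and then invokes a structure theorem from Van der Waerden to the effect that a finite-dimensional algebra without radical possesses a multiplicative identity $1_A$; finally it checks that $1_A(\R^d)$ is $A$-invariant and nonzero, so $1_A \in \GL_d(\R)$, and idempotence forces $1_A = \mathrm{Id}$.

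Your unitization trick sidesteps the non-unital subtleties: by passing to $B = A + \R\,\mathrm{Id}$ you can quote the standard Wedderburn theorem for unital algebras, and the argument reduces to the clean observation that a finite-dimensional algebra with a faithful simple module is simple. This is arguably more transparent and avoids the somewhat delicate reference in the paper's proof. The paper's approach, on the other hand, exhibits the unit of $A$ constructively rather than by contradiction, and the elementary argument about nilpotent ideals makes the role of the irreducibility hypothesis more visible. Both proofs ultimately rest on the same structure theory, just packaged differently.
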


\begin{proof}
Let $W \subset \Mat_d(\R)$ be such a subalgebra. We first show that the only nilpotent right ideal of $W$ is the zero ideal. Indeed, let $I$ be a  nonzero nilpotent right ideal of $W$. Let $k$ be the maximal number such that $I^k \neq 0$. Let $f_0 \in I^k$. Since $I^{k+1} = 0$, we have
\[f_0(\R^d) \subset \bigcap_{f \in I} \ker(f).\]
The intersection on the right-hand side is preserved by $W$ and not equal to $\R^d$, because $I$ is a nonzero right ideal.
Then $f_0$ must be zero, which contradicts $I^k \neq 0$.

Thus, $W$ is an algebra without radical \cite[Chapter XVI, \S 116]{VanderWaerden}. By \cite[Chapter XVI, \S 117]{VanderWaerden}\footnote{More precisely, we apply the theorem to algebras, or rings with operator domain $\R$, using the terminology of Van der Waerden, see \cite[Chapter XVI, \S 115]{VanderWaerden}.}, $W$ has a multiplicative identity $1_W$. 
Its image $1_W(\R^d)$ is preserved by $W$ and nonzero since $W$ is nonzero. Hence $1_W(\R^d) = \R^d$ and $1_W \in \GL_d(\R)$. Then $1_W^2 = 1_W$ forces $1_W$ to be the identity of $\Mat_d(\R)$.
\end{proof}

Now we are ready to prove Proposition~\ref{pr:NCaffine}.

\begin{proof}[Proof of Proposition~\ref{pr:NCaffine}]
By Lemma~\ref{lm:notaffine}, $\R[\Gbb]_{\leq 1}$ is isomorphic to $\R \oplus E^*$, the space of affine mappings from $E$ to $\R$. 
On $\R \oplus E^*$, let $G$ act by
\[\forall g \in G,\, \forall f \in \R \oplus E^*, \, \forall x \in E, \;  (g \cdot f)(x) = f(xg).\]
%Then $\R[\Gbb]_{\leq 1}$ and $\R\oplus E^*$ are isomorphic as $G$-modules.
By Lemma~\ref{lm:mundef<eg}, there exist $C_1 \geq 1$ and $c_1 > 0$ such that 
\begin{equation}
\label{eq:mu*mdef}
\forall f \in \R \oplus E^*,\; \forall m \geq 1, \quad \mu^{*m} \bigl(\bigl\{ g \in \Gamma \mid \abs{f(g)} < e^{-C_1 m} \norm{g} \norm{f} \bigr\}\bigl) \ll_\mu e^{-c_1 m}.
\end{equation}

Given a proper affine subspace $W \subset E$, there exists $f \in \R \oplus E^*$ such that its linear part $f_1 \in E^*$ has norm $\norm{f_1} = 1$ and 
\[
\forall g \in E,\; d(g, W ) = \abs{f(g)}.
\]
Let $\rho \geq e^{-n}$.
Pick $m$ such that $e^{-C_1 m} \asymp \rho^{1/2}$.
Using the relation $\mu^{*n} = \mu^{*m} * \mu^{*(n - m)}$, we have 
\[
\mu^{*n}(\{ g \in \Gamma  \mid  \abs{f(g)} \leq \rho \norm{g} \}) = \int_{\Gamma} \mu^{*m}(\{ g \in \Gamma  \mid  \abs{(h \cdot f)(g)} \leq \rho \norm{gh} \}) \dd \mu^{*(n - m)}(h)
\]
We distinguish two cases according to whether $\rho \norm{h} \leq e^{-C_1 m} \norm{h \cdot f}$. If this is the case, then $\rho \norm{gh} \leq e^{-C_1 m} \norm{g} \norm{h \cdot f}$ and then by \eqref{eq:mu*mdef},
\[
 \mu^{*m}\bigl( \bigl\{ g \in \Gamma  \mid  \abs{(h \cdot f)(g)} \leq \rho \norm{gh} \bigr\}\bigr) \ll_\mu e^{-c_1 m} \ll \rho^{c_1/2C_1} .
\]
Otherwise, $\norm{h \cdot f} \leq  \rho e^{C_1 m} \norm{h} \leq \rho^{1/2} \norm{h}$ by the choice of $m$.

Thus,
\[
\mu^{*n}(\{ g \in \Gamma  \mid  \abs{f(g)} \leq \rho \norm{g} \}) \ll_\mu \rho^{c_1/2C_1} + \mu^{*(n - m)}\bigl( \bigl\{ h \in \Gamma \mid \norm{h \cdot f} \leq \rho^{1/2} \norm{h} \bigr\}\bigr).
\]
Now observe that $E^*$ is a submodule of the the semisimple $G$-module $\Mat_d(\R)^*$, which is isomorphic to the sum of $d$ copies of the simple $G$-module $\R^d$. %the given representation of $G$. 
It follows that $E^*$ is isomorphic to the sum of $\frac{\dim(E)}{d}$ copies of $\R^d$.
For each $i= 1, \dotsc, \frac{\dim(E)}{d}$, let $\pi_i \colon \R\oplus E^* \to \R^d$ denote the projection to the $i$-th $\R^d$-factor. Remembering $\norm{f_1} = 1$, we obtain
$\norm{\pi_i(f)} \gg_G 1$ for some $i$.
Then $\norm{h \cdot f} \leq \rho^{1/2} \norm{h}$ implies $\norm{h\pi_i(f)} \ll_G \rho^{1/2} \norm{h}\norm{\pi_i(f)}$. 
Hence
\begin{multline*}
\mu^{*(n - m)}\bigl( \bigl\{ h \in \Gamma \mid \norm{h \cdot f} \leq \rho^{1/2} \norm{h} \bigr\}\bigr) \\
\leq \mu^{*(n - m)}\bigl( \bigl\{ h \in \Gamma \mid \norm{h\pi_i(f)} \leq C_2 \rho^{1/2} \norm{h}\norm{\pi_i(f)} \bigr\}\bigr)
\end{multline*}
where $C_2$ is a constant depending only on $G$.

By our choice of $m$, we have $C_2 \rho^{1/2} \geq e^{-(n-m)}$. Hence, by Proposition~\ref{pr:LDrhokappa},
\[
 \mu^{*(n - m)}\bigl( \bigl\{ h \in \Gamma \mid \norm{h\pi_i(f)} \leq C_2 \rho^{1/2} \norm{h}\norm{\pi_i(f)} \bigr\}\bigr) \ll_\mu \rho^{\kappa_2/2}
\]
where $\kappa_2 > 0$ is the constant given by Proposition~\ref{pr:LDrhokappa} which depends only on $\mu$.
This proves the desired estimate with $\kappa = \min\{\frac{c_1}{2C_1}, \frac{\kappa_2}{2}\}$.
\end{proof}

\subsection{Escaping a larger neighborhood of a subvariety}
The rest of this section is devoted to the proof of Proposition~\ref{pr:NCdet}.
The idea is to generalize what we did above for affine subspaces.
This time, the variety that we want to avoid is defined by a general polynomial map $f$ on the algebra $E$, so that we shall have to consider  the representation  $\rho \colon G \to \GL(\R[\Gbb])$ defined by
\[\forall g \in G,\, \forall f \in \R[\Gbb],\, \forall x \in \Gbb, \quad (\rho(g)f)(x) = f(xg).\]
We refer to finite-dimensional subrepresentations of this representation as $G$-modules.
For a $G$-module $M$, we denote by $\lambda_1(\mu,M)$ the top Lyapunov exponent associated to the random walk on $M$ defined by $\mu$:
\[\lambda_1(\mu,M) = \lim \frac{1}{n} \int_G \log \norm{\rho(g)_{\mid M}} \dd \mu^{*n}(g)\]
where $\norm{\ }$ denotes some operator norm.

For a real number $\lambda \geq 0$, define $M_\lambda$ to be the sum of submodules $M$ of $\R[\Gbb]_{\leq D}$ such that $\lambda_1(\mu,M) \geq \lambda$.
Let $p_\lambda \colon \R[\Gbb]_{\leq D} \to M_\lambda$ be an epimorphism of $G$-modules onto $M_\lambda$.
This exists because $G$ is semisimple.
Remark that $M_\lambda$ is a sum of isotypical components in $\R[\Gbb]_{\leq D}$ so that $p_\lambda$ is uniquely defined.

\begin{proposition}
\label{pr:LDPpolynome}
Let $\mu$ be a probability measure on $\SL_d(\Z)$, for some $d\geq 2$.
Let $\Gamma$ denote the subsemigroup generated by $\mu$ and $\Gbb$ the Zariski closure of $\Gamma$ in $\SL_d$.
Assume that 
\begin{enumerate}[label=(\alph*)]
\item  the measure $\mu$ has a finite exponential moment,
\item  the algebraic group $\Gbb$ is Zariski connected and semisimple,
\end{enumerate} 
and let the notation be as above.

Given $D$, $\lambda \geq 0$ and $\omega > 0$, there is $c > 0$ and $n_0 \geq 1$ (depending also on $\mu$) such that
\[\forall f \in \R[\Gbb]_{\leq D},\; \forall n \geq n_0,\quad \mu^{*n} \bigl(  \bigl\{ g \in \Gamma \mid \abs{f(g)} \leq e^{(\lambda - \omega)n} \norm{p_\lambda(f)}\bigr\}  \bigr)  \leq e^{-c n}.\]
\end{proposition}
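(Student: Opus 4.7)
The plan is to generalize the two-scale argument from the proof of Proposition~\ref{pr:NCaffine}. The key new ingredient is the isotypical decomposition of $V := \R[\Gbb]_{\leq D}$ under the representation $\rho$. By assumptions (b) and (c) together with the strong irreducibility-semisimplicity lemma proved above, $\Gbb$ is semisimple, so $V$ splits as $V = \bigoplus_W V^{(W)}$, a direct sum of isotypical components indexed by the irreducible $G$-modules $W$ occurring in $V$. Then $M_\lambda = \bigoplus_{W\,:\,\lambda_1(\mu,W) \geq \lambda} V^{(W)}$, and $p_\lambda$ is the projection onto $M_\lambda$ along the complementary isotypicals.

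I would fix a small $\alpha > 0$ depending on $\omega$, set $m = \lfloor \alpha n \rfloor$, and split $\mu^{*n} = \mu^{*m} * \mu^{*(n-m)}$. Parametrizing $g = g'h$ with $g' \sim \mu^{*m}$ and $h \sim \mu^{*(n-m)}$ independent, and using the identity $f(g'h) = (\rho(h)f)(g')$, one rewrites
\[
\mu^{*n}\bigl\{g : |f(g)| \leq e^{(\lambda-\omega)n}\norm{p_\lambda f}\bigr\} = \int \mu^{*m}\bigl\{g' : |(\rho(h)f)(g')| \leq e^{(\lambda-\omega)n}\norm{p_\lambda f}\bigr\} \dd\mu^{*(n-m)}(h).
\]
Lemma~\ref{lm:mundef<eg} applied to $\rho(h)f \in \R[\Gbb]_{\leq D}$ bounds the inner probability by $e^{-c_1 m}$ provided $\norm{\rho(h)f} \geq e^{(\lambda-\omega)n + Cm}\norm{p_\lambda f}$ (using the trivial bound $\norm{g'} \geq 1$ for $g' \in \SL_d(\Z)$). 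Choosing $\alpha \leq \omega/(2C)$, this condition is implied by the key lower bound
\begin{equation}
\label{eq:NCdeg-key}
\norm{\rho(h)f} \geq e^{(\lambda - \omega/2)n}\norm{p_\lambda f}.
\end{equation}

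To bound the $\mu^{*(n-m)}$-measure of the set of $h$ for which \eqref{eq:NCdeg-key} fails, note that $\rho(h)$ preserves the decomposition $M_\lambda \oplus N_\lambda$ and the projections are uniformly bounded, hence $\norm{\rho(h) f} \gtrsim \norm{\rho(h) p_\lambda f}$. Decompose further $p_\lambda f = \sum_W p^{(W)} f$ and pick an isotypical summand $W_0$ of $M_\lambda$ such that $\norm{p^{(W_0)} f} \gtrsim \norm{p_\lambda f}$. The $G$-action on the irreducible module $W_0$ is strongly irreducible: the stabilizer of any finite collection of proper subspaces of $W_0$ is a Zariski closed subgroup of finite index in $\Gbb$, hence equals $\Gbb$ by Zariski connectedness, contradicting irreducibility. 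Writing $V^{(W_0)} \simeq W_0 \otimes \R^{m_0}$ with $p^{(W_0)} f = \sum_j v_j \otimes e_j$, Theorem~\ref{thm:LargeD}\ref{it:LargeDnc} applied on $W_0$ yields, for each $j$ with $v_j \neq 0$, the estimate $\norm{\rho(h) v_j} \geq e^{(\lambda_{W_0} - \omega/8)(n-m)} \norm{v_j}$ outside an exceptional set of $\mu^{*(n-m)}$-measure $\leq e^{-c(n-m)}$. A union bound over the finitely many $j$'s together with $\lambda_{W_0} \geq \lambda$ yields~\eqref{eq:NCdeg-key} for $\alpha$ small enough, outside a set of measure $\leq e^{-c(n-m)}$. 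Combining, the total measure is $\ll e^{-c_1 m} + e^{-c_2(n-m)} \leq e^{-cn}$, as desired.

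The main obstacle is the lower bound \eqref{eq:NCdeg-key}, i.e.\ a ``large deviation from below'' on each irreducible $G$-submodule appearing in $M_\lambda$. This requires verifying strong irreducibility on each isotypical factor (deduced from irreducibility plus Zariski connectedness of $\Gbb$, as above) and using that $\rho$ inherits a finite exponential moment from $\mu$ on $\SL_d(\Z)$ because $\norm{\rho(g)} \ll \norm{g}^D$. Once these are in place, the argument above carries through uniformly over $f \in \R[\Gbb]_{\leq D}$ since the number of isotypical components and the relevant norm constants depend only on the representation, that is, only on $D$ and $\mu$.
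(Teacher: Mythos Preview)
Your proof is correct and follows essentially the same two-scale strategy as the paper: split $n$ into $m$ and $n-m$, use Lemma~\ref{lm:mundef<e} (you use Lemma~\ref{lm:mundef<eg} together with $\norm{g'}\geq 1$, which is equivalent) to control the inner factor, and invoke Theorem~\ref{thm:LargeD}\ref{it:LargeDnc} on an irreducible submodule of $M_\lambda$ (strongly irreducible because $\Gbb$ is connected) to control the outer factor. The only cosmetic difference is that the paper picks directly a simple submodule $M\subset M_\lambda$ with $\norm{p_M(f)}\gg_{\Gbb,D,\lambda}\norm{p_\lambda(f)}$, while you pass through the isotypical decomposition and a tensor basis before applying the large deviation estimate; either route gives the key lower bound \eqref{eq:NCdeg-key}.
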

%Since $\rho$ is an algebraic representation, the assumption that $\mu$ has a finite exponential moment implies that $\rho(\mu)$ restricted any finite-dimensional submodule also has a finite exponential moment.
\begin{proof}
Note that, for any $0 < m  < n$, we have
\begin{multline}
\label{eq:n=n-m+m}
\mu^{*n}\bigl\{ g \in \Gamma \mid \abs{f(g)} \leq  e^{(\lambda - \omega)n} \norm{ p_\lambda(f)} \bigr\} \\
= \int_\Gamma \mu^{*m} \bigl\{ g \in \Gamma \mid  \abs{ (\rho(h)f)(g)} \leq e^{(\lambda - \omega) n} \norm{p_\lambda(f)} \bigr\} \dd \mu^{*(n-m)}(h)
\end{multline}

For any $f \in \R[\Gbb]_{\leq D}$, there is a simple $G$-module $M$ contained in $M_\lambda$ such that
\[\norm{p_M(f)} \gg_{\Gbb,D,\lambda} \norm{p_\lambda(f)}\]
where $p_M \colon M_\lambda \to M$ is a projection of $G$-modules.
By definition of $M_\lambda$, the top Lyapunov exponent in $M$ satisfies $\lambda_1(\mu,M) \geq \lambda$.
Note that since $\Gbb$ is Zariski connected, the $G$-action on the simple module $M$ is strongly irreducible. Hence, by the large deviation estimate Theorem~\ref{thm:LargeD}\ref{it:LargeDnc}, there is $c > 0$ and $n_0 \geq 1$ such that for all $m > 0$ satisfying $n - m \geq n_0$,
\[ \mu^{*(n-m)} \bigl\{ h \in \Gamma \mid \norm{\rho(h) p_M(f)} \leq e^{(\lambda-\frac{\omega}{2}) (n-m)} \norm{p_M(f)}\bigr\}
\leq e^{-c(n-m)},\]
and hence
\begin{equation}
\label{eq:LDPrhohf}
 \mu^{*(n-m)} \bigl\{ h \in \Gamma \mid \norm{\rho(h) f} \leq e^{ (\lambda-\frac{\omega}{2}) (n-m)} \norm{p_\lambda(f)}\bigr\}
\leq e^{-c(n-m)}.
\end{equation}

Applying Lemma~\ref{lm:mundef<e} to the function $\rho(h)f$, for $h \in \Gamma$, we obtain $ \forall m \geq m_0$
\begin{equation}
\label{eq:mumdef}
\mu^{*m} \bigl\{ g \in \Gamma \mid \abs{(\rho(h)f)(g)} < e^{-C m} \norm{\rho(h) f} \bigr\} \leq e^{-c m},
\end{equation}
for some $C > 0$, $c > 0$ and $m_0 \geq 1$.
Setting $m = \bigl \lfloor \frac{\omega n}{2(C + \lambda)} \bigr \rfloor$, so that
\[(\lambda - \omega)n - (\lambda - \frac{\omega}{2})(n- m) \leq - C m,\]
the desired inequality follows from \eqref{eq:n=n-m+m}, \eqref{eq:LDPrhohf} and \eqref{eq:mumdef}.
\end{proof}

\subsection{Criterion to have nonzero component in modules of maximal Lyapunov exponent}
In order to use Proposition~\ref{pr:LDPpolynome}, we need to be able to say when a regular function has a nonzero component in a simple submodule of large Lyapunov exponent.

\begin{lemma}
\label{lm:criterionL}
Let $\mu$ be a probability measure on $\SL_d(\R)$, $d\geq 2$, with some finite exponential moment.
Assume that the group $\Gamma$ generated by $\mu$ is non-compact and acts irreducibly on $\R^d$, and that its Zariski closure $\Gbb$ is connected.

Let $f \in \R[\Mat_d]$ be a polynomial of degree $D \geq 1$ whose degree $D$ homogeneous part does not vanish on the algebra $E$ generated by $G$.
The following holds for every integer $k \geq \dim(E)$.
Consider the polynomial $\bar{F} \in \R[\Mat_d^k]$ defined by
\[\bar{F}(x_1,\dotsc, x_k) = f(x_1 + \dotsb + x_k)\]
and let $F \in \R[\Gbb^k]$ be the restriction of $\bar{F}$ to $\Gbb^k$. 
Then
\[p(F) \neq 0\]
where $p \colon \R[\Gbb^k]_{\leq D} \to \R[\Gbb^k]_{\leq D}$ is the projection to the sum of all simple $G^k$-submodules $M$ of $\R[\Gbb^k]_{\leq D}$ having $\lambda_1(\mu^{\otimes k}, M) \geq D \lambda_1(\mu,\R^d)$.
\end{lemma}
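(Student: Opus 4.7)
The plan is to show $p(F) \neq 0$ by proving that the cyclic $G^k$-submodule $V_F \subset \R[\Gbb^k]_{\leq D}$ generated by $F$ under $\rho$ has top Lyapunov exponent exactly $D\lambda_1$, which is the maximum value compatible with degree considerations. Once this is established, $G^k$-equivariance of $p$ forces $p(F) \neq 0$: otherwise all $G^k$-translates of $F$ would lie in $\ker p$, and the top Lyapunov exponent of $V_F$ would be strictly less than $D\lambda_1$.

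First I reduce to the degree-$D$ homogeneous part. Decomposing $f = f_0 + \dots + f_D$ and correspondingly $F = F_0 + \dots + F_D$ with $F_j(x_1,\dots,x_k) = f_j(x_1 + \dots + x_k)$, the lower pieces $F_0 + \dots + F_{D-1}$ lie in the $G^k$-submodule $\R[\Gbb^k]_{\leq D-1}$, whose top Lyapunov exponent is bounded by $(D-1)\lambda_1 < D\lambda_1$: indeed $\|\rho(g)\xi\| \ll (\max_i \|g_i\|)^{D-1}\|\xi\|$ for any $\xi$ of filtration degree at most $D-1$. Hence $p$ annihilates the lower pieces and it suffices to prove $p(F_D) \neq 0$.

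Next I establish that $F_D|_{\Gbb^k}$ is nonzero by showing the addition map $\sigma\colon \Gbb^k \to E$, $(g_1,\dots,g_k)\mapsto g_1+\dots+g_k$, is dominant. By Lemma~\ref{lm:notaffine}, $\Span_\R(G) = E$; moreover, since $E$ is a simple algebra and the Lie algebra $\mathfrak{g}$ of $\Gbb$ satisfies $\mathfrak{g}(\R^d) = \R^d$ (because $\mathfrak{g}(\R^d)$ is $G$-invariant under the adjoint action, so irreducibility forces it to be $\R^d$), $\mathfrak{g}$ is not contained in any proper left ideal of $E$, hence $E\mathfrak{g} = E$. It follows that $\Span\{g\mathfrak{g} : g \in G\} = E\mathfrak{g} = E$, and since $k \geq \dim E$ one can pick $g_1^0,\dots,g_k^0 \in G$ with $\sum_i g_i^0\mathfrak{g} = E$, making $d\sigma$ surjective at that point. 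Dominance of $\sigma$ together with the nonvanishing of $f_D$ on $E$ then yields that $F_D = f_D \circ \sigma$ is nonzero on $\Gbb^k$.

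The final and main step is the Lyapunov lower bound $\lambda_1(\mu^{\otimes k}, V_{F_D}) \geq D\lambda_1$. Evaluating at $(I,\dots,I)\in\Gbb^k$ yields $\|\rho(g)F_D\| \gtrsim |f_D(g_1+\dots+g_k)|$, so it suffices to show
\[
\frac{1}{n}\log\bigl|f_D(g_1^{(n)}+\dots+g_k^{(n)})\bigr| \geq D\lambda_1 - o(1)
\]
with probability bounded away from zero as $n \to \infty$. Writing $f_D(\sum g_i) = \|\sum g_i\|^D \cdot f_D(u)$ with $u$ the unit direction, the norm estimate $\frac{1}{n}\log\|\sum g_i\| \to \lambda_1$ follows from Theorem~\ref{thm:LargeD}: by Theorem~\ref{thm:LargeD}\ref{it:LargeDnc} the vectors $g_i v$ for a fixed test vector $v$ have typical norm $e^{\lambda_1 n}\|v\|$, and independence of the samples prevents exponential cancellation of $\|\sum g_i v\|$. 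The delicate point, which I expect to be the main obstacle, is ruling out exponential decay of $|f_D(u)|$, equivalently exponential concentration of $u$ on the proper algebraic subvariety $\{f_D = 0\}\cap S(E)$. This I would handle via Proposition~\ref{pr:NCaffine}: applied to $\mu^{*n}$ conditionally on $k-1$ of the samples, it shows that $\sum g_i$ avoids any fixed affine subspace at distance $\geq \rho \|g_k\|$ with complementary probability polynomial in $\rho$; a covering argument for $\{f_D = 0\}\cap S(E)$ at scale $\rho = e^{-\omega n}$ (with $\omega$ small) by affine pieces, combined with a Lojasiewicz-type inequality converting a distance bound into a lower bound on $|f_D|$, then yields the required pointwise estimate and closes the argument.
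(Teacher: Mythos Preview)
Your overall strategy is sound in outline, and the reduction to the homogeneous part $F_D$ is correct. There is a minor slip in the dominance step: ``$\mathfrak{g}(\R^d)=\R^d$ implies $\mathfrak{g}$ is not contained in a proper left ideal'' is false (the left ideal of matrices with vanishing last column in $\Mat_2(\R)$ still acts surjectively on $\R^2$), though the conclusion $E\mathfrak{g}=E$ is salvageable since $g\mathfrak{g}=\mathfrak{g}g$ for $g\in G$ makes $E\mathfrak{g}=\mathfrak{g}E$ a nonzero two-sided ideal of the simple algebra $E$. The serious gap is in the final step. Covering the hypersurface $\{f_D=0\}$ on the unit sphere of $E$ by tangent-hyperplane neighborhoods at scale $\rho$ costs on the order of $\rho^{-C}$ pieces (with $C$ at least $(\dim E-2)/2$ from second-order contact), while Proposition~\ref{pr:NCaffine} gives only $\ll\rho^\kappa$ per piece for a fixed small $\kappa=\kappa(\mu)$. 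The union bound yields $\rho^{\kappa-C}$, and nothing forces $\kappa>C$; taking $\rho=e^{-\omega n}$ with $\omega$ small does not help since the exponent $(\kappa-C)\omega$ remains negative. In effect you are trying to promote hyperplane non-concentration to hypersurface non-concentration by a covering argument, and that promotion is precisely what Lemma~\ref{lm:criterionL} is designed to supply (via Proposition~\ref{pr:NCdet}); the argument does not close.

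The paper's proof sidesteps probability entirely. Using Lemma~\ref{lm:limitset} one finds $g,h\in G^k$ with $f_D(g_1\pi_0 h_1+\dots+g_k\pi_0 h_k)\neq 0$, where $\pi_0$ is the projector to the highest-weight space of $\R^d$. One then evaluates $F(g\exp(nb)h)$ along the ray $b=(a,\dots,a)$ with $a\in\afr^{++}$ and checks it grows like $\norm{\exp(na)}^D\asymp e^{Dn\chi_\sigma(a)}$. Decomposing $\rho(h)F$ into simple $G^k$-summands $M_j$, some component with $F_j\neq 0$ must have highest weight $\chi_{M_j}$ satisfying $\chi_{M_j}(b)\geq D\chi_\sigma(a)$; since every weight of $\R[\Gbb^k]_{\leq D}$ is a sum of at most $D$ weights of the $\sigma_i$, this forces $\chi_{M_j}$ to be the sum of $D$ copies of the highest weight of $\sigma$, hence $\lambda_1(\mu^{\otimes k},M_j)=\chi_{M_j}(\vec\lambda(\mu^{\otimes k}))=D\lambda_1$. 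No covering, no {\L}ojasiewicz, no union bound.
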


We shall use the theory of the highest weight as well as the theory of random walks on semisimple groups. 
So let us fix some notation and recall briefly the needed results.
%\marginpar{\small Donner une référence pour les notions basiques.}
Let $\gfr$ denote the Lie algebra of $G$.
Let $K$ be a maximal compact subgroup of $G$.
Inside the orthogonal complement, with respect to the Killing form, of the Lie algebra of $K$, we choose a Cartan subspace $\afr$ of $\gfr$.
Every algebraic representation of $G$ is diagonalizable for $\afr$. That is, for every $G$-module $M$, we have
\[M = \bigoplus_{\chi \in \afr^*} M_\chi\]
where for each $\chi \in \afr^*$, $M_\chi$ is the corresponding weight space
\[M_\chi = \{v \in M \mid \forall a \in \afr,\, \exp(a)\cdot v = e^{\chi(a)} v \}.\]
The linear forms $\chi \in \afr^*$ for which $M_\chi \neq \{0\}$ are called the
%restricted
 weights of $M$. 
Denote by $\Sigma(M)$ the sets of weights of $M$.

The set of of nontrivial weights of the adjoint representation of $G$ is the set of restricted roots. We denote it by $\Sigma$. It forms a root system.
We fix a set $\Sigma_+$ of positive roots and denote by $\afr^+$ the associated Weyl chamber:
\[\afr^+ = \{a \in \afr \mid \forall \alpha \in \Sigma_+,\, \alpha(a) \geq 0\}.\]
We also write $\afr^{++}$ to denote the interior of the Weyl chamber:
\[\afr^{++} = \{a \in \afr \mid \forall \alpha \in \Sigma_+,\, \alpha(a) > 0\}.\]

Let $g \in G$. The Cartan projection $\kappa(g)$ of $g$ is the $\afr^+$-part in its Cartan decomposition, that is, the unique element in $\afr^+$ such that $g \in K \exp(\kappa(g)) K$. The law of large numbers for a semisimple group, \cite[Theorem 10.9]{BenoistQuint}, says that there is an element $\vec\lambda(\mu)$ in $\afr^{++}$, called the Lyapunov vector associated to $\mu$, such that
\[ \vec\lambda(\mu) = \lim_{n \to +\infty} \frac{1}{n} \int_G \kappa(g) \dd \mu^{*n}(g).\]

If $M$ is a simple $G$-module, then $M$ has a highest weight, denoted by $\chi_M \in \Sigma(M)$, so that for any weight $\chi \in \Sigma(M)$, $\chi_M - \chi$ is a sum of positive roots. By \cite[Corollary 10.12]{BenoistQuint}, we have
\[
\lambda_1(\mu,M) = \chi_M(\vec\lambda(\mu)).
\]
%\comm{Si $M$ n'est pas simple, alors il peut ne pas y avoir de plus haut poids au sens ci-dessus. C'est le cas pour $G=\SL_3$, par exemple, si $M$ est la somme de la représentation standard et de sa représentation duale: la différence des deux plus hauts poids n'est pas égale à une racine.
%L'existence d'un plus haut poids découle du théorème de Poincaré-Birkhoff-Witt, et de l'irréductiblité, je pense.}

\smallskip
Now let us recall the definition of the limit set of the group $G$ in $\Mat_d(\R)$.
We write $\R G$ for the set of all elements $\Mat_d(\R)$ of the form $\lambda g$ with $\lambda \in \R$, and $g \in G$. Let $\overline{\R G}$ denote the closure of $\R G$ in $\Mat_d(\R)$ for the norm topology. 
Let $r_G$ denote the proximal dimension of $G$, defined by
\begin{equation}
\label{eq:defproxdim}
r_G = \min \bigl\{ \rank(\pi) \mid \pi \in \overline{\R G} \setminus \{0\} \bigr\}.
\end{equation}
The limit set of $G$ in $\Mat_d(\R)$ is defined to be
\[\Pi_G = \bigl\{\pi \in \overline{\R G} \mid \rank(\pi)= r_G \bigr\}.\]

\begin{lemma}
\label{lm:limitset}
Let $\Gbb < \SL_d$ be connected semisimple $\R$-group. 
Assume that $G = \Gbb(\R)$ acts irreducibly on $\R^d$.
Let $\pi_0 \in \Mat_d(\R)$ be the spectral projector to the weight space associated to the highest weight.
Then 
\[
\Pi_G = \R^{*} K \pi_0 K = \R^{*} G \pi_0 G.
\]
If moreover $G$ is not compact then, writing $E = \Span_\R(G)$, the sum-set
\[\underbrace{G\pi_0 G + \dotsb + G\pi_0 G}_{\text{$\dim(E)$ times}}\]
contains an open subset of $E$.
\end{lemma}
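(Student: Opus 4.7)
The plan is to handle the two assertions separately, using the Cartan decomposition for the identification of $\Pi_G$ and the simplicity of $E$ combined with a submersion argument for the sum-set statement.

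For $\Pi_G = \R^* K\pi_0 K$, I write the Cartan decomposition $g = k_g \exp(\kappa(g)) l_g$ with $k_g, l_g \in K$ and $\kappa(g) \in \afr^+$. Given a sequence $g_n \in G$ for which $\sigma_1(g_n)^{-1} g_n$ converges to some nonzero element of $\overline{\R G}$, I pass to a subsequence so that $k_{g_n}\to k$, $l_{g_n}\to l$, and $\kappa(g_n)/\norm{\kappa(g_n)} \to H \in \afr^+$. Using the $\afr$-weight decomposition $\R^d = \bigoplus_\chi V_\chi$, the normalized diagonal factor $\exp(\kappa(g_n))/\sigma_1(g_n)$ converges to the projector onto $\bigoplus_{\chi : \chi(H) \text{ maximal}} V_\chi$ along the remaining weight spaces; the rank of this projector is minimized precisely when $H$ is regular, yielding the spectral projector $\pi_0$ onto $V_{\chi_0}$. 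This simultaneously shows $r_G = \rank\pi_0$ and $\Pi_G \subseteq \R^* K\pi_0 K$, with the reverse inclusion obtained from sequences $g_n = k\exp(nH_0) l$ with $H_0 \in \afr^{++}$. The identity $\R^* G\pi_0 G = \R^* K\pi_0 K$ then follows from the trivial inclusion $K\pi_0 K \subseteq G\pi_0 G$ and the observation that any $g_1 \pi_0 g_2$ lies in $\overline{\R G}$ as a limit $\sigma_1(h_n)^{-1} g_1 h_n g_2$ with $h_n \in G$ satisfying $\sigma_1(h_n)^{-1}h_n \to \pi_0$, and has rank exactly $r_G$ by invertibility of $g_1, g_2$, hence lies in $\Pi_G$.

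For the sum-set statement, I first observe that $\Span_\R(G\pi_0 G) = E\pi_0 E$ is a two-sided ideal of $E$. By the Burnside--Jacobson density theorem applied to the irreducible $G$-action, $E$ is of the form $\End_D(\R^d)$ for a real division algebra $D$, and is therefore simple; combined with $\pi_0 \neq 0$, this forces $\Span_\R(G\pi_0 G) = E$. I then consider the smooth map $F\colon G^{2k} \to E$, $(g_1, h_1, \ldots, g_k, h_k) \mapsto \sum_{i=1}^k g_i \pi_0 h_i$, with $k = \dim E$. Its differential at $(g_1, h_1, \ldots, g_k, h_k)$ has image $\sum_i g_i T h_i$, where $T = \gfr\pi_0 + \pi_0\gfr$ is the tangent space to the orbit $G\pi_0 G$ at $\pi_0$. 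The non-compactness of $G$ guarantees $\afr \neq 0$, and irreducibility of the non-trivial representation $\R^d$ forces $\chi_0$ to be non-zero on $\afr$; hence $\afr\cdot\pi_0 = \R\pi_0 \neq 0$ and in particular $T \neq 0$. The set $\bigcup_{g,h \in G} g T h$ is bi-invariant under $G$, so its linear span is a nonzero two-sided ideal of $E$, and by simplicity equals $E$. A basis extraction then yields $(g_i, h_i)_{i \leq k'}$ with $k' \leq \dim E$ such that $\sum_i g_i T h_i = E$; padding with trivial summands brings the count to exactly $k = \dim E$ without shrinking the image of the differential, which is then surjective onto $E$. Thus $F$ is a submersion at that point, and its image contains an open neighborhood of $\sum g_i \pi_0 h_i$ in $E$.

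The main subtlety is carrying out the Cartan-decomposition analysis uniformly across subsequences so as to correctly identify $\pi_0$ with the limit of $\sigma_1(g_n)^{-1} g_n$ for regular directions, and to confirm that it is precisely regular directions that minimize the rank of the limit. Once this is in place, both the description of $\Pi_G$ and the submersion argument for the sum-set reduce to relatively standard manipulations.
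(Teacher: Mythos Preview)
Your argument for the identification of $\Pi_G$ follows the same Cartan-decomposition route as the paper, and is correct in substance. One imprecision: the limit of $\exp(\kappa(g_n))/\sigma_1(g_n)$ is not in general the spectral projector onto $\bigoplus_{\chi(H)\text{ max}} V_\chi$, but rather a diagonal element $\sum_\chi c_\chi \pi_\chi$ with $c_{\chi_0}=1$ and $c_\chi\in[0,1]$ for the other weights (the coefficients for weights tying with $\chi_0$ on $H$ depend on how $\kappa(g_n)$ approaches the wall, not just on the limiting direction $H$). This does not affect the conclusion, since any such limit has rank at least $\dim V_{\chi_0}$ with equality only when all other $c_\chi$ vanish; the paper sidesteps this by working directly with the coefficients $e^{\chi(a_n)-\chi_0(a_n)}\le 1$ rather than introducing $H$.

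For the sum-set assertion your submersion argument is correct but takes a more elaborate route than the paper. The paper observes that non-compactness gives $\chi_0\ne 0$, hence $\exp(a)\pi_0=e^{\chi_0(a)}\pi_0$ shows $\R^*_+\pi_0\subset G\pi_0$, so $G\pi_0 G$ is stable under positive scaling. Since $\Span_\R(G\pi_0 G)=E$ by simplicity (as you also note), one picks a basis $\pi_1,\dots,\pi_{\dim E}$ of $E$ inside $G\pi_0 G$ and concludes that the sum-set contains the open positive orthant $\R^*_+\pi_1+\dotsb+\R^*_+\pi_{\dim E}$. This avoids differentiating altogether. Your approach instead uses $\chi_0\ne 0$ to show the tangent space $T=\gfr\pi_0+\pi_0\gfr$ is nonzero and then runs the ideal argument at the infinitesimal level; it is perfectly valid, and has the mild advantage of generalizing more readily (e.g.\ if one wanted fewer summands at the cost of a less explicit open set), but the paper's linear-algebra argument is shorter and more elementary here.
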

%\comm{Le cas compact est un peu particulier : le plus haut poids est nul, et $\pi_0=Id$. D'ailleurs, la deuxième partie du lemme est encore vraie dans le cas compact.}
\begin{proof}
Let $\chi_0 = \chi_{\R^d} \in \Sigma(\R^d)$ denote the highest weight of the simple $G$\dash{}module $\R^d$.
For a weight $\chi \in \Sigma(\R^d)$, let $\pi_\chi$ be the spectral projector to the associated weight space.
Let $a \in \afr^{++}$ be any element. We have 
\[\norm{\exp(n a)}^{-1}  \exp(na) = \sum_{\chi \in \Sigma(\R^d)} e^{n (\chi(a)- \chi_0(a))}\pi_\chi.\]
Now by definition of the highest weight, $\chi(a)- \chi_0(a) < 0$ for $\chi \neq \chi_0$. It follows that
\[\pi_0 = \lim_{n \to +\infty} \norm{\exp(n a)}^{-1} \exp(n a)
\in \overline{\R G}.\]

Let $\pi \in \overline{\R G}$ be another nonzero element. There exists sequences $(\lambda_n) \in \R^\N$ and $(g_n) \in G^\N$ such that
\[ \pi = \lim_{n \to +\infty} \lambda_n g_n.\]
Let $g_n = k_n \exp(a_n) l_n \in K \exp(\afr^+) K$ be the Cartan decomposition of $g_n$. By compactness of $K$, replacing $(g_n)$ by a subsequence if necessary, we may assume that $k_n$ converges to $k \in K$ and $l_n$ converges to $l$. Then
\[k^{-1} \pi l^{-1} = \lim_{n \to +\infty} \lambda_n \exp(a_n)\]
Observe that 
\[\exp(a_n) = \sum_{\chi \in \Sigma(\R^d)} e^{\chi(a_n)} \pi_\chi.\]
Hence 
\[\lambda_n \exp(a_n) = \lambda_n e^{\chi_0(a_n)} (\pi_0 + \sum_{\chi \in \Sigma(\R^d) \setminus \{\chi_0\}} e^{\chi(a_n) - \chi_0(a_n)} \pi_\chi).\]
Note that $e^{\chi(a_n) - \chi_0(a_n)} \leq 1$ for all $\chi \in \Sigma(\R^d)$ and $n \geq 1$.
We deduce that $\lambda_n e^{\chi_0(a_n)}$ converges to $\lambda \neq 0$, for otherwise $\pi$ would be zero. Moreover, 
\[\rank(\pi)  = \rank(k^{-1} \pi l^{-1}) \geq \rank(\pi_0).\]
Equality holds if and only if $\lim_{n \to +\infty} e^{\chi(a_n) - \chi_0(a_n)} = 0$ for all $\chi \in \Sigma(\R^d) \setminus \{\chi_0\}$, which in turn is equivalent to
\[k^{-1} \pi l^{-1} = \lambda \pi_0.\]
Therefore, $r_G = \rank(\pi_0)$ and $\Pi_G \subset \R^* K\pi_0 K$.
We conclude by noticing that $\Pi_G$ is invariant under multiplication by $G$ on both sides : $\R^* K\pi_0 K \subset \R^* G\pi_0 G \subset \Pi_G$.

For the last assertion, assume that $G$ is not compact.
Then, $\chi_0 \neq 0$ and therefore $\chi_0(\afr) = \R$.
For $a \in \afr$, $\exp(a) \pi_0 = e^{\chi_0(a)}\pi_0$,
so that $\R^*_+ \pi_0 \subset G \pi_0$ and hence
\[
\R^*_+ G \pi_0 G \subset G \pi_0 G.
\]
Since the action of $G$ on $\R^d$ is irreducible, $E$ is an simple algebra over $\R$, by a version of Wedderburn's theorem \cite[2. page 194]{VanderWaerden}.
Observe that $\Span_\R(G\pi_0 G)$ is a nontrivial two-sided ideal of $E$, hence $\Span_\R(G\pi_0 G) = E$. Therefore we can pick $\dim(E)$ elements $(\pi_1, \dotsc, \pi_{\dim(E)}) $ from $G\pi_0 G$ making a basis of $E$. We conclude that
\[\underbrace{G\pi_0 G + \dotsb + G\pi_0 G}_{\text{$\dim(E)$ times}} \supset \R^*_+ \pi_1 + \dotsb + \R^*_+ \pi_{\dim(E)}\]
contains an open subset of $E$.
\end{proof}

\begin{proof}[Proof of Lemma~\ref{lm:criterionL}]
The Lie algebra of $G^k$ is $\gfr \oplus \dots \oplus \gfr$, in which we choose $\bfr = \afr \oplus \dots \oplus \afr$ to be the Cartan subspace. Then the associated restricted root system is the direct sum $\Sigma \sqcup \dots \sqcup \Sigma \subset \bfr^*$. We choose $\Sigma_+ \sqcup \dots \sqcup \Sigma_+$ as the set of positive roots so that $\bfr^+ = \afr^+ \times \dots \times \afr^+$ is the corresponding Weyl chamber and
\[\vec\lambda(\mu^{\otimes k}) = (\vec\lambda(\mu), \dotsc, \vec\lambda(\mu)) \in \bfr^+\]
is the Lyapunov vector associated to the random walk defined by $\mu^{\otimes k}$. 

For any algebraic representation $\pi$ of $G^k$, we denote by $\Sigma(G^k,\pi)$ the set of weights of $\pi$ with respect to $\bfr$.

Let $\sigma \colon G \to \GL(\R^d)$ denote the standard representation of $G$ and, for $i = 1,\dotsc,k$, let $\sigma_i \colon G^k \to G \to \GL(\R^d)$ denote the representation of $G^k$ obtained by composing the projection $G^k \to G$ to the $i$-th factor with $\sigma$. 
Note that for each $i$, there is a natural bijection between $\Sigma(G^k,\sigma_i) \to \Sigma(\sigma)$, $\chi \mapsto \tilde{\chi}$ such that the weight $\chi$ is the composition of the $i$-th projection with $\tilde{\chi} \in \Sigma(\sigma)$.

Let $G^k$ act on $\R[\Mat_d^k]_{\leq D}$ by right translation. Let $\rho \colon G^k \to \GL(\R[\Mat_d^k]_{\leq D})$ denote the corresponding representation. Then, $\rho$ is equivalent to
\[ \bigoplus_{j = 0}^D \Sym^j \bigl( \underbrace{\sigma_1 \oplus \dotsb \oplus \sigma_1}_{\text{$d$ times}} \oplus \dotsb \oplus \underbrace{\sigma_k \oplus \dotsb \oplus \sigma_k}_{\text{$d$ times}} \bigr).\]
It follows that any weight $\chi \in \Sigma(G^k,\rho)$ in $\rho$ is the sum of at most $D$ elements from $\bigcup_{i=1}^k \Sigma(G^k,\sigma_i)$.
In particular, 
\begin{align*}
\lambda_1(\mu^{\otimes k}, \R[\Mat_d^k]_{\leq D}) &= \max_{\chi \in \Sigma(G^k,\rho)} \chi( \vec\lambda(\mu^{\otimes k})) \\
&\leq D \max_{i} \max_{\chi \in \Sigma(G^k,\sigma_i)} \chi( \vec\lambda(\mu^{\otimes k}))\\
& = D \max_{i} \max_{\chi \in \Sigma(G^k,\sigma_i)} \tilde\chi( \vec\lambda(\mu))\\
&\leq D \lambda_1(\mu,\R^d).
\end{align*}
Since $G$ is not compact, $\lambda_1(\mu,\R^d)$ is positive by a result of Furstenberg~\cite{Furstenberg1963},
%(using a more modern language, this is a consequence of the Lyapunov vector being in the interior of the Weyl chamber)
and it follows that
\[
\lambda_1(\mu^{\otimes k}, \R[\Mat_d^k]_{< D}) < D \lambda_1(\mu,\R^d).
\]
The $G^k$-module $\R[\Gbb^k]_{< D}$ is a quotient of $\R[\Mat_d^k]_{< D}$, whence
\begin{equation}
\label{eq:l1of<Dpart}
\lambda_1(\mu^{\otimes k}, \R[\Gbb^k]_{< D}) < D \lambda_1(\mu,\R^d).
\end{equation}
Let $f_D\in \R[\Mat_d]$ denote the degree $D$ homogeneous part of $f$. Then $\bar{F}_D \in \R[\Mat_d^k]$ defined by
\[
\forall x_1, \dotsc, x_k \in \Mat_d,\;
\bar{F}_D( x_1, \dotsc, x_k ) = f_D(x_1 + \dots + x_k)
\]
is the degree $D$ homogeneous part of $\bar{F}$. Let $F_D \in \R[\Gbb^k]$ be the restriction of $\bar{F}_D$ to $\Gbb^k$. 
By \eqref{eq:l1of<Dpart} and the fact that $F - F_D \in \R[\Gbb^k]_{< D}$, we get
\[p(F) = p(F_D).\]
We may therefore assume that $f$ is homogeneous of degree $D$.

By Lemma~\ref{lm:limitset}, $f$ does not vanish on $G\pi_0 G + \dotsb + G\pi_0 G$ where% (see the proof of Lemma~\ref{lm:limitset})
\[\pi_0 = \lim_{n \to +\infty} \frac{\exp{na}}{\norm{\exp{na}}} \in \Mat_d(\R)\]
for some $a \in \afr^{++}$. 
Fix $g = (g_1, \dotsc, g_k) \in G^k$ and $h = (h_1,\dotsc, h_k) \in G^k$ such that
\[
f(g_1\pi_0 h_1 + \dotsb + g_k \pi_0 h_k) \neq 0.
\]
Writing $b = (a,\dotsc,a) \in \bfr$, we have, by the homogeneity of $\bar{F}$,
\[\lim_{n \to +\infty} \frac{F(g \exp(nb) h)}{\norm{\exp(na)}^D}
= \lim_{n \to +\infty} \bar{F} \Bigl( \frac{g \exp(nb) h}{\norm{\exp(na)}} \Bigr)
= f(g_1\pi_0 h_1 + \dotsb + g_k \pi_0 h_k) \neq 0,
\]
whence 
\[\abs{F(g \exp(nb) h)} \gg \norm{\exp(na)}^D.\]
In the rest of the proof, the implied constants in the Vinogradov notation are independent of $n$ but might depend on other quantities, like $g$ or $h$.
On the one hand,
\[\norm{\exp(na)} \gg e^{n \chi_\sigma(a)},\]
and on the other hand,
\[\abs{F(g \exp(nb) h)} = \bigl\lvert{\bigl(\rho(\exp(nb))\rho(h)F\bigr)(g))}\bigr\rvert \ll \norm{\rho(\exp(nb))\rho(h)F}.\]
Decompose $ \R[\Gbb^k]_{\leq D} = \bigoplus_j M_j$ into simple $G^k$-modules and decompose $\rho(h)F = \sum_j F_j$ accordingly.
Denote by $\chi_{M_j}$ the highest weight of $M_j$. Then
\[\norm{\rho(\exp(nb))\rho(h)F} \leq \sum_j e^{n\chi_{M_j}(b)} \norm{F_j}.\]
From the previous inequalities, there must exist $j$ such that $F_j \neq 0$ and
\[e^{D n \chi_\sigma(a)} \ll  e^{n\chi_{M_j}(b)} \norm{F_j}.\]
Hence $D n \chi_\sigma(a) \leq n \chi_{M_j}(b) + O(1)$ and necessarily
\[D \chi_\sigma(a) \leq \chi_{M_j}(b).\]
We have seen that $\chi_{M_j}$ is the sum of $D$ elements from $\bigcup_{i=1}^k \Sigma(G^k,\sigma_i)$:
\[\chi_{M_j} = \chi_1 + \dotsb + \chi_D.\]
Then 
\[\chi_{M_j}(b) = \chi_1(b) + \dotsb + \chi_D(b) = \tilde{\chi}_1(a) + \dotsb + \tilde{\chi}_D(a).\]
Thus we have simultaneously 
\[D \chi_\sigma(a) \leq \tilde{\chi}_1(a) + \dotsb + \tilde{\chi}_D(a)\]
 and 
\[\tilde{\chi}_1 + \dotsb + \tilde{\chi}_D \leq D \chi_\sigma \]
for the order over the set of weights.
Since $a \in \afr^{++}$, , this forces
\[\tilde{\chi}_1 = \dots = \tilde{\chi}_D = \chi_\sigma.\]
Therefore
\[\lambda_1(\mu^{\otimes k},M_j) = \chi_{M_j}(\vec\lambda(\mu^{\otimes k})) = (\tilde{\chi}_1 + \dotsb + \tilde{\chi}_D) (\vec\lambda(\mu)) = D \chi_\sigma (\vec\lambda(\mu)) = D \lambda_1(\mu,\R^d).\]
We conclude that $p(\rho(h)F) \neq 0$ and hence
\[p(F) = \rho(h)^{-1} p(\rho(h)F) \neq 0.\qedhere\]
%\comm{Cette démonstration est trop fastidieuse ; à reprendre ?}
\end{proof}

We can now easily deduce Proposition~\ref{pr:NCdet} from Proposition~\ref{pr:LDPpolynome} and Lemma~\ref{lm:criterionL}.

\begin{proof}[Proof of Proposition~\ref{pr:NCdet}]
Note that under our assumptions, $G$ cannot be compact.
%For otherwise, $\Gamma \subset G \cap \SL_d(\Z)$ would be a finite group and hence equal to its Zariski closure $\Gbb$. But $\Gbb$ is assumed to be Zariski connected. It follows that $\Gbb$ is trivial, this contradicts the assumption that $G$ acts irreducibly on $\R^d$.
Let $f \colon E \to \R$ be a polynomial map of degree $D$, and denote by $f_D$ its degree $D$ homogeneous part.
Define $F \in \R[\Gbb^{k}]$ by
\[\forall g_1, \dotsc, g_k \in G,\; F(g_1, \dotsc, g_k) = f(g_1 + \dotsb + g_k).\]
Let $p \colon \R[\Gbb^k]_{\leq D}\to\R[\Gbb^k]_{\leq D}$ be the projection to the sum of all simple submodules $M$ such that $\lambda_1(\mu^{\otimes k},M)=D\lambda_1(\mu,\R^d)$.
By Lemma~\ref{lm:criterionL}, $\norm{p(F)} = 0$ implies $\norm{f_D} = 0$, and since these two expressions define seminorms on the space of polynomial maps on $E$, it follows that
\begin{equation}\label{fdpf}
\norm{f_D} \ll \norm{p(F)}.
\end{equation}
By Proposition~\ref{pr:LDPpolynome} applied to the random walk on $G^k$ associated to the measure $\mu^{\otimes k}$, with $\lambda=D\lambda_1(\mu,\R^d)$ we get that for every $\omega>0$, there exists $c>0$ and $n_0\in\N$ such that
\[ \forall n \geq n_0,\quad
(\mu^{*n})^{\otimes k} \bigl(  \bigl\{ g \in G^k \mid \abs{F(g)} \leq e^{(\lambda - \omega)n} \norm{p(F)}\bigr\}  \bigr)  \leq e^{-c n}.\]
Together with \eqref{fdpf}, this proves what we want.
\end{proof}

\subsection{Fourier decay for random walks}
The relevant object here is the measure $\tilde{\mu}_n$, obtained from $\mu_n = \mu^{*n}$ after rescaling by a factor $e^{-\lambda_1 n}$.
This rescaling shrinks $\mu_n$ to a ball of subexponential size around $0$.
An important consequence of the results of this section and the previous one is the following theorem.

\begin{theorem}[Fourier decay for {$\tilde{\mu}_n$}]
\label{thm:decay}
Let $\mu$ be a probability measure on $\SL_d(\Z)$, $d\geq 2$.
Let $\Gamma$ denote the subsemigroup generated by $\mu$, $\Gbb$ the Zariski closure of $\Gamma$ in $\SL_d$ and $E$ the subalgebra of $\Mat_d(\R)$ generated by $\Gbb(\R)$.
Denote $\mu_n = \mu^{*n}$ and $\tilde{\mu}_n = (e^{-\lambda_1 n})_* \mu_n$ where $\lambda_1$ is the top Lyapunov exponent of $\mu$.
Assume that: 
\begin{enumerate}[label=(\alph*)]
\item  The measure $\mu$ has a finite exponential moment; 
\item  The action of $\Gamma$ on $\R^d$ is irreducible;
\item  The algebraic group $\Gbb$ is Zariski connected. % and semisimple.
\end{enumerate} 
Then there exists $\alpha_0 > 0$ such that for every $0 < \alpha < \alpha_0$, there exists $c_0 > 0$ and $n_0 \geq 1$ such that for every $n\geq n_0$,
\[ \forall \xi\in E^*\ \mbox{with}\ e^{\alpha n}\leq \norm{\xi} \leq e^{\alpha_0 n},\quad
|\widehat{\tilde{\mu}_n}(\xi)| \leq e^{-c_0 n}.\]
\end{theorem}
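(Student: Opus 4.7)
The plan is to express $\tilde\mu_n$ as an $s$-fold multiplicative convolution, use Lemma~\ref{lm:ordre} to convert the problem to bounding a Fourier coefficient of an $s$-fold multiplicative convolution of an \emph{additive} convolution (which is the object on which the non-concentration results of this section are naturally formulated), and finally apply Theorem~\ref{thm:fourier}. To set this up, fix $\kappa>0$ small enough that Proposition~\ref{pr:NCaffine} holds with this exponent, and let $s=s(E,\kappa)$ and $\eps=\eps(E,\kappa)$ be the constants supplied by Theorem~\ref{thm:fourier}; set $l=\dim E$. Given $\xi$ with $e^{\alpha n}\leq\norm{\xi}\leq e^{\alpha_0 n}$, put $\delta=\norm{\xi}^{-1}$, so that $\norm{\xi}=\delta^{-1}$. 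For $n$ large and (up to a negligible remainder) divisible by $s$, let $m=n/s$ and $\nu=\tilde\mu_m$; the identity $(e^{-\lambda_1 m}g_1)\dotsm(e^{-\lambda_1 m}g_s)=e^{-\lambda_1 n}g_1\dotsm g_s$ gives $\nu^{*s}=\tilde\mu_n$. Setting $\mu_0=\nu^{\pp l}\mm\nu^{\pp l}$, Lemma~\ref{lm:ordre} yields
\[
\abs{\widehat{\tilde\mu_n}(\xi)}^{(2l)^s}\;\leq\;\widehat{\mu_0^{*s}}(\xi),
\]
so it suffices to prove $\abs{\widehat{\mu_0^{*s}}(\xi)}\leq\delta^{\eps\tau}$ via Theorem~\ref{thm:fourier} for some $\tau>0$.

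The support bound (1) in Theorem~\ref{thm:fourier} follows at once from Theorem~\ref{thm:LargeD}\ref{it:LargeDn}: outside probability $2l\,e^{-cm}$, each of the $2l$ summands building $\mu_0$ lies in $B_E(0,e^{\omega m})$, so the sum lies in $B_E(0,2l\,e^{\omega m})\subset B_E(0,\delta^{-\eps})$ as soon as $\omega<s\alpha\eps$. For the affine non-concentration (3), conditioning on $2l-1$ of the summands reduces matters to bounding $\nu(W'^{(\rho)})$ for an arbitrary proper affine subspace $W'\subset E$; unscaling by $e^{\lambda_1 m}$, this becomes a bound on $\mu^{*m}\bigl\{g:d(g,e^{\lambda_1 m}W')\leq\rho e^{\lambda_1 m}\bigr\}$, which on the overwhelming event $\norm{g}\gtrsim e^{(\lambda_1-\omega)m}$ is dominated by $\mu^{*m}\bigl\{g:d(g,e^{\lambda_1 m}W')\leq\rho e^{\omega m}\norm{g}\bigr\}\lesssim(\rho e^{\omega m})^\kappa$ via Proposition~\ref{pr:NCaffine}. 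This is acceptable provided $\omega\kappa<s\alpha\eps$ and $\alpha_0\leq 1/s$, so that the constraint $\rho\geq\delta\geq e^{-m}$ imposed by that proposition is satisfied.

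The key and least routine verification is hypothesis (2). Writing $\mu_0(x+S_E(\delta^\eps))=\mu_0\bigl\{y:\abs{\det_E(y-x)}\leq\delta^\eps\bigr\}$ and unscaling the measure, this becomes a bound on
\[
(\mu^{*m})^{\pp 2l}\bigl\{y:\abs{\det\nolimits_E(y-e^{\lambda_1 m}x)}\leq\delta^\eps e^{d_E\lambda_1 m}\bigr\},
\]
with $d_E=\dim E$. This is precisely the setting of Proposition~\ref{pr:NCdet} applied to the polynomial $f(y)=\det_E(y-e^{\lambda_1 m}x)$, whose degree is $D=d_E$ and whose top homogeneous part is $\det_E$ (so that $\norm{f_D}$ is a constant independent of $x$). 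The requirement $k\geq\dim E$ there is exactly why one sets $l=\dim E$; the bound of order $e^{-cm}\ll\delta^\tau$ follows as soon as $\delta^\eps e^{d_E\lambda_1 m}\leq e^{(d_E\lambda_1-\omega)m}$, i.e. $\omega\leq s\alpha\eps$. Once the three hypotheses are in place, Theorem~\ref{thm:fourier} gives $\abs{\widehat{\mu_0^{*s}}(\xi)}\leq\delta^{\eps\tau}$, whence $\abs{\widehat{\tilde\mu_n}(\xi)}\leq e^{-c_0 n}$ with $c_0=\alpha\eps\tau/(2l)^s$.

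The main obstacle is the careful bookkeeping of exponential scales: the large-deviation parameter $\omega$ must be chosen smaller than $s\alpha\eps$ for (1) and (2), and smaller than $s\alpha\eps/\kappa$ for (3), while the final rate $c_0$ degrades with each such choice; $\alpha_0$ must be bounded above by $1/s$ so that the scale $\rho=\delta$ is compatible with the constraint $\rho\geq e^{-m}$ in Proposition~\ref{pr:NCaffine}; and the parameter $\tau$ of Theorem~\ref{thm:fourier} (which is required to satisfy $\tau<\eps\kappa$) must still yield a strictly positive $c_0$ after all losses. All these constraints are simultaneously satisfiable for $\alpha_0$ sufficiently small in terms of $\mu$, $E$, $s$, $\eps$ and the constants supplied by Theorem~\ref{thm:LargeD} and Propositions~\ref{pr:NCaffine} and \ref{pr:NCdet}.
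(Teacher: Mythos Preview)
Your proof is correct and follows essentially the same strategy as the paper's. The only organizational differences are that you set $m=n/s$ at the outset (so that Lemma~\ref{lm:ordre} lands directly on $\tilde\mu_n$), whereas the paper first proves the bound for $\tilde\mu_{sn}$ and then reduces to general $n$ via $n=sq+r$; and you handle the restriction to $\norm{g}\gtrsim e^{(\lambda_1-\omega)m}$ implicitly (absorbing the exceptional event into the right-hand side of hypothesis~(iii)), whereas the paper introduces an explicit splitting $\tilde\mu_n^{\pp D}\mm\tilde\mu_n^{\pp D}=\eta+\theta$ with $\theta$ small and $\eta$ satisfying the non-concentration estimates cleanly. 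Both routes are equivalent.
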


\begin{proof}
We want to apply Theorem~\ref{thm:fourier} to the measure $\tilde{\mu}_n$, and for that, we should check that it is not concentrated near any affine subspace, nor near any translate of the set of non-invertible elements of $E$.
This will follow from Propositions~\ref{pr:NCaffine} and \ref{pr:NCdet}.
Recall that given $\rho>0$, we write $S_E(\rho)$ for the set
\[ S_E(\rho) = \{\, x\in E \mid \abs{\det\nolimits_E(x)} \leq \rho\,\}.\]
Let $D = \dim(E)$. 
Under the assumptions of the theorem, we claim that there exists $\kappa > 0$ depending only $\mu$ such that for every $\omega > 0$, there exists $c =c(\mu, \omega) > 0$ such that for every $n \geq 1$, we can decompose the convolution
\[\tilde\mu_n^{\pp D} \mm \tilde\mu_n^{\pp D} = \eta + \theta\]
into positive Borel measures satisfying the following properties.
\begin{enumerate}
\item $\theta(E) \ll_\mu e^{-cn}$,
\item $\eta(E \setminus B(0, e^{\omega n})) \ll_\mu e^{-cn}$,
\item $\forall x \in E,\ \eta(x + S_E(e^{-\omega n})) \ll_\mu e^{-cn}$,
\item $\forall \rho \geq e^{-n},\ \forall W < E\ \mbox{affine subspace},\ \eta(W^{(\rho)}) \ll_\mu e^{\omega n} \rho^\kappa$.
\end{enumerate}
To justify this claim, let $\eta_1$ be the restriction of $\tilde{\mu}_n$ to $E \setminus B(0, e^{-\omega n})$ and put 
\[
\eta' = \eta_1 \pp \tilde{\mu}_n^{\pp( D - 1)}\quad \text{and} \quad \eta = \eta' \mm \tilde\mu_n^{\pp D}.
\] 
By Theorem~\ref{thm:LargeD}\ref{it:LargeDn}, there is $c = c(\mu,\omega) > 0$ such that for every $n \geq 1$,
\[\tilde{\mu}_n(B(0,e^{-\omega n})) \ll_\mu e^{-cn}
\quad \text{and} \quad 
\tilde{\mu}_n(E \setminus B(0,e^{\omega n})) \ll_\mu e^{-cn}.\]
It follows that
\[\theta(E) \ll_\mu e^{-cn}\]
and
\[
 \eta \bigl( E \setminus B(0, 2D e^{\omega n})\bigr) \ll_\mu e^{-cn}.
\]
For $x \in E$, apply Proposition~\ref{pr:NCdet} to the polynomial function $y \mapsto \det_E(y - e^{\lambda_1 m}x)$. Note that these polynomials all have degree $D = \dim(E)$ and all have the same degree $D$ homogeneous part, namely $\det_E$. We obtain $c > 0$ such that 
\begin{align*}
\eta'\bigl(x + S_E(e^{-\omega n})\bigr) &\leq \tilde\mu_n^{\pp D} \bigl(x + S_E(e^{-\omega n})\bigr)\\
&= \mu_n^{\pp D} \bigl( \bigl\{\, y \in E \mid \abs{\det\nolimits_E(e^{-\lambda_1 n}y- x)} \leq e^{-\omega n} \,\bigr\} \bigr)\\
&= \mu_n^{\pp D} \bigl( \bigl\{\, y \in E \mid \abs{\det\nolimits_E(y- e^{\lambda_1 n}x)} \leq e^{(D \lambda_1 -\omega) m} \,\bigr\} \bigr)\\
&\ll_\mu e^{-cn}.
\end{align*}
Since this property is preserved under additive convolution, the same holds for $\eta$.
Now let $W$ be a proper affine subspace of $E$.
Using the definition of $\eta_1$ and Proposition~\ref{pr:NCaffine}, we find for every $\rho \geq e^{-n}$,
\begin{align*}
\eta_1 (\{\, g \in E \mid d(g,W) \leq \rho \,\})
& \leq \tilde\mu_n \bigl( \bigl\{\, g\in E \mid d(g,W) \leq \rho \text{ and } \norm{g} \geq e^{-\omega n} \,\bigr\} \bigr)\\
&\leq \tilde\mu_n \bigl( \bigl\{ g\in E \mid d(g,W) \leq \rho e^{\omega n} \norm{g} \bigr\} \bigr)\\
&= \mu_n \bigl( \bigl\{\, g\in E \mid d(g, e^{\lambda_1 n}W) \leq \rho e^{\omega n} \norm{g} \,\bigr\} \bigr)\\
&\ll_\mu e^{\omega \kappa n} \rho^\kappa.
\end{align*}
Again this property is preserved under additive convolution, so that $\eta$ satisfies the required conditions.

Let $\eps = \eps(\mu,\kappa) > 0$ and $s = s(\mu,\kappa) \geq 1$ be the constant given by Theorem~\ref{thm:fourier} applied with the parameter $\kappa$.
Set $\omega = \alpha \eps / 2$. Recall that we chose $c = c(\mu,\omega)$. Finally set $\tau = \min\{c/2,\eps \kappa \}$.
With the choice of these parameters, for every $n$ large enough, for every $R \in {[e^{\alpha n }, e^{n}]}$,
\begin{enumerate}
\item $\theta(E) \leq R^{-\tau}$,
\item $\eta(E \setminus B_E(0, R^\eps)) \leq R^{-\tau}$,
\item for all $x \in E$, $\eta(x + S_E(R^{-\eps})) \leq R^{-\tau}$,
\item for all $\rho \geq R^{-1}$ and every proper affine subspace $W \subset E$, $\eta(W^{(\rho)}) \leq R^{\eps} \rho^\kappa$.
\end{enumerate}
In other words, the assumptions of Theorem~\ref{thm:fourier} are satisfied for the measure $\eta$ at the scale $\frac{1}{R}$. Therefore, for all $\xi \in  E^*$ in the range 
\begin{equation*}
e^{\alpha n} \leq \norm{\xi} \leq e^{n},
\end{equation*}
we have
\begin{equation*}
\abs{\widehat{\eta^{* s}}(\xi)} \leq \norm{\xi}^{-\eps \tau}.
\end{equation*}
It follows that
\[ \abs{\widehat{(\tilde\mu_n^{\pp D} \mm \tilde\mu_n^{\pp D} )^{* s}}(\xi)} \leq \norm{\xi}^{-\eps \tau} + O_s(\norm{\xi}^{-\tau}).\]
Applying Lemma~\ref{lm:ordre} to $\tilde\mu_n^{\pp D} \mm \tilde\mu_n^{\pp D}$, we get
\[ 
\abs{\widehat{\tilde\mu_{sn}}(\xi)} \leq \norm{\xi}^{-c_0},
\]
where $c_0 = \frac{\eps \tau}{ 2(2D)^s}$ and, again, assuming that $n \geq n_0(\mu,\alpha)$.
This shows the desired upper bound for $\widehat{\tilde{\mu}_n}(\xi)$ provided that $n$ is a multiple of $s$.

To prove the estimate for general $n$, write $n=sq+r$ with $0\leq r<s$. On the one hand, by~\eqref{eq:hatmuffnu},
\[ \widehat{\tilde{\mu}_n}(\xi) = \int \widehat{\tilde{\mu}_{sq}}(\xi x)\dd\tilde{\mu}_r(x),\]
On the other hand, it follows from the Markov inequality and the fact that $\tilde{\mu}_r$ has bounded exponential moment that, for $x$ outside of a set of exponentially small $\tilde{\mu}_r$-measure,
\[ e^{-\frac{\alpha n}{2}}\norm{\xi} \leq \norm{\xi x} \leq e^{\frac{n}{4s}}\norm{\xi}.\]
This proves the theorem with $\alpha_0 = \frac{1}{4s}$.
\end{proof}

\section{The set of large Fourier coefficients}
\label{sc:fourier}

Starting from Theorem~\ref{thm:decay}, we now use some Fourier analysis to show a first intermediate statement towards Theorem~\ref{thm:main}.

\begin{proposition}[First step: concentration and separation]
\label{pr:step1}
Let $\mu$ be a probability measure on $\SL_d(\Z)$, $d\geq 2$.
Denote by $\Gamma$ the subsemigroup generated by $\mu$, and by $\Gbb$ the Zariski closure of $\Gamma$ in $\SL_d$.
Assume that: 
\begin{enumerate}[label=(\alph*)]
\item  The measure $\mu$ has a finite exponential moment; 
\item  The action of $\Gamma$ on $\R^d$ is irreducible;
\item  The algebraic group $\Gbb$ is Zariski connected. % and semisimple.
\end{enumerate} 
There exist constants $C \geq 0$ and $\sigma > \tau>0$ such that the following holds.\\
Let $\nu$ be a Borel probability measure on $\Tbb^d$.
Let $t_0 \in {(0, 1/2)}$.
Assume that  for some $a_0 \in \Z^d \setminus\{0\}$,
\[\abs{\widehat{\mu^{* n} * \nu}(a_0)} \geq t_0 \quad\text{and}\quad n \geq C \abs{\log t_0}.\]
Then, writing 
$N = e^{\sigma n} \norm{a_0}$ and $M = e^{-\tau n}N$,
there exists a $\frac{1}{M}$-separated set $X\subset\Tbb^d$ such that
\[ \nu\left(\bigcup_{x\in X} B(x,\frac{1}{N})\right)
\geq t_0^C.\]
\end{proposition}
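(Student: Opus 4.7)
The plan is to deduce concentration of $\nu$ from the large Fourier coefficient $|\widehat{\mu^{*n}*\nu}(a_0)| \geq t_0$, with Theorem~\ref{thm:decay} as the main analytic input. The starting identity
\[
\widehat{\mu^{*n}*\nu}(a_0) = \int_{\Tbb^d} h(x)\,d\nu(x), \qquad h(x) = \int e(a_0\cdot gx)\,d\mu^{*n}(g),
\]
recasts the hypothesis as $\nu$ integrating $h$ to modulus $\geq t_0$, and since $\|h\|_\infty\leq 1$, Markov's inequality gives $\nu(V)\geq t_0/2$ for the level set $V=\{x : |h(x)|\geq t_0/2\}$. It will suffice to cover $V$ by $1/N$-balls around a $1/M$-separated set $X$.

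The function $h$ admits a second description that brings Theorem~\ref{thm:decay} into play. For any lift $\tilde x\in\R^d$ of $x\in\Tbb^d$, set $\xi_{\tilde x}\in E^*$, $\xi_{\tilde x}(M)=a_0\cdot M\tilde x$; then $h(x)=\widehat{\mu^{*n}}(-\xi_{\tilde x})$, independently of the lift (because $\xi_{\tilde x+k}-\xi_{\tilde x}$ takes integer values on $\SL_d(\Z)$). Theorem~\ref{thm:decay} guarantees $|h(x)|\leq e^{-c_0 n}$ whenever some lift $\xi_{\tilde x+k}$ has norm in the interval $[e^{(\alpha-\lambda_1)n},e^{(\alpha_0-\lambda_1)n}]$. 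Choosing $C$ so that $n\geq C|\log t_0|$ forces $e^{-c_0 n}<t_0/2$, whence $V$ is contained in the ``bad set''
\[
B=\bigl\{x\in\Tbb^d : \text{no lift }\xi_{\tilde x+k}\text{ has norm in the decay interval}\bigr\}.
\]

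The heart of the proof is a geometric/Diophantine analysis of $B$. The collection of lifts $\{\xi_{\tilde x+k}\}_{k\in\Z^d}$ is a coset of the lattice $\Lambda=\{\zeta_k : k\in\Z^d\}$ in the subspace $W=a_0\otimes\R^d\subset E^*$, where $\zeta_k(M)=a_0\cdot Mk$, and $\Lambda$ has covolume comparable to $\|a_0\|^{\dim W}$. A coset of $\Lambda$ fails to meet the prescribed shell only when, in a precise Diophantine sense, it sits abnormally close to a rational point of small denominator; this translates to $x\in\Tbb^d$ lying within $1/N=e^{-\sigma n}/\|a_0\|$ of a point $p/q\in\Tbb^d$ with $q\leq M=e^{(\sigma-\tau)n}\|a_0\|$, provided $\sigma$ and $\tau$ are chosen compatibly with $\alpha,\alpha_0$ and $\lambda_1$. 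Extracting a maximal $1/M$-separated subset $X$ of these centres then yields
\[
\nu\Bigl(\bigcup_{x\in X}B(x,1/N)\Bigr)\geq\nu(V)\geq t_0/2\geq t_0^{C}
\]
for $C\geq 2$, using $t_0<1/2$.

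The principal obstacle is exactly this geometric step. Theorem~\ref{thm:decay} delivers decay only in a narrow band of scales, and applying it at the best lift of $\xi_{\tilde x}$ alone would confine $B$ only to the union of a tiny ball and the complement of a slightly larger ball around $0\in\Tbb^d$, which is far too weak. The real gain must come from exploiting the \emph{entire} lattice of lifts, reducing the question to counting how many cosets of $\Lambda$ can avoid the shell, and packaging the exceptional cosets as points on $\Tbb^d$ close to rationals of small denominator. Choosing the exponents $\sigma>\tau$ so that the denominators of these bad centres remain bounded by $M$, while the associated balls have radius at most $1/N$, is where all the careful bookkeeping lives.
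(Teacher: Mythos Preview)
Your approach has a genuine gap, and it is precisely the one you flag in your final paragraph but do not actually close. The decay window in Theorem~\ref{thm:decay} corresponds, after undoing the rescaling, to $\|\xi\|\in[e^{(\alpha-\lambda_1)n},e^{(\alpha_0-\lambda_1)n}]$ with $\alpha<\alpha_0<\lambda_1$; both radii are exponentially small in $n$. On the other hand the lattice $\Lambda=\{\zeta_k:k\in\Z^d\}\subset W$ has shortest nonzero vector of norm $\asymp\|a_0\|\geq 1$. Hence the shell of interest contains at most one point of any coset $\xi_{\tilde x}+\Lambda$, namely the representative nearest to $0$, and ``exploiting the entire lattice of lifts'' adds nothing whatsoever: you are still left with the dichotomy that either $\|\xi_{\tilde x}\|$ is below the inner radius (so $x$ lies in a ball of radius $\asymp e^{(\alpha-\lambda_1)n}/\|a_0\|$ around $0$) or above the outer radius (which is the generic case and carries no information). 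The asserted Diophantine conclusion---that bad cosets correspond to $x$ close to rationals of small denominator---is simply false; the bad set $B$ is the complement of a thin annulus around $0\in\Tbb^d$ and covers almost all of the torus. No choice of $\sigma,\tau$ rescues this.

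The paper's proof proceeds by an entirely different route, working in the frequency domain rather than the physical one. It studies the set $A(t)=\{a\in\Z^d:|\hat\nu(a)|\geq t\}$ of large Fourier coefficients of $\nu$ and shows, via a counting estimate, that $A(t_0^C)\cap B(0,N)$ is $M$-dense enough to feed into the Fourier-analytic Lemma of \cite[Proposition~7.5]{BFLM}, which then produces the separated set $X$. The counting estimate is obtained as follows: a Cauchy--Schwarz/H\"older argument (Lemma~\ref{lm:SpecHolder}) shows that a suitable additive convolution of $\mu^{*n}$ gives large mass to $\{g:|\hat\nu(a_0 g)|\geq t_1\}$; the Fourier decay of $\tilde\mu_n$ then feeds into a regularity lemma (Lemma~\ref{lm:dgammafull}) showing that this set, after rescaling, is nearly full-dimensional inside some small ball; pushing forward by $g\mapsto a_0 g$ and a recentering trick using the additive structure of $A(t)$ yields the required density of large Fourier coefficients near the origin. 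The essential point you are missing is that Theorem~\ref{thm:decay} is not applied to bound $h(x)$ at individual torus points, but rather to exhibit regularity of the law of the random walk in $E$, which is then transferred to $A(t)$ through the multiplicative map $g\mapsto a_0 g$.
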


The proof of this statement goes by two steps.
First, following \cite{BFLM}, one applies a Fourier analytic lemma \cite[Proposition~7.5]{BFLM} to translate the concentration of $\nu$ to a statement about its Fourier coefficients.
Then, one uses the Fourier decay of $\tilde{\mu}_n = (e^{-\lambda_1 n})_*\mu^{*n}$ to study the set of large Fourier coefficients
\begin{equation}\label{at}
A(t) = \{a \in \Z^d \mid \abs{\hat\nu(a)} \geq t\},
\end{equation}
and prove the desired statement.

\subsection{Detecting concentration from the Fourier coefficients}

Because it is so elementary, and yet beautiful, we include the Fourier analytic lemma needed for our argument.
The reader is referred to \cite[Proposition~7.5]{BFLM} for its ingenious proof.

\begin{lemma}
Given $d\in\N$,
there exists $c>0$ such that if a measure $\nu$ on $\Tbb^d$ satisfies 
\[
\Ncal(A(t)\cap B(0,N), M) \geq s \left(\frac{N}{M}\right)^d
\]
for some numbers $s,t>0$ and some $M,N\geq 1$ such that $M<cN$, then there exists a $\frac{1}{M}$-separated subset $X\subset\Tbb^d$ such that
\[
\nu\left(\bigcup_{x\in X} B(x,\frac{1}{N})\right) 
\geq c (st)^3.
\]
\end{lemma}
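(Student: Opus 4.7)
The plan is to apply a Fourier-analytic duality argument in four steps.

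First, from the hypothesis I would extract a maximal $M$-separated subset $A'\subset A(t)\cap B(0,N)$. Since any maximal $M$-separated set is automatically an $M$-cover, we have $|A'|\geq\Ncal(A(t)\cap B(0,N),M)\geq s(N/M)^d$. For each $a\in A'$ I would pick a unimodular $\epsilon_a\in\C$ with $\epsilon_a\hat\nu(a)=|\hat\nu(a)|$, and form the trigonometric polynomial $P(x)=\sum_{a\in A'}\epsilon_a\,e(a\cdot x)$. Unwinding the definition gives $\int P\,d\nu=\sum_{a\in A'}|\hat\nu(a)|\geq t|A'|$, while trivially $\norm{P}_\infty\leq|A'|$ and, by Parseval, $\norm{P}_{L^2(\Tbb^d)}^2=|A'|$.

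Second, combining these three inequalities locates a concentration set: by Markov's inequality, $E=\{x:|P(x)|\geq t|A'|/2\}$ satisfies $\nu(E)\geq t/2$, while Chebyshev applied to the Parseval bound gives $|E|\leq 4/(t^2|A'|)$. Thus we already have $\nu$-concentration on a set of small Lebesgue measure; the remaining task is to equip it with the announced $\frac{1}{M}$-separated structure.

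The heart of the proof is to exploit the $M$-separation of the frequencies of $P$ to force the peaks of $|P|$ to sit at well-separated locations. I would pick a nonnegative smooth bump $\chi$ on $\Tbb^d$ with $\int\chi=1$, essentially supported in $B(0,\frac{1}{M})$, and with $\hat\chi$ decaying rapidly past scale $M$. Expanding $|P|^2=|A'|+\sum_{a\neq a'\in A'}\epsilon_a\bar\epsilon_{a'}e((a-a')\cdot x)$, the non-constant frequencies all satisfy $|a-a'|\geq M$, so they are strongly damped by convolution with $\chi$ and hence $|P|^2*\chi$ is close to $|A'|$ everywhere. Equivalently, on every $\frac{1}{M}$-ball the average of $|P|^2$ is close to $|A'|$; together with the trivial $L^\infty$-bound $|P|^2\leq|A'|^2$, this limits how many high peaks of $|P|$ can sit in one $\frac{1}{M}$-ball. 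Finally, since $P$ has frequencies in $B(0,N)$, Bernstein's inequality yields $\norm{\nabla P}_\infty\leq 2\pi N|A'|$, so each peak of height $\gtrsim t|A'|$ already occupies a ball of radius $\sim\frac{1}{N}$.

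The final step is to thin out a maximal $\frac{1}{M}$-separated family $X$ of such peak-locations inside $E$. By the previous step, $E$ is essentially contained in $X^{(1/N)}=\bigcup_{x\in X}B(x,\frac{1}{N})$, and chasing the constants through the previous two steps produces the announced $\nu(X^{(1/N)})\geq c(st)^3$. I expect the main obstacle to be precisely this bookkeeping: the cubic power $(st)^3$ (rather than a weaker one) requires careful tracking of the losses at each use of Cauchy--Schwarz or Hölder, especially in the passage from the $L^2$-flatness of $|P|^2*\chi$ to the peak-and-plateau structure of $|P|$. It is presumably for this reason that the paper defers the ingenious proof to the original argument of Bourgain, Furman, Lindenstrauss and Mozes.
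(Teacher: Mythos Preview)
The paper does not actually prove this lemma; it only states it and refers the reader to \cite[Proposition~7.5]{BFLM} for the proof. So there is no in-paper argument to compare against. That said, your outline is indeed the skeleton of the BFLM proof: build the trigonometric polynomial $P$ on an $M$-separated frequency set $A'$, use $\int P\,d\nu\geq t|A'|$ together with $\|P\|_2^2=|A'|$ to get $\nu$-concentration on a Lebesgue-small set, exploit the $M$-separation of $A'$ to make $|P|^2$ have constant local $L^2$-averages at scale $1/M$, and use the frequency localization at scale $N$ to give each peak a plateau of width $\sim 1/N$.

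One honest caveat: the route you describe, if followed literally (Chebyshev on $|P|^2*\chi$ in each $1/M$-cell, then Bernstein), yields a bound of the form $c\,s\,t^{d+O(1)}$ rather than $c(st)^3$; the cube in BFLM comes from a slightly different organization of the Cauchy--Schwarz/H\"older steps, working with the smoothed density $\nu*K_N$ and the identity $\int |P|^2 (\nu*K_M)=|A'|$ (using a Fej\'er-type kernel $K_M$ whose transform vanishes on $A'-A'\setminus\{0\}$). You correctly flag this as the delicate point. For the paper's application this distinction is immaterial, since only a polynomial bound in $s,t$ is needed, but if you want the stated exponent you should mirror the BFLM bookkeeping rather than the Bernstein-plus-counting variant you sketch.
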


Going back to the statement of Proposition~\ref{pr:step1} above we see that it is enough to show that, under the same assumptions, there exist $C\geq 0$ and $\sigma > \tau>0$ such that, for $N=e^{\sigma n}
\norm{a_0}$ and $M=e^{-\tau n}N$,
\begin{equation}\label{large}
\Ncal\bigl(A(t_0^C) \cap B(0,N) , M\bigr) \geq  t_0^C \Bigl(\frac{N}{M}\Bigr)^d.
\end{equation}
This is the goal of the next paragraph.

\subsection{Fourier decay and large coefficients}

For $a \in \Z^d$ and $x \in \Tbb^d$, we denote by $(a,x) \mapsto \langle a , x \rangle \in \Tbb$ the natural pairing.  
Vectors in $\Z^d = \widehat{\Tbb^d}$ indexing Fourier coefficients are naturally understood as row vectors, so that for any $g \in \SL_d(\Z)$, we have
\[\langle a , gx \rangle = \langle ag , x \rangle.\]

Before we start the proof of \eqref{large}, we record an elementary lemma -- not much more than the Cauchy-Schwarz inequality -- which shows that the set of large Fourier coefficients of a measure has some additive structure.
It will later be combined with the multiplicative properties of $\mu^{*n}$, allowing us to exploit the sum-product phenomenon for the study of the set of large Fourier coefficients.
This approach to Fourier coefficients of multiplicative convolutions of measures goes back to the work of Bourgain and Konyagin \cite{bourgainkonyagin} on exponential sums in finite fields.

We use the symbols $\pp$ and $\mm$ introduced in Section~\ref{sc:sumprod}.

\begin{lemma}[Additive structure of Fourier coefficients]
\label{lm:SpecHolder}
Let $\mu$ be a Borel probability measure on $\SL_d(\Z)$ and $\nu$ a Borel probability measure on $\Tbb^d$.
If 
\[ \abs{\widehat{\mu * \nu}(a_0)} \geq t_0 > 0,\]
then for any integer $k \geq 1$, the set
\[A = \bigl\{ g \in \Mat_d(\Z) \mid \abs{\hat{\nu}(a_0g)} \geq t_0^{2k}/2 \bigr\}\]
satisfies
\[\bigl(\mu^{\pp k} \mm \mu^{\pp k}\bigr) (A) \geq \frac{t_0^{2k}}{2}.\]
\end{lemma}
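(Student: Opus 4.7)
The plan is to relate the $\omega$-integral of $g \mapsto \hat{\nu}(a_0 g)$, where $\omega = \mu^{\pp k} \mm \mu^{\pp k}$, to a power-mean of a single Fourier-like function on the torus, so that the hypothesis $\abs{\widehat{\mu * \nu}(a_0)}\ge t_0$ feeds in directly. Concretely, I would introduce the function
\[ F \colon \Tbb^d \to \C, \qquad F(x) = \int_{\Mat_d(\Z)} e(a_0 g \cdot x) \dd\mu(g), \]
so that $\int F(x) \dd\nu(x) = \widehat{\mu * \nu}(a_0)$ by Fubini and the identity $a_0 \cdot gx = (a_0 g)\cdot x$. Hence $\bigl|\int F \dd\nu \bigr| \geq t_0$.

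Next, I would expand $\int \hat{\nu}(a_0 g)\dd\omega(g)$ by writing $g = g_1 + \dotsb + g_k - h_1 - \dotsb - h_k$ with all $g_i, h_j$ independent of law $\mu$, and using that $e(a_0 g \cdot x)$ factors as $\prod_i e(a_0 g_i \cdot x) \prod_j e(-a_0 h_j \cdot x)$. After integrating each $g_i$- and $h_j$-factor separately against $\mu$, this yields
\[ \int_{\Mat_d(\Z)} \hat{\nu}(a_0 g)\dd\omega(g) = \int_{\Tbb^d} F(x)^k\, \overline{F(x)}^k \dd\nu(x) = \int_{\Tbb^d} \abs{F(x)}^{2k}\dd\nu(x). \]
In particular this quantity is real and non-negative. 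By the Cauchy–Schwarz inequality applied to $\nu$, $\bigl|\int F\dd\nu\bigr|^{2} \leq \int \abs{F}^{2}\dd\nu$, and by Jensen's inequality applied to the convex function $u \mapsto u^k$ on $[0,\infty)$, $\bigl(\int \abs{F}^{2}\dd\nu\bigr)^{k}\leq \int \abs{F}^{2k}\dd\nu$; chaining these gives $\int \abs{F}^{2k}\dd\nu \geq t_0^{2k}$.

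Finally, I would peel off the level set $A$ by a one-line Markov-type argument: since $\abs{\hat{\nu}(a_0 g)} \leq 1$ always, splitting the integral at the threshold $t_0^{2k}/2$ yields
\[ t_0^{2k} \leq \int \hat{\nu}(a_0 g) \dd\omega(g) \leq \omega(A) \cdot 1 + (1 - \omega(A)) \cdot \tfrac{t_0^{2k}}{2}, \]
so $\omega(A) \geq t_0^{2k}/2$, which is the desired conclusion. There is no genuinely hard step here; the only thing worth flagging is making sure the factorization $\int \hat\nu(a_0 g)\dd\omega = \int \abs{F}^{2k}\dd\nu$ is applied to the right dual pairing, so that the multiplicative action of $\Mat_d(\Z)$ on $\widehat{\Tbb^d}=\Z^d$ matches with the additive convolutions $\pp$ and $\mm$ on $\Mat_d(\Z)$, and not the other way around — this is precisely the identity $\langle a, gx\rangle = \langle ag, x\rangle$ that bridges the additive structure on the matrix side with the Fourier side.
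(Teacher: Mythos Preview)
Your proof is correct and follows essentially the same route as the paper: both expand $\int \hat\nu(a_0 g)\,\dd(\mu^{\pp k}\mm\mu^{\pp k})(g)$ as $\int_{\Tbb^d}|F|^{2k}\dd\nu$ via the identity $\langle a_0, gx\rangle = \langle a_0 g, x\rangle$, bound this below by $t_0^{2k}$ using H\"older/Jensen, and then split off the level set $A$. The only cosmetic difference is that the paper applies H\"older in one step while you use Cauchy--Schwarz followed by Jensen, and the paper passes to $\int|\hat\nu(a_0 g)|\,\dd\omega$ before splitting (which you could also do, to make the Markov step cleaner since $\hat\nu$ is complex-valued).
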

\begin{proof}
Observe that
\[\widehat{\mu * \nu}(a_0) =  \int_{\Tbb^d}\int_\Gamma e(\langle a_0 , gx \rangle ) \dd \mu (g) \dd \nu(x)  = \int_{\Tbb^d}\int_\Gamma e(\langle a_0g , x \rangle ) \dd \mu (g) \dd \nu(x).\] 
By Hölder's inequality,
\begin{align*}
t_0^{2k} \leq \abs{\widehat{\mu * \nu}(a_0)}^{2k} &\leq  \int_{\Tbb^d} \bigl\lvert \int_\Gamma e(\langle a_0g , x \rangle ) \dd \mu (g) \bigr\rvert^{2k} \dd \nu(x)\\
& = \int_{\Gamma^{2k}} \hat\nu\bigl(a_0 (g_1 + \dotsb + g_k - g_{k+1} - \dotsb - g_{2k}) \bigr) \dd \mu^{\otimes 2k} (g_1, \dotsc, g_{2k})\\
& \leq \int_E \abs{\hat{\nu}(a_0g)}  \dd \bigl(\mu^{\pp k} \mm \mu^{\pp k}\bigr) (g)\\
& \leq \bigl(\mu^{\pp k} \mm \mu^{\pp k}\bigr) (A) + \frac{t_0^{2k}}{2} \bigl(\mu^{\pp k} \mm \mu^{\pp k}\bigr)( E \setminus A),
\end{align*}
which finishes the proof of the lemma.
\end{proof}

The next lemma is a regularity statement we need for measures that have a strong Fourier decay.
It essentially states that if a set in $\R^D$ carries a large proportion of a measure with small Fourier coefficients at all frequencies between $\delta^{-\alpha}$ and $\delta^{-1-\alpha}$, then we can find a ball of radius $\delta^{O(\alpha)}$ in the set on which the measure is comparable to the Lebesgue measure at scale $\delta$.

\begin{lemma}[Regularity from Fourier decay]
\label{lm:dgammafull}
Given $D \geq 1$ and $\alpha > 0$, there exist constants $c = c(D,\alpha) > 0$ and $C_1 = C_1(D,\alpha) > 0$ such that the following holds for all $0 < \delta < c t$.
Let $\mu$ be a Borel measure on $\R^D$, of total mass $\mu(\R^D) \leq 1$. 
Let $A$ be a subset of $\R^D$. Assume
\begin{enumerate}
\item $\Supp(\mu) \subset B(0, \delta^{-\alpha})$,
%\marginpar{\small dans la condition (ii), peut on remplacer $\delta^{-1-\alpha}$ par $\delta^{-1}$ ?}
\item for all $\xi \in \R^D$ with $\delta^{-\alpha} \leq \norm{\xi} \leq \delta^{-1- \alpha}$, $\abs{\hat\mu(\xi)} \leq \norm{\xi}^{-C_1}$,
\item $\mu(A) \geq t$.
\end{enumerate}
Then there exists $x \in \R^D$ such that 
\[\Ncal(A \cap B(x, \delta^\beta), \delta) \geq c t^{D + 1} \Bigl( \frac{\delta^\beta}{\delta} \Bigr)^D,\]
where $\beta = (2D + 1) \alpha $.
\end{lemma}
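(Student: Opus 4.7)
The approach is Fourier-analytic, following the scheme of \cite[Theorem 7]{Bourgain2010}. The idea is to convert the polynomial Fourier decay of $\mu$ into a pointwise density bound (and, with a larger $C_1$, a Lipschitz bound) for the smoothed density $g = \mu \pp P_\delta$, and then combine these with a level-set / pigeonhole argument to isolate a $\delta^\beta$-ball where $A$ is densely packed.

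\textbf{Step 1 (density and Lipschitz bounds).} Fix a Schwartz bump $P$ with $\hat P \geq 0$ compactly supported in $B(0,2)$ and $\hat P(0)=1$, and let $P_r = r^{-D}P(\cdot/r)/\int P$, so that $\hat{P_r}$ is supported in $B(0,2/r)$. Choose $C_1 = C_1(D,\alpha) > D+1$. Splitting the integral at the frequency $\delta^{-\alpha}$ and using the hypothesis $|\hat\mu(\xi)| \leq \|\xi\|^{-C_1}$ in the range $[\delta^{-\alpha}, \delta^{-1-\alpha}]$, we obtain
\[
\|g\|_\infty \leq \int |\hat\mu(\xi)| |\hat{P_\delta}(\xi)|\,d\xi \lesssim \delta^{-D\alpha} =: K,
\qquad
\int |\xi||\hat g(\xi)|\,d\xi \lesssim \delta^{-\alpha(D+1)} =: L,
\]
so $g$ is $L$-Lipschitz. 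In particular, $\mu(B(x,\delta)) \lesssim K\,\delta^{D} = \delta^{D(1-\alpha)}$ for all $x$, and by Plancherel $\|g\|_2^2 \lesssim K$.

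\textbf{Step 2 (level-set covering bound).} Set $\lambda = c\,t\,\delta^{D\alpha}$ with $c>0$ small, and consider the super-level set $E_\lambda := \{g \geq \lambda\}$. Since $\Supp g \subset B(0, 2\delta^{-\alpha})$ has Lebesgue measure $\lesssim \delta^{-D\alpha}$, one has $\int_{E_\lambda^c} g \leq \lambda\,|\Supp g| \lesssim t$, while $\int_{A^{(\delta)}} g \geq \mu(A) \geq t$, so $\mu(A^{(\delta)} \cap E_\lambda) \gtrsim t$, and the $L^\infty$ bound gives $|A^{(\delta)} \cap E_\lambda| \gtrsim t\,\delta^{D\alpha}$. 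The Lipschitz bound implies $E_\lambda + B(0,r) \subset E_{\lambda/2}$ for $r := \lambda/(2L) \sim t\,\delta^\beta$. Combining this with the Minkowski-type inequality $|E_\lambda + B(0,r)| \geq c\,\Ncal(E_\lambda,r)\,r^D$ and the measure bound on $|E_{\lambda/2}|$ (obtained from $L^2$ together with $L^\infty$), one deduces $\Ncal(E_\lambda,r) \lesssim t^{-(D+1)}$, and by subdivision $\Ncal(E_\lambda, \delta^\beta) \lesssim t^{-(D+1)}\max(1,(r/\delta^\beta)^D)$.

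\textbf{Step 3 (pigeonhole and conclusion).} By pigeonhole, some $\delta^\beta$-ball $B$ satisfies
\[
|A^{(\delta)} \cap E_\lambda \cap B| \;\geq\; \frac{|A^{(\delta)} \cap E_\lambda|}{\Ncal(E_\lambda,\delta^\beta)} \;\gtrsim\; t^{D+2}\,\delta^{D\alpha}\cdot \min\!\bigl(1,\; t^D \delta^{-D(\beta-\alpha)}\bigr).
\]
With the choice $\beta = (2D+1)\alpha$ (so that $\beta - \alpha = 2D\alpha$), both cases $r \leq \delta^\beta$ and $r > \delta^\beta$ yield $|A^{(\delta)} \cap B| \gtrsim t^2 \delta^{D\beta} = t^2\,|B|$, and since $t \leq 1$ we have $t^2 \geq t^{D+1}$. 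Converting Lebesgue measure into $\delta$-covering via $\Ncal(A \cap B', \delta) \gtrsim |A^{(\delta)} \cap B|/\delta^D$ (for a ball $B'$ of radius $\delta^\beta + \delta$, which can be replaced by a $\delta^\beta$-ball upon absorbing constants) gives the desired
\[
\Ncal(A \cap B', \delta) \;\gtrsim\; t^{D+1}\,\bigl(\delta^\beta/\delta\bigr)^D.
\]

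\textbf{Main obstacle.} The main difficulty lies in Step 2, namely in extracting the sharp exponent $t^{-(D+1)}$ for the covering of the level set $E_\lambda$. This requires the fine balancing between (i) the threshold $\lambda \sim t\delta^{D\alpha}$ (dictated by the condition $\int_{E_\lambda^c} g \lesssim t$), (ii) the Lipschitz scale $r \sim t\delta^\beta$ (coming from $r = \lambda/(2L)$), and (iii) the measure bound on $E_{\lambda/2}$ from $L^2$/Minkowski. The choice $\beta = (2D+1)\alpha$ is precisely what makes $D\alpha + \alpha(D+1) = D\beta/ D \cdot \text{(scaling)}$ work out, and the refinement to scale $\delta^\beta$ together with $t \leq 1$ then yields the factor $t^{D+1}$; the detailed computation appears in \cite[Lemma 2.11]{He_these}.
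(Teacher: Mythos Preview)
There is a genuine gap in Step~2: the claimed covering bound $\Ncal(E_\lambda,r)\lesssim t^{-(D+1)}$ is not correct and cannot be obtained from the $L^1$, $L^2$, or $L^\infty$ estimates you list. Indeed, from $\|g\|_1\le 1$ and Markov one gets $|E_{\lambda/2}|\lesssim \lambda^{-1}\asymp t^{-1}\delta^{-D\alpha}$, while $r\asymp t\delta^{\beta}$, so
\[
\Ncal(E_\lambda,r)\;\lesssim\;\frac{|E_{\lambda/2}|}{r^D}\;\lesssim\; t^{-(D+1)}\,\delta^{-D(\alpha+\beta)},
\]
and the extra factor $\delta^{-D(\alpha+\beta)}$ cannot be removed (using $L^2$ only makes it worse). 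For a concrete obstruction, take $\mu$ essentially uniform on $B(0,\delta^{-\alpha})$: then $g\asymp\delta^{D\alpha}$ on the whole ball, so $E_\lambda\approx B(0,\delta^{-\alpha})$ and $\Ncal(E_\lambda,r)\asymp (\delta^{-\alpha}/r)^D$, far larger than $t^{-(D+1)}$. With this (correct) bound, your pigeonhole in Step~3 only yields $|A^{(\delta)}\cap B|\gtrsim t^{D+2}\delta^{D(2\alpha+\beta)}$, which falls short of $t^{D+1}\delta^{D\beta}$ by the factor $\delta^{2D\alpha}$.

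The paper's argument avoids this loss by \emph{not} covering a global level set. Instead, it covers $B(0,\delta^{-\alpha})$ by balls $B_i$ of radius $\rho=c\,t\,\delta^{\beta}$ and uses the Fourier bound to show that the smoothed density $\mu_\delta$ is \emph{nearly constant} on each $B_i$ (this is where the three-range frequency split and the choice $\beta=(2D+1)\alpha$ enter). One then selects a ball $B_i$ with two properties simultaneously: (a) $\mu_\delta(A^{(\delta)}\cap B_i)/\mu_\delta(B_i)\gtrsim t$, and (b) $\mu_\delta(B_i)\gtrsim t/i_{\max}$. Near-constancy turns (a) into $|A^{(\delta)}\cap B_i|/|B_i|\gtrsim t$, whence $|A^{(\delta)}\cap B_i|\gtrsim t\rho^D\asymp t^{D+1}\delta^{D\beta}$. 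The point is that near-constancy transfers the \emph{relative} $\mu_\delta$-density of $A$ on the ball directly into relative Lebesgue density, which your level-set argument does not do; you only know the absolute Lebesgue measure of $A^{(\delta)}\cap E_\lambda$, and this is too weak once $E_\lambda$ is spread out. (As a minor aside, your band-limited bump makes $P_\delta$ non-compactly supported, so the inequality $\int_{A^{(\delta)}}g\ge\mu(A)$ also needs an error term; the paper uses a compactly supported bump and handles the high frequencies via an extra term $T_3$.)
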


\begin{proof}
In this proof, the implied constants in the Vinogradov or Landau notation are allowed to depend on the dimension $D$ in addition to the dependency indicated by subscripts.

Let $\phi \colon \R^D \to \R$ be a nonnegative smooth function supported on $B(0,1)$ such that $\int_{\R^D} \phi = 1$.
Set $\phi_\delta(x) = \delta^{-D}\phi(\delta^{-1} x)$, $\forall x \in \R^D$. Note that
\[\forall \xi \in \R^D,\quad  \widehat{\phi_\delta}(\xi) = \hat\phi(\delta \xi).\]
Since $\phi$ is smooth, for any $C_2 > 0$, we have
\begin{equation}
\label{eq:hatphi}
\forall \xi \in \R^D,\quad \abs{\hat\phi(\xi)} \ll_{C_2} (1 + \norm{\xi})^{-C_2}.
\end{equation}

Define $\mu_\delta = \mu \pp \phi_\delta$, viewed either as a measure or as a smooth function on $\R^D$.
%the density of the measure
Clearly,
\[\mu_\delta(A^{(\delta)}) \geq \mu(A) \geq t.\]
Let $c > 0$ be a small constant depending on $D$ and $\alpha$ to be determined later.
Assume $\delta < ct$ and set $\rho = c \delta^\beta t$.
Let $(B_i)_{1\leq i \leq i_{\max}}$ be an essentially disjoint covering of $B(0,\delta^{-\alpha})$ by closed balls of radius $\rho$.
In other words, the intersection multiplicity of the covering is at most $C = O(1)$, so that in particular the number of balls is at most $i_{\max} = O(\delta^{-D \alpha} \rho^{-D})$.
Consider 
\[I = \Bigl\{ 1\leq i \leq i_{\max} \mid \frac{\mu_\delta(A^{(\delta)} \cap B_i)}{\mu_\delta(B_i)} \geq \frac{t}{2C} \Bigr\}.\]
%We have
%\begin{align*}
%t \leq \mu_\delta(A^{(\delta)})  &\leq \sum_{i=1}^{i_{\max}} \frac{\mu_\delta(A^{(\delta)} \cap B_i)}{\mu_\delta(B_i)} \mu_\delta(B_i) \\
%& \leq \sum_{i \in I}\mu_\delta(B_i) +  \frac{t}{2C} \sum_{i \notin I} \mu_\delta(B_i) \\
%& \leq \sum_{i \in I}\mu_\delta(B_i) +  \frac{t}{2}.
%\end{align*}
%Therefore, $\sum_{i \in I}\mu_\delta(B_i) \gg t$. 
Using the finite multiplicity of the covering, we infer that $\sum_{i \in I}\mu_\delta(B_i) \geq t/2$.
Hence there exists $i \in I$ such that 
\[\mu_\delta(B_i) \gg \frac{t}{i_{\max}} \gg t \delta^{D\alpha}\rho^{D}.\]

We fix this $i$ from now on.
Define
\[M = \max_{x \in B_i} \mu_\delta(x) \quad \text{ and } \quad m = \min_{x \in B_i} \mu_\delta(x).\]
We have $\mu_\delta(B_i) \leq M \abs{B_i}$ and hence 
\begin{equation}
\label{eq:MforMax} 
M \gg t \delta^{D\alpha}.
\end{equation}
Let $x_0 \in B_i$ such that $\mu_\delta(x_0) = M$.
By the Plancherel theorem, for any $x \in B_i$,
\begin{equation*}
\mu_\delta(x) = \mu \pp \phi_\delta (x) = \int_{\R^D} e(- \langle\xi , x\rangle) \hat\mu(\xi) \widehat{\phi_\delta}(\xi) \dd \xi
\end{equation*}
Thus,
\begin{align*}
\abs{\mu_\delta(x)- \mu_\delta(x_0)} &\leq \int_{\R^D} \abs{1 - e( \langle\xi , x - x_0\rangle)} \abs{\hat\mu(\xi)} \abs{ \widehat{\phi_\delta}(\xi)} \dd \xi \\
&\leq T_1 + 2 T_2 + 2 T_3,
\end{align*}
where
\begin{align*}
T_1 &= \int_{\norm{\xi} \leq \delta^{-\alpha}} \abs{1 - e( \langle\xi , x - x_0\rangle)} \dd \xi \\
&\ll \int_{\norm{\xi} \leq \delta^{-\alpha}} \norm{\xi} \norm{x - x_0} \dd \xi 
\ll \delta^{-(D + 1)\alpha} \rho,\\
T_2 &= \int_{\delta^{-\alpha} \leq \norm{\xi} \leq \delta^{-1 - \alpha}} \abs{\hat\mu(\xi)} \dd \xi \\
&\ll \int_{\delta^{-\alpha} \leq \norm{\xi} \leq \delta^{-1 - \alpha}} \norm{\xi}^{-C_1} \dd \xi
\ll \frac{\delta^{(C_1 - D)\alpha}}{C_1 - D},\\
T_3 &= \int_{\norm{\xi} \geq \delta^{-1 - \alpha}} \abs{ \hat{\phi}(\delta\xi)} \dd \xi\\
&\ll_{C_2} \int_{\norm{\xi} \geq \delta^{-1 - \alpha}} \delta^{-C_2} \norm{\xi}^{-C_2} \dd \xi
\ll_{C_2} \frac{\delta^{(C_2 - D)\alpha - D}}{C_2 - D}.
\end{align*}
In the last line, we used \eqref{eq:hatphi}.

Picking $\beta = (2D + 1) \alpha$, $C_1 = \frac{D \alpha + 1}{\alpha} + D$ and $C_2 = \frac{D \alpha + D + 1}{\alpha} + D$ and putting these inequalities together, we obtain, remembering \eqref{eq:MforMax} and $\delta < ct$, 
\[M - m \ll_{\alpha} c t \delta^{D\alpha} + \delta^{D\alpha + 1} \ll_{\alpha} c M.\]
This implies $M/m \leq 2$ provided that $c$ is chosen small enough according to $D$ and $\alpha$.
Remembering $i \in I$, we have

\[
t \ll \frac{\mu_\delta(A^{(\delta)} \cap B_i)}{\mu_\delta(B_i)} \leq \frac{M}{m} \frac{\abs{A^{(\delta)} \cap B_i}}{\abs{B_i}}.
\]
Hence 
\[\Ncal(A \cap B_i, \delta) \gg \delta^{-D} \abs{A^{(\delta)} \cap B_i} \gg t \delta^{-D}  \abs{B_i} \gg t \delta^{-D} \rho^D = c^D t^{D+1} \delta^{D\beta-D}.\]
Let $x \in \R^D$ be the center of $B_i$. Then $A \cap B_i \subset A \cap B(x,\delta^\beta)$ and hence
\[\Ncal(A \cap B(x,\delta^\beta), \delta) \gg_{\alpha} t^{D+1} \delta^{D\beta-D}. \qedhere\] 
\end{proof}

Combining the above observations and Theorem~\ref{thm:decay}, we can derive \eqref{large}.

\begin{proof}[Proof of \eqref{large}]
In this proof, the implied constants in the Vinogradov or Landau notation are allowed to depend on $\mu$ and hence on $E$.
As before,  for $n \geq 1$, we let $\tilde{\mu}_n = (e^{- \lambda_1 n})_* \mu_n$ denote the rescaling of $\mu_n = \mu^{*n}$. Denote by $E$ the subalgebra of $\Mat_d(\R)$ generated by $\Gbb(\R)$ and write $D=\dim(E)$.
By Theorem~\ref{thm:decay} there exists constants $\alpha_0 > 0$ and $c_0 > 0$ such that
\[
\forall \xi \in E^* \text{ with } e^{\frac{\alpha_0 n}{8D + 4}} \leq \norm{\xi} \leq e^{\alpha_0 n},\quad \abs{\widehat{\tilde\mu_{n}}(\xi)} \leq \norm{\xi}^{-c_0},
\]
Now fix $\delta = e^{-\frac{\alpha_0 n}{2}}$ and write $\alpha=1/(4D+2)$.
Let $C_1 = C_1(D,\alpha)$ from Lemma~\ref{lm:dgammafull} and set $k = \lceil C_1/c_0 \rceil$ so that the above implies
\[
\forall \xi \in E^* \text{ with }\delta^{-\alpha} \leq \norm{\xi} \leq \delta^{-2},\quad \abs{\widehat{\tilde\mu_{n}^{\pp k} \mm \tilde\mu_{n}^{\pp k}}(\xi)} \leq \norm{\xi}^{-C_1},
\]
This says that the measure $\tilde\mu_{n}^{\pp k} \mm \tilde\mu_{n}^{\pp k}$ is regular at all scales between $\delta^2$ and $\delta^\alpha$.

\smallskip

On the other hand, since $\abs{\widehat{\mu_{n} * \nu}(a_0)} \geq t_0$, it follows from Lemma~\ref{lm:SpecHolder} that
the set
\[A = \bigl\{ g \in E \cap \Mat_d(\Z) \mid \abs{\hat{\nu}(a_0g)} \geq t_1 := t_0^{2k}/2\bigr\}\]
satisfies
\[\bigl(\mu_{n}^{\pp k} \mm \mu_{n}^{\pp k} \bigr) (A) \gg t_0^{2k}.\]
Letting $\tilde A = e^{- \lambda_1 n } \cdot A$ be the rescaling of $A$, we find
\[\bigl(\tilde\mu_{n}^{\pp k} \mm \tilde\mu_{n}^{\pp k} \bigr) (\tilde A) \gg t_0^{2k}.\]
From the large deviation estimate Theorem~\ref{thm:LargeD}\ref{it:LargeDn}, we also have
\[\bigl(\tilde\mu_{n}^{\pp k} \mm \tilde\mu_{n}^{\pp k} \bigr) (E \setminus B(0,\delta^{-\alpha})) \ll e^{-c_2 n}\]
for some $c_2 = c_2(\mu) > 0$.
Assuming $n \geq \frac{4k}{c_2}\abs{\log t_0}$, this implies
\[\bigl(\tilde\mu_{n}^{\pp k} \mm \tilde\mu_{n}^{\pp k} \bigr) (\tilde A \cap B(0,\delta^{-\alpha})) \gg  t_0^{2k}.\]

So we can apply Lemma~\ref{lm:dgammafull} to the restriction of $\tilde\mu_{n}^{\pp k} \mm \tilde\mu_{n}^{\pp k}$ to $B(0,\delta^{-\alpha})$.
Letting $t_1=t_0^{2k}$, we obtain $x \in B(0,\delta^{-\alpha})$ such that
\[\Ncal( \tilde A \cap B(x, \delta^{1/2}), \delta) \gg t_1^{D+1} \delta^{-D/2}.\]
Rescaling back, we find
\begin{equation}
\label{eq:AcapBx}
\Ncal\bigl( A \cap B(e^{\lambda_1 n} x, N_0), M_0 \bigr) \gg t_2 \Bigl(\frac{N_0}{M_0}\Bigr)^D,
\end{equation}
where $t_2 = t_1^{D+1}$, $N_0 = e^{\sigma n}$ and $M_0 = e^{-\tau n} N_0$ with $\sigma = \lambda_1 - \frac{\alpha_0}{4}$ and $\tau = \frac{\alpha_0}{4}$.
In accordance with the statement of the proposition, we put
 \[ N= N_0\norm{a_0}
\quad\text{and}\quad
M = M_0\norm{a_0}.\]

\smallskip

Consider the map $\phi_0 \colon E \to \R^d$, $g \mapsto a_0g$.
Letting $A' = A \cap B(e^{\lambda_1 n} x, N_0)$, we have
\begin{equation}
\label{eq:A'fiber}
\Ncal(A', M_0) \leq \Ncal\bigl(\phi_0(A'), M \bigr) \max_{b \in \phi_0(A')} \Ncal\bigl(A' \cap \phi_0^{-1}(B(b,M)) ,M_0\bigr).
\end{equation}
We claim that
\begin{equation}
\label{eq:A'capW}
\max_{b \in \phi_0(A')} \Ncal\bigl(A' \cap \phi_0^{-1}(B(b,M)) ,M_0\bigr) \ll \Bigl(\frac{N_0}{M_0}\Bigr)^{D - d}
\end{equation}
Evidently $\norm{\phi_0} \asymp_E \norm{a_0}$. 
Let $W_0 = \ker \phi_0$. Since $G$ acts irreducibly on $\R^d$, $\phi_0$ is surjective and hence $\dim(W_0) = D - d$. 
The restriction $\phi_{0\mid W_0^\perp} \colon W_0^\perp \to \R^d$ is bijective. Moreover, by a compactness argument, 
%$\norm{a_0}(\phi_{a_0}|_{W_0^\perp})^{-1}$ est continue sur la sphère unité de $(\R^d)$, décrite par $a_0/\norm{a_0}$.
\[\norm{\phi_{0\mid W_0^\perp}^{-1}} \asymp_E \norm{a_0}^{-1}.\]
Consequently, for any $y \in E$,
\[ \phi_0^{-1}(B(\phi_0(y),M))  \subset y + W_0^{(O(M_0))}.\]
Hence 
\[\Ncal\bigl(A' \cap \phi_0^{-1}(B(\phi_0(y),M)) ,M_0\bigr) \leq \Ncal\bigl(B(0,N_0) \cap W_0^{(O(M_0))} ,M_0\bigr) \ll \Bigl(\frac{N_0}{M_0}\Bigr)^{D - d},\]
which proves the claim \eqref{eq:A'capW}.
From \eqref{eq:AcapBx}, \eqref{eq:A'fiber} and \eqref{eq:A'capW}, we get
\[\Ncal\bigl(\phi_0(A') , M \bigr) \gg t_2 \Bigl(\frac{N}{M}\Bigr)^{d}.\]
By definition of $A'$, we have $\phi_0(A') \subset A(t_1) \cap B(b, \norm{\phi_0}N_0)$, where $b = e^{\lambda_1 n} a_0x$ and $\norm{\phi_0}N_0 \ll N$. 

This is almost what we want, except that the ball $B(b,N)$ is not centered at the origin.
To recenter that ball, we make use once more of the additive properties of the set of large Fourier coefficients.
Choose $b' \in B(b,N)$ to be such that the quantity $\Ncal\bigl(A(t_1) \cap B(b', \frac{N}{2}) , M \bigr)$ is maximal. We get
\[\Ncal\bigl(A(t_1) \cap B(b', \frac{N}{2}) , M \bigr) \gg t_2 \Bigl(\frac{N}{M}\Bigr)^{d}.\]
Choose an $M$-separated subset $A_1 \subset A(t_1) \cap B(b', N/2)$ of cardinality 
\[\abs{A_1}\gg \Ncal\bigl(A(t_1) \cap B(b', \frac{N}{2}) , M \bigr)\]
and such that all Fourier coefficients $\hat\nu(a)$, for $a\in A_1$, fall into the same quadrant of $\C$.
Then
\[ \abs{A_1} \frac{t_1}{4} \leq \bigl\lvert \sum_{a \in A_1} \hat\nu(a) \bigr\rvert = \Bigl\lvert \int_{\Tbb^d} \sum_{a \in A_1} e(\langle a, x\rangle) \dd \nu(x) \Bigr\rvert  \]
By the Cauchy-Schwarz inequality, 
\begin{equation*}
\abs{A_1}^2 \frac{t_1^2}{16} \leq \int_{\Tbb^d} \bigl\lvert \sum_{a \in A_1} e(\langle a, x\rangle)\bigr\rvert^2  \dd \nu(x) = \sum_{a_1,a_2 \in A_1} \abs{\hat\nu(a_1 - a_2)}
\end{equation*}
Thus, there exists $a_2 \in A_1$ such that
\begin{equation*}
\abs{A_1} \frac{t_1^2}{16} \leq \sum_{a_1 \in A_1} \abs{\hat\nu(a_1 - a_2)}.
\end{equation*}
Set $A_2 = (A_1 - a_2) \cap A\bigl(\frac{t_1^2}{32}\bigr)$, we have $\abs{A_2} \geq \frac{t_1^2}{32} \abs{A_1}$ and $A_2 \subset B(0,N)$.
It follows that
\[\Ncal\Bigl(A \Bigl(\frac{t_1^2}{32}\Bigr) \cap B(0,N), M\Bigr) \gg t_1^2 t_2 \Bigl(\frac{N}{M}\Bigr)^{d}.\]
This concludes our proof.
\end{proof}

\section{Concentration near rational points}
\label{sc:rational}

In this section, we finish the proof of Theorem~\ref{thm:main} from the introduction.
We shall in fact prove a slightly more general statement, given as Proposition~\ref{pr:general} below.
%Recall that for parameters $Q\geq 1$ and $\rho>0$, we write $\sW_Q$ for the set of rational points on $\Tbb^d$ with denominator at most $Q$, and $\sW_Q^{(\rho)}$ for its $\rho$-neighborhood.

In all this section, unless stated otherwise, $\mu$ denotes a probability measure on $\SL_d(\Z)$, $d\geq 2$.
The Lyapunov exponents of $\mu$ are denoted by $\lambda_1 \geq \dots \geq \lambda_d$.
The subsemigroup generated by $\mu$ is denoted by $\Gamma$, its Zariski closure by $\Gbb$, and we write $G=\Gbb(\R)$ for the set of real points.
We assume that:
\begin{enumerate}[label=(\alph*)]
\item  The measure $\mu$ has a finite exponential moment; 
\item  The action of $\Gamma$ on $\R^d$ is irreducible;
\item  The algebraic group $\Gbb$ is Zariski connected.
\end{enumerate} 
We also let $E$ be the subalgebra of $\Mat_d(\R)$ generated by $G$.
For $n\in\N$, we write $\mu_n=\mu^{*n}$ for the law of the random walk in $G$ at time $n$.
Moreover, $\nu$ denotes a Borel probability measure $\nu$ on $\Tbb^d$, understood as the starting distribution of a random walk on $\Tbb^d$.
We write $\nu_n = \mu_n * \nu$ for the law of the random walk at time $n$.
Finally, given $Q\geq 1$ and $\rho>0$, we denote by $\sW_Q$ the set of rational points in $\Tbb^d$ with denominator at most $Q$, and by $\sW_Q^{(\rho)}$ its $\rho$-neighborhood.

\begin{proposition}
\label{pr:general}
Let $d \geq 2$. Let $\mu$ be a probability measure on $\SL_d(\Z)$.
%Denote by $\Gamma$ the subsemigroup generated by $\mu$, and by $\Gbb < \SL_d$ the Zariski closure of $\Gamma$.
%Assume 
%\begin{enumerate}[label=(\alph*)]
%\item  The measure $\mu$ has a finite exponential moment; 
%\item  The only subspaces of $\R^d$ preserved by $\Gamma$ are $\{0\}$ and $\R^d$;
%\item  The algebraic group $\Gbb$ is Zariski connected. % and semisimple.
%\end{enumerate} 
Under the assumptions above, the following holds.

%Let $\lambda_1$ denote the top Lyapunov exponent associated to $\mu$.
Given $\lambda \in {(0, \lambda_1)}$, there exists a constant $C = C(\mu, \lambda) > 0$ such that for every Borel probability measure $\nu$ on $\Tbb^d$ and every $t \in {(0,1/2)}$, 
if for some $a \in \Z^d \setminus \{0\}$, 
\[\abs{\widehat{\nu_{n}}(a)} \geq t \quad \text{and} \quad n \geq C \log\frac{\norm{a}}{t},\]
then
\[ \nu \bigl( \sW_{Q}^{(e^{-\lambda n})}\bigr) \geq t^C\]
for some $Q \leq \bigl(\frac{\norm{a}}{t}\bigr)^C$.
\end{proposition}

The argument follows closely the one given in Section~7 of \cite{BFLM}, but some modifications are required since we cannot make use of the proximality assumption.
We shall divide the proof of Theorem~\ref{thm:main} into three parts.
First, one observes that given a Borel probability measure $\nu$ on $\Tbb^d$, the sequence of measures $\nu_n$ satisfies a diophantine property: if it gives much weight to a ball of small radius, then the ball must contain a rational point with small denominator.
Second, starting the separated set $X$ around which, by Proposition~\ref{pr:step1}, $\nu_n$ is concentrated, one goes backwards along the random walk in order to increase the concentration of the measure around the set $X$, until one can apply the diophantine property to conclude that $\nu_{n-m}$ is concentrated near some rational points with bounded denominator.
The last part, concluding the proof, is again going backwards along the random walk, to show that if $\nu_n$ concentrates near the set of rational points of bounded height, then $\nu$ is even more concentrated near that set.

\subsection{An almost diophantine property}

The key to obtain the concentration near \emph{rational} points is the following almost diophantine property of the sequence of measures $\nu_n = \mu_n*\nu$, $n\in\N$.

\begin{proposition}[Almost diophantine property]
\label{pr:dioph}
Let $\mu$ be a probability measure on $\SL_d(\Z)$, $d\geq 2$, with some finite exponential moment.
Assume that $\mu$ acts strongly irreducibly on $\R^d$.

There exist constants $C\geq 0$ and $\eta>0$ depending only on $\mu$, such that for every Borel probability measure $\nu$ on $\Tbb^d$, for every $x\in\Tbb^d$, every $\rho>0$, and every $n\geq C\abs{\log \rho}$,
\[ \nu_n(B(x,\rho)) \geq \rho^\eta
\quad\Longrightarrow\quad
x\in \sW_{\rho^{-1/10}}^{(\rho^{9/10})}.\]
\end{proposition}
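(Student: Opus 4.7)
The plan is to follow the strategy of \cite[\S7]{BFLM}: decompose the random walk as $\nu_n = \mu_m * \nu_{n-m}$ at an intermediate scale $m \asymp |\log\rho|$, transfer the concentration in $B(x,\rho)$ backwards using the large deviation estimate Theorem~\ref{thm:LargeD}, and exploit the arithmetic of the preimages $\{g^{-1}x:g\in\Gamma\}$ to produce an integer matrix $h$ witnessing that $x$ lies near a low-denominator rational.

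Fix $\eps\in(0,1/(10d))$ and a small $\eta>0$, and choose $m=\lfloor m_0|\log\rho|\rfloor$ with $m_0>0$ small enough that, by Theorem~\ref{thm:LargeD}\ref{it:LargeDsv}, the set $B_m=\{g\in\SL_d(\Z):\norm{g},\norm{g^{-1}}\leq\rho^{-\eps}\}$ satisfies $\mu_m(B_m^c)\leq\rho^{10\eta}$. For $g\in B_m$ one has $g^{-1}B(x,\rho)\subset B(g^{-1}x,\rho^{1-\eps})$, so the hypothesis and Fubini give
\[
\int_{B_m}\nu_{n-m}\bigl(B(g^{-1}x,\rho^{1-\eps})\bigr)\,d\mu_m(g)\geq\tfrac{1}{2}\rho^\eta.
\]
A Markov bound extracts $U\subset B_m$ with $\mu_m(U)\geq\rho^\eta/4$ on which $\nu_{n-m}(B(g^{-1}x,\rho^{1-\eps}))\geq\rho^\eta/4$ for every $g\in U$. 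Because $\nu_{n-m}$ is a probability measure, at most $4\rho^{-\eta}$ of these balls are pairwise disjoint; covering and pigeonhole produce a cluster $\mathcal{C}\subset U$ with $\mu_m(\mathcal{C})\geq\rho^{2\eta}/64$ such that $\{g^{-1}x:g\in\mathcal{C}\}$ lies in a single ball of radius $\leq 2\rho^{1-\eps}$.

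For any $g_1,g_2\in\mathcal{C}$, the difference $h:=g_1^{-1}-g_2^{-1}\in\Mat_d(\Z)$ satisfies $\norm{h}\leq 2\rho^{-\eps}$, and there exist $v\in\Z^d$ and a lift $\tilde x\in\R^d$ of $x$ with $\norm{h\tilde x-v}\leq 2\rho^{1-\eps}$. If $h$ is invertible, then $|\det h|\leq\norm{h}^d\leq 2^d\rho^{-d\eps}$ and, since $|\det h|\geq 1$, the adjugate formula gives $\norm{h^{-1}}\ll_d\norm{h}^{d-1}\ll_d\rho^{-(d-1)\eps}$, whence
\[
d_{\Tbb^d}(x,h^{-1}v)\leq\norm{h^{-1}}\cdot 2\rho^{1-\eps}\ll_d\rho^{1-d\eps}.
\]
Since $h^{-1}v\in\sW_{|\det h|}$, taking $\eps$ slightly smaller than $1/(10d)$ and $\rho$ small enough to absorb the dimensional constants places $x$ in $\sW_{\rho^{-1/10}}^{(\rho^{9/10})}$.

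The main obstacle is producing a pair $(g_1,g_2)\in\mathcal{C}\times\mathcal{C}$ with $h$ invertible. The bad pairs consist of the diagonal $\Delta=\{(g,g)\}$ and the subvariety $Z=\{(g_1,g_2):\det(g_1^{-1}-g_2^{-1})=0\}$. By Corollary~\ref{cr:gapEquid} applied with a prime $p\asymp e^{m/C_1}$, one has $\max_g\mu_m(\{g\})\ll\rho^{c_0 m_0}$, so $(\mu_m\otimes\mu_m)(\Delta)\leq\max_g\mu_m(\{g\})\ll\rho^{c_0m_0}$. For $Z$, observe that $f(g_1,g_2):=\det(g_1^{-1}-g_2^{-1})$ is nonzero on $\Gbb\times\Gbb$: evaluating at $g_2=I$ gives $f(g_1,I)=\det(I-g_1)$, which does not vanish for a regular semisimple $g_1\in\Gbb$ without eigenvalue $1$ (such exist in any nontrivial connected semisimple subgroup of $\SL_d$). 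Applying Proposition~\ref{pr:gapEscape} to $\mu\otimes\mu$ on the semisimple connected group $\Gbb\times\Gbb$ therefore gives $(\mu_m\otimes\mu_m)(Z)\ll\rho^{c_1 m_0}$. Choosing $\eta$ small enough that $4\eta<\min(c_0,c_1)m_0$ guarantees $(\rho^{2\eta}/64)^2$ exceeds $(\mu_m\otimes\mu_m)(\Delta\cup Z)$, which yields an admissible pair and completes the proof, with all constants depending only on $\mu$.
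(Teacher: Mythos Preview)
There is a genuine gap in the step where you claim that $f(g_1,g_2)=\det(g_1^{-1}-g_2^{-1})$ does not vanish identically on $\Gbb\times\Gbb$. Your justification reduces to the existence of $g_1\in\Gbb$ with $\det(I-g_1)\neq 0$, i.e.\ an element without eigenvalue~$1$, and you assert that such elements exist in any nontrivial connected semisimple subgroup of $\SL_d$. This is false. Take $d=3$ and $\Gbb$ the image of $\SL_2$ under the adjoint representation on $\mathfrak{sl}_2\cong\R^3$; concretely, $\Gbb\cong\SO(2,1)^\circ\subset\SL_3$, and one can realise $\Gamma=\mathrm{Ad}(\SL_2(\Z))\subset\SL_3(\Z)$, acting strongly irreducibly. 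Every element $A\in\SO(2,1)^\circ$ has $1$ as an eigenvalue (its eigenvalues on $\R^3$ are $\{1,\lambda,\lambda^{-1}\}$ for some $\lambda$), hence for any $A,B\in\Gbb$,
\[
\det(A-B)=\det(A)\det(I-A^{-1}B)=\det(I-A^{-1}B)=0.
\]
Thus in this example \emph{every} pair $(g_1,g_2)\in\mathcal{C}\times\mathcal{C}$ lies in the bad set $Z$, and Proposition~\ref{pr:gapEscape} gives no information because $f$ vanishes on all of $\Gbb\times\Gbb$.

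The paper's proof is organised differently precisely to avoid this obstruction. Instead of clustering the points $g^{-1}x$ and taking a single difference $g_1^{-1}-g_2^{-1}$, it applies Jensen's inequality with exponent $2d$ to produce a $2d$-tuple $(g_1,\dotsc,g_d,h_1,\dotsc,h_d)$ and a common point $y\in\bigcap_i g_i^{-1}B\cap\bigcap_j h_j^{-1}B$, then forms $M=g_1+\dotsb+g_d-h_1-\dotsb-h_d$. The point of taking $2d$ group elements is that, by Lemma~\ref{lm:notaffine}, the set $\{g_1+\dotsb+g_d-h_1-\dotsb-h_d:g_i,h_j\in G\}$ contains a nonempty open subset of the algebra $E$, so $\det M$ is not identically zero on $G^{2d}$ and Proposition~\ref{pr:gapEscape} applies. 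A difference of only two group elements need not reach the invertible part of $E$, as the example above shows. Your clustering argument could in principle be repaired by extracting $2d$ elements from $\mathcal{C}$ and forming an integer combination with zero coefficient sum, but making that combination invertible with controlled norm again requires the $2d$-element input and the affine-span fact from Lemma~\ref{lm:notaffine}, which brings you back to the paper's argument.
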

\begin{proof}
By Lemma~\ref{lm:notaffine}, the manifold $G$ is not included in any proper affine subspace of $E$ and therefore, writing $D = \dim E$, the set
\[ \{\, g_1+\dots+g_D-h_1-\dots-h_D \mid g_i,h_i\in G \,\}\]
contains a non-empty open set in $E$.
This implies in particular that the map $(g_i,h_i)\mapsto \det(g_1+\dots+g_D-h_1-\dots-h_D)$ is not identically zero on $G^D\times G^D$.
By Proposition~\ref{pr:gapEscape}, we infer that there exists $c>0$ such that for every $m$ large enough,
\[ \mu_m^{\otimes 2d}\bigl( \bigl\{\, (g_i,h_i)_{1\leq i\leq D} \mid \det(\sum g_i - \sum h_i) = 0 \,\bigr\} \bigr) \leq e^{-c m}.\]
Set $\eta=\frac{c}{40Dd\lambda_1}$ and for given $\rho > 0$ choose $m \geq 1$ such that $\rho \asymp e^{-20d\lambda_1 m}$. 
Let $B=B(x,\rho)$ and assume that $\nu_n(B) \geq \rho^\eta$. Then
\begin{align*}
e^{-cm} \asymp \rho^{2D\eta}
& \leq \nu_n(B)^{{2D}}\\
%& = (\sum_{g\in\Gamma} \mu_m(g)\nu_{n-m}(g^{-1}B)}))^{{2D}}\\
& = \Bigl( \int_{\Tbb^d} \sum_{g\in\Gamma} \mu_m(g) \mathbbm{1}_{B}(gx) \dd\nu_{n-m}(x) \Bigr)^{2D}\\
& \leq \int_{\Tbb^d} \Bigl(\sum_{g\in\Gamma} \mu_m(g)\mathbbm{1}_{g^{-1}B}(x) \Bigr)^{2D} \dd\nu_{n-m}(x) \qquad \text{(by Jensen's inequality)}\\
& = \sum_{g_i,h_i} \mu_m(g_1) \dotsm \mu_m(g_D)\mu_m(h_1) \dotsm \mu_m(h_D)
\nu_{n-m}(g_1^{-1}B\cap \dotsb \cap h_D^{-1}B).
\end{align*}
This shows that the $\mu_m^{\otimes 2D}$-measure of ${2D}$-tuples of elements $g_1,\dotsc,h_D$ such that
\[\nu_{n-m}(g_1^{-1}B\cap\dots\cap h_D^{-1}B) \gg e^{-cm}\]
is at least $e^{-cm}$.
In particular, using the large deviation estimate Theorem~\ref{thm:LargeD}\ref{it:LargeDn} and the observation above on the determinant, we may find elements $g_1,\dots,h_d$ in the support of $\mu_m$ satisfying 
\[
\left\{
\begin{array}{l}
\max(\norm{g_i},\norm{h_i})\leq e^{2\lambda_1 m},\\
\det(g_1+\dots+g_D-h_1-\dots-h_D)\neq 0,\\
g_1^{-1}B\cap \dots \cap h_D^{-1}B\neq\emptyset.
\end{array}
\right.
\]
If $y \in \R^d$ represents a point in that intersection, then, writing $M = g_1+\dots+g_D-h_1-\dots -h_D$, there exists $v\in\Z^d$ such that
\[ My \in v +B(0,2D\rho),\]
whence
\begin{equation}\label{ygv}
y  \in  M^{-1}v + B(0,2D\norm{M}^{-1}\rho).
\end{equation}
Now, the matrix $M$ has integer entries, and its determinant is bounded above by $e^{2d\lambda_1 m}$, so that the entries of $M^{-1}$ are rational numbers with denominator bounded above by $e^{2d\lambda_1 m}\leq\rho^{-\frac{1}{10}}$.
Moreover, 
\[\norm{M^{-1}} \leq \norm{M}^{d-1} \det(M)^{-1} \leq e^{2(d-1)\lambda_1 m}.\]
Equality \eqref{ygv} above shows that $x=g_1y\mod\Z^d$ is at distance at most $\rho^{\frac{9}{10}}$ from a rational point with denominator at most $\rho^{-\frac{1}{10}}$.
This finishes the proof.
\end{proof}

\subsection{Bootstrapping concentration}

We now wish to combine the diophantine property of $\nu_n = \mu_n*\nu$ with the concentration statement given by Proposition~\ref{pr:step1} to obtain some concentration near rational points.
To help the reader follow our progress towards Proposition~\ref{pr:general}, we formulate another intermediate step, which is the goal of this paragraph.

\begin{proposition}[Second step: concentration around rational points]
\label{pr:step2}
Under the assumptions recalled at the beginning of this section, there exists a constant $C$ depending only on $\mu$ such that the following holds.\\
Let $t_0 \in {(0, 1/2)}$.
Assume that  for some $a_0 \in \Z^d \setminus\{0\}$,
\[\abs{\widehat{\nu_n}(a_0)} \geq t_0 \quad\text{and}\quad n \geq C \log\frac{\norm{a_0}}{t_0}.\]
Then, for every integer $m$ such that $m\geq  C \log\frac{\norm{a_0}}{t_0}$ and $n-m\geq Cm$,
\[ \nu_{n-m} \bigl(\sW_{Q}^{(Q^{-8})}\bigr) \geq t_0^C,\]
for some $Q\in[e^{\frac{m}{C}},e^{Cm}]$.
\end{proposition}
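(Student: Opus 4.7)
The plan is to combine Proposition~\ref{pr:step1} with the almost diophantine property of Proposition~\ref{pr:dioph}, exploiting the Markov decomposition $\nu_n = \mu_m * \nu_{n-m}$ to transfer concentration from time $n$ back to time $n-m$. The first step is to apply Proposition~\ref{pr:step1} to produce a $\frac{1}{M}$-separated set $X \subset \Tbb^d$ with $\nu_n(X^{(1/N)}) \geq t_0^C$, where $N = e^{\sigma n}\norm{a_0}$ and $M = e^{-\tau n}N$. The hypothesis $n \geq C \log(\norm{a_0}/t_0)$ ensures that these scales are well-defined and exponentially small.

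Next, using $\nu_n = \mu_m * \nu_{n-m}$, the concentration translates to
\[\int_{\Gamma} \nu_{n-m}\bigl(g^{-1}X^{(1/N)}\bigr) \, d\mu_m(g) \geq t_0^C.\]
By Theorem~\ref{thm:LargeD}\ref{it:LargeDn}, one may restrict the integral to $g$ satisfying $\norm{g}, \norm{g^{-1}} \leq e^{2 \lambda_1 m}$, at the cost of subtracting $e^{-cm}$, which is harmless thanks to $m \geq C \log(\norm{a_0}/t_0)$. For such typical $g$, the anisotropic set $g^{-1}X^{(1/N)}$ is contained in the $(e^{2\lambda_1 m}/N)$-neighborhood of the finite set $g^{-1}X \subset \Tbb^d$.

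The main step is then to extract a local density statement at an intermediate scale $\rho$ of size roughly $e^{-\Theta(m)}$. A Vitali-style pigeonhole over the balls $B(y, \rho)$ carrying the concentration should yield a ball with $\nu_{n-m}(B(y, \rho)) \geq \rho^{\eta}$, where $\eta$ is the exponent from Proposition~\ref{pr:dioph}. Since $n - m \geq Cm \geq C|\log \rho|$, the almost diophantine property then applies and forces $y$ to lie within $\rho^{9/10}$ of a rational point of denominator at most $\rho^{-1/10}$. Crucially, $\SL_d(\Z)$ preserves $\sW_Q$ for every integer $Q \geq 1$, so the rational approximations observed inside $g^{-1}X$ are consistent with the rational structure of $\Tbb^d$ itself and can be aggregated across different $g$'s into a single estimate for $\nu_{n-m}\bigl(\sW_Q^{(Q^{-8})}\bigr)$ for a well-chosen $Q$.

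The hard part will be the calibration of $\rho$: one needs simultaneously that the mass surviving the pigeonhole remains polynomial in $t_0$, that the denominator $Q \sim \rho^{-1/10}$ lands in the prescribed range $[e^{m/C}, e^{Cm}]$, and that the diophantine error $\rho^{9/10}$ tightens to the required $Q^{-8}$. The slack in the hypothesis $n - m \geq Cm$ is what allows the large-deviation losses of the second step to be absorbed without spoiling the conclusion, while the invariance of $\sW_Q$ under $\SL_d(\Z)$ is the mechanism that lets rational structure propagate backwards along the random walk.
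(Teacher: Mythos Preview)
Your proposal has a genuine gap at the pigeonhole step, and the mechanism you describe will not close it.

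First, a small misreading: Proposition~\ref{pr:step1} takes as input $\abs{\widehat{\mu^{*n}*\nu}(a_0)}\geq t_0$ and outputs concentration of the \emph{initial} measure $\nu$, not of $\mu^{*n}*\nu$. So if you factor $\nu_n=\mu_{m_0}*\nu_{n-m_0}$ and apply the proposition with $m_0$ in the role of $n$, you obtain an $r_0$-separated set $X_0$ with $\nu_{n-m_0}(X_0^{(\rho_0)})\geq t_0^{C_0}$, where $\rho_0=e^{-\sigma m_0}\norm{a_0}^{-1}$ and $r_0=e^{\tau m_0}\rho_0$; there is no need to pull back by $g^{-1}$ afterwards.

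The real problem is the jump from this to a single ball of mass $\geq\rho^\eta$. The set $X_0$ has cardinality of order $r_0^{-d}\asymp e^{d(\sigma-\tau)m_0}\norm{a_0}^d$, so pigeonholing the mass $t_0^{C_0}$ over $\abs{X_0}$ balls of radius $\rho_0$ yields at best $t_0^{C_0}\abs{X_0}^{-1}$ per ball. For Proposition~\ref{pr:dioph} you need this to exceed $\rho_0^\eta=e^{-\eta\sigma m_0}\norm{a_0}^{-\eta}$, i.e.\ essentially $e^{d(\sigma-\tau)m_0}\leq e^{\eta\sigma m_0}$. Since $\eta$ is a small fixed exponent and $\sigma>\tau$, this inequality fails: the set is far too large compared to the scale. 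Your ``Vitali-style pigeonhole'' cannot succeed, and pulling back by a single $g$ only enlarges the balls (by a factor $\norm{g^{-1}}$), making matters worse.

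What is missing is the bootstrapping Lemma~\ref{lm:rewind}, which is the core of the paper's proof. One intersects the preimages under $d$ independent elements $g_1,\dots,g_d$ chosen via Lemma~\ref{lm:place}; the intersection $g_1^{-1}X^{(\rho)}\cap\dots\cap g_d^{-1}X^{(\rho)}$ is still covered by at most $\abs{X}$ balls, but now of radius $e^{-(\lambda_1-\eps)m_+}\rho$, i.e.\ the radius \emph{shrinks} while the cardinality does not grow. Iterating this a bounded number $k=O_\mu(1)$ of times drives the radius $\rho_k$ down until $\abs{X_k}\leq\abs{X_0}\leq\rho_k^{-\eta/2}$, at which point the pigeonhole plus Proposition~\ref{pr:dioph} applies and yields $X_k^{(\rho_k)}\subset\sW_Q^{(\rho_k^{8/10})}$ directly, with no appeal to the $\SL_d(\Z)$-invariance of $\sW_Q$.
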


%One can reduce the interval for $Q$ to $[e^{\frac{n}{C}},e^{n/A}]$, for arbitrary $A$ (of course $C$ will depend on $A$).
%To see this, simply take $m'$ larger in the proof.

The concentration statement given by Proposition~\ref{pr:step1} is not strong enough for a direct application of the diophantine property.
We first need Lemma~\ref{lm:rewind} below to bootstrap concentration.
It is exactly the same statement as \cite[Proposition~7.2]{BFLM}, and the proof is also the same, with some minor modifications to avoid the use of the proximality assumption; we include it nonetheless, for readability.

Given a subset $X\subset\Tbb^d$ and a small parameter $\rho>0$, we shall write $X^{(\rho)}$ for the $\rho$-neighborhood of $X$.
\begin{lemma}
\label{lm:rewind}
Given $\eps>0$, there exist $c>0$ and $m_0\in\N$ so that for $m\geq m_0$, the following holds for every Borel probability measure $\nu$ on $\Tbb^d$.
Given scales $r,\rho>0$ such that $e^{2d\lambda_1 m}\rho<r$, there are scales
$r_1=e^{-(\lambda_1+\eps)m} r$ and $\rho_1=e^{-(\lambda_1-\eps)m}\rho$,
so that for every $r$-separated set $X\subset\Tbb^d$, one can construct an $r_1$-separated set $X_1\subset\Tbb^d$ of cardinality $\abs{X_1} \leq \abs{X}$ and with the property
\[ \nu(X_1^{(\rho_1)}) \geq \nu_m(X^{(\rho)})^d - e^{-c m}.\]
\end{lemma}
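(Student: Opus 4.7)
Following \cite[Proposition~7.2]{BFLM} with adaptations to the non-proximal setting, I would begin by applying Jensen's inequality with exponent $d$ to the function $f(y) := \mu_m\{g : gy \in X^{(\rho)}\}$, so that $\nu_m(X^{(\rho)}) = \int_{\Tbb^d} f\,\dd\nu$ and
\[
\nu_m(X^{(\rho)})^d \leq \int_{\Tbb^d}\!f^d\,\dd\nu = \int_{G^d}\!\nu\bigl(Y(g_1,\dotsc,g_d)\bigr)\,\dd\mu_m^{\otimes d},
\]
where $Y(g_1,\dotsc,g_d) := \{y \in \Tbb^d : g_iy \in X^{(\rho)}\ \forall i\}$. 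The aim is then to fix a \emph{good} tuple $(g_1^*,\dotsc,g_d^*)$ for which $Y(g_1^*,\dotsc,g_d^*)$ sits in the $\rho_1$-neighborhood of a naturally $r_1$-separated set $X_1$ parametrized by admissible configurations in $X^d$.

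The good event $E_m \subset G^d$ is defined by two conditions: (a) $\norm{g_i} \leq e^{(\lambda_1+\eps/2)m}$ for each $i$, and (b) the stacked matrix $G^* := (g_1;\dotsc;g_d) \in \Mat_{d^2,d}(\R)$ has smallest singular value $\sigma_d(G^*) \geq e^{(\lambda_1-\eps/2)m}$. Condition (a) fails with probability at most $e^{-cm}$ by Theorem~\ref{thm:LargeD}\ref{it:LargeDn}. For (b), writing $\sigma_d(G^*)^2 = \min_{\norm{v}=1}\sum_i\norm{g_iv}^2$, I would combine Theorem~\ref{thm:LargeD}\ref{it:LargeDnc} with a non-concentration estimate for the top $r_G$-dimensional singular subspace of $g \sim \mu_m$ on the Grassmannian $\Grass(r_G,d)$, in the spirit of Proposition~\ref{pr:NCdet}, to ensure that with exponentially high probability no unit $v$ lies simultaneously in the ``contracting directions'' of all $d$ independent $g_i$'s. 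An averaging argument then yields $(g_1^*,\dotsc,g_d^*) \in E_m$ with $\nu(Y(g_1^*,\dotsc,g_d^*)) \geq \nu_m(X^{(\rho)})^d - e^{-cm}$.

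Fix this tuple and work in a lift to $\R^d$. The hypothesis $e^{d\lambda_1m}\rho < r$ together with $r$-separation of $X$ ensure that for each $y \in Y(g_1^*,\dotsc,g_d^*)$ there is a unique $Y(y) = (y_1,\dotsc,y_d) \in X^d$ with $g_i^*y - y_i \in B(0,\rho) + \Z^d$. After consistent lifting, $G^*y = Y(y) + \Delta$ with $\norm{\Delta} \leq \sqrt{d}\rho$, so
\[
y = (G^*|_Q)^{-1}\pi_Q Y(y) + (G^*|_Q)^{-1}\pi_Q\Delta, \qquad Q := \mathrm{im}(G^*),
\]
and the error is bounded by $\sigma_d(G^*)^{-1}\sqrt{d}\rho \leq \rho_1$ for $m$ large. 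Setting $X_1 := \{(G^*|_Q)^{-1}\pi_Q Y \bmod \Z^d : Y \in X^d \text{ admissible}\} \subset \Tbb^d$, one gets $Y(g_1^*,\dotsc,g_d^*) \subset X_1^{(\rho_1)}$. For distinct admissible $Y,Y'$, admissibility gives $\norm{\pi_{Q^\perp}Y}, \norm{\pi_{Q^\perp}Y'} \leq \sqrt{d}\rho$, hence $\norm{\pi_Q(Y-Y')} \geq \norm{Y-Y'} - 2\sqrt{d}\rho \geq r/2$ by $r$-separation and $\rho \ll r$, and then
\[
\norm{(G^*|_Q)^{-1}\pi_Q(Y-Y')} \geq \frac{r}{2\,\sigma_1(G^*)} \geq \frac{r}{2\sqrt{d}}e^{-(\lambda_1+\eps/2)m} \geq r_1
\]
for $m$ sufficiently large, so $X_1$ is automatically $r_1$-separated (after checking that the periodicity of $\Tbb^d$ does not shrink these distances, which holds since the candidates remain in a bounded region of $\R^d$). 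The main technical obstacle is step~(b) above: in the proximal setting of \cite{BFLM} the lower bound on $\sigma_d(G^*)$ follows immediately from the fact that $d$ independent top eigenvectors, drawn from the stationary measure on $\Pbb^{d-1}$, span $\R^d$ with positive co-volume; without proximality, one must instead quantify the non-concentration of $d$ independent top $r_G$-dimensional singular subspaces on $\Grass(r_G,d)$, which is the novel input required here.
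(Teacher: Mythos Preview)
Your proposal is essentially correct and follows the same route as the paper. Both arguments apply Jensen with exponent $d$, extract a good tuple $(g_1,\dotsc,g_d)$ from a high-probability event, and then show that the common intersection $\bigcap_i g_i^{-1}X^{(\rho)}$ is contained in an $r_1$-separated union of $\rho_1$-balls. Your condition~(b) on $\sigma_d(G^*)$ is equivalent, up to a factor $\sqrt{d}$, to the paper's condition that $\max_i \norm{g_iv} \geq e^{(\lambda_1-\eps)m}\norm{v}$ for all nonzero $v$.

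The one place where the paper is more explicit is precisely your step~(b), which it isolates as Lemma~\ref{lm:place}. Rather than invoking Proposition~\ref{pr:NCdet}, the paper proves this directly: writing $r$ for the proximal dimension and $W^-_g$ for the orthogonal complement of the top $r$-dimensional singular subspace of $g$, one shows by induction on $k$ that $\bigcap_{i\leq k}\Nbd(W^-_{g_i},e^{-4d\eps m})$ is contained in a neighborhood of a subspace of dimension $d-r+1-k$, using at each step only Theorem~\ref{thm:LargeD}\ref{it:LargeDnc} applied to a single line of the previous subspace together with the Guivarc'h--Raugi gap $\lambda_{r}>\lambda_{r+1}$. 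This is more elementary than a full Grassmannian non-concentration estimate and does not require $\mu$ to be supported on $\SL_d(\Z)$; the paper does, however, remark that your suggested route via Proposition~\ref{pr:NCdet} (with $d$ replaced by $d^2$) would also suffice here. The paper's construction of $X_1$ is likewise slightly simpler than yours: it observes directly that there are at most $\abs{X}$ nonempty intersections $\bigcap_i g_i^{-1}B(x_i,\rho)$ (since each $g_1^{-1}B(x_1,\rho)$, having diameter $\ll e^{d\lambda_1 m}\rho < r$, can meet at most one $g_i^{-1}B(y,\rho)$ for each $i$), and bounds their diameter and separation straight from conditions~(a) and~(b), avoiding the pseudoinverse formalism.
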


In the proof of \cite[Proposition~7.2]{BFLM}, the following large deviation estimate in the proximal case is used. We extend it to the non-proximal case.
\begin{lemma}\label{lm:place}
Let $\mu$ be a Borel probability measure on $\SL_d(\R)$ with some finite exponential moment, and assume that the semigroup $\Gamma$ generated by $\mu$ acts strongly irreducibly on $\R^d$.
Then, for every $\eps > 0$, there exists $c > 0$ such that for every large enough $m\in\N$,
\[\mu_{m}^{\otimes d}\Bigl\{ (g_1,\dotsc,g_d)\mid \forall v \in \R^d \setminus \{0\}, \max_i \frac{\norm{g_i v}}{\norm{v}} \geq e^{(\lambda_1 - \eps)m} \Bigr\} \geq 1 - e^{-cm}.\]
\end{lemma}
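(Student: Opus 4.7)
The plan is to combine the large deviation estimates of Theorem~\ref{thm:LargeD} with a transversality argument for the top right singular subspaces of $g_1,\dotsc,g_d$, exploiting their independence.

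Let $r$ denote the proximality dimension of $G$, so that by Guivarc'h--Raugi one has $\lambda_1 = \dotsb = \lambda_r > \lambda_{r+1}$. By Theorem~\ref{thm:LargeD}\ref{it:LargeDsv} applied with window $\omega = \eps/16$, outside a set of $\mu_m^{\otimes d}$-measure at most $de^{-c_0 m}$ one has $\sigma_1(g_i) \geq e^{(\lambda_1-\eps/16)m}$ and $\sigma_{r+1}(g_i) \leq e^{(\lambda_{r+1}+\eps/16)m}$ for every $i$. Writing the Cartan decomposition $g_i = k_i \Sigma_i l_i$, let $V_{g_i}^{+,R} = l_i^{-1}\Span(e_1,\dotsc,e_r)$ be the top right singular subspace, of dimension $r$, and let $\pi_i$ denote the orthogonal projection onto it. The orthogonality of right singular vectors then gives
\[
\|g_i v\| \geq \sigma_1(g_i) \cdot \|\pi_i v\|.
\]
It therefore suffices to show that, with probability at least $1-e^{-c_1 m}$, every unit vector $v$ satisfies $\max_{1\leq i\leq d}\|\pi_i v\| \geq e^{-15\eps m/16}$.

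To prove this, the idea is to extract from the subspaces $V_{g_i}^{+,R}$ (whose dimensions sum to $dr \geq d$) a basis $\{w_1,\dotsc,w_d\}$ of $\R^d$ with controlled Gram--Schmidt gaps. Proceed iteratively: having chosen $w_1,\dotsc,w_{j-1}$ spanning $F_{j-1}$, pick an index $i_j$ such that $V_{g_{i_j}}^{+,R} \not\subset F_{j-1}$, and let $u_j \in F_{j-1}^\perp$ be a unit vector. From $\|g_{i_j} u_j\|^2 \leq \sigma_1(g_{i_j})^2 \|\pi_{i_j} u_j\|^2 + \sigma_{r+1}(g_{i_j})^2$ and the singular-value bounds, the event that $\|\pi_{i_j} u_j\| < e^{-\eps m/(2d)}$ implies $\|g_{i_j} u_j\| < e^{(\lambda_1-\eps/(4d))m}$, using the spectral gap $\lambda_r>\lambda_{r+1}$. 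Conditionally on $g_1,\dotsc,g_{j-1}$ and on $i_j$, the vector $u_j$ is deterministic, so Theorem~\ref{thm:LargeD}\ref{it:LargeDnc} applied with $\omega=\eps/(8d)$ gives $\mu_m\{g_{i_j}:\|g_{i_j} u_j\|<e^{(\lambda_1-\eps/(4d))m}\} \leq e^{-c_2 m}$. A union bound over $j\leq d$, together with a rescaling producing a unit vector $w_j \in V_{g_{i_j}}^{+,R}$ with $d(w_j,F_{j-1}) \geq e^{-\eps m/(2d)}$, then yield the desired basis.

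Finally, $\sigma_d(W) \geq |\det(W)|/\|W\|^{d-1} \geq d^{-(d-1)/2}\prod_j d(w_j, F_{j-1}) \geq d^{-(d-1)/2}e^{-\eps m/2}$, where $W=(w_1|\dotsb|w_d)$. Since the $w_j$ are unit, the linear map $v\mapsto(\langle w_j,v\rangle)_j$ has smallest singular value $\sigma_d(W)$, so $\max_j |\langle w_j,v\rangle| \geq d^{-d/2}e^{-\eps m/2}\|v\|$. Combined with $\|\pi_{i_j}v\|\geq|\langle w_j,v\rangle|$ and the lower bound on $\sigma_1(g_i)$, this produces $\max_i \|g_i v\| \geq e^{(\lambda_1-\eps)m}\|v\|$ for $m$ large. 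The main obstacle is the careful bookkeeping of parameters, since a naive net-plus-Lipschitz argument gives a too-weak bound when the LDP rate $c(\omega)$ degenerates quadratically as $\omega\to 0$; here the key mechanism is the Guivarc'h--Raugi spectral gap $\lambda_r>\lambda_{r+1}$, which upgrades the purely geometric smallness of $\|\pi_{i_j}u_j\|$ into an exponential shortfall of $\|g_{i_j}u_j\|$ below its typical size $e^{\lambda_1 m}$, and it is this shortfall that the LDP controls with a positive rate depending only on $\eps$ and $\mu$.
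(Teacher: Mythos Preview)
Your approach is essentially dual to the paper's: where the paper shows that the intersection $\bigcap_i \Nbd(W^-_{g_i}, e^{-4d\eps m})$ shrinks to a small ball by inductively cutting down a subspace $W_k$, you build a quantitative spanning set from the top singular subspaces $V_{g_i}^{+,R} = (W^-_{g_i})^\perp$. Both arguments iterate the same ingredient---Theorem~\ref{thm:LargeD}\ref{it:LargeDnc} applied to a unit vector determined by the previously revealed $g_i$'s, upgraded via the Guivarc'h--Raugi gap $\lambda_r > \lambda_{r+1}$---so the strategies coincide.

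One point needs tightening. As phrased, ``pick an index $i_j$ such that $V_{g_{i_j}}^{+,R}\not\subset F_{j-1}$'' makes the choice of $i_j$ depend on $g_{i_j}$ itself, which would destroy the independence you then invoke for the large deviation estimate. Simply set $i_j=j$: then $u_j\in F_{j-1}^\perp$ is measurable with respect to $(g_1,\dotsc,g_{j-1})$, the LDP applies to the fresh $g_j$, and the non-containment is a \emph{consequence} of $\norm{\pi_j u_j}\geq e^{-\eps m/(2d)}$, not a selection criterion. Two smaller imprecisions: the bound $\norm{g_i v}\geq \sigma_1(g_i)\norm{\pi_i v}$ should read $\sigma_r(g_i)$ in place of $\sigma_1(g_i)$ (harmless, since strong irreducibility gives $\sigma_r(g)\asymp_G \sigma_1(g)$ via \cite[Lemma~4.36]{BenoistQuint}, which also justifies your claim $\lambda_1=\dotsb=\lambda_r$); and the window $\omega=\eps/16$ must be taken of order $\eps/d$ for the inequality $\omega\leq \eps/(2d)-\eps/(4d)$ to hold when $d\geq 5$.
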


\begin{remark}
If one assumes that $\mu$ is supported on $\SL_d(\Z)$, and replaces $d$ by $d^2$, then this lemma follows directly from Proposition~\ref{pr:NCdet}.
This particular case would be sufficient for our purposes.
\end{remark}

\begin{proof}
In this proof, $c$ denotes a small positive constant, depending on $\eps$, and whose value may vary from one line to another.
Let $r$ denote the proximality dimension of $\Gamma$.
We shall use the notation introduced in the proof of Proposition~\ref{pr:LDrhokappa}. Recall that for $g \in \Gamma$, we consider its Cartan decomposition $g = k\diag(\sigma_1(g),\dotsc,\sigma_d(g))l$, where $k$ and $l$ are orthogonal matrices and $\sigma_1(g) \geq \dots \geq \sigma_d(g)$ are the singular values of $g$.
We defined 
\[
%V^+_g = k \Span(e_1,\dotsc,e_r) \; \text{ and } \;
W^-_g = l^{-1}\Span(e_{r+1},\dotsc, e_d)\]
where $(e_1,\dotsc,e_d)$ is the standard basis of $\R^d$, so that for every non-zero $v \in \R^d$,
\begin{equation}
\label{eq:sigmaV-}
\frac{\norm{gv}}{\norm{v}} \geq \dang(\R v , W^-_g) \norm{g},
\end{equation}
By the large deviation estimate Theorem~\ref{thm:LargeD}\ref{it:LargeDn} if $g_1,\dots,g_d$ are independent random variables with law $\mu_m$, then with probability at least $1-e^{-cm}$,
\begin{equation*}
\forall i\in\{1,\dots,d\},\quad
\norm{g_i} \geq e^{(\lambda_1-\eps)m}.
\end{equation*}

For a subspace $W\leq\R^d$, we let $\Nbd(W,\rho)$ denote the $\rho$-neighborhood of $W$ in $\R^d$.
It follows from the above that the lemma will be proved -- with $4d\eps$ instead of $\eps$ -- if we can show that with probability at least $1-e^{-cm}$, the intersection
\[\bigcap_{i = 1}^d \Nbd(W^-_{g_i}, e^{- 4d\eps m})\]
reduces to a ball of radius $\frac{1}{2}$.
%because then, no unit vector $v$ can lie in this intersection.

For that, we construct inductively for $k=1,\dots, d - r + 1$ a linear subspace $W_k$ of dimension $d - r  + 1 - k$, depending on $g_1,\dots,g_k$, such that
\[\bigcap_{i = 1}^k \Nbd(W^-_{g_i}, e^{- 4d\eps m}) \subset \Nbd(W_k,  e^{- 4(d+1-k) \eps m}).\]
At each step, $W_{k+1}$ is constructed in terms of $W_k$ and $g_{k+1}$ and the construction is possible with probability $1 - e^{-cm}$.

%On conclura avec l'indépendance des $(g_i)$.
For $k=1$, one may simply take $W_1=W^-_{g_1}$.
Then, suppose $W_k$ has been constructed, and let $\R w\subset W_k$ be any line.
By Theorem~\ref{thm:LargeD}\ref{it:LargeDnc}, with probability at least $1 - e^{-cm}$
\[
\frac{\norm{g_{k+1} w}}{\norm{w}} \geq e^{(\lambda_1 - \eps)m}.
\]
By Theorem~\ref{thm:LargeD}\ref{it:LargeDsv}, with probability $1-e^{-cm}$,
\begin{equation}
\label{eq:sigmalambda}
\forall j\in\{1,\dots,d\},\quad \abs{\frac{1}{m} \log \sigma_j(g_{k+1}) - \lambda_j} \leq \eps,
\end{equation}
and by a straightforward generalization of \cite[Lemma~4.1(2)]{BFLM},
\[
\frac{\norm{gw}}{\norm{w}} \leq \norm{g} d(\R w,W^-_{g_{k+1}}) + \sigma_{r+1}(g_{k+1}).
\]
Since by a theorem of Guivarc'h and Raugi \cite{GuivarchRaugi}, $\lambda_r>\lambda_{r+1}$, we deduce from the above that, provided $\eps>0$ is small enough, 
\[d(\R w, W^-_{g_{k+1}}) \geq e^{-3\eps m}.\]
This implies
%see for instance Lemma 2.9 in [Saxcé,product theorem]
that there exists a proper subspace $W_{k+1}<W_k$ such that
\[\Nbd(W_k, e^{-4(d-k+1)\eps m}) \cap \Nbd(W^-_{g_{k+1}}, e^{-4d\eps m})
\subset \Nbd(W_{k+1}, e^{-4(d-k)\eps m}).\]
This proves what we want.
\end{proof}

Before proving Lemma~\ref{lm:rewind}, observe that, by the definition of Lyapunov exponents, 
\[\lambda_1 + \dots + \lambda_d = 0.\]
Since $\lambda_1 \geq \dots \geq \lambda_d$, it follows that
\[-\lambda_d \geq -(d-1)\lambda_1.\]
Recall also that by~\cite{Furstenberg1963}, $\lambda_1 > 0$, hence
\[\lambda_1 - \lambda_d \leq d\lambda_1 < 2d \lambda_1.\]

\begin{proof}[Proof of Lemma~\ref{lm:rewind}]
First, by Jensen's inequality used in the same way as in the proof of Proposition~\ref{pr:dioph},
\begin{equation*}
\nu_m(X^{(\rho)})^d
% & = (\sum_{g\in\Gamma} \mu_m(g)\nu(g^{-1}X^{(\rho)}))^d\\
%& = (\int\sum_{g\in\Gamma} \mu_m(g)\mathbbm{1}_{g^{-1}X^{(\rho)}}(x)\dd\nu(x))^d\\
%& \leq \int(\sum_{g\in\Gamma} \mu_m(g)\mathbbm{1}_{g^{-1}X^{(\rho)}}(x))^d\dd\nu(x)\\
 \leq \sum_{g_1,\dots,g_d\in\Gamma} \mu_m(g_1)\dots\mu_m(g_d) \nu(g_1^{-1}X^{(\rho)}\cap\dots\cap g_d^{-1}X^{(\rho)}).
\end{equation*}
This implies that the set of $d$-tuples $(g_i)_{1\leq i\leq d}$ such that
\begin{equation}\label{inter}
\nu(g_1^{-1}X^{(\rho)}\cap\dots\cap g_d^{-1}X^{(\rho)})\geq \nu_m(X^{(\rho)})^d-e^{-c m}
\end{equation}
 has $\mu_m^{\otimes d}$-measure at least $e^{-c m}$.
By Theorem~\ref{thm:LargeD}\ref{it:LargeDsv} and Lemma~\ref{lm:place}, if $c$ is chosen small enough, there must exist $(g_1,\dots,g_d)$ satisfying this inequality, and moreover
\begin{equation}\label{normgi}
\forall i=1,\dots,d,\quad
\norm{g_i} \leq e^{(\lambda_1+\eps)m}
\quad\text{and}\quad
\norm{g_i^{-1}} \leq e^{(-\lambda_d+\eps)m}
\end{equation}
and
\begin{equation}\label{placegi}
\forall v \in \R^d \setminus \{0\},\quad
\max_i \frac{\norm{g_i v}}{\norm{v}} \geq e^{(\lambda_1 - \eps)m}.
\end{equation}
We fix such elements $g_1,\dots,g_d$ for the rest of the proof.

Without loss of generality, we may assume that $\eps>0$ is so small that
\[ \lambda_1-\lambda_d+3\eps < 2 d \lambda_1.\]
We claim then that the set $g_1^{-1}X^{(\rho)}\cap\dots\cap g_d^{-1}X^{(\rho)}$ is included in a union of at most $\abs{X}$ balls of radius $\rho_1=e^{-(\lambda_1-\eps)m}\rho$.
Indeed, from \eqref{normgi} one finds -- drawing a picture of $X^{(\rho)}$ and $g_i^{-1}X^{(\rho)}$ -- that given $x\in X$ and $i\geq 1$, the set $g_1^{-1}B(x,\rho)$ meets at most one component $g_i^{-1}B(y,\rho)$, $y\in X$.
Therefore, there are at most $\abs{X}$ non-empty intersections $g_1^{-1}B(x_1,\rho)\cap\dotsb \cap g_d^{-1}B(x_d,\rho)$, for $x_1,\dotsc,x_d\in X$.

If $x,y$ lie inside such an intersection, then, for each $i$, $\norm{g_i(x-y)}\leq\rho$, and \eqref{placegi} implies that $\norm{x-y}\leq e^{-(\lambda_1-\eps)m}\rho=\rho_1$.
Thus, each intersection $g_1^{-1}B(x_1,\rho)\cap \dots \cap g_d^{-1}B(x_d,\rho)$ is included in a ball of radius $\rho_1$.

Finally, using~\eqref{normgi} again, we see that these intersections are separeated by at least $r_1=e^{-(\lambda_1+\eps)m} r$, and the proposition follows.
\end{proof}

To prove Proposition~\ref{pr:step2}, we proceed as follows.
Applying first Proposition~\ref{pr:step1}, we shall obtain $m_0\in\N$ and scales $\rho_0$ and $r_0$, together with an $r_0$-separated set $X_0\subset\Tbb^d$ such that
\[ \nu_{n-m_0}(X_0^{(\rho_0)}) \geq t_0^C.\]
The idea is then to reduce the radius of the balls, by an iterated application of Lemma~\ref{lm:rewind}.
We shall thus obtain an increasing sequence of integers $m_k$ and a decreasing sequence of scales $\rho_k$ and $r_k$, $k=1,2,\dots$ together with an $r_k$-separated set $X_k\subset\Tbb^d$ such that $\abs{X_k}\leq \abs{X_0}$ and
\[ \nu_{n-m_k}(X_k^{(\rho_k)}) \geq t_0^{C_k}.\]
Once we arrive at a scale $\rho_k$ such that
\[ \abs{X_k} \leq \abs{X_0} \leq \rho_k^{-\frac{\eta}{2}},\]
we shall be able to use the diophantine property of the random walk to conclude.
Now let us turn to the detailed proof.

\begin{proof}[Proof of Proposition~\ref{pr:step2}]
Let $C_0$ and $\sigma > \tau>0$ be the constants given by Proposition~\ref{pr:step1}.
 and $C'$ and $\eta>0$ the ones given by Proposition~\ref{pr:dioph}.
Then write
\[ m = m_0 + km_+,\]
%\[ n = m_0 + km_+ + m',\]
where
\[ m_0\geq C_0|\log t_0|,
\qquad
\frac{\tau m_0}{8d\lambda_1}
\leq m_+ \leq
\frac{\tau m_0}{4d\lambda_1} \]
and
\[ k = \big\lceil \frac{32d^2\sigma}{\eta\tau}\big\rceil =O_\mu(1).\]
%and
%\[ C'(m_0+km_+)\lambda_1 \leq m' \leq 2C'(m_0+km_+)\lambda_1.\]
This is feasible provided $m\geq C\abs{\log t_0}$, where $C\geq 0$ depends on $\mu$ via the constants $C_0$, $\tau$, etc.
Note that within constants depending only on $\mu$,
\[ m \asymp m_0 \asymp m_+.\]

By Proposition~\ref{pr:step1} applied to
\[ \nu_n = \mu_{m_0}*\nu_{n-m_0},\]
there exist scales
\[ \rho_0=e^{-\sigma m_0} \norm{a_0}^{-1}
\quad\text{and}\quad
r_0 = e^{\tau m_0}\rho_0\]
together with an $r_0$-separated subset $X_0\subset\Tbb^d$ such that
\[ \nu_{n-m_0}(X_0^{(\rho_0)}) \geq t_0^{C_0}.\]
Note that, since $X_0$ is $r_0$-separated, 
\[ \abs{X_0} \ll_d r_0^{-d} \leq e^{d(\sigma - \tau)m_0} \norm{a_0}^d.\]
Thus if $C$ was chosen large enough, we have
\[ \abs{X_0} \leq e^{d\sigma m_0}.\]

Choose $\eps>0$ such that $k\eps < d\lambda_1$ so that
\[2 k\eps m_+ < \frac{\tau m_0}{2}\]
and apply Lemma~\ref{lm:rewind} to
\[ \nu_{n - m_0} = \mu_{m_+}*\nu_{n - m_0 - m_+}.\]
This is allowed since by our choice of parameters
\[ e^{2d\lambda_1 m_+}\rho_0 \leq e^{\frac{\tau m_0}{2}}\rho_0 < r_0.\]
This yields scales
\[ \rho_1 = e^{-(\lambda_1-\eps)m_+} \rho_0
\quad\text{and}\quad
r_1 = e^{-(\lambda_1+\eps)m_+}r_0\]
together with an $r_1$-separated subset $X_1$ such that $\abs{X_1} \leq \abs{X_0}$ and
\[ \nu_{n-m_0-m_+}(X_1^{(\rho_1)}) \geq t_0^{C_1},\]
provided $m$
% and hence $m_+$,
is large enough to ensure that  $e^{-cm_+}<t_0^{C_1}$.
We may repeat this procedure at least $k$ times, and therefore obtain a sequence of scales defined inductively by
\[ \rho_{i+1}=e^{-(\lambda_1-\eps)m_+}\rho_i
\quad\text{and}\quad
r_{i+1} = e^{-(\lambda_1+\eps)m_+}r_i.\]
Indeed, our choice of $\eps$ ensures that for every $i \leq k$,
\[ e^{2 d\lambda_1 m_+} \rho_{i} \leq e^{2 d\lambda_1 m_+ + 2i \eps m_+ - \tau m_0}r_{i} < r_{i}.\]
In the end, we obtain scales $\rho_k$ and $r_k$, and a set $X_k$ with
\[ \abs{X_k} \leq \abs{X_0} \leq e^{d\sigma m_0}\]
such that
\begin{equation}\label{xklarge}
 \nu_{n-m}(X_k^{(\rho_k)}) \geq t_0^{C_k}.
\end{equation}
Moreover, 
\[ \rho_k = e^{-k(\lambda_1-\eps)m_+}\rho_0 \leq e^{- \frac{k \lambda_1 m_+}{2}} \leq e^{-\frac{2d\sigma m_0}{\eta}}\]
so that
\[ |X_k| \leq \rho_k^{-\frac{\eta}{2}}.\]
Therefore, adjusting slightly the values of the constants, we may restrict $X_k$ to the points $x \in X_k$ satisfying
\[ \nu_{n-m}(B(x,\rho_k)) \geq \rho_k^{\eta},\]
while preserving \eqref{xklarge}.

Note that we also have $\rho_k^{-1} \leq e^{(\sigma + \lambda_1) m} \norm{a_0}$. 
Thus, if $C$ was chosen large enough, then $n-m\geq C m\geq C'\abs{\log\rho_k}$, 
and we may conclude by Proposition~\ref{pr:dioph} that
\[ X_k^{(\rho_k)} \subset \sW_{\rho_k^{-1/10}}^{(\rho_k^{8/10})}.\]
This proves the proposition with $Q = \rho_k^{-1/10}$ in the desired range.
\end{proof}

\subsection{End of the proof of Proposition~\ref{pr:general}: near rational points}

The end of the proof of Proposition~\ref{pr:general} is based on an argument similar in spirit to the one used in Lemma~\ref{lm:rewind}, to bootstrap concentration.
The proposition we shall need is again taken from \cite{BFLM}, where it appears as \cite[Proposition~7.4]{BFLM}.
The proof we present follows closely the one given in \cite{BFLM}, but the key Lemma~\ref{lm:nombre} below, analogous to \cite[Lemma~7.10]{BFLM}, is proved using a new argument, which avoids using a regularity property of the $\mu$-stationary measure on the projective space, only available with a proximality assumption.

\begin{proposition}
\label{pr:rewindbis}
Let $\mu$ be a probability measure on $\SL_d(\Z)$ with some finite exponential moment and acting strongly irreducibly on $\R^d$.
Given $\eps>0$, there exist $m_*$ and $\omega>0$ such that if $\rho>0$, $Q\geq 1$ and $m\geq m_*$ satisfy
\[ e^{d\lambda_1 m}\rho < Q^{-2},\]
and $\nu$ is any Borel probability measure on $\Tbb^d$, then
\[ \nu \Bigl(\sW_Q^{(e^{-(\lambda_1-\eps) m}\rho)}\Bigr) \geq \nu_{m} \bigl(\sW_Q^{(\rho)}\bigr) - e^{-\omega m}.\]
\end{proposition}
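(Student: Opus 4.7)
The plan is to unfold the convolution and exploit the fact that $\SL_d(\Z)$ preserves $\sW_Q$. For any $g\in\SL_d(\Z)$, both $g$ and $g^{-1}$ have integer entries, so $g\sW_Q=\sW_Q$, hence
\[
g^{-1}\sW_Q^{(\rho')}=\sW_Q+g^{-1}B(0,\rho'),
\qquad
(\mu_m*\nu)\bigl(\sW_Q^{(\rho')}\bigr)=\int\nu\bigl(\sW_Q+g^{-1}B(0,\rho')\bigr)\,\mathrm{d}\mu_m(g).
\]
The goal is then to exhibit a set of $\mu_m$-measure $1-e^{-cm}$ of ``typical'' matrices $g$ for which $\sW_Q^{(\rho)}\subset g^{-1}\sW_Q^{(\rho')}$, averaging of which yields the conclusion.

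First I would invoke Theorem~\ref{thm:LargeD}\ref{it:LargeDn} to obtain $c=c(\mu,\eps)>0$ such that the set of $g$ with $\norm{g}\leq e^{(\lambda_1+\eps/2)m}$ has $\mu_m$-measure at least $1-e^{-cm}$. For such a $g$, given $x=w+v\in\sW_Q^{(\rho)}$ with $w\in\sW_Q$ and $\norm{v}\leq\rho$, the point $gx=gw+gv$ satisfies $gw\in\sW_Q$ (by the preceding invariance) and $\norm{gv}\leq\norm{g}\rho\leq e^{(\lambda_1+\eps/2)m}\rho$, so $gx$ lies in $\sW_Q^{(e^{(\lambda_1+\eps/2)m}\rho)}$. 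Thus if $\rho'\geq e^{(\lambda_1+\eps)m}\rho$, we have $\sW_Q^{(\rho)}\subset g^{-1}\sW_Q^{(\rho')}$, and averaging over the typical set gives
\[
(\mu_m*\nu)(\sW_Q^{(\rho')})\geq(1-e^{-cm})\nu(\sW_Q^{(\rho)})\geq\nu(\sW_Q^{(\rho)})-e^{-cm}.
\]
The hypothesis $e^{d\lambda_1 m}\rho<Q^{-2}$ guarantees $e^{(\lambda_1+\eps)m}\rho<Q^{-2}$ for $\eps$ small enough, so the balls $B(w,\rho')$ around distinct $w\in\sW_Q$ are disjoint and there is no combinatorial overlap to correct for.

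The hard part is reconciling this natural approach with the stated inflation factor: the above argument \emph{dilates} the scale from $\rho$ to $e^{+(\lambda_1+\eps)m}\rho$, whereas the statement \emph{contracts} it to $e^{-(\lambda_1-\eps)m}\rho$. A contracted inequality in fact fails in general: for example, taking $\nu$ to be the normalized uniform measure on a single ball $B(x_0,\rho)$ with $x_0\in\sW_Q$, one has $\nu(\sW_Q^{(\rho)})=1$, while the pushforward $g_*\nu$ is uniform on the ellipsoid $gx_0+gB(0,\rho)$ with smallest semi-axis $\sigma_d(g)\rho\sim e^{\lambda_d m}\rho$; a short computation shows $(\mu_m*\nu)\bigl(\sW_Q^{(e^{-(\lambda_1-\eps)m}\rho)}\bigr)$ is exponentially small. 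I therefore read the stated exponent as a typographical sign issue: the intended factor is $e^{+(\lambda_1+\eps)m}$ (or an equivalent dilation), for which the proof sketched above applies directly.
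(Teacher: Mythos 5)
You are right that the inequality exactly as printed cannot hold, and your example of $\nu$ uniform on a $\rho$-ball centred at a point of $\sW_Q$ does exhibit the problem. But your diagnosis of the misprint is not the one consistent with the rest of the paper: the intended correction is not to turn the contraction $e^{-(\lambda_1-\eps)m}$ into a dilation, it is to exchange the two measures, i.e. the proposition should read
\[
\nu\bigl(\sW_Q^{(e^{-(\lambda_1-\eps)m}\rho)}\bigr)\;\geq\;(\mu_m*\nu)\bigl(\sW_Q^{(\rho)}\bigr)-e^{-\omega m},
\]
as in \cite[Proposition~7.4]{BFLM}. This is what the paper's own argument proves and what the proof of Proposition~\ref{pr:general} uses: there one knows that the law at a late time, $\nu_{n-m_0}$, charges $\sW_Q^{(Q^{-8})}$, and one must transfer this \emph{backwards} along the walk to the initial measure $\nu$ at successively \emph{finer} scales, ending with $\nu(\sW_Q^{(e^{-\lambda n})})\geq t^C$. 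Your dilated version goes forward in time and coarsens the scale; it is indeed true, and trivially so by the argument you give (since $g\sW_Q=\sW_Q$ and $\norm{g}\leq e^{(\lambda_1+\eps)m}$ typically), but precisely for that reason it carries no information that could yield the $e^{-\lambda n}$-scale conclusion of Theorem~\ref{thm:main}, so it cannot be the intended statement.

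Consequently the substantive content of the proposition is absent from your proposal. The actual proof runs as follows. Unfolding $(\mu_m*\nu)(\sW_Q^{(\rho)})=\int\nu(g^{-1}\sW_Q^{(\rho)})\dd\mu_m(g)$ and using $\nu(g^{-1}\sW_Q^{(\rho)})\leq 1$, the set $A$ of those $g$ with $\nu(g^{-1}\sW_Q^{(\rho)})\geq(\mu_m*\nu)(\sW_Q^{(\rho)})-e^{-\theta m}$ satisfies $\mu_m(A)\geq e^{-\theta m}$; after discarding the $g$ with $\norm{g^{-1}}>\frac12 e^{d\lambda_1 m}$ (large deviations), Lemma~\ref{lm:nombre} (resting on Lemma~\ref{lm:place}) extracts a subset $\cG\subset A$ of cardinality at least $e^{\theta m}$ such that any $d$ distinct elements $g_1,\dotsc,g_d$ of $\cG$ satisfy $\max_i\norm{g_iv}\geq e^{(\lambda_1-\eps)m}\norm{v}$ for all $v$. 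The key geometric step, which is where the hypothesis $e^{d\lambda_1 m}\rho<Q^{-2}$ enters, is that for such a $d$-tuple the intersection $\bigcap_i g_i^{-1}\sW_Q^{(\rho)}$ is contained in $\sW_Q^{(e^{-(\lambda_1-\eps)m}\rho)}$: a point $u$ of the intersection lies within $e^{d\lambda_1 m}\rho<Q^{-2}$ of each rational point $g_i^{-1}(x_i+v_i)$, all of denominator at most $Q$, so these points coincide with a single $u_0\in\sW_Q$, and then $\max_i\norm{g_i(u-u_0)}\leq\rho$ forces $\norm{u-u_0}\leq e^{-(\lambda_1-\eps)m}\rho$. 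Hence the sets $g^{-1}\sW_Q^{(\rho)}\setminus\sW_Q^{(e^{-(\lambda_1-\eps)m}\rho)}$, $g\in\cG$, have intersection multiplicity less than $d$, so some $g\in\cG$ gives this difference $\nu$-measure at most $d\,e^{-\theta m}$; combined with $g\in A$ this yields the (swapped) inequality. None of these ingredients — the separation $Q^{-2}$ between rationals of denominator at most $Q$, the condition $e^{d\lambda_1 m}\rho<Q^{-2}$, Lemma~\ref{lm:nombre}, the multiplicity argument — appears in your argument, and they are exactly where the strength of the proposition lies.
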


The proof of this proposition is based on the following lemma.

\begin{lemma}
\label{lm:nombre}
Let $\mu$ be a Borel probability measure on $\SL_d(\R)$ with some finite exponential moment and whose support generates a subsemigroup acting strongly irreducibly on $\R^d$.
Given $\eps>0$, there exists $\theta>0$ such that the following holds for every integer $m$ sufficiently large.

Let $A$ be a subset of $\SL_d(\R)$ such that $\mu_m(A)\geq e^{-\theta m}$.
There exists a subset $\cG=\{g_i\}_{1\leq i\leq k}$ of cardinality $k\geq e^{\theta m}$ in $A$ such that for every subset $\{g_{i_1},\dots,g_{i_d}\}$ of $d$ elements of $\cG$, for every $v$ in $\R^d$,
\[ \max_{1\leq j\leq d} \norm{g_{i_j} v} \geq e^{(\lambda_1-\eps)m} \norm{v}.\]
\end{lemma}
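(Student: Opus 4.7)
My plan is to combine Lemma~\ref{lm:place} with a probabilistic sampling argument. Applied with the parameter $\eps$, Lemma~\ref{lm:place} asserts that the set
\[
B = \Bigl\{(g_1, \dotsc, g_d) \in \SL_d(\R)^d \mid \exists v \in \R^d\setminus\{0\},\; \max_{1\leq j \leq d} \frac{\|g_j v\|}{\|v\|} < e^{(\lambda_1 - \eps)m}\Bigr\}
\]
of bad $d$-tuples satisfies $\mu_m^{\otimes d}(B) \leq e^{-cm}$ for some $c = c(\mu,\eps) > 0$ and every sufficiently large $m$. This is the key input, and it effectively turns the problem into a combinatorial one: finding a large $\cG \subset A$ avoiding all bad $d$-subsets.

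I would then fix $\theta = c/(10d)$ and set $N = \lceil 4 e^{2\theta m}/\mu_m(A) \rceil \leq 4 e^{3\theta m}$, and draw $X_1, \dotsc, X_N$ i.i.d.\ from $\mu_m$. Write $Y = \#\{i : X_i \in A\}$ and let $Z$ denote the number of $d$-element subsets $\{i_1, \dotsc, i_d\} \subset \{1, \dotsc, N\}$ with $(X_{i_1}, \dotsc, X_{i_d}) \in B$ (this is well defined because $B$ is symmetric in its arguments). Chebyshev's inequality yields $\Pr(Y \geq 2 e^{2\theta m}) \geq 3/4$ for $m$ large, since $\Ebb[Y] = N\mu_m(A) \geq 4e^{2\theta m}$ and $\text{Var}(Y) \leq \Ebb[Y]$. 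Markov's inequality combined with $\Ebb[Z] \leq \binom{N}{d} \mu_m^{\otimes d}(B) \leq 4^d e^{(3d\theta - c)m} = 4^d e^{-7cm/10}$ gives $\Pr(Z = 0) \geq 3/4$ for $m$ large. Hence a deterministic realization satisfies both, and letting $\cG$ be the set of distinct values among the $X_i$ lying in $A$, every $d$-subset of $\cG$ arises from a tuple of distinct indices whose image lies outside $B$, which is the required expansion property.

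The hard part will be ensuring the lower bound $|\cG| \geq e^{\theta m}$: this requires that the samples are mostly distinct, which fails if $\mu_m$ carries large atoms. This reduces to a Kesten-type decay $\sup_g \mu_m(\{g\}) \leq e^{-\eta m}$ for some $\eta$ large relative to $\theta$ (say $\eta > 5\theta$). Under such a bound, the expected number of colliding pairs among the $X_i$'s is at most $\binom{N}{2} \sup_g \mu_m(\{g\}) \leq e^{(6\theta - \eta)m} \to 0$, and Markov then produces at most $e^{\theta m}$ collisions with high probability, whence $|\cG| \geq Y - e^{\theta m} \geq e^{\theta m}$. Such atom decay is a consequence of the non-amenability of $\Gamma$, which is itself forced by the strong irreducibility assumption together with $\lambda_1 > 0$ (a positivity of the top Lyapunov exponent rules out $\Gamma$ being finite or virtually abelian), via a standard spectral-radius argument. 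An alternative which sidesteps the atom issue entirely is to first discretize $A$ at a scale $\delta = e^{-\beta m}$ for small $\beta$ and apply the same sampling argument to representatives of the resulting cover, using the large deviation estimate Theorem~\ref{thm:LargeD}\ref{it:LargeDn} to confine $A$ to a ball of radius $e^{2\lambda_1 m}$.
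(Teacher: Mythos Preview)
Your argument is correct in outline and takes a genuinely different route from the paper. Both proofs feed on the same probabilistic input, Lemma~\ref{lm:place}, which bounds $\mu_m^{\otimes d}(\cT)\le e^{-cm}$ for the set $\cT$ of bad $d$-tuples. From there the paper runs a deterministic greedy selection: it picks $g_1,g_2,\dots$ one at a time, at step $k+1$ avoiding a family of ``conditionally bad'' sets $A_r(g_{i_1},\dots,g_{i_{r-1}})$ whose $\mu_m$-measures are controlled by a cascading Markov argument (each level loses a factor $e^{-cm/2^r}$), and continues as long as the total forbidden mass $\ll k^d e^{-cm/2^d}$ stays below $\mu_m(A)$. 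Your random sampling plus deletion is shorter and conceptually cleaner, and the constants are comparable ($\theta=c/(10d)$ versus the paper's $\theta=c/(d2^{d+1})$). The greedy argument has the advantage of being entirely deterministic and of making the dependence on Lemma~\ref{lm:place} explicit at each stage; your probabilistic argument has the advantage of avoiding the nested conditioning.

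The one soft spot, which you correctly flag, is distinctness of the selected elements. Your Kesten sketch is the right idea but is hand-wavy: deducing non-amenability from strong irreducibility and $\lambda_1>0$ requires the Tits alternative plus some casework, and Kesten as usually stated applies to symmetric walks, so one really bounds $(\check\mu*\mu)^{*m}(\{e\})$ and then uses Cauchy--Schwarz to control $\sup_g\mu_m(\{g\})$. In the paper's actual setting ($\mu$ supported on $\SL_d(\Z)$ with connected semisimple Zariski closure) the exponential atom decay is immediate from Corollary~\ref{cr:gapEquid}, which is the cleanest fix. It is worth noting that the paper's greedy proof also implicitly needs the chosen $g_i$ to be distinct, so this is not a defect peculiar to your approach; it is just slightly easier to patch there, since at each step one may add the finitely many previously chosen points to the forbidden set.
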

\begin{proof}
Having fixed $\eps>0$, let
\[ \cT = \bigl\{\, (h_1,\dots,h_d) \mid \forall v \in \R^d \setminus \{0\}, \max_i \frac{\norm{h_i v}}{\norm{v}}< e^{(\lambda_1 - \eps)m} \,\bigr\}.\]
By Lemma~\ref{lm:place}, there exists $c > 0$ such that for every large enough $m$,
\[\mu_{m}^{\otimes d}(\cT) \leq e^{-cm}.\]
We shall prove that the lemma holds with $\theta=\frac{c}{d2^{d+1}}$.
Let
\[ A_1=\bigl\{\, g\in A \mid 
\mu_m^{\otimes d-1}(\{\,(h_2,\dots,h_{d}) \mid (g,h_2,\dots,h_d)\in\cT\,\}) \geq e^{-cm/2} \,\bigr\}.\]
Then
\[ e^{-cm} \geq \mu_m^{\otimes d}(\cT) \geq e^{-cm/2}\mu_m(A_1),\]
and therefore
\[ \mu_m(A_1)\leq e^{-cm/2}.\]
To construct $\cG$, we first choose $g_1\in A\setminus A_1$; this is possible because $\theta< c/2$.
Let
\[ A_2(g_1) = \bigl\{\, g\in A \mid 
\mu_m^{\otimes d-2}(\{\, (h_3,\dots,h_{d}) \mid (g_1,g,h_3\dots,h_d)\in\cT\,\})\geq e^{-cm/4} \,\bigr\}.\]
Since $g_1\not\in A_1$, we have
\begin{align*}
e^{-cm/2} & \geq \mu_m^{\otimes d-1}(\{\, (h_2,\dots,h_d) \mid (g_1,h_2,\dots,h_d)\in\cT \,\})\\
& \geq e^{-cm/4}\mu_m(A_2(g_1)),
\end{align*}
whence
\[ \mu_m(A_2(g_1)) \leq e^{-cm/4}.\]
We may therefore pick an element $g_2\in A$ such that $g_2\not\in A_1\cup A_2(g_1)$.
Then set
\[ A_3(g_1,g_2)= \bigl\{\, g \mid \mu_m^{\otimes d-3}(\{\,(h_4,\dots,h_d) \mid (g_1,g_2,g,h_4,\dots,h_d)\in\cT\,\}) \geq e^{-cm/8}\,\bigr\}\]
for which it is readily checked, using the fact $g_2\not\in A_1(g_1)$, that
\[ \mu_m(A_3(g_1,g_2))\leq e^{-cm/8}.\]
This allows us to pick $g_3\in A$ such that $g_3\not\in A_1\cup A_2(g_1)\cup A_2(g_2)\cup A_3(g_1,g_2)$.
Following this procedure, the elements  $g_1,g_2,g_3,\dots$ of $\cG$ are constructed inductively.
Once $g_1,\dots,g_k$ have been chosen, one picks $g_{k+1}\in A$ outside the union of all subsets
\begin{multline*}
A_r (g_{i_1},\dots,g_{i_{r-1}}) = \\
\bigl\{\, g \mid
\mu_m^{\otimes d-r}(\{\, (h_{r+1},\dots,h_d) \mid (g_{i_1},\dots,g_{i_{r-1}},g,h_{r+1},\dotsc,h_d)\in\cT \,\})\geq e^{-cm/2^r}\,\bigr\},
\end{multline*}
where $(g_{i_1},\dots,g_{i_{r-1}})$ can be any subset of $(g_1,\dots,g_k)$ with at most $d$ elements.
By convention, for $r=d$, write
\[ A_d(g_{i_1},\dots,g_{i_{d-1}})=\{\, g \mid (g_{i_1},\dots,g_{i_{d-1}},g)\in\cT \,\}.\]
Just as above, one checks by induction, using $g_{i_{r-1}}\not\in A_{r-1}(g_{i_1},\dots,g_{i_{r-2}})$, that
\[ \mu_m(A_r(g_{i_1},\dots,g_{i_{r-1}})) \leq e^{-cm/2^r}\leq e^{-cm/2^d}.\]
Thus, at step $k$, the union of all subsets $A_r(g_{i_1},\dots,g_{i_r})$ to be avoided has measure at most
\[ \Bigl(1 + \binom{k}{1} + \dotsb + \binom{k}{d}\Bigr) e^{-cm/2^d} \ll k^d e^{-cm/2^d}.\]
So the procedure can go on as long as $k^d e^{-cm/2^d} < \mu_m(A)$.
Since $\mu_m(A)\geq e^{-\theta m} = e^{-cm/2^{d+1}}$, one can at least reach some $k\geq e^{\frac{cm}{d2^{d+1}}}$, which proves the lemma.
\end{proof}

The rest of the proof of Proposition~\ref{pr:rewindbis} is exactly as in \cite[\S 7.D.]{BFLM}; we include it for completeness.

\begin{proof}[Proof of Proposition~\ref{pr:rewindbis}]
Once more, write
\[ \nu_m(\sW_Q^{(\rho)}) = \sum_g \mu_m(g)\nu(g^{-1}\sW_Q^{(\rho)}),\]
to observe that
\[ A = \Bigl\{\, g \in \SL_d(\R) \mid \nu \bigl( g^{-1}\sW_Q^{(\rho)} \bigr)\geq  \nu_m\bigl(\sW_Q^{(\rho)}\bigr)  - e^{-\theta m} \,\Bigr\}\]
satisfies
\[ \mu_m(A)\geq e^{-\theta m}.\]
Using the large deviation estimate for $\norm{g^{-1}}$, we may reduce $A$ without any significant loss of $\mu_m$-measure so that for every $g$ in $A$,
\[ \norm{g^{-1}} \leq e^{(-\lambda_d + \eps)m} \leq \frac{1}{2}e^{d\lambda_1 m}.\]
By Lemma~\ref{lm:nombre}, there exists a subset $\cG\subset A$ of cardinality at least $e^{\theta m}$ such that for any distinct elements $g_1,\dots,g_d$ in $\cG$, for every $v\in\R^d$,
\[ \max_{1\leq i\leq d} \norm{g_i v} \geq e^{(\lambda_1-\eps)m}\norm{v}.\]
For such elements $g_1,\dots,g_d$,
\[ g_1^{-1}\sW_Q^{(\rho)}\cap\dots g_d^{-1}\sW_Q^{(\rho)}
= \bigcup_{x_1,\dots,x_d\in \sW_Q} g_1^{-1}B(x_1,\rho)\cap\dots\cap g_d^{-1}B(x_d,\rho).\]
Now, if $u\in\R^d$ represents an element of $g_1^{-1}B(x_1,\rho)\cap\dots\cap g_d^{-1}B(x_d,\rho)$, then, for some vectors $v_i\in\Z^d$,
\begin{equation}\label{ugi}
g_iu=x_i + v_i + O(\rho),\quad i=1,\dots,d
\end{equation}
i.e.
\[
 u = g_i^{-1}(x_i+v_i) + O(e^{d\lambda_1 m}\rho).
\]
But the points $g_i^{-1}(x_i+v_i)$ are rational with denominator at most $Q$, so that they are at least $Q^{-2}$ away from one another.
Since $e^{d\lambda_1 m}\rho<Q^{-2}$, this shows that there exists $u_0\in\R^d$, rational with denominator at most $Q$ such that for each $i$,
$g_iu_0 = x_i+v_i$.
Coming back to \eqref{ugi} above, we find
\[ \norm{g_i(u-u_0)} \leq \rho,\quad i = 1,\dotsc,d\]
and by definition of the subset $\cG$,
\[ \norm{u-u_0} \leq e^{-m(\lambda_1-\eps)}\rho.\]
This shows that
\[ g_1^{-1}\sW_Q^{(\rho)}\cap\dots \cap g_d^{-1}\sW_Q^{(\rho)} \subset \sW_Q^{(e^{-(\lambda_1-\eps)m}\rho)}.\]
In other words, the family of subsets
\[\bigl\{\, g^{-1}\sW_Q^{(\rho)}\setminus \sW_Q^{(e^{-m(\lambda_1-\eps)}\rho)} \mid g\in\cG \,\bigr\}\]
has intersection multiplicity less than $d$.
Therefore,
\[ \sum_{g\in\cG} \nu_m\bigl(g^{-1}\sW_Q^{(\rho)}\setminus \sW_Q^{(e^{-m(\lambda_1-\eps)}\rho)}\bigr) \leq d,\]
and as $\abs{\cG} \geq e^{\theta m}$, there must exist $g$ in $\cG$ such that 
\[ \nu_m\bigl( g^{-1}\sW_Q^{(\rho)} \setminus \sW_Q^{(e^{-m(\lambda_1-\eps)}\rho)}\bigr) \leq de^{-\theta m}.\]
Then,
\begin{align*}
 \nu_m\bigl(\sW_Q^{(e^{-m(\lambda_1-\eps)}\rho)}\bigr) & \geq \nu_m\bigl(g^{-1}\sW_Q^{(\rho)}\bigr)-d e^{-\theta m}\\
& \geq \nu\bigl(\sW_Q^{(\rho)}\bigr) - (d+1) e^{-\theta m}\\
& \geq \nu\bigl(\sW_Q^{(\rho)}\bigr) - e^{-\omega m}.
\end{align*}
\end{proof}

We are finally ready to prove Proposition~\ref{pr:general}.

\begin{proof}[Proof of Proposition~\ref{pr:general}]
Recall the shorthand $\nu_n=\mu_n*\nu$, $n \in \N$.
By Proposition~\ref{pr:step2}, there is a constant $C_0 > 0$ depending only on $\mu$ such that for $m_0 = C_1 \log\frac{\norm{a}}{t}$ with $C_1 \geq C_0$, there exists $Q\in[e^{m_0/C_0},e^{C_0 m_0}]$ such that
\[ \nu_{n-m_0}\bigl(\sW_Q^{(Q^{-8})}\bigr) \geq t^C_0.\]

Set $\rho_0=Q^{-8}$, choose $m_1$ maximal so that $e^{d\lambda_1 m_1}\rho_0 < Q^{-2}$.
Then $e^{d\lambda_1 m_1} \asymp_\mu Q^{-2} \rho_0^{-1} = Q^6$ and hence
\begin{equation}
\label{eq:m1geqC1}
m_1 \geq \frac{6}{d\lambda_1} \log Q - O_\mu(1) \geq \frac{6 C_1}{d\lambda_1 C_0}\log\frac{\norm{a}}{t} - O_\mu(1).
\end{equation}
Thus by picking $C_1$ sufficiently large, we can make $m_1 \geq m_*$ where $m_*$ is the constant given by Proposition~\ref{pr:rewindbis} applied to $\eps := (\lambda_1 - \lambda)/2$.

It is easy to see that if $C = C(\mu,\lambda)$ is chosen large enough, every integer $n \geq C \log\frac{\norm{a}}{t}$ can be written as
\[n = m_0 + m_1 + \dotsb + m_k\]
for some $k \geq 2$ and some integers $m_2, \dotsc, m_k$ satisfying 
\begin{equation}
\label{eq:relmj+1}
\forall j = 1, \dotsc, k-1, \quad  m_j < m_{j+1} < \bigl(1 + \frac{\lambda_1 - \eps}{d \lambda_1}\bigr) m_j
\end{equation}
Define recursively for $j = 1, \dotsc, k$, 
\[\rho_j = e^{-(\lambda_1 - \eps) m_j} \rho_{j-1}.\]
Then \eqref{eq:relmj+1} implies, by a simple induction, that
\[\forall j = 1, \dotsc, k, \quad  e^{d\lambda_1 m_j} \rho_{j-1} < Q^{-2}.\]

Therefore we can apply repeatedly Proposition~\ref{pr:rewindbis} to get
\[ \nu\bigl(\sW_Q^{(\rho_k)}\bigr) \geq t^{C_0} - \sum_{j = 1}^k e^{- \omega m_j} \]
where $\omega > 0$ is a constant depending only on $\mu$ and $\lambda$.
Observe that, first, 
\[ \rho_k = e^{-(\lambda + \eps) (n - m_0)} \rho_0 \leq e^{-\lambda n} \]
provided that $C \geq \frac{\lambda + \eps}{\eps} C_1$.
Secondly, by \eqref{eq:relmj+1},
\[\sum_{j = 1}^k e^{- \omega m_j} \leq e^{-\omega m_1} \sum_{i \geq 1} e^{-\omega i} \ll_\omega e^{-\omega m_1} \]
is smaller than $t^{C_0}/2$ provided that $C_1/C_0$ is chosen large enough (recall \eqref{eq:m1geqC1}).
This finishes the proof.\qedhere

%After $i$ steps, having obtained integers $m_1,\dots, m_{i-1}$ and $\rho_i>0$ such that 
%\[ \nu_{n -\sum_{j<i} m_j}(\sW_Q^{(\rho_i)}) \geq t^C - \sum_{j<i} e^{-\omega m_j},\]
%choose $m_i$ maximal so that $e^{d\lambda_1 m_i}\rho_i < Q^{-2}$, and set $\rho_{i+1}=e^{-\lambda m_i}\rho_i$, so that by Proposition~\ref{pr:rewindbis},
%\[ \nu_{n-\sum_{j\leq i} m_j}(\sW_Q^{(\rho_{i+1})}) \geq t^C -\sum_{j\leq i}e^{-\omega m_j}.\]
%The procedure stops when $n-\sum_j m_j<m_*$, the constant given by Proposition~\ref{pr:rewindbis}.
%Then, we find, for some $r<m_*= O_\mu(1)$,
%\[ \nu_r( \sW_Q^{(\rho_0e^{-\lambda (n-m_0-r)})}) \geq t^C - \sum_j e^{-\omega m_j},\]
%and therefore, for every $\eps>0$, for $n$ large enough
%\begin{align*}
%\nu(\sW_Q^{(e^{-(\lambda-2\eps)n}})) 
%& \geq \nu_r( \sW_Q^{(e^{-(\lambda-\eps)n})}) - e^{-cn}\\
%& \geq t^C - e^{-cn} -\sum_j e^{-\omega m_j}.
%\end{align*}

%To conclude, it suffices to show that $\sum_j e^{-\omega m_j}$ is bounded above by $t^C/2$ provided $m_0$ (or $Q$) is chosen large enough.
%Now the recurrence relation defining the numbers $m_i$ shows that
%\[ \rho_{i+1}= e^{-\lambda m_i}\rho_i \quad\text{and}\quad e^{d\lambda_1 m_i} \rho_i \asymp Q^{-2}.\]
%This implies
%\[ e^{d\lambda_1 m_{i+1}} \asymp Q^{-2}\rho_{i+1}^{-1}
%= Q^{-2}e^{\lambda m_i}\rho_{i}^{-1}
%= Q^{-2} e^{\lambda(m_i+\dots+m_1)}\rho_0^{-1}
%\asymp e^{\lambda m_i}e^{d\lambda_1 m_i}\]
%and therefore
%\[ m_{i+1} = \frac{d\lambda_1+\lambda}{d\lambda_1}m_i + O(1).\]
%Since $\frac{d\lambda_1+\lambda}{d\lambda_1}>1$, this shows that the sum $\sum e^{-\omega m_i}$ converges, which is what we wanted to show.
\end{proof}

\section{Conclusion}
\label{sc:conclusion}

To conclude this paper, we mention one application of our main theorem, and then give some possible further directions of research, some of which we hope to address in publications to come.

\subsection{Expansion in simple groups modulo arbitrary integers}
\label{ss:exp}

In Section~\ref{sc:noncon}, we made use of the result of Salehi Golsefidy and Varjú \cite{SGV} about expansion in semisimple groups modulo prime -- or square-free -- numbers.
In a reverse direction, it was observed by Bourgain and Varjú \cite{bv} that the quantitative equidistibution of linear random walks on the torus of Bourgain, Furman, Lindenstrauss and Mozes \cite{BFLM} could be used to derive some expansion results in $\SL_d(\Z/q\Z)$, where $q$ runs over \emph{all} natural integers.
Because of the proximality assumption required by \cite{BFLM}, their argument could only apply to \emph{$\R$-split} simple $\Q$-groups, such as $\SL_d$.
With Theorem~\ref{thm:main} at hand, we can now generalize their result to any simple $\Q$-group.

\begin{theorem}[Super-approximation in simple groups modulo arbitrary integers]
\label{sa}
Let $S$ be a finite subset of $\GL_d(\Z)$, and $\Gamma$ the subgroup generated by $S$.
If the identity component of the Zariski closure of $\Gamma$ is an absolutely simple algebraic group, then the family of Cayley graphs $\cG(\pi_q(\Gamma),\pi_q(S))_{q\in\N}$ is a family of expanders .
\end{theorem}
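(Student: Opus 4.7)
The proof follows the strategy of Bourgain--Varjú~\cite{bv}, with Theorem~\ref{thm:main} used in place of the BFLM equidistribution theorem~\cite{BFLM}. The expander property is equivalent to a uniform $L^2$-mixing bound
\[
\| \mu_S^{*n} - u_q \|_2 \ll q^{-c}
\]
for $n \geq C \log q$, where $u_q$ denotes the uniform probability measure on $\pi_q(\Gamma)$ and $\mu_S$ the uniform probability measure on $S \cup S^{-1}$. Via Chinese-Remainder-Theorem-type decompositions this reduces to prime-power moduli $q = p^k$. For the square-free case, Salehi Golsefidy--Varjú~\cite{SGV} applies directly since $\Gbb$, being simple, is semisimple and connected.

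For prime powers $q = p^k$ one proceeds inductively along the congruence filtration inside $\pi_{p^k}(\Gamma)$. The successive quotients in this filtration are abelian, isomorphic for $p$ sufficiently large to $\mathrm{Lie}(\Gbb) \otimes \Fbb_p$, and the action of $\Gamma$ on them factors through the adjoint representation composed with the reduction modulo $p$. Fourier analysis on each abelian layer reduces the inductive step to the following character-sum estimate: for every non-trivial additive character $\chi$ on $\mathrm{Lie}(\Gbb) \otimes \Fbb_p$ and every $v \in \mathrm{Lie}(\Gbb) \otimes \Fbb_p$,
\[
\Bigl| \sum_{g \in \Gamma} \mu_S^{*n}(g) \, \chi(\Ad(g) v) \Bigr| \leq p^{-c},
\]
for $n$ comparable to $\log p$, uniformly in $\chi$ and $v$. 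Writing $\chi(X) = e(\bracket{a, X}/p)$ with $a \in \Z^D \setminus p\Z^D$, where $D = \dim \Gbb$, the character sum is exactly $\widehat{(\Ad_* \mu_S)^{*n} * \delta_{v/p}}(a)$.

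This is now a Fourier-decay statement for the random walk on $\Tbb^D$ induced by the adjoint representation $\Ad \colon \Gamma \to \SL_D(\Z)$. The latter is well-defined on a suitable integer lattice in $\mathrm{Lie}(\Gbb)$ preserved by $\Gamma$, which exists because every element of $\GL_d(\Z)$ has unit determinant and hence acts integrally by conjugation on $\Mat_d(\Z)$. The adjoint representation satisfies all the hypotheses of Theorem~\ref{thm:main}: irreducibility follows from the simplicity of $\Gbb$, Zariski connectedness is inherited from $\Gbb$, and the finite-exponential-moment condition is automatic because $S$ is finite. Invoking Theorem~\ref{thm:main}, any large Fourier coefficient $|\widehat{\,\cdot\,}(a)| \geq t$ forces $v/p$ to be within $e^{-\lambda n}$ of a rational point of denominator at most $(\norm{a}/t)^C$. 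When this denominator is strictly less than $p$ and $n$ is large enough that $e^{-\lambda n} < p^{-2}$, the lower bound $p^{-2}$ on the separation between distinct rationals of denominator less than $p$ forces $v/p$ itself to have denominator strictly less than $p$, contradicting $a \notin p\Z^D$.

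The main obstacle is the quantitative bookkeeping required to iterate this character estimate across all the congruence levels and then combine prime-power estimates into a uniform expansion bound for arbitrary $q$. This is the $L^2$-flattening machinery of \cite{bv}, which transfers to our setting with the polynomial estimate from Theorem~\ref{thm:main} playing the same role as the BFLM input played there. The details of this program, together with extensions to more general arithmetic settings, will be addressed in a subsequent paper.
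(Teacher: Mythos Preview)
Your plan matches the paper's approach exactly: the paper does not include a proof in the main text, stating only that the ideas are those of Bourgain--Varj\'u~\cite{bv} with Theorem~\ref{thm:main} substituted for the BFLM input, and deferring the details to a companion note~\cite{HeSaxce_expansion}.

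Two points to watch. First, a slip in your final paragraph: the contradiction should be with $v \notin p\Z^D$ (the starting point $v/p$ of the torus walk being genuinely irrational), not with $a \notin p\Z^D$; Theorem~\ref{thm:main} constrains the starting point, not the frequency. Second, and more substantively, your claim that ``irreducibility follows from the simplicity of $\Gbb$'' is only valid when $\Gbb$ is absolutely simple. If $\Gbb$ is $\Q$-simple but arises by restriction of scalars, the adjoint representation decomposes over $\R$ and the hypothesis~\ref{it:SA2} of Theorem~\ref{thm:main} fails. The paper itself flags precisely this situation in \S5.2 as requiring work beyond Theorem~\ref{thm:main}; presumably the companion note handles it, but your sketch as written covers only the absolutely simple case.
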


One motivation for trying to prove such expansion results with no moduli restriction comes from the work of Bourgain and Kontorovich \cite{bk_zaremba,BK} where the circle method is combined with super-approximation to derive spectacular number theory results.

Most ideas for the proof of Theorem~\ref{sa} are already present in \cite{bv}, but that proof is rather involved, and only uses Theorem~\ref{thm:main} as a black box.
For this reason, we do not include it here, but rather refer the reader to the expository article~\cite{HeSaxce_expansion} for a detailed proof.
Other recent results on super-approximation, similar to the theorem above, but with different assumptions, are \cite[Appendix]{BK}, \cite{SG_II}, and \cite{FSZ}.
The survey~\cite{Breuillard_survey} also provides a very nice overview of the subject.

As observed by Salehi Golsefidy and Varjú~\cite[Question 2]{SGV}, one should expect the theorem to hold with the weaker assumption that the Zariski closure of $\Gamma$ is perfect.
To prove such a result, if one wants to exploit some equidistribution result on the torus similar to Theorem~\ref{thm:main}, one should relax the irreducibility assumption, which leads us to the second point of this conclusion.

\subsection{Without irreducibility}
The only obvious obstruction to equidistribution is when the random walk is trapped in a rational coset of a subtorus that is obtained as the image of a $\Gamma$-invariant rational subspace via the projection $\R^d \to \Tbb^d$.
Thus, in order to prove equidistribution of a linear random walk on the torus, it may be more natural only to assume the action of $\Gamma$ to be irreducible on $\Q^d$, rather than $\R^d$.

Indeed, for example, assume that the group generated by the random walk has semisimple Zariski closure without compact factor and acts strongly irreducibly on $\Q^d$.
Then Guivarc'h and Starkov~\cite{GuivarchStarkov} and independently Muchnik~\cite{Muchnik} showed that every proper closed invariant subset is a finite set of rational points. 
Moreover, under the same assumption, Benoist and Quint~\cite[Corollary~1.4]{bq2} showed that the only non-atomic stationary measure on $\Tbb^d$ is the Haar measure. See also Benoist-Quint~\cite{bq3} for a result on equidistribution of trajectories. 

Similarly, Theorem~\ref{thm:main} should remain valid if one only assumes the irreducibility of the action of $\Gamma$ on $\Q^d$, as long as the Zariski closure of $\Gamma$ is semisimple without compact factor.

The general approach used here should work in this setting, but there is one important difference: the algebra $E$ generated by $\Gamma$ will no longer be simple, but only semisimple.
In particular, the rescaled measure $\tilde{\mu}_n$ studied in Section~\ref{sc:noncon} may very well be concentrated on a proper ideal of $E$.
One therefore needs to modify several of our arguments to adapt the proof to this more general setting.

%We hope to address this issue in forthcoming work in collaboration with Elon Lindenstrauss.
%\comm{En fait, on pourrait même espérer un résultat dans l'esprit de notre article sur le somme-produit dans les représentations des groupes de Lie : si on part d'un point hors de toute classe à gauche d'un tore préservé par l'action de $\Gamma$, alors, la marche aléatoire s'équidistribue, si $G$ est semisimple, ou parfait, ou même engendré par des unipotents.}

Furthermore, the question of equidistribution is still interesting even without any irreducibility assumption. For example, the above-mentioned work of Benoist and Quint gives a classification of orbit closures and stationary measures under the assumption that the Zariski closure of the group is semisimple.
In another direction, Bekka and Guivarc'h~\cite{BekkaGuivarch} showed that the measure preserving action of a subgroup $\Gamma < \SL_d(\Z)$ on $\Tbb^d$ has a spectral gap if and only if there is no nontrivial $\Gamma$-invariant torus factor on which $\Gamma$ acts as a virtually abelian group.

\subsection{The two other assumptions}

First, we believe that the Theorem~\ref{thm:main} is still valid even if one does not require the group $\Gbb$ to be Zariski connected.
In fact, many arguments in our proof still works without this assumption, but, as is the case without the irreducibility assumption, the rescaled measures $\tilde{\mu}_n$ may concentrate near a proper subspace of $E$: the algebra generated by the connected component of $G$.
This leads to several technical difficulties when trying to prove a flattening statement.

\smallskip

Second, it would be an interesting problem to determine what moment conditions are really necessary in order to have the convergence statement of Theorems~\ref{thm:easy} and \ref{thm:main}.
It seems plausible for example that Theorem~\ref{thm:easy} holds with the weaker assumption of a moment of order $1$:
$ \int \log\norm{g}\dd\mu(g) < \infty$.
Even a counter-example to Theorem~\ref{thm:easy} without any moment condition would be interesting.

\subsection{Spaces of lattices}

Given the results of Benoist and Quint \cite{bq2} classifying stationary measures on the space of lattices $\SL_d(\R)/\SL_d(\Z)$, it is very natural to ask whether one can obtain an analog of Theorem~\ref{thm:main} in this setting.
Even the following qualitative equidistribution problem is still open \cite[\S 5.4. Question~3]{bqintro}.

\smallskip

Let $\mu$ be a measure on $\SL_d(\R)$ generating a Zariski dense subgroup $\Gamma$, and $x$ a point in $\SL_d(\R)/\SL_d(\Z)$ with infinite $\Gamma$-orbit.
Show that the sequence of measures $(\mu_n*\delta_x)_{n\geq 1}$ converges to the Haar measure as $n$ goes to infinity.

\smallskip

\subsection*{Acknowledgements.}
We are indebted to Emmanuel Breuillard for several ideas used in \S\S 3.2 and 3.3 and for sharing his unpublished note \cite{Breuillard} on non-concentration estimates for random matrix products.
It is a pleasure to thank him, as well as Richard Aoun, Yves Benoist, Elon Lindenstrauss, Jean-François Quint and Péter Varjú, for useful and motivating discussions.
We also thank the (unfortunately) anonymous referees for their careful proofreading of the manuscript and their helpful suggestions.
\bibliographystyle{abbrv} %alpha-fr abbrv-fr plain-fr acm amsplain aomplain
\bibliography{bib_nonproximal}

\begin{thebibliography}{10}

\bibitem{aoun}
R.~Aoun.
\newblock Transience of algebraic varieties in linear groups---applications to
  generic {Z}ariski density.
\newblock {\em Ann. Inst. Fourier (Grenoble)}, 63(5):2049--2080, 2013.

\bibitem{BekkaGuivarch}
B.~Bekka and Y.~Guivarc'h.
\newblock On the spectral theory of groups of affine transformations of compact
  nilmanifolds.
\newblock {\em Ann. Sci. \'{E}c. Norm. Sup\'{e}r. (4)}, 48(3):607--645, 2015.

\bibitem{bq1}
Y.~Benoist and J.-F. Quint.
\newblock Mesures stationnaires et ferm\'{e}s invariants des espaces
  homog\`enes.
\newblock {\em Ann. of Math. (2)}, 174(2):1111--1162, 2011.

\bibitem{bqintro}
Y.~Benoist and J.-F. Quint.
\newblock Introduction to random walks on homogeneous spaces.
\newblock {\em Jpn. J. Math.}, 7(2):135--166, 2012.

\bibitem{bq2}
Y.~Benoist and J.-F. Quint.
\newblock Stationary measures and invariant subsets of homogeneous spaces
  ({II}).
\newblock {\em J. Amer. Math. Soc.}, 26(3):659--734, 2013.

\bibitem{bq3}
Y.~Benoist and J.-F. Quint.
\newblock Stationary measures and invariant subsets of homogeneous spaces
  ({III}).
\newblock {\em Ann. of Math. (2)}, 178(3):1017--1059, 2013.

\bibitem{BenoistQuint}
Y.~Benoist and J.-F. Quint.
\newblock {\em Random walks on reductive groups}, volume~62 of {\em Ergebnisse
  der Mathematik und ihrer Grenzgebiete. 3. Folge. A Series of Modern Surveys
  in Mathematics}.
\newblock Springer, Cham, 2016.

\bibitem{Borel}
A.~Borel.
\newblock {\em Linear algebraic groups}, volume 126 of {\em Graduate Texts in
  Mathematics}.
\newblock Springer-Verlag, New York, second edition, 1991.

\bibitem{BougerolLacroix}
P.~Bougerol and J.~Lacroix.
\newblock {\em Products of random matrices with applications to
  {S}chr\"{o}dinger operators}, volume~8 of {\em Progress in Probability and
  Statistics}.
\newblock Birkh\"{a}user Boston, Inc., Boston, MA, 1985.

\bibitem{Bourgain2009}
J.~Bourgain.
\newblock Multilinear exponential sums in prime fields under optimal entropy
  condition on the sources.
\newblock {\em Geom. Funct. Anal.}, 18(5):1477--1502, 2009.

\bibitem{Bourgain2010}
J.~Bourgain.
\newblock The discretized sum-product and projection theorems.
\newblock {\em J. Anal. Math.}, 112:193--236, 2010.

\bibitem{BFLM}
J.~Bourgain, A.~Furman, E.~Lindenstrauss, and S.~Mozes.
\newblock Stationary measures and equidistribution for orbits of nonabelian
  semigroups on the torus.
\newblock {\em J. Amer. Math. Soc.}, 24(1):231--280, 2011.

\bibitem{bourgaingamburd_sl2p}
J.~Bourgain and A.~Gamburd.
\newblock Uniform expansion bounds for {C}ayley graphs of {${\rm
  SL}_2(\mathbb{F}_p)$}.
\newblock {\em Ann. of Math. (2)}, 167(2):625--642, 2008.

\bibitem{BourgainGamburd_modpnII}
J.~Bourgain and A.~Gamburd.
\newblock Expansion and random walks in {$\SL_d(\Z / p^n\Z)$}. {II}.
\newblock {\em J. Eur. Math. Soc. (JEMS)}, 11(5):1057--1103, 2009.
\newblock With an appendix by Bourgain.

\bibitem{BK}
J.~Bourgain and A.~Kontorovich.
\newblock On the local-global conjecture for integral {A}pollonian gaskets.
\newblock {\em Invent. Math.}, 196(3):589--650, 2014.
\newblock With an appendix by P\'{e}ter P. Varj\'{u}.

\bibitem{bk_zaremba}
J.~{Bourgain} and A.~{Kontorovich}.
\newblock {On Zaremba's conjecture}.
\newblock {\em {Ann. Math. (2)}}, 180(1):137--196, 2014.

\bibitem{bourgainkonyagin}
J.~Bourgain and S.~V. Konyagin.
\newblock Estimates for the number of sums and products and for exponential
  sums over subgroups in fields of prime order.
\newblock {\em C. R. Math. Acad. Sci. Paris}, 337(2):75--80, 2003.

\bibitem{bv}
J.~Bourgain and P.~P. Varj\'{u}.
\newblock Expansion in {$SL_d({\bf Z}/q{\bf Z}),\,q$} arbitrary.
\newblock {\em Invent. Math.}, 188(1):151--173, 2012.

\bibitem{Breuillard}
E.~Breuillard.
\newblock A non concentration estimate for random matrix products.
\newblock Unpublished notes available at
  \url{https://www.math.u-psud.fr/~breuilla/RandomProducts2.pdf}.

\bibitem{Breuillard_survey}
E.~Breuillard.
\newblock Approximate subgroups and super-strong approximation.
\newblock In {\em Groups {S}t {A}ndrews 2013}, volume 422 of {\em London Math.
  Soc. Lecture Note Ser.}, pages 1--50. Cambridge Univ. Press, Cambridge, 2015.

\bibitem{bgt_expansionlinear}
E.~Breuillard, B.~Green, and T.~Tao.
\newblock Approximate subgroups of linear groups.
\newblock {\em Geom. Funct. Anal.}, 21(4):774--819, 2011.

\bibitem{CoxLittleOShea}
D.~A. Cox, J.~Little, and D.~O'Shea.
\newblock {\em Ideals, varieties, and algorithms}.
\newblock Undergraduate Texts in Mathematics. Springer, Cham, fourth edition,
  2015.
\newblock An introduction to computational algebraic geometry and commutative
  algebra.

\bibitem{saxce_producttheorem}
N.~de~Saxc\'{e}.
\newblock A product theorem in simple {L}ie groups.
\newblock {\em Geom. Funct. Anal.}, 25(3):915--941, 2015.

\bibitem{FriedMoshe}
M.~D. Fried and M.~Jarden.
\newblock {\em Field arithmetic}, volume~11 of {\em Ergebnisse der Mathematik
  und ihrer Grenzgebiete. 3. Folge. A Series of Modern Surveys in Mathematics
  [Results in Mathematics and Related Areas. 3rd Series. A Series of Modern
  Surveys in Mathematics]}.
\newblock Springer-Verlag, Berlin, second edition, 2005.

\bibitem{FSZ}
E.~Fuchs, K.~E. Stange, and X.~Zhang.
\newblock Local-global principles in circle packings.
\newblock {\em Compos. Math.}, 155(6):1118--1170, 2019.

\bibitem{Furstenberg1963}
H.~Furstenberg.
\newblock Noncommuting random products.
\newblock {\em Trans. Amer. Math. Soc.}, 108:377--428, 1963.

\bibitem{guivarch}
Y.~{Guivarc'h}.
\newblock {Produits de matrices al\'eatoires et applications aux propri\'et\'es
  g\'eom\'etriques des sous-groupes du groupe lin\'eaire. (Products of random
  matrices and applications to geometric properties of subgroups of linear
  groups)}.
\newblock {\em {Ergodic Theory Dyn. Syst.}}, 10(3):483--512, 1990.

\bibitem{GuivarchRaugi}
Y.~Guivarc'h and A.~Raugi.
\newblock Frontière de {F}urstenberg, propriétés de contraction et
  théorèmes de convergence.
\newblock {\em Z. Wahrsch. Verw. Gebiete}, 69(2):187--242, 1985.

\bibitem{GuivarchStarkov}
Y.~Guivarc'h and A.~N. Starkov.
\newblock Orbits of linear group actions, random walks on homogeneous spaces
  and toral automorphisms.
\newblock {\em Ergodic Theory Dynam. Systems}, 24(3):767--802, 2004.

\bibitem{He2016}
W.~He.
\newblock Discretized sum-product estimates in matrix algebras.
\newblock {\em J. Anal. Math.}, 139(2):637--676, 2019.

\bibitem{he_projection}
W.~He.
\newblock Orthogonal projections of discretized sets.
\newblock {\em J. Fractal Geom.}, 7(3):271--317, 2020.

\bibitem{he_schubert}
W.~He.
\newblock Random walks on linear groups satisfying a {S}chubert condition.
\newblock {\em Israel J. Math.}, 238(2):593--627, 2020.

\bibitem{HeSaxce_expansion}
W.~He and N.~de~Saxc\'{e}.
\newblock Trou spectral dans les groupes simples.
\newblock Available at
  \url{https://www.math.univ-paris13.fr/~desaxce/publications/sg_qarbitraire.pdf}.

\bibitem{helfgott_sl2}
H.~A. Helfgott.
\newblock Growth and generation in {${\rm SL}_2(\mathbb{Z}/p\mathbb{Z})$}.
\newblock {\em Ann. of Math. (2)}, 167(2):601--623, 2008.

\bibitem{HLW}
S.~Hoory, N.~Linial, and A.~Wigderson.
\newblock Expander graphs and their applications.
\newblock {\em Bull. Amer. Math. Soc. (N.S.)}, 43(4):439--561, 2006.

\bibitem{LangWeil}
S.~Lang and A.~Weil.
\newblock Number of points of varieties in finite fields.
\newblock {\em Amer. J. Math.}, 76:819--827, 1954.

\bibitem{LePage}
E.~Le~Page.
\newblock Th\'{e}or\`emes limites pour les produits de matrices al\'{e}atoires.
\newblock In {\em Probability measures on groups ({O}berwolfach, 1981)}, volume
  928 of {\em Lecture Notes in Math.}, pages 258--303. Springer, Berlin-New
  York, 1982.

\bibitem{Li2018}
J.~{Li}.
\newblock {Discretized Sum-product and Fourier decay in $\mathbb{R}^n$}.
\newblock {\em arXiv e-prints}, page arXiv:1811.06852, Nov 2018.
\newblock arXiv:1811.06852, to appear in Journal d'Analyse Mathématique.

\bibitem{Muchnik}
R.~Muchnik.
\newblock Semigroup actions on {$\mathbb{T}^n$}.
\newblock {\em Geom. Dedicata}, 110:1--47, 2005.

\bibitem{nori}
M.~V. Nori.
\newblock On subgroups of {${\rm GL}_n({\bf F}_p)$}.
\newblock {\em Invent. Math.}, 88(2):257--275, 1987.

\bibitem{pyberszabo}
L.~Pyber and E.~Szab\'{o}.
\newblock Growth in finite simple groups of {L}ie type.
\newblock {\em J. Amer. Math. Soc.}, 29(1):95--146, 2016.

\bibitem{raghunathan}
M.~S. Raghunathan.
\newblock {\em Discrete subgroups of {L}ie groups}.
\newblock Springer-Verlag, New York-Heidelberg, 1972.
\newblock Ergebnisse der Mathematik und ihrer Grenzgebiete, Band 68.

\bibitem{SG_II}
A.~Salehi~Golsefidy.
\newblock Super-approximation, {II}: the {$p$}-adic case and the case of
  bounded powers of square-free integers.
\newblock {\em J. Eur. Math. Soc. (JEMS)}, 21(7):2163--2232, 2019.

\bibitem{SGV}
A.~Salehi~Golsefidy and P.~P. Varj\'{u}.
\newblock Expansion in perfect groups.
\newblock {\em Geom. Funct. Anal.}, 22(6):1832--1891, 2012.

\bibitem{Tao2008}
T.~Tao.
\newblock Product set estimates for non-commutative groups.
\newblock {\em Combinatorica}, 28(5):547--594, 2008.

\bibitem{TaoVu}
T.~Tao and V.~H. Vu.
\newblock {\em Additive combinatorics}, volume 105 of {\em Cambridge Studies in
  Advanced Mathematics}.
\newblock Cambridge University Press, Cambridge, 2010.

\bibitem{VanderWaerden}
B.~L. {Van der Waerden}.
\newblock {\em Modern Algebra. Volume II. Based in part on lectures by E. Artin
  and E. Noether.}
\newblock Frederick Ungar Publishing Co., New York, transl. from the 3rd german
  edition, 1950.

\end{thebibliography}

\end{document}